\newtheorem{thm}{Theorem}[section]
\newtheorem{theorem}[thm]{Theorem}
\newtheorem{prop}[thm]{Proposition}
\newtheorem{proposition}[thm]{Proposition}
\newtheorem{lemma}[thm]{Lemma}
\newtheorem{corollary}[thm]{Corollary}
\newtheorem*{theoremletter}{Main Theorem}
\newtheorem*{conjecturetz}{Trivial Zero Conjecture}
\theoremstyle{definition}
\newtheorem{defn}[thm]{Definition}
\newtheorem{definition}[thm]{Definition}
\newtheorem{remark}[thm]{Remark}
\newcommand{\A}{\mathbb{A}}
\newcommand{\C}{\mathbb{C}}
\newcommand{\F}{\mathbb{F}}
\newcommand{\G}{\mathbb{G}}
\newcommand{\Q}{\mathbb{Q}}
\newcommand{\R}{\mathbb{R}}
\newcommand{\T}{\mathbb{T}}
\newcommand{\Z}{\mathbb{Z}}
\newcommand{\cC}{\mathcal{C}}
\newcommand{\cD}{\mathcal{D}}
\newcommand{\cE}{\mathcal{E}}
\newcommand{\cF}{\mathcal{F}}
\newcommand{\cG}{\mathcal{G}}
\newcommand{\cH}{\mathcal{H}}
\newcommand{\cI}{\mathcal{I}}
\newcommand{\cL}{\mathcal{L}}
\newcommand{\cM}{\mathcal{M}}
\newcommand{\cO}{\mathcal{O}}
\newcommand{\cR}{\mathcal{R}}
\newcommand{\cT}{\mathcal{T}}
\newcommand{\cU}{\mathcal{U}}
\newcommand{\cV}{\mathcal{V}}
\newcommand{\cW}{\mathcal{W}}
\newcommand{\cX}{\mathcal{X}}
\newcommand{\gc}{\mathfrak{c}}
\newcommand{\gd}{\mathfrak{d}}
\newcommand{\gf}{\mathfrak{f}}
\renewcommand{\gg}{\mathfrak{g}}
\newcommand{\gh}{\mathfrak{h}}
\newcommand{\gk}{\mathfrak{k}}
\newcommand{\gm}{\mathfrak{m}}
\newcommand{\gn}{\mathfrak{n}}
\newcommand{\gu}{\mathfrak{u}}
\newcommand{\sw}{{\sf{w}}}
\newcommand{\Cl}{\mathscr{C}\!\ell_F^+}
\newcommand{\End}{\operatorname{End}}
\newcommand{\Gal}{\operatorname{Gal}}
\newcommand{\GL}{\operatorname{GL}}
\newcommand{\Res}{\operatorname{Res}}
\newcommand{\pr}{\operatorname{pr}}
\newcommand{\rH}{\operatorname{H}}
\newcommand{\rRG}{\operatorname{R\Gamma}}
\newcommand{\SL}{\operatorname{SL}}
\newcommand{\Sym}{\operatorname{Sym}}
\newcommand{\bs}{\!\setminus}
\newcommand{\an}{\operatorname{an}}
\newcommand{\GB}{\operatorname{GB}}
\newcommand{\FM}{\operatorname{FM}}
\newcommand{\coker}{\operatorname{coker}}
\newcommand{\dlog}{\operatorname{dlog}}
\newcommand{\new}{\operatorname{new}}
\newcommand{\id}{\operatorname{id}}
\newcommand{\BM}{\operatorname{BM}}
\newcommand{\coinv}{\operatorname{coinv}}
\newcommand{\Ad}{\operatorname{Ad}}
\newcommand{\ord}{\operatorname{ord}}
\newcommand{\cyc}{\operatorname{cyc}}
\newcommand{\tw}{\operatorname{tw}}
\newcommand{\triv}{\operatorname{triv}}
\newcommand{\ev}{\operatorname{ev}}
\newcommand{\eval}{\operatorname{eval}}
\newcommand{\Sp}{\operatorname{Sp}}
\newcommand{\cusp}{\operatorname{cusp}}
\newcommand{\Aut}{\operatorname{Aut}}
\newcommand{\sgn}{\operatorname{sign}}
\newcommand{\Det}{\operatorname{Det}}
\newcommand{\Ind}{\operatorname{Ind}}
\newcommand{\rig}{\operatorname{rig}}
\newcommand{\Frob}{\operatorname{Frob}}
\newcommand{\St}{\operatorname{St}}
\newcommand{\Fil}{\operatorname{Fil}}
\newcommand{\st}{\operatorname{st}}
\newcommand{\dR}{\operatorname{dR}}
\newcommand{\Tr}{\operatorname{Tr}}
\newcommand{\Hom}{\operatorname{Hom}}
\renewcommand{\Re}{\operatorname{Re}}
\newcommand{\ontop}[2]{\genfrac{}{}{0pt}{}{#1}{#2}}
\newenvironment{psmallmatrix}{\left(\begin{smallmatrix}}{\end{smallmatrix}\right)}
\title[$p$-adic $L$-functions of Hilbert cusp forms and the trivial zero conjecture]{$p$-adic $L$-functions of Hilbert cusp forms \\ and the trivial zero conjecture}
\author{Daniel Barrera, Mladen Dimitrov \and Andrei Jorza}
\address{Universidad de Santiago de Chile, Alameda 3363, 9160000 Estaci\'on Central, Santiago, Chile}
\email{daniel.barrera.s@usach.cl}
\address{Universit\'e de Lille, CNRS, UMR 8524 -- Laboratoire Paul Painlev\'e, 59000 Lille, France}
\email{mladen.dimitrov@univ-lille.fr}
\address{University of Notre Dame, 275 Hurley Hall, Notre Dame, IN 46556, USA}
\email{ajorza@nd.edu}
\subjclass[2010]{Primary: 11F67, Secondary: 11F41, 11G40}
\begin{document}

\begin{abstract}
We prove a strong form of the trivial zero conjecture at the central point for the 
$p$-adic $L$-function of a non-critically refined self-dual cohomological cuspidal automorphic
representation of $\GL_2$ over a totally real field, which is Iwahori spherical at places above $p$. 

In the case of a simple zero we adapt the approach of Greenberg and Stevens, based on the functional 
equation for the $p$-adic $L$-function of a nearly finite slope family and on 
improved $p$-adic $L$-functions that we construct using automorphic symbols and overconvergent cohomology. 

For higher order zeros we develop a conceptually new approach
studying the variation of the root number in partial families and establishing the vanishing of many Taylor coefficients of the $p$-adic $L$-function of the family. 
\end{abstract}

\maketitle
\addtocontents{toc}{\setcounter{tocdepth}{0}}

\section*{Introduction}

The complex analytic $L$-function of an algebraic cuspidal automorphic representation $\pi$ on a reductive
group over a number field $F$ lies at the heart of the Langlands program, and the relationship between its analytic properties, namely the order of vanishing at critical points, and the arithmetic of the conjecturally attached $p$-adic representation $V_{\pi}$ of the absolute Galois group $\mathrm{G}_F$ is the content of the famous Bloch-Kato conjectures. 
Iwasawa theory, in turn, seeks to relate the arithmetic of the restriction of $V_{\pi}$ to the $p$-adic cyclotomic 
extension of $F$, and the behavior of the $p$-adic analytic $L$-function $L_p(\pi,s)$ of $\pi$.
The existence of $p$-adic $L$-functions for automorphic representations and families thereof is a challenging problem in itself, but even when they have been constructed, properties such as the location of their zeros and orders of vanishing have remained poorly understood. 

To ensure good analytic properties in the cyclotomic variable $s$,  $L_p(\pi,s)$ contains 
extra interpolation factors which can possibly vanish at a critical integer. Such zeros,
 called trivial, were first considered for 
an elliptic curve $E$ over $\Q$ in the seminal work of Mazur, Tate and Teitelbaum
\cite{mazur-tate-teitelbaum}. If $E$ has split multiplicative reduction at $p$, the
$p$-adic $L$-function $L_p(E,s)$ has a trivial zero at $s=1$ and it was
conjectured, and later proven by Greenberg and Stevens \cite{greenberg-stevens}, that 
\[L_p'(E,1) = \mathscr{L}(E) \cdot \frac{L(E,1)}{\Omega_{E}},\]
where $\Omega_{E}$ is the real period of $E$ and $\mathscr{L}(E)=\tfrac{\log_p q_{E}}{\ord_p q_{E}}$ is the so-called $\mathscr{L}$-invariant,  $q_{E}$ being the Tate period of $E$. While trivial zeros of 
$p$-adic $L$-functions and their $\mathscr{L}$-invariants were considered by Mazur, Tate and Teitelbaum in their quest to formulate a $p$-adic analogue of the Birch and Swinnerton-Dyer conjecture, various recent works on the Bloch-Kato conjecture rely
crucially on $p$-adic $L$-functions and the Iwasawa main conjecture.
In the context of geometric Galois representations the following more general, albeit somewhat vague, trivial zero conjecture
 springs from various places in the literature and is part of the ``folklore''. 

Let $V$ be a $p$-adic representation of $\mathrm{G}_{\Q}$, critical in the sense of Deligne,  such that $V_p=V|_{\mathrm{G}_{\Q_p}}$ is semi-stable. Let $D\subset \cD_{\st}(V_p)$ be a regular submodule in the sense of Perrin-Riou \cite{perrin-riou:Lp}. The works of Coates and Perrin-Riou posit the existence of a $p$-adic $L$-function $L_p(V,D,s)$ satisfying an interpolation formula of the form
$L_p(V,D,0)=\Omega_V^{-1}L(V,0)\cE(V_p,D)$, where $\Omega_V$ is a Deligne period, 
$L(V,s)$ is the complex $L$-function and $\cE(V_p,D)$ is a product of linear
Euler factors. 

\begin{conjecturetz}
Letting $e$ denote the number of
vanishing Euler factors in $\cE(V_p,D)$ and  $\cE^+(V_p,D)$ the product of the remaining non-vanishing ones, the $p$-adic $L$-function $L_p(V,D,s)$
vanishes to order at least $e$ at $s=0$ and
\[L_p^{(e)}(V,D,0)=e! \cdot \mathscr{L}(V,D)\cdot \cE^+(V_p,D) \cdot \frac{L(V,0)}{\Omega_V} 
, \]
where  $ \mathscr{L}(V,D)$ is an arithmetic $\mathscr{L}$-invariant.
More generally, by the Trivial Zero Conjecture for a geometric representation $V$ of $\mathrm{G}_F$ and a collection of regular submodules $D_v\subset \cD_{\st}(V_{|\mathrm{G}_{F_v}})$ for $v\mid p$ we mean the one for 
$\Ind_{\mathrm{G}_F}^{\mathrm{G}_\Q}V$ and the regular submodule $\mathop{\oplus}\limits_{v\mid p}\Ind_{\mathrm{G}_{F_v}}^{G_{\mathbb{Q}_p}}D_v$.
\end{conjecturetz}
Precise formulations of the Trivial Zero Conjecture exist in a number of restricted settings. In the case when $V$ is crystalline at $p$ Greenberg and Benois made explicit the conjectural interpolation factor $\cE(V_p,D)$. Moreover, Greenberg in the case of ordinary representations, and Benois in the semi-stable case, have defined arithmetic $\mathscr{L}$-invariants Galois cohomologically, 
when $V$ satisfies a number of technical hypotheses (S, U, T of \cite{greenberg:trivial-zeros} and (C1)-(C5) of \cite{benois:L-invariant}).

\smallskip

This article is devoted to proving the Trivial Zero Conjecture
at the central point $s=0$, with precise interpolation factors and with the Greenberg-Benois arithmetic $\mathscr{L}$-invariant, for the Galois representation $V_\pi(1)$ attached to a  unitary self-dual cuspidal automorphic representation $\pi$ of $\GL_2$ over a totally real field $F$ having an arbitrary cohomological weight.

 The construction of a $p$-adic $L$-function for $\pi$ requires the choice of a regular $p$-refinement $\widetilde{\pi}$, {\it i.e.}, the choice for each $v$ dividing $p$ of a character $\nu_v$ of $F_v^\times$ which can be realized uniquely as a sub of the Weil-Deligne representation attached to $\pi_v$ via the local Langlands correspondence.
 Assuming that  $\widetilde{\pi}$ is {\it non-critical} (see Def. \ref{d:non-critical}), there exists a $p$-adic $L$-function $L_p(\widetilde{\pi},s)$.

 Let $S_p$ be the set of places of $F$ above $p$
and $\St_p$ the subset of places at which $\pi$ is a twist of the Steinberg representation.  
  The set $E$ of places for which the local interpolation factor of $L_p(\widetilde{\pi},s)$
vanishes at $s=1$ consists of $v\in \St_p$ such that $\pi_v$ is 
the Steinberg representation.

\begin{theoremletter}[Theorem \ref{t:ezc-hmf}] 
Let  $\widetilde{\pi}$ be a non-critically refined cohomological self-dual cuspidal automorphic
representations of $\GL_2$ over $F$, which is Iwahori spherical at  places above $p$.
Then  $L_p(\widetilde{\pi},s)$ has order of vanishing  at
least $e=|E|$ at $s=1$ and
\begin{align}\label{eq:ezc}
L_p^{(e)}(\widetilde{\pi},1) =e! \,\mathscr{L}(\widetilde{\pi}) \cdot \frac{L(\pi,\tfrac{1}{2})}{\Omega_{\widetilde{\pi}}} 
\cdot 2^{|\St_p\!\setminus E|} \!\! \prod_{v\in S_p\!\setminus \St_p} \left(1-\nu_v^{-1}(\varpi_v)\right)^2, 
 \end{align}
 where $\varpi_v$ is a uniformizer at $v$, 
 $\mathscr{L}(\widetilde{\pi})$ is the Fontaine-Mazur
 $\mathscr{L}$-invariant of Definition \ref{d:L-inv}, and 
$\Omega_{\widetilde{\pi}}$ is a  Betti-Whittaker period defined in \S\ref{ss:periods}. 
Moreover, if the Greenberg-Benois arithmetic $\mathscr{L}$-invariant is defined, then the Trivial Zero Conjecture holds for the Galois representation $V_\pi(1)$ with the choice of regular submodule as in \S\ref{ss:greenberg-benois}.
\end{theoremletter}

The conjectural non-vanishing of the $\mathscr{L}$-invariant is currently only known for elliptic curves over $\Q$ (see \cite{saint-etienne}). Previously, the Trivial Zero Conjecture at the central point was proved for modular forms over $\Q$ in \cite{greenberg-stevens,stevens:ezc}, and for parallel weight $2$ ordinary Hilbert cusp forms in \cite{mok} using a Rankin-Selberg construction for a single trivial zero and in general in \cite{spiess}, building on ideas of \cite{Dar01,orton:ezc}.
The non-criticality condition is implied by the assumption of $\widetilde{\pi}$ having non-critical slope (see Cor. \ref{p:non-criticality}) and is expected to be true for most regular $\widetilde\pi$ (see Bella{\"\i}che \cite{bellaiche:critical}, Pollack-Stevens \cite{pollack-stevens:critical}, and Bella{\"\i}che-Dimitrov \cite{bellaiche-dimitrov} when $F=\Q$).  After the completion of this article we were made aware of a preprint of Bergdall and Hansen \cite{bergdall-hansen} who construct $p$-adic $L$-functions under the weaker condition of $\widetilde{\pi}$ being 
{\it decent} (see Rem.\ref{rem:BH}).

In the case of a single trivial zero our approach to the Main Theorem is inspired by the work of Greenberg-Stevens \cite{greenberg-stevens}, and crucially uses the $p$-adic $L$-function of the unique $p$-adic family containing $\widetilde{\pi}$ constructed and studied in the first part of the paper. 

However, in the case of a multiple trivial zero, the computation of higher order derivatives has long been known to lie outside the reach of the Greenberg-Stevens method, thus  requiring some  genuinely new ideas.
 Indeed, the use of the functional equation for the $p$-adic $L$-function of the family of maximal dimension
containing $\widetilde{\pi}$, as suggested by Hida and Mazur in
the nearly ordinary case (see \cite[\S1]{mok}), does not suffice alone to compute higher order derivatives. Our innovation consists in making use of partial $p$-adic families to flip the sign of the root number and deduce the vanishing of many Taylor coefficients of a certain $p$-adic analytic function $\mathbb{L}_p(x_1,\ldots, x_e;u)$. We deduce the Main Theorem from the following properties:
\begin{enumerate}
\item (Specialization) $ \mathbb{L}_p(0,..,0;u)= \langle \gn\rangle_p^{u/4}L_p(\widetilde{\pi}, \tfrac{2-u}{2})$, 
 $\gn$ being the tame conductor of $\pi$. 
 \item (Functional equation) $\mathbb{L}_p(x_1,\dots, x_e;-u) = \widetilde{\varepsilon} \cdot \mathbb{L}_p(x_1,\dots, x_e; u)$, with $\widetilde{\varepsilon}\in\{\pm 1\}$.
\item (Retrieved $L$-value) $(-2)^{e} \frac{d^e}{du^e}\mathbb{L}_p(u,\dots,u;u)|_{u=0}= \text{R.H.S.  of  } \eqref{eq:ezc}$. 
\item (Taylor coefficients) $\mathbb{L}_p(x_1,\dots, x_e;u)$ contains only multinomials of total degrees $\geqslant e$, 
\[ \text{and }  \frac{d^e}{du^e}\mathbb{L}_p(0,\dots,0;u)|_{u=0}=  \frac{d^e}{du^e}\mathbb{L}_p(u,\dots,u;u)|_{u=0}. \]
\end{enumerate}

 Our construction of $p$-adic $L$-functions is geometric, based on the theory of automorphic symbols 
 introduced in \cite{dimitrov} and on the construction of eigenvarieties using overconvergent cohomology
 as in \cite{urban} and \cite{hansen}. For a Hilbert modular variety $Y_K$ this was 
 initiated in \cite{barrera} and is fully developed in the present work. 
 Given an admissible affinoid neigbourhood $\cU$ of the weight of $\pi$ in the weight space,  
and given a non-zero $U_p$-eigenclass $\Phi$ in the  compactly supported overconvergent cohomology $\rH^d_c(Y_K, \cD_\cU)$, 
 we attach in \S\ref{automorphic-symbols} a canonical $\cO(\cU)$-valued distribution $\ev(\Phi)$ having controlled growth on the Galois group of the 
maximal abelian extension of $F$ which is unramified outside $p\infty$. 
   Specializing to the case where 
$\Phi$ corresponds to the $p$-adic family passing through $\widetilde\pi$ (see Theorem \ref{free-etale}) we define $L_p(\lambda,s)$ as the $p$-adic Mellin transform of $\ev(\Phi)$ (see \S\ref{ss:2var} and \eqref{eq:padicL}).
A specific feature of our treatment is that, thanks to a precise choice of a $p$-refined automorphic newform in $\pi$, 
the interpolation formula for $L_p(\lambda,s)$ has no superfluous factors, allowing us to establish the concise functional equation (ii) (see Theorem \ref{t:functional-equation-2}). While the proof of (iii) uses the improved $p$-adic $L$-functions constructed in \S\ref{ss:improved}, reinterpretting (iii) in terms of arithmetic $\mathscr{L}$-invariants requires an extra input from $p$-adic Hodge theory, namely the existence of rigid analytic triangulations in the category of $(\varphi,\Gamma)$-modules.

 In the case of a simple zero ($e=1$), property (iv) is an immediate consequence of the functional equation (i) as 
in the Greenberg-Stevens method. Establishing (iv) in the case of a zero of higher order ($e>1$), which is the keystone in our approach,  demands to go beyond the Greenberg-Stevens method and use partially improved $p$-adic $L$-functions as well as study the behavior of $\pi_{\lambda,v}$ in certain `partial' $p$-adic families defined in \S\ref{subsection-controls}. This allows us to establish a number of relations between the Taylor coefficients of $\mathbb{L}_p(x_1,\dots, x_e;u)$ which are not all predicted by the Trivial Zero Conjecture and which we believe are of independent interest.  
Our results do not rely on the Leopoldt nor the Bloch-Kato Conjectures. 
The formula that we show is true even when the archimedean $L$-function vanishes at the central point, implying then that 
the order of vanishing of the $p$-adic $L$-function is at least $e+1$. 

\bigskip 
{\small
\paragraph{{\bf Acknowledgements}} We are indebted to the referees for their numerous comments and suggestions 
on how to improve the exposition. We would like to also  thank D.~Benois, A.~Betina, J.~Bergdall, D.~Hansen, H.~Hida, A.~Iovita, A.~Raghuram, D.~Ramakrishnan, G.~Rosso, J.~Tilouine, E.~Urban, C.~Williams for helpful discussions. The second author is thankful to the University of Notre Dame for its hospitality during his visit in 2017, while the third author is grateful to the CEMPI Lab at the University of Lille where he was a Visiting Associate in June 2016 and June 2019, and to the Institute of Mathematics at Sorbonne University, where part of this work was completed, for their excellent research environments. 

The first author has received funding from the European Research Council grant $n^\circ$682152 and FONDECYT PAI 77180007, while the
 second author has benefited from the support of the Agence Nationale de la Recherche grants ANR-11-LABX-0007-01 
and ANR-18-CE40-0029, and the 
 third author was partially supported by the NSA Young Investigator grant H98230-16-1-0302.
}

\newpage

\setcounter{section}{-1}

\section{Notations and conventions}\label{s:notations}

Throughout this paper $F$ will be a totally real number field of degree $d$ and ring of integers $\cO_F$. Let $\A_F=\A\otimes_\Q F=\A_{F,f}\times F_\infty$ be the ring of adeles of $F$ and denote $\widehat\cO_F= \cO_{F}\otimes_{\Z} \widehat{\Z}$.

We choose a generator $\varpi_{\gf} \in \A_{F,f}^{\times}$ of each fractional ideal $\gf$ of $F$ 
such that for any finite place $v$ of $F$ one has $\varpi_{v\gf}=\varpi_v\cdot\varpi_{\gf}$, 
where $\varpi_v$ is a uniformizer of the ring of integers $\cO_v$ of $F_v$. 

Moreover, we define the adele 
$1_{\gf}\in \A_F$ by $ (1_{\gf})_{v}= 
\begin{cases} 
1 & \text{, if } n_v \neq 0, \\ 
0 & \text{, if } n_v = 0.
\end{cases}$

When $\gf\subset \cO_F$ we consider the strict idele class group
\[\Cl(\gf)= F^{\times}\!\setminus\A_F^{\times}/U(\gf) F_\infty^{\times+},\]
where $(\cdot)^+$ denotes the connected component of  identity in a real Lie group and
$U(\gf)$ denotes the principal congruence subgroup of level $\gf$ of $\widehat\cO_F^{\times}$. 
Moreover we denote $E(\gf) \subseteq \cO_F^{\times}$ the group of totally positive units which are congruent to $1$ modulo $\gf$.

Let $\Sigma$ be the set of infinite places of $F$ which are all real and can also be seen as 
embeddings of $F$ in the algebraic closure $\overline{\Q}$ of $\Q$ inside $\C$. The choice, for a given prime number $p$, of an  embedding of $\iota_p:\overline{\Q}\hookrightarrow \overline{\Q}_p$ induces a partition 
$\Sigma= \bigsqcup_{v\in S_p} \Sigma_v$, where $\sigma\in \Sigma_v$ 
if and only if $v$ is the kernel of the composed map $ \cO_F\xrightarrow{\iota_p\circ \sigma} 
\overline{\Z}_p\twoheadrightarrow\overline{\F}_p$. 

We let   $(\cdot)^\epsilon$ denote the eigenspace corresponding to a character 
  $\epsilon$ of  $F_\infty^\times/F_\infty^{\times+}=\{\pm 1\}^{\Sigma}$. 

We denote by $\mathrm{G}_E$ the absolute Galois group of a perfect field $E$. 
For $S$ a finite set of places of $F$ we let $\Gal_{S}$ denote the Galois group of the 
maximal abelian extension of $F$ which is unramified outside $S$. We let $\Gal_{S\infty}=\Gal_{S\cup\Sigma}$ and 
 $\Gal_{p\infty}=\Gal_{S_p\cup\Sigma}$, where $S_p$ denotes the set of places above $p$.  
For $S\subset S_p$, we let $\Sigma_S= \bigsqcup_{v\in S} \Sigma_v$ and 
 $\cO_{F,S}=\prod_{v\in S}\cO_v$.

 The cyclotomic character $\chi_{\cyc}:\Gal_{p\infty}
\twoheadrightarrow \Gal(F(\mu_{p^{\infty}})/F) \to \Z_p^\times$ corresponds, via global class field theory, to the idele class character 
$\chi_{\cyc}:F^{\times}_+ \!\setminus \A_{F,f}^{\times} \to \Z_p^\times$ sending $y$ to $ \prod\limits_{v\in S_p}\mathrm{N}_{F_v/\Q_p}(y_v)|y_f|_F$. 
One has $\chi_{\cyc}(\varpi_v)=|\varpi_v|_v=q_v^{-1}$ if $v\notin S_p\cup \Sigma$ and 
$\chi_{\cyc}(\varpi_v)=\mathrm{N}_{F_v/\Q_p}(\varpi_v) q_v^{-1}$ if $v\in S_p$. 
Define 
\[\langle \cdot \rangle_p=\chi_{\cyc}\omega_p^{-1}:\Gal_{p\infty}\to 1+2p\Z_p,\] where 
$\omega_p$ is the Teichm\"uller lift of $\chi_{\cyc}$ mod $p$, if $p$ is odd (resp. of 
$\chi_{\cyc}$ mod $4$, if $p=2$). Note that the character $\langle \cdot \rangle_p$ factors through the Galois group $\Gal_{\cyc}= \Gal(F_{\cyc}/F)$ of the cyclotomic $\Z_p$-extension $F_{\cyc}\subset F(\mu_{p^{\infty}})$ of $F$, hence can be raised to power any $s\in \cO_{\C_p}$. 

We normalize the Artin reciprocity map so that a uniformizer $\varpi_v$ is sent to a geometric Frobenius $\Frob_v$, and $p$-adic Hodge theory so that the 
cyclotomic character $\chi_{\cyc}$ has Hodge-Tate weight $-1$. 
We consider  the following non-trivial additive unitary character of $\A_{F}/F$:
\begin{align*}
\psi: \A_{F}/F\longrightarrow \A/\Q \longrightarrow \C^\times,
\end{align*} 
where the first map is the trace, and the second is the
usual additive character $\psi_0$ on $\A/\Q$ 
characterized by $\ker(\psi_{0|\Q_\ell})=\Z_\ell$ for every prime
number $\ell$ and $\psi_{0|\R}=\exp(2i\pi \cdot)$. 
We remark that the largest fractional ideal contained in $\ker(\psi_v)$ equals 
$(\varpi_{v}^{-\delta_v})$, where $\delta_v $ is the valuation at $v$ of the different $\gd $ of $F$. With this notation the discriminant of $F$ is $\mathrm{N}_{F/\Q}(\gd)$.

Let $dx=\otimes_v dx_v$ be the (self-dual) Haar measure on
$\A_F$ which induces the discrete measure on $F\subset \A_F$ and the Haar measure with volume $1$ on $\A_F/ F$. It has the property that $dx_\sigma$ is the usual Lebesgue measure for $\sigma\in \Sigma$ and 
 when $v$ is a finite place $\displaystyle \int_{\cO_v}dx_v= q_v^{-\delta_v/2}$. 
 We also let $d^\times x=\otimes d^\times x_v$ be the Haar measure on $\A_F^\times/F^\times$ such that $d^\times x_\sigma=|x_\sigma|_\sigma^{-1}dx_\sigma$ for $\sigma\in \Sigma$ and $\displaystyle \int_{\cO_v^\times}d^\times x_v=1$, for $v$ finite.

Given a finite place $v$ and  a character $\chi_v$ of $F_v^\times$ of conductor $c_v$,  one can define the local Gauss sum, which is independent of the choice of uniformizers, by: 
\begin{align}\label{eq:gauss}
\tau(\chi_v,\psi_v,d^\times)=\int_{F_v^\times}\chi_v(y)\psi_v(y)d^\times y=
\int_{\cO_v^\times}\chi_v(u\varpi_v^{-c_v-\delta_v})\psi_v(u\varpi_v^{-c_v-\delta_v})du.
\end{align}
For $\chi:F^{\times} \!\setminus \A_F^{\times} \to \C^\times$ an idele class character of conductor $\gc_\chi$ we define the global Gauss sum
\begin{align}\label{eq:global-gauss-sum}
\tau(\chi)=\prod_{v\nmid \infty}\tau(\chi_v,\psi_v,d_{\chi_v})
=\prod_{v\mid \gc_\chi}\tau(\chi_v,\psi_v,d^\times)
 \prod_{v\nmid \gc_\chi\infty }\chi_v(\varpi_v^{-\delta_v}),
 \end{align}
 where the Haar measure $d_{\chi_v}$ on $F_v^\times$ gives $\cO_v^\times$ volume $1$ (resp. $1-q_v^{-1}$)  when $\chi_v$ is unramified (resp. ramified).

\bigskip

\tableofcontents

\addtocontents{toc}{\setcounter{tocdepth}{1}}

\newpage

\part{$p$-adic $L$-functions for families of nearly finite slope Hilbert cuspforms}
We develop a natural framework yielding simultaneous 
 constructions of $p$-adic $L$-functions and their improved counterparts 
 for nearly finite slope families of Hilbert cusp forms.

\section{Automorphic theory of Hilbert cusp forms}
We recall the representation theory of Hilbert automorphic cusp forms and construct normalized 
$p$-refined nearly finite slope newforms allowing us to define canonical 
periods.

\subsection{Hilbert modular varieties}\label{ss:hilbert modular varieties}
We consider the reductive group scheme $G=\Res^{\cO_F}_\Z\GL_{2}$ over $\Z$. We  let  $C_{\infty}$ be the standard maximal compact subgroup of $G_\infty$ and 
 $K_{\infty}= C_\infty F_\infty^\times$.

The Hilbert modular variety of level $K$, an open compact subgroup of $G(\A_f)$, is defined as the locally symmetric space
\[ Y_K= G(\Q)\!\setminus G(\A)/ K K_{\infty}^+ . \]

By the Strong Approximation Theorem for $\SL_{2}(\A_F)$, the fibers of the map: 
\[{\det}_K:Y_K\to F^{\times}\!\setminus\A_F^{\times}/\det(K) F_\infty^{\times+},\]
are connected.
For each $[\eta]\in \A_F^{\times}/(F^{\times}\det(K) F_\infty^{\times+}) = \pi_0(Y_K)$ the connected component
 $Y_K[\eta]= {\det}_K^{-1}([\eta])$ can be described as a quotient of the unbounded hermitian symmetric domain $G_{\infty}^+/K_{\infty}^+ $ by a congruence subgroup as follows. 
Choosing a representative $\eta\in\A_{F,f}^{\times}$ of $[\eta]$ (one can take it to be a uniformizer at some finite place), there is an isomorphism
\[ \Gamma_\eta\!\setminus G_\infty^+/K_{\infty}^+ \simeq Y_K[\eta]\text{, } 
 g_\infty\mapsto g_\infty \begin{psmallmatrix}\eta & 0 \\ 0 & 1\end{psmallmatrix},
 \text{ where } \Gamma_\eta=G(\Q)\cap \begin{psmallmatrix}\eta & 0 \\ 0 & 1\end{psmallmatrix} K\begin{psmallmatrix}\eta & 0 \\ 0 & 1\end{psmallmatrix}^{-1}G_{\infty}^+. 
 \]

In the sequel we assume that $K$ is sufficiently small in the sense that for all $g\in G(\A)$: 
\begin{align}\label{neat}
G(\Q)\cap gKK_{\infty}^+  g^{-1}= F^{\times} \cap K F_\infty^\times.
\end{align}
It is equivalent to ask that $\Gamma_\eta$ modulo its center $\Gamma_\eta\cap F^{\times}$ is torsion free, this property being independent of the choice of the representative $\eta$. Then $Y_K[\eta]$ is a complex manifold admitting $G_\infty^+/K_{\infty}^+$ as a universal covering space with group $ \Gamma_\eta/(\Gamma_\eta\cap F^{\times})$.

\subsection{Local systems and cohomology} \label{local-sys}
We will now describe two natural constructions of local systems on $Y_K$. In \S \ref{alg-weights} we will apply these constructions to attach local systems to algebraic representations of $G$.
Consider first a left $G(\Q)$-module $V$ such that 
\begin{align}\label{center} 
 F^{\times} \cap K F_\infty^\times \text{ acts trivially on } V.
 \end{align}
 
 By \eqref{neat} and \eqref{center} the group $G(\Q)\cap gKK_{\infty}^+  g^{-1}= 
F^{\times} \cap K F_\infty^\times$ acts trivially on $V$. Therefore 
\[ G(\Q)\!\setminus (G(\A)\times V) /K K_{\infty}^+  \to Y_K,\] 
is a local system with left $G(\Q)$-action and right $K K_{\infty}^+ $-action given by:
$\gamma(g,v)k=(\gamma g k ,\gamma\cdot v)$.

Alternatively, given a left $K$-module $V$ satisfying \eqref{center}, one can consider the local system $\cV_K$:
\[G(\Q)\!\setminus (G(\A)\times V) /K K_{\infty}^+ \to Y_K,\] 
with left $G(\Q)$-action and right $K K_{\infty}^+  $-action given by: 
$\gamma(g,v)k=(\gamma g k , k^{-1}\cdot v)$.

 We will denote by $\cV_K$ (or $\cV$ if $K$ is clear from the context) the corresponding sheaf of locally constant sections on $Y_K$ and will consider the usual (resp. compactly supported) cohomology groups 
$\rH^{i}(Y_K,\cV)$ (resp. $\rH_c^{i}(Y_K,\cV) $). 
 Although we use the same notation for both constructions, it will be generally clear from the context which one applies and otherwise we will name it explicitly. 
When the actions of $G(\Q)$ and $K$ on $V$ extend compatibly into a left action of $G(\A)$, the two resulting local systems are isomorphic by $(g,v)\mapsto (g,g^{-1}\cdot v)$, yielding an isomorphism of sheaves and their cohomology groups, 	thus justifying the abuse of notation.

\subsection{Cohomological weights} \label{alg-weights}

Let $B$ be the standard Borel subgroup of $G$ of upper triangular matrices, whose Levi subgroup $T$ consists of the diagonal matrices. 

The characters of the torus $\Res^F_{\Q}\G_m$ can be identified with $\Z[\Sigma]$ as follows: 
for any $k=\sum\limits_{\sigma\in \Sigma} k_\sigma \sigma\in\Z[\Sigma]$ and any $\Q$-algebra $A$ splitting $F$,
we consider the character
\begin{align}\label{eq:integer-char}
x\in (F\otimes_{\Q}A)^\times\mapsto x^k=\prod_{\sigma\in
 \Sigma} \sigma(x)^{k_\sigma}\in A^\times.
 \end{align}
The norm character $\mathrm{N}_{F/\Q}:\Res^F_{\Q}\G_m \to \G_m$  corresponds then to the element $t=\sum\limits_{\sigma\in \Sigma} \sigma$.

Integral weights of $G$ are given by characters of the form $(a,d)\mapsto 
a^k d^{k'}$ for some $(k,k')\in \Z[\Sigma]^2$. Characters such that $k_\sigma\geqslant k'_\sigma$ for all $\sigma\in \Sigma$ are called 
dominant with respect to $B$ and parametrize the irreducible algebraic representations of $G$, explicitly given by 
\[\bigotimes_{\sigma\in \Sigma} \left(\Sym_\sigma^{k_\sigma-k'_\sigma}\otimes \Det_\sigma^{k'_\sigma}\right).\]

\begin{defn} \label{d:coh-weight}
We say that a dominant weight of $G$ is {\it cohomological} if it is of the form 
\[\left(\frac{(\sw-2) t+k}{2}, \frac{(\sw+2)t-k}{2}\right), \mathrm{ where} \]
 $(k,\sw)\in \Z[\Sigma]\times\Z$ is such that for all 
$\sigma\in \Sigma$ we have $k_\sigma\geqslant 2$ and $k_\sigma\equiv \sw \pmod{2}$. We will identify the cohomological weight with the tuple $(k,\sw)$ defining it, and call $\sw$ the purity weight. 
\end{defn}
A dominant integral weight is cohomological exactly when the central character of the corresponding $G$-representation factors through the norm $\mathrm{N}_{F/\Q}$. 

Given a cohomological weight $(k,\sw)$ and a $\Q$-algebra $A$ splitting $F$, we
consider the $A$-module $L_{k,\sw}(A)$ of polynomials $f$ of degree at most $k-2t= (k_\sigma-2)_{\sigma\in \Sigma}$ in the variables $z=(z_\sigma)_{\sigma\in \Sigma}$ with coefficients in $A$, endowed with the following right action of $G(A)\simeq \GL_2(A)^{\Sigma}$:
\begin{align}\label{right-action}
f_{|\gamma}(z)=\det(\gamma)^{((\sw+2)t-k)/2}(cz+d)^{k-2t} f\left(\frac{az+b}{cz+d}\right), \text{ where } 
 \gamma=\begin{psmallmatrix}a & b \\ c & d\end{psmallmatrix} \in G(A).
\end{align}
Then its dual $L_{k,\sw}^\vee(A)=\Hom_A(L_{k,\sw}(A), A)$ is endowed with a left action of $G(A)$ given by 
\begin{align}\label{left-action}
(\gamma\cdot \mu)(f)= \mu(f_{|\det(\gamma)^{-1}\cdot \gamma}), \text{ where }  \gamma\in G(A), \,\mu\in L_{k,\sw}^\vee(A),\, f\in L_{k,\sw}(A). 
\end{align}
and there is an isomorphism of left $G(A)$-modules 
\begin{align}
L_{k,\sw}^\vee(A)\simeq\bigotimes_{\sigma\in \Sigma} (\Sym_\sigma^{k_\sigma-2}\otimes 
\Det_\sigma^{(2-k_\sigma-\sw)/2})(A^2)
\end{align}

For  $(k,\sw)$ and $A$ as above, the assumption \eqref{center} for the left $G(A)$-module $L_{k,\sw}^\vee(A)$ reads:
\begin{align}\label{center-bis} 
 \mathrm{N}^{\sw}_{F/\Q}(\varepsilon)=1
 \text{ for all }\varepsilon \in F^{\times} \cap K F_\infty^\times.
 \end{align}
Under this condition applying the construction of \S \ref{local-sys} yields a sheaf 
$\cL_{k,\sw}^\vee(A)$ whose cohomology groups $\rH^{i}(Y_K,\cL_{k,\sw}^\vee(A))$ and $\rH_c^{i}(Y_K,\cL_{k,\sw}^\vee(A))$ will play a prominent role in this paper.

\subsection{Cohomological cuspidal automorphic representations}\label{ss:cohomological cuspidal automorphic representations}

The aim of this section is to describe the cuspidal automorphic representations contributing to $\rH^d(Y_K,\cL_{k,\sw}^\vee( \C))$ for $(k,\sw)$ a cohomological weight as in Definition \ref{d:coh-weight} and $K$ satisfying \eqref{center-bis}, and to perform some archimedean computations which will be used to interpret cohomologically the special values of automorphic $L$-functions. While the general theory is well known, the applications we have in mind require an explicit version as in \cite[\S4.4]{raghuram-tanabe}.

Let $\gg_\infty$ (resp. $\gk_\infty$) be the complexified Lie algebra of $G_\infty$ (resp. $K_\infty$).
Using the comparison between Betti cohomology over $\C$ and de Rham cohomology, and further
reinterpreting the de Rham complex in terms of the complex computing relative Lie algebra cohomology, we obtain:
\begin{align*}
\rH^d_{\cusp}(Y_K, \cL_{k,\sw}^\vee(\C))=&
\rH^d(\gg_\infty, K_\infty^+,L_{k,\sw}^\vee(\C)\otimes C^\infty_{\cusp}(G(\Q)\!\setminus G(\A)/K))\\
=&
\bigoplus_{\pi}
\rH^d(\gg_\infty, K_\infty^+,  L_{k,\sw}^\vee(\C)\otimes \pi_\infty)\otimes \pi_f^K,
\end{align*}
where $\pi$  runs over the cuspidal automorphic representation of $G(\A)$. 
By K\"unneth's formula 
\[\rH^d(\gg_\infty, K_\infty^+, L_{k,\sw}^\vee(\C)\otimes\pi_\infty )=
\bigotimes\limits_{\sigma \in \Sigma}\rH^1(\gg_\sigma,
K_\sigma^+,  L_{k_\sigma,\sw}^\vee(\C) \otimes \pi_\sigma),\]
where $\rH^1(\gg_\sigma, K_\sigma^+,  L_{k_\sigma,\sw}^\vee(\C)\otimes\pi_\sigma)\neq 0$ if and only if 
$\pi_\sigma$ is the irreducible infinite dimensional representation $\pi_{k_\sigma,\sw}$ of $\GL_2(\R)$ whose Langlands parameter $\C^\times\rtimes \{1,j\}\to \GL_2(\C)$ is given by 
\[z\in \C^\times \mapsto |z|^{\sw /2}
 \begin{pmatrix} (\overline{z}/z)^{(k_\sigma-1)/2}&\\ &(z/\overline{z})^{(k_\sigma-1)/2}\end{pmatrix}
\text{ and } j\mapsto \begin{pmatrix} &1\\(-1)^{k_\sigma-1}&\end{pmatrix}.\]

One can also describe $\pi_{k_\sigma,\sw}$ as follows. Consider the unitary induction from $B_\sigma$ to $G_\sigma$ of the character which is trivial on the unipotent radical and given on $T_\sigma$ by
\begin{align}\label{induced}
(a,d)\mapsto a^{(k_\sigma+\sw -2)/2} d^{(\sw-k_\sigma+2)/2}\left|\tfrac{a}{d}\right|^{1/2}.
\end{align}
By Frobenius reciprocity it has a unique non-trivial finite dimensional quotient given by $L_{k_\sigma,\sw}(\C)$ and the kernel turns out to be isomorphic to $\pi_{k_\sigma,\sw}$. The fact that the extension is non-split
 implies the non-vanishing of $\rH^1(\gg_\sigma, K_\sigma^+,  L_{k_\sigma,\sw}^\vee(\C)\otimes\pi_\sigma)$.

\begin{definition}\label{aut-def} 
We say that an automorphic representation $\pi$ of $G(\A)$ has weight $(k,\sw)$ if 
$\pi_\infty=\mathop{\otimes}\limits_{\sigma\in \Sigma} \pi_{k_\sigma,\sw}$.
The integer $\sw$ is the purity weight of $\pi$, {\it i.e.}, the weight of its central character.
\end{definition}

In summary, a cuspidal automorphic representation $\pi$ of $G(\A)$ contributes to 
$\rH^d(Y_K,\cL_{k,\sw}^\vee(\C))$ if and only if it has weight
 $(k,\sw)$ and $\pi_f^K\neq 0$. If such
$\pi$ exist then condition \eqref{center-bis} is always satisfied. Indeed, if $\phi\in \pi^K$, then for each $\varepsilon\in F^\times\cap K F_\infty^\times$ and $g\in G(\A)$ we have
\[\phi(g)=\phi(\begin{psmallmatrix}\varepsilon&\\ &\varepsilon
\end{psmallmatrix}g) = \phi(g\begin{psmallmatrix}\varepsilon_\infty&\\ &\varepsilon_\infty
\end{psmallmatrix})=\mathrm{N}_{F/\Q}^\sw(\varepsilon) \phi(g).\]

We will now specify a basis of 
\begin{align}\label{eq:gk-coh}
\rH^1(\gg_\sigma, K_\sigma^+, L_{k_\sigma,\sw}^\vee(\C)\otimes \pi_{k_\sigma,\sw})=
\Hom_{C_\sigma^+}(\gg_\sigma/\gk_\sigma, L_{k_\sigma,\sw}^\vee(\C)\otimes \pi_{k_\sigma,\sw}),
\end{align}
where one considers the adjoint action of $C_\sigma^+=\left\{r(\theta)=\begin{psmallmatrix}\cos(\theta) & -\sin(\theta)\\ \sin(\theta)&
\cos(\theta) \end{psmallmatrix} \Big{|} \theta\in \R\right\}$ on $\gg_\sigma/\gk_\sigma$.

Let $(w_\sigma^*, \bar w_\sigma^*)$ be the dual basis of the basis $w_\sigma=\tfrac{1}{4}\begin{psmallmatrix}1 & i \\ i &-1 \end{psmallmatrix}, \bar w_\sigma=\tfrac{1}{4}\begin{psmallmatrix}1 & -i \\ -i &-1 \end{psmallmatrix}$
of $\gg_\sigma/\gk_\sigma$. 

Consider $\eval_{\pm i}\in L_{k_\sigma,\sw}^\vee(\C)$ defined as $\eval_{\pm i}(P)=P(\pm i)$. 
From \eqref{left-action} one finds that 
\[ \Ad(r(\theta))(w_\sigma^*)= e^{2i\theta} w_\sigma^*, \, \Ad(r(\theta))(\bar w_\sigma^*)= e^{-2i\theta}\bar w_\sigma^*, 
\text{ and } r(\theta)\cdot \eval_{\pm i}=e^{\pm i\theta(k_\sigma-2)}\eval_{\pm i}. 
\] 

Since the $C_\sigma^+$-types $r(\theta)\mapsto e^{\pm i\theta k_\sigma}$ do not occur in 
$L_{k_\sigma,\sw}^\vee(\C)$ but do occur in the induced representation from \eqref{induced}, one deduces that there is a unique function $\phi_\sigma\in \pi_{k_\sigma,\sw}$ such that $\phi_\sigma(\cdot r(\theta))= e^{-i\theta k_\sigma}\phi_\sigma$ for all $\theta\in\R$ and normalized such that $\phi_\sigma(1)=1$ (here we use that the characters $r(\theta)\mapsto e^{\pm i\theta k_\sigma}$ and \eqref{induced} agree on $\begin{psmallmatrix}-1 & \\ &
-1 \end{psmallmatrix}$ since $k_\sigma\equiv \sw\pmod{2}$). Hence: 
\[\Hom_{C_\sigma^+}(\gg_\sigma/\gk_\sigma, L_{k_\sigma,\sw}^\vee(\C)\otimes \pi_{k_\sigma,\sw})= \C (w_\sigma^*\otimes  \eval_{i}\otimes\phi_\sigma ) \oplus \C (\bar w_\sigma^*\otimes  \eval_{-i}\otimes\overline{\phi_\sigma}).\]
Since $\begin{psmallmatrix}-1 & \\ &
1 \end{psmallmatrix}\cdot w_\sigma^* = \bar{w}_\sigma^*$, $\begin{psmallmatrix}-1 & \\ &
1 \end{psmallmatrix}\cdot \eval_i = (-1)^{(\sw+k_\sigma-2)/2} \eval_{-i}$ and 
$\begin{psmallmatrix}-1 & \\ &
1 \end{psmallmatrix}\cdot \phi_\sigma=(-1)^{(\sw+k_\sigma-2)/2}\overline{\phi_\sigma}$, 
\begin{equation*}
\Xi_{\pi_\sigma}^{\epsilon_\sigma}= i^{(2-\sw-k_\sigma)/2} \left(w_\sigma^*\otimes  \eval_{i}\otimes\phi_\sigma + \epsilon_\sigma(-1) \bar w_\sigma^*\otimes  \eval_{-i}\otimes\overline{\phi_\sigma}\right)
\end{equation*}
is an eigenbasis of \eqref{eq:gk-coh} for the action of 
$C_\sigma/C_\sigma^+=K_\sigma/K_\sigma^+\xrightarrow[\sim]{\det}
F_\sigma^\times/F_\sigma^{\times+}=\{\pm 1\}$.

Letting $w_\infty^*=\mathop{\otimes}\limits_{\sigma\in \Sigma} w_\sigma^* $ and $\phi_\infty=\mathop{\otimes}\limits_{\sigma\in \Sigma} \phi_\sigma $, 
the space $\rH^d(\gg_\infty, K_\infty^+, \pi_\infty\otimes L_{k,\sw}^\vee(\C))$ has the following basis indexed by the characters $\epsilon: K_\infty/K_\infty^+= \{\pm 1\}^\Sigma\to \{\pm 1\}$:
\begin{align}\label{eq:XI}
\Xi_{\pi_\infty}^\epsilon=\bigotimes_{\sigma\in \Sigma}
\Xi_{\pi_\sigma}^{\epsilon_\sigma}=i^{((2-\sw)t-k)/2} \sum_{s_\infty\in \{\pm 1\}^\Sigma} \epsilon(s_\infty) 
(s_\infty\cdot (w_\infty^*\otimes \eval_{i} \otimes \phi_\infty )).
\end{align}

\subsection{Automorphic representations of nearly finite slope} \label{ss:automorphic representation of nearly}
In this section, we introduce the notion of {\it a nearly finite slope} cuspidal automorphic representation $\pi$. 
These are stable under twists and encompass both Coleman's finite slope and Hida's nearly ordinary cases.
Furthermore, we define a $p$-refined newline line in such a $\pi$.
\begin{definition}\label{d:hilbert-nearly}
Let $\pi$ be a cohomological cuspidal automorphic representation $\pi$ of $G(\A)$ and let $v\in S_p$ be a place of $F$. We say that
\begin{enumerate}
\item $\pi_v$ has {\it finite slope} if either $\pi_v$ is a principal
series with at least one unramified character or $\pi_v$ is an
unramified twist of the Steinberg representation.
\item $\pi_v$ has {\it nearly finite slope} if $\pi_v$ is not supercuspidal, or equivalently if
it is a twist of a finite slope representation by a finite order character.
\item A representation $\pi_v$ is {\it regular} if either it is a twist of the Steinberg representation or it is a principal series representation with distinct characters $\chi_{1,v}\neq\chi_{2,v}$. 
\item A {\it refinement} of a nearly finite slope representation $\pi_v$ of $\GL_2(F_v)$ is a one-dimensional sub $\nu_v$ of the Weil-Deligne representation attached to $\pi_v$ via the local Langlands correspondence for $\GL_2(F_v)$. 
 \item For $S\subset S_p$, a (regular) $S$-refinement of an automorphic representation $\pi$ is a pair $\widetilde{\pi}_S=(\pi,\{\nu_v\}_{v\in S})$ such that $\nu_v$ is a (regular) refinement of $\pi_v$ for all $v\in S$. When $S=S_p$
 we call $\widetilde{\pi}=\widetilde{\pi}_{S_p}$ a $p$-refinement. 
 \end{enumerate} 
\end{definition}

Suppose $(\pi_v,\nu_v)$ is a refined regular nearly finite slope representation. If $\pi_v$ is twist of the Steinberg representation let $K_v$ be the
Iwahori group
\begin{align}\label{eq:iwahori}
I_v=\{\begin{psmallmatrix}
a&b\\c&d\end{psmallmatrix}\in \GL_2(\cO_v )\mid
c\in \varpi_v\cO_v,\}  \text{ and }  
\end{align}
 if $\pi_v$ is a principal series let $K_v=I_v\cap K_1(v^{m_v})$ where $m_v$ is the conductor of $\chi_{1,v}/\chi_{2,v}$. Let
\begin{align}\label{eq:nu-v}
K'_v=K(\pi_v,\nu_v) = \ker \left(K_v\xrightarrow{\det}
\cO_v ^\times\xrightarrow{\nu_v}
\C^\times\right)
\end{align}

For a uniformizer $\varpi_v\in \cO_v $ and $\delta\in \cO_v ^\times$ we define the
Hecke operators
\begin{align}\label{eq:hecke-delta}
U_{\varpi_v}= \left[ K_v' \begin{psmallmatrix}\varpi_v & \\ &
1 \end{psmallmatrix}K_v'\right]\text{ and } U_{\delta}= \left[ K_v' \begin{psmallmatrix}\delta & \\ &
1 \end{psmallmatrix}K_v'\right]. 
\end{align}

\begin{lemma}\label{l:multiplicity one}
For any refined regular nearly finite slope representation $(\pi_v,\nu_v)$ one has 
\[\dim(\pi_v^{K_v'})_{(U_{\varpi_v}-\nu_v(\varpi_v), U_{\delta}-\nu_v(\delta)\mid
\delta\in \cO_v ^\times)}=1.\]
\end{lemma}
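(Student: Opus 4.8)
The plan is to treat the three types of refined regular nearly finite slope representations case by case, in each case writing down an explicit model of $\pi_v$ in which the line of $U_{\varpi_v}$- and $U_\delta$-eigenvectors with the prescribed eigenvalue can be exhibited and shown to be one-dimensional. First I would reduce to the untwisted situation: since a refinement $\nu_v$ of $\pi_v$ corresponds under the twist $\pi_v\mapsto \pi_v\otimes(\eta_v\circ\det)$ to a refinement $\nu_v\eta_v$ of the twisted representation, and since $K_v'=K(\pi_v,\nu_v)$ and the Hecke operators $U_{\varpi_v},U_\delta$ transform compatibly (the $\nu_v$ in the definition of $K_v'$ absorbs the finite-order twist, while the algebraic twist only shifts eigenvalues), one may assume $\pi_v$ is either Steinberg or an irreducible principal series $\pi(\chi_{1,v},\chi_{2,v})$ with $\chi_{1,v}$ unramified and $\chi_{1,v}\neq\chi_{2,v}$, in accordance with the finite slope and regularity hypotheses of Definition~\ref{d:hilbert nearly}.

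In the principal series case, realize $\pi_v$ as the induced representation from the Borel, so $\pi_v^{K_v'}$ consists of functions on $\GL_2(F_v)$ that are $K_v'$-invariant on the right. By the Iwasawa decomposition and the explicit choice $K_v=I_v\cap K_1(v^{m_v})$ with $m_v=\cond(\chi_{1,v}/\chi_{2,v})$, the space of such functions is spanned by the two functions supported on the two open $B(F_v)\backslash\GL_2(F_v)/K_v'$ double cosets (the ``old'' and the $w$-translated vectors); a direct computation of the action of the double coset operators $U_{\varpi_v}$ and $U_\delta$ on this two-dimensional space shows it is upper triangular with the two diagonal characters $\nu_v$ and the other (non-eigenvalue) one, and regularity $\chi_{1,v}\neq\chi_{2,v}$ guarantees the two eigenvalues are distinct so the generalized $\nu_v$-eigenspace is exactly one-dimensional. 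In the Steinberg case, $\pi_v=\St$ sits in the exact sequence $0\to\St\to\Ind_B^G|\cdot|^{1/2}\otimes|\cdot|^{-1/2}\to\mathbf{1}\to 0$; the Iwahori-fixed space $\pi_v^{I_v'}$ is one-dimensional to begin with (the induced representation has a two-dimensional Iwahori-fixed space and the trivial quotient accounts for one dimension), so the eigenspace statement is immediate once one checks $U_{\varpi_v}$ acts by the single eigenvalue $\nu_v(\varpi_v)$ and $U_\delta$ by $\nu_v(\delta)$ on that line.

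The step I expect to be the main obstacle is the explicit computation of the double-coset Hecke action in the principal series case: one must carefully choose coset representatives for $K_v'\begin{psmallmatrix}\varpi_v&\\&1\end{psmallmatrix}K_v'$ and for $K_v'\begin{psmallmatrix}\delta&\\&1\end{psmallmatrix}K_v'$, track how they permute and scale the two basis vectors of $\pi_v^{K_v'}$, and verify that the off-diagonal term does not create extra generalized eigenvectors — this is where the precise level $K_v=I_v\cap K_1(v^{m_v})$ and the definition of $K_v'$ via $\nu_v$ are essential, since a coarser level would enlarge $\pi_v^{K_v'}$ and break multiplicity one. A secondary subtlety is ensuring the normalizations (half-integral twists in \eqref{induced}-type characters, the exact placement of $|\cdot|^{1/2}$) are consistent with the conventions fixed in \S\ref{ss:cohomological cuspidal automorphic representations}, so that $U_{\varpi_v}$ indeed has $\nu_v(\varpi_v)$ — and not a $q_v$-power multiple of it — as an eigenvalue; once the bookkeeping is pinned down the argument is routine.
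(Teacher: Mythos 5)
Your strategy — case analysis on the shape of $\pi_v$, explicit model by parabolic induction, then a coset computation of $U_{\varpi_v}$ and $U_\delta$ — is workable in principle and does ultimately arrive at the right answer, but it is both longer and shakier than the paper's argument, and one of your dimension claims is incorrect as stated.

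Your assertion that $\pi_v^{K_v'}$ is always two-dimensional for a principal series is wrong. Already for $\nu_v$ unramified (so $K_v'=K_v$) and $\pi_v=\pi(\chi_{1,v},\chi_{2,v})$ with $\chi_{1,v}$ unramified and $\chi_{2,v}$ ramified of conductor $m_v$, the space $\pi_v^{K_v}$ is \emph{one}-dimensional: the function supported on the big cell $BwK_v$ is forced to vanish because $\chi_{2,v}$ is nontrivial on the $a$-coordinate of $T(\cO_v)\cap wK_vw^{-1}$. In the other direction, if $\nu_v|_{\cO_v^\times}$ is ramified then $K_v'\subsetneq K_v$ and $\pi_v^{K_v'}$ may be strictly larger than $\pi_v^{K_v}$; it is only after projecting to the $U_\delta$-eigenspace that the dimension comes back down. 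Your text postpones the $U_\delta$-eigenvalue constraint until a later ``direct computation,'' but the two-dimensionality you invoke at the outset already requires it.

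The paper sidesteps all of this with one observation: since the $\cO_v^\times$-action via the $U_\delta$ is semisimple and $K_v'=\ker(\nu_v\circ\det|_{K_v})$, one has a canonical identification
\[
(\pi_v^{K_v'})_{(U_\delta-\nu_v(\delta))}\simeq (\pi_v\otimes\nu_v^{-1})^{K_v},
\]
which immediately reduces the lemma to ordinary new-vector theory for $\pi_v\otimes\nu_v^{-1}$ at level $K_v$. The right-hand side is a line (when $\pi_v$ is a twist of Steinberg, or a principal series with $\chi_{1,v}/\chi_{2,v}$ ramified) on which $U_{\varpi_v}$ acts by $\nu_v(\varpi_v)$, or a two-dimensional space (unramified principal series) on which $U_{\varpi_v}$ has distinct eigenvalues by regularity. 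No choice of coset representatives or explicit Hecke computation is needed. Your ``reduction to the untwisted case'' is morally this same twist by $\nu_v^{-1}$, but because you do not formulate it as the isomorphism above, you are left re-deriving the dimension count by hand, and that is precisely where your argument currently breaks. If you first impose the $U_\delta$-eigenvalue condition and reinterpret the result as $K_v$-invariants of the $\nu_v^{-1}$-twist, the rest of your plan (citing the standard one- or two-dimensionality and regularity) goes through and coincides with the paper's proof.
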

\begin{proof}
Since the $\cO_v^\times$-action is semisimple, it follows from \eqref{eq:nu-v} that there is an isomorphism: 
\begin{align}\label{eq:KK}
(\pi_v^{K_v'})_{(U_{\delta}-\nu_v(\delta)\mid
\delta\in \cO_v ^\times)}= (\pi_v\otimes \nu_v^{-1})^{K_v}.
\end{align}
If $\pi_v$ is either a twist of the Steinberg representation or a principal series with $\chi_{1,v}/\chi_{2,v}$ ramified, then 
\eqref{eq:KK} is one-dimensional on which $U_{\varpi_v}$ acts by $\nu_v(\varpi_v)$.
If $\pi_v$ is a principal series with $\chi_{1,v}/\chi_{2,v}$ unramified
then \eqref{eq:KK} is two-dimensional and we conclude that the
$U_{\varpi_v}$-eigenspace for $\nu_v(\varpi_v)$ is a line by regularity.
\end{proof}

Let $\widetilde{\pi}_S=(\pi, \{\nu_v\}_{v\in S})$ be a regular $S$-refinement of a nearly finite slope cuspidal automorphic representation of
$G(\A)$ of cohomological weight $(k, \sw)$.
\begin{definition}\label{d:u}
Let $\gu$ be a prime of $F$ such that:
 \begin{enumerate}
\item any open compact subgroup $K$ of $\GL_2(\A_{F,f})$ such that  $K_\gu=K_0(\gu)$ satisfies \eqref{neat}, 
\item $\gu$ is unramified and $\pi_{\gu}$ is an unramified principal series with Hecke parameters $\alpha_{\gu}\neq\beta_{\gu}$.
\end{enumerate}
\end{definition}
The existence of $\gu$ satisfying (i) follows from \cite[Lem.2.1]{dimitrov:ihara}, while the fact that $\gu$ can be chosen to satisfy (ii) as well can be shown using the irreducibility of the Galois representation 
$V_{\pi}$. It is also a consequence of the Sato-Tate conjecture which is known for Hilbert cusp forms. In this case $\pi_{\gu}^{K_0(\gu)}$ is
$2$-dimensional on which $U_{\gu}$ acts with eigenvalues
$\alpha_{\gu}$ and $\beta_{\gu}$.

\begin{definition}\label{d:mpi-tilde}
Let $E$ be a number field containing the Galois closure of $F$, the field rationality of $\pi_f$, the Hecke parameters of $\pi_{\gu}$, and the values of the characters $(\nu_v)_{v\in S_p}$. 

Let $\gm_\pi$ be the maximal ideal corresponding to $\pi_f$ of 
the Hecke algebra $\T=E[T_v, S_v\mid v\nmid \gn\gu p]$. 

 For $S\subset S_p$, we consider the maximal ideal
\[\gm_{\widetilde{\pi}_S} = (\gm_\pi, U_{\gu}-\alpha_\gu,U_{\varpi_v}-\nu_v(\varpi_v),
U_{\delta}-\nu_v(\delta)\mid \delta\in \cO_v ^\times, v\in S)\]
of the Hecke algebra $\widetilde\T_S=\T[U_{\gu},U_{\varpi_v},U_{\delta}\mid \delta\in
\cO_v ^\times, v\in S]$, and we let $\gm_{\widetilde{\pi}}=\gm_{\widetilde{\pi}_{S_p}}$. 
\end{definition}

\begin{definition}\label{d:newline} 
Let $K(\widetilde{\pi}_S,\gu)=K_0(\gu)\prod\limits_{v\notin
 S\cup\{\gu\}}K_1(v^{m_v})\prod\limits_{v\in S}K'_v$ 
  where $m_v$ is the conductor of $\pi_v$ and $K_v'$ is as in \eqref{eq:nu-v}.
The $(S,\alpha_{\gu})$-{\it refined} newline of a regular $\widetilde \pi_S$ is given by
 \[N_{\widetilde{\pi}_S,\alpha_\gu}=\left(\pi_f^{K(\widetilde{\pi}_S,\gu)}\right)_{\gm_{\widetilde{\pi}_S}}=\left(\pi_f^{K(\widetilde{\pi}_S,\gu)}\right)[\gm_{\widetilde{\pi}_S}],\]
 where $[\gm_{\widetilde{\pi}}]$ denotes the subspace annihilated by $\gm_{\widetilde{\pi}}$. 
 While for $v\in S_p$ the $U_{\varpi_v}$-eigenvalue $\nu_v(\varpi_v)$ of any $\phi \in N_{\widetilde{\pi},\alpha_\gu}$ depends on the choice of
$\varpi_v$, its $p$-adic valuation is independent of it. 
\end{definition}

\begin{definition} \label{d:non-critical-slope} 
 The {\it slope} $h_{\widetilde{\pi}_v}$ of $\widetilde{\pi}_v=(\pi_v, \nu_v)$ is defined as the 
 $p$-adic valuation of 
\[\nu_v(\varpi_v)\prod_{\sigma\in \Sigma_v}\sigma(\varpi_v)^{(k_\sigma+\sw-2)/2}.\]
 
 We say that the refinement $\widetilde{\pi}_v$ has {\it non-critical slope} if $ e_v h_{\widetilde{\pi}_v} < \min\limits_{\sigma\in \Sigma_v} (k_\sigma-1)$. 
 
 For $S\subset S_p$, we say that
 $\widetilde{\pi}_S$ has {\it non-critical slope} if $\widetilde{\pi}_v$ has non-critical slope for each $v \in S$. 
 
 Finally, we say that  $\widetilde{\pi}=\widetilde{\pi}_{S_p}$ has 
 {\it very  non-critical slope} if the following inequality holds:
\begin{align}\label{eq:very-non-critical-slope}
\sum\limits_{v\in S_p}e_{v}h_{\widetilde{\pi}_v} < \min_{\sigma\in \Sigma} (k_\sigma-1).
\end{align}

 \end{definition}

\subsection{Normalized $(S,\gu)$-refined eigenforms and Whittaker functions}\label{ss:whittaker}
Let $\widetilde{\pi}_S= (\pi, \{\nu_v\}_{v\in S})$ be a regular $S$-refinement of a nearly
finite slope refinement of a cuspidal cohomological automorphic representation $\pi$ of
$G(\A)$. In this section we use Whittaker models to choose a basis $\phi_{\widetilde{\pi}_S,\alpha_\gu}$ of the line 
$N_{\widetilde{\pi}_S,\alpha_\gu}$ from Def. \ref{d:newline} for which a suitable zeta 
integral yields the Jacquet-Langlands $L$-function of
$\pi$. The cusp form $\phi_{\widetilde{\pi}_S,\alpha_\gu}$, divided by a suitable
complex period, will yield an overconvergent cohomology class to
which we will attach a $p$-adic $L$-function in \S\ref{ss:p-adic L-functions attached}.

The global Whittaker model $\cW(\pi,\psi)$ of $\pi$ can be written
as a restricted tensor product of local Whittaker models $\cW(\pi_v,\psi_v)$, with respect to
$W_v^\circ\in \cW(\pi_v,\psi_v)$ for $v$ outside a finite set of bad places, where $W_v^\circ\in \cW(\pi_v,\psi_v)^{\GL_2(\cO_v)}$ is
such that $W_v^\circ(1)=1$. 
To relate values of complex $L$-functions to Whittaker integrals we will use the isomorphism
\begin{align}\label{eq:global-whittaker}
\pi \xrightarrow{\sim} \cW(\pi,\psi), \quad \phi\mapsto W_{\phi}(g)= \int_{\A_{F}/F}\phi\left( \begin{pmatrix} 1 & x
\\ 0 & 1 \end{pmatrix} g \right)\psi(-x) dx,
\end{align}
whose inverse is given by the  Fourier expansion $\displaystyle \phi(g)=\sum_{\xi\in F^\times}
W_{\phi}(\begin{psmallmatrix} \xi &\\& 1\end{psmallmatrix} g)$.

Given any collection $W_v\in \cW(\pi_v,\psi_v)$ such
that $W_v=W_v^\circ$ for almost all $v$, the tensor $\otimes
W_v$ lies in $\cW(\pi,\psi)$ and therefore is of the
form $W_\phi$ for some $\phi\in \pi$. We remark that for any choice of 
isomorphisms $\cW(\pi_v,\psi_v)\simeq \pi_v $ sending $W_v$ to $\phi_v$ 
such that $\phi_v=\phi_v^\circ$ for almost all $v$ ($\phi_v^\circ$ being the vector relative to which the restricted tensor product $\otimes'\pi_v$ is defined), $\phi$ and $\otimes\phi_v$ differ by a scalar, 
and hence $\phi$ is a pure tensor itself.

We now specify explicitly such a collection of Whittaker
functions, beginning with places $v\not\in S\cup \Sigma $ for which $\pi_v^{K_1(v^{m_v})}$ is a line. 
Let $W_v^{\new}$ be the generator of the line $\cW(\pi_v,\psi_v)^{K_1(v^{m_v})}$ 
given by the following formulas (see \cite[\S3.3.1]{raghuram-tanabe} and \cite[Thm.4.6.5]{bump}).
\begin{enumerate}
\item If $v\neq \gu$ and $\pi_v$ is the unramified principal series with
characters $\chi_{1,v}$ and $\chi_{2,v}$ then
\begin{align}\label{eq:whit-spherical}
W_v^{\new}\left(\begin{psmallmatrix} \varpi_v^{m-\delta_v}&\\&
1\end{psmallmatrix}\right)=
\begin{cases}
q_v^{-m/2}\sum\limits_{l=0}^m\chi_{1,v}(\varpi_v)^{l} \chi_{2,v}(\varpi_v)^{m-l}&m\geqslant 0,\\
0 & m<0.
\end{cases}\end{align}
\item If $\pi_v$ is a principal series with unramified $\chi_{1,v}$ and ramified $\chi_{2,v}$ then
\begin{align}\label{eq:whit-principal}
W_v^{\new}\left(\begin{psmallmatrix} \varpi_v^{m-\delta_v}&\\&
1\end{psmallmatrix}\right)=
\begin{cases}
q_v^{-m/2}\chi_{1,v}(\varpi_v)^m&m\geqslant 0,\\
0 & m<0.
\end{cases}\end{align}
\item If $\pi_v$ is a twist of the Steinberg representation by the unramified character $\chi_v$ then
\begin{align}\label{eq:whit-steinberg}
W_v^{\new}\left(\begin{psmallmatrix} \varpi_v^{m-\delta_v}&\\&
1\end{psmallmatrix}\right)=
\begin{cases}
q_v^{-m}\chi_{v}(\varpi_v)^m&m\geqslant 0,\\
0 & m<0.
\end{cases}\end{align}
\item In all other cases $ W_v^{\new}
\left(\begin{psmallmatrix} \varpi_v^{m-\delta_v}&\\&
1\end{psmallmatrix}\right)=
\begin{cases}
1&m\geqslant 0,\\
0 & m<0.
\end{cases}$
\end{enumerate}
Denoting $\{\alpha_{\gu},\beta_{\gu}\}=
\{\chi_{1,\gu}(\varpi_{\gu})\sqrt{q_{\gu}},\chi_{2,\gu}(\varpi_{\gu})\sqrt{q_{\gu}}\}$ the Hecke parameters of $\pi_{\gu}$ we let 
\[W_{\gu}^{\alpha}\left( \begin{psmallmatrix}
y&\\&1\end{psmallmatrix}\right) =W^{\new}_{\gu}\left( \begin{psmallmatrix}
y&\\&1\end{psmallmatrix}\right)-\beta_{\gu} q_{\gu}^{-1}W_{\gu}\left( \begin{psmallmatrix}
y\varpi_{\gu}^{-1}&\\&1\end{psmallmatrix}\right).\]

Then we have $W_{\gu}^{\alpha}\left( \begin{psmallmatrix}
\varpi_{\gu}^m&\\&1\end{psmallmatrix}\right)=
\begin{cases}
q_\gu^{-m} \alpha_\gu^m &m\geqslant 0,\\
0 & m<0.
\end{cases} $

Finally, we specify $v$-refined Whittaker functions at $v\in S$. 
Recall that $\nu_v$ is a one-dimensional sub
of the Weil-Deligne representation attached to  $\pi_v$ (assumed non-supercuspidal). 

Let $W'_v\in \cW(\pi_v\otimes \nu_v^{-1},\psi_v)$ be the new vector chosen as above. 
If $\pi_v\otimes \nu_v^{-1}$ is ramified we let 
\[W_v^{\nu}=\nu_v(\varpi_v)^{\delta_v}(\nu_v\circ\det) \cdot W'_v\in \cW(\pi_v,\psi_v).\]
When $\pi_v\otimes \nu_v^{-1}$ is an unramified principal
series with characters $|\cdot|^{1/2}\neq |\cdot|^{1/2}\chi_{2,v}/\chi_{1,v}$ we let
\[W_v^{\nu} =\nu_v(\varpi_v)^{\delta_v}(\nu_v\circ\det) \cdot\left(W'_v-\tfrac{\chi_{2,v}(\varpi_v)}{\chi_{1,v}(\varpi_v)}
W'_v\left(\cdot \begin{psmallmatrix}
\varpi_v^{-1}&\\&1\end{psmallmatrix}\right)\right)\in \cW(\pi_v,\psi_v).
\]
Formulas \eqref{eq:whit-spherical}, \eqref{eq:whit-principal}, and \eqref{eq:whit-steinberg} then imply:
\begin{align}\label{eq:whit-p-stabilized}
W_v^{\nu}\left(\begin{psmallmatrix} \varpi_v^{m-\delta_v}&\\&
1\end{psmallmatrix}\right)=
\begin{cases} q_v^{-m}\nu_v(\varpi_v)^m&m\geqslant 0,\\ 0 & m<0. \end{cases}\end{align}

\begin{lemma}
The image of $W_{\widetilde{\pi}_S,\alpha_\gu,f}=W_{\gu}^{\alpha}\otimes\bigotimes\limits_{v\notin
 S\cup\{\gu\}}W_v^{\new}\otimes\bigotimes\limits_{v\in S}W_v^\nu$ in $\pi_f$ is a basis of $N_{\widetilde{\pi}_S,\alpha_\gu}$.
\end{lemma}
\begin{proof} The statement is clear when $v\nmid p \gu$ since any isomorphism 
$\cW(\pi_v,\psi_v)\simeq \pi_v $ matches the new lines, as well as in the case $v=\gu$ as by construction $W_{\gu}^{\alpha}$ has $U_{\gu}$-eigenvalue $\alpha_{\gu}$. Suppose that $v\in S_p$. 
The function $W_v^{\nu}$ is $K_v'$-invariant as both $W'_v$
 and $\nu_v\circ \det$ are. The
fact that $U_{\delta} W_v^{\nu}=\nu_v(\delta)W_v^{\nu}$
for $\delta\in \cO_v^\times$ then follows from the $K_v$-invariance of $W'_v$. Finally, 
$U_{\varpi_v}$ fixes the refined new vector in $\cW(\pi_v\otimes \nu_v^{-1},\psi_v)$, hence 
acts as $\nu_v(\varpi_v)$ on $W_v^{\nu}$.
\end{proof}

For $\sigma\in \Sigma$ we choose $W_\sigma\in\cW(\pi_\sigma,\psi_\sigma)$ such that $W_\sigma(\cdot r(\theta))=e^{-i k_\sigma \theta} W_\sigma$ and 
$W_\sigma\left(\begin{psmallmatrix} y&\\&1\end{psmallmatrix}\right)=y^{(k_\sigma+\sw)/2}e^{-2\pi y}$ for all $y>0$ (see \cite{bump,raghuram-tanabe}). Since $\pi_\sigma\simeq \pi_\sigma\otimes \sgn_\sigma$,  we have: 

\begin{lemma}\label{W-infty}
  $W_\sigma$ has  support in $G_\sigma^+$ and its image  in $\pi_\sigma$ belongs to the line generated by 
$\phi_\sigma$. 
\end{lemma}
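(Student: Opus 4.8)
The plan is to identify the image of $W_\sigma$ in $\pi_\sigma$ with a nonzero scalar multiple of $\phi_\sigma$, by invoking multiplicity one of $C_\sigma^+$-types together with the equivariance of the Whittaker isomorphism. Concretely, fix an isomorphism $\cW(\pi_\sigma,\psi_\sigma)\simeq\pi_\sigma$ intertwining right translations; it carries the $C_\sigma^+$-isotypic subspace of $\cW(\pi_\sigma,\psi_\sigma)$ on which $r(\theta)$ acts by $e^{-ik_\sigma\theta}$ isomorphically onto the corresponding subspace of $\pi_\sigma$, and $\phi_\sigma$ lies in the latter. Hence it is enough to show that $W_\sigma$ is a nonzero vector of this same $C_\sigma^+$-type and that the subspace is one-dimensional; then the image of $W_\sigma$ and $\phi_\sigma$ span one and the same line, which is exactly the claim.

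First I would note that $W_\sigma$ has $C_\sigma^+$-type $r(\theta)\mapsto e^{-ik_\sigma\theta}$ by its very definition, and that it is nonzero because $W_\sigma\left(\begin{psmallmatrix}y&\\&1\end{psmallmatrix}\right)=y^{(k_\sigma+\sw)/2}e^{-2\pi y}$ does not vanish for $y>0$. For the one-dimensionality, I would recall from the discussion preceding Definition \ref{aut-def} that $\pi_\sigma$ is the kernel of the surjection from the reducible, non-split unitary induction of the Borel character \eqref{induced} onto $L_{k_\sigma,\sw}(\C)$. On restriction to $C_\sigma^+$ that induced representation contains the character $r(\theta)\mapsto e^{-ik_\sigma\theta}$ with multiplicity one, while $L_{k_\sigma,\sw}(\C)$, which as a $C_\sigma^+$-module is $\Sym^{k_\sigma-2}$, only involves the characters $r(\theta)\mapsto e^{in\theta}$ with $|n|\leqslant k_\sigma-2$; therefore this character occurs in $\pi_\sigma$ with multiplicity exactly one. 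This is nothing but the $C_\sigma^+$-type computation already performed in \S\ref{ss:cohomological cuspidal automorphic representations} to single out $\phi_\sigma$.

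I do not expect a genuine obstacle: as the wording ``by construction'' indicates, the lemma is essentially formal once the normalizations of $W_\sigma$ and $\phi_\sigma$ are in place, the only mild point being the bookkeeping of $C_\sigma^+$-types of the induced representation and of its finite-dimensional quotient. If one wanted the explicit proportionality constant between the image of $W_\sigma$ and $\phi_\sigma$ --- which is not needed in what follows --- it could be extracted by evaluating both at $\begin{psmallmatrix}y&\\&1\end{psmallmatrix}$ for $y>0$ and comparing with the Kirillov-model normalization of $W_\sigma$.
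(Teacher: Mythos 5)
Your proof is correct and fills in precisely the argument the paper leaves implicit: the paper states the lemma with only the phrase ``by construction,'' and the construction you reconstruct---that $\phi_\sigma$ spans the one-dimensional $C_\sigma^+$-isotypic subspace of type $r(\theta)\mapsto e^{-ik_\sigma\theta}$ inside $\pi_{k_\sigma,\sw}$ (the multiplicity-one statement being exactly what was established in \S\ref{ss:cohomological cuspidal automorphic representations} to single out $\phi_\sigma$), that $W_\sigma$ is a nonzero vector of the same type by its defining properties, and that any $G_\sigma$-equivariant Whittaker isomorphism carries isotypic lines to isotypic lines---is the intended one. Minor remark: you write ``$L_{k_\sigma,\sw}(\C)$, which as a $C_\sigma^+$-module is $\Sym^{k_\sigma-2}$'' where the paper's bookkeeping in \S\ref{ss:cohomological cuspidal automorphic representations} is phrased in terms of the contragredient $L_{k_\sigma,\sw}^\vee(\C)$; since a finite-dimensional representation and its contragredient have the same $C_\sigma^+$-types, this makes no difference to the conclusion that $e^{-ik_\sigma\theta}$ is absent from the finite-dimensional quotient and hence survives with multiplicity one in the kernel $\pi_{k_\sigma,\sw}$.
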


\begin{definition}\label{d:newform}
We define the normalized $(S,\alpha_\gu)$-refined newform as the cusp form $\phi_{\widetilde{\pi}_S,\alpha_\gu}\in \pi$ 
whose image under the isomorphism \eqref{eq:global-whittaker} corresponds to the pure tensor
\[W_{\widetilde{\pi}_S,\alpha_\gu}=W_{\widetilde{\pi}_S,\alpha_\gu,f}\otimes \bigotimes_{\sigma\in \Sigma}W_\sigma.\]
\end{definition}

We end this section by computing the local Whittaker integrals
that yield the local Euler factors of the complex $L$-function. 
The following proposition shows how the choice of level $\gf$ in the automorphic symbols 
in \S \ref{automorphic-symbols} reflects in the local Whittaker integrals. 

\begin{proposition}\label{p:local-whittaker-integrals}
Suppose $v\in S$ and $\chi_v$ is a finite order character of $F_v^\times$. Then
 \[Z_v=\int_{F_v^\times}\chi_v(y) W_v^{\nu}
\left(\begin{psmallmatrix}
y\varpi_v^{n_v} &y\\&1\end{psmallmatrix}\right)|y|_v^{s-1}d^\times y =
q_v^{\delta_v(s-1)} \chi_v(\varpi_v^{-\delta_v}) (q_v^{-1}\nu_v(\varpi_v))^{n_v} \frac{q_v}{q_v-1} Q(\chi_v\nu_v,s)\]
for $\Re(s)$ sufficiently large, where $c_v$ denotes the conductor of $\chi_v\nu_v$, and 
\[Q(\chi_v\nu_v,s)=\begin{cases}
q_v^{s c_v} (\chi_v\nu_v)(\varpi_v^{\delta_v}) \tau(\chi_v\nu_v,\psi_v,d_{\chi_v\nu_v})&, \text{ if } n_v \geqslant c_v \geqslant 1,\\
\left(1-\frac{(\chi_v\nu_v)(\varpi_v)}{q_v^s}\right)^{-1}\left(1-\frac{q_v^{s-1}}{(\chi_v\nu_v)(\varpi_v)}\right)&, \text{ if } n_v >c_v=0,\\
\left(1-\frac{(\chi_v\nu_v)(\varpi_v)}{q_v^s}\right)^{-1}\left(1-\frac{1}{q_v}\right) &, \text{ if } n_v=c_v=0.
\end{cases}\]
\end{proposition}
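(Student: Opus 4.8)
The plan is to reduce the global integral $Z_v$ to a sum over $m \geqslant 0$ of the explicit Whittaker values \eqref{eq:whit-p-stabilized} twisted by $\chi_v$ and $|\cdot|_v^{s-1}$, and then recognize the resulting series as a local Gauss sum (in the highly ramified range $n_v \geqslant c_v \geqslant 1$) or as a ratio of local Euler factors (in the unramified-twist ranges). First I would use the Iwasawa-type decomposition of the integration variable: writing $y = u\varpi_v^{m}$ with $u \in \cO_v^\times$ and $m \in \Z$, the matrix $\begin{psmallmatrix} y\varpi_v^{n_v} & y \\ & 1\end{psmallmatrix}$ factors as $\begin{psmallmatrix} y & \\ & 1\end{psmallmatrix}\begin{psmallmatrix} \varpi_v^{n_v} & 1 \\ & 1\end{psmallmatrix}$, and since $W_v^{\nu}$ is $K_v'$-invariant, only the diagonal part survives once we have absorbed the unipotent and the $\cO_v^\times$-part appropriately — the key point being that $\begin{psmallmatrix} \varpi_v^{n_v} & 1 \\ & 1 \end{psmallmatrix}$ and the action of $u$ on the right can be analyzed using the formula $W_v^{\nu}(\cdot r)$ for $r$ in the level group, together with the fact that conjugating the unipotent entry by the diagonal torus rescales it by $y$. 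This is where the shift by $\delta_v$ (the different exponent) and the normalization of $\psi_v$ (whose conductor is $\varpi_v^{-\delta_v}$) enter, producing the prefactor $q_v^{\delta_v(s-1)}\chi_v(\varpi_v^{-\delta_v})$, and where the $\nu_v$-twist of $W'_v$ by $\nu_v(\varpi_v)^{\delta_v}(\nu_v\circ\det)$ contributes the $(q_v^{-1}\nu_v(\varpi_v))^{n_v}$ factor via evaluation on $\begin{psmallmatrix}\varpi_v^{n_v} & \\ & 1\end{psmallmatrix}$.

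Second I would carry out the remaining $\cO_v^\times$-integral, which after the reductions above becomes $\int_{\cO_v^\times}\chi_v\nu_v(u)\psi_v(u\varpi_v^{m'})\,d^\times u$ for an appropriate shift $m'$ depending on $m$, $n_v$ and $\delta_v$; the factor $\frac{q_v}{q_v-1}$ is the ratio between the Haar measure giving $\cO_v^\times$ volume $1$ used implicitly here and the measure $d_{\chi_v\nu_v}$ appearing in \eqref{eq:gauss}–\eqref{eq:global-gauss-sum}. Third, I would split into the three cases. When $n_v \geqslant c_v \geqslant 1$, the orthogonality of the ramified character $\chi_v\nu_v$ over $\cO_v^\times$ kills all but one value of $m$, and the surviving term is exactly the local Gauss sum $\tau(\chi_v\nu_v,\psi_v,d_{\chi_v\nu_v})$ after accounting for the $q_v^{sc_v}$ and $\chi_v\nu_v(\varpi_v^{\delta_v})$ normalizations coming from \eqref{eq:gauss}. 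When $\chi_v\nu_v$ is unramified ($c_v=0$), the $\cO_v^\times$-integral is $1$ unless the character $\psi_v(u\varpi_v^{m'})$ is nontrivial on $\cO_v^\times$, i.e.\ unless $m' < 0$; so one gets a geometric series $\sum_{m \geqslant 0}(\chi_v\nu_v(\varpi_v)q_v^{-s})^m$ truncated or corrected by the finitely many indices where $\psi_v$ fails to be trivial, which after summation yields $\left(1 - \frac{\chi_v\nu_v(\varpi_v)}{q_v^s}\right)^{-1}$ times a polynomial correction — and that correction is precisely $\left(1 - \frac{q_v^{s-1}}{\chi_v\nu_v(\varpi_v)}\right)$ when $n_v > 0$ and $\left(1-q_v^{-1}\right)$ when $n_v = 0$, the difference reflecting whether the extra unipotent shift $\varpi_v^{n_v}$ pushes us past the conductor of $\psi_v$.

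The main obstacle I anticipate is the bookkeeping of all the twists and shifts in the first step: correctly tracking how the $(\nu_v\circ\det)$-twist in the definition of $W_v^{\nu}$, the $\varpi_v^{n_v}$ shift in the second column, the different exponent $\delta_v$, and the conductor $c_v$ of $\chi_v\nu_v$ interact when one decomposes the group element and evaluates $W_v^{\nu}$ using \eqref{eq:whit-p-stabilized}. In particular the passage from $W'_v \in \cW(\pi_v\otimes\nu_v^{-1},\psi_v)$ (for which \eqref{eq:whit-p-stabilized} is literally the new-vector formula) back to $W_v^{\nu} \in \cW(\pi_v,\psi_v)$ must be done with care, since the twist $(\nu_v\circ\det)$ acts on the full matrix and not just the torus, and one needs the identity $\det\begin{psmallmatrix}y\varpi_v^{n_v} & y \\ & 1\end{psmallmatrix} = y\varpi_v^{n_v}$ to see that it contributes $\nu_v(y\varpi_v^{n_v})$, which then recombines with $\chi_v(y)$ inside the integral to give $\chi_v\nu_v(y)$ — this is the structural reason the answer is governed by $\chi_v\nu_v$ rather than $\chi_v$. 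Once that reduction is clean, the case analysis is a routine evaluation of $\cO_v^\times$-integrals against additive and multiplicative characters.
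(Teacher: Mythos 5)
Your approach is essentially the same as the paper's: reduce the Whittaker value to a torus value, substitute $y=u\varpi_v^m$, and case-split on the conductor of $\chi_v\nu_v$, with the key structural observation that the $\nu_v\circ\det$ twist in the definition of $W_v^\nu$ is what converts $\chi_v$ into $\chi_v\nu_v$ inside the integral. One step needs to be fixed, though: the factorization $\begin{psmallmatrix}y\varpi_v^{n_v}&y\\&1\end{psmallmatrix}=\begin{psmallmatrix}y&\\&1\end{psmallmatrix}\begin{psmallmatrix}\varpi_v^{n_v}&1\\&1\end{psmallmatrix}$ leaves you holding $\begin{psmallmatrix}\varpi_v^{n_v}&1\\&1\end{psmallmatrix}$, which is not in $K_v'$ (its determinant is $\varpi_v^{n_v}$ and its diagonal is not a unit), so it cannot be absorbed by level-group invariance as you suggest. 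The clean move, and the one the paper makes, is the \emph{left}-unipotent factorization $\begin{psmallmatrix}y\varpi_v^{n_v}&y\\&1\end{psmallmatrix}=\begin{psmallmatrix}1&y\\&1\end{psmallmatrix}\begin{psmallmatrix}y\varpi_v^{n_v}&\\&1\end{psmallmatrix}$, after which the defining transformation property of the Whittaker model immediately gives $W_v^\nu\!\left(\begin{psmallmatrix}y\varpi_v^{n_v}&y\\&1\end{psmallmatrix}\right)=\psi_v(y)\,W_v^\nu\!\left(\begin{psmallmatrix}y\varpi_v^{n_v}&\\&1\end{psmallmatrix}\right)$. Your parenthetical about conjugating the unipotent past the torus and rescaling by $y$ is exactly how to recover this identity from your decomposition, but the point is that you must commute the unipotent to the left of the torus before invoking any Whittaker covariance; you cannot absorb $\begin{psmallmatrix}\varpi_v^{n_v}&1\\&1\end{psmallmatrix}$ on the right. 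A small bookkeeping correction as well: evaluating $W_v^\nu$ at $\begin{psmallmatrix}\varpi_v^{n_v}&\\&1\end{psmallmatrix}$ via \eqref{eq:whit-p-stabilized} yields $(q_v^{-1}\nu_v(\varpi_v))^{n_v+\delta_v}$, not $(q_v^{-1}\nu_v(\varpi_v))^{n_v}$; the latter only emerges after combining with the $q_v^{\delta_v(s-1)}\chi_v(\varpi_v^{-\delta_v})$ prefactor. Your remaining steps — the $\cO_v^\times$-integral, the Gauss sum in the ramified range, and the geometric series plus the $m=-1$ boundary term in the unramified range, with the $\tfrac{q_v}{q_v-1}$ coming from switching between $d^\times$ and $d_{\chi_v\nu_v}$ — match the paper exactly.
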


\begin{proof} 
 Since $W_v^{\nu}\in \cW(\pi_v,\psi_v)$ we have $W_v^{\nu}\left(\begin{psmallmatrix}
y\varpi_v^{n_v}&y\\&1\end{psmallmatrix}\right)=\psi_v(y)W_v^{\nu}\left(\begin{psmallmatrix}
y\varpi_v^{n_v}&\\&1\end{psmallmatrix}\right)$. Using \eqref{eq:whit-p-stabilized}:
\begin{align*}
Z_v&=\nu_v(\varpi_v)^{n_v+\delta_v}\int_{F_v^\times}(\chi_v\nu_v)(y) \psi_v(y)
(\nu_v(\varpi_v^{-\delta_v}) (\nu_v^{-1}\circ\det)\cdot W_v^{\nu}) \left(\begin{psmallmatrix}y\varpi_v^{n_v}&\\&1\end{psmallmatrix}\right)|y|_{v}^{s-1} d^\times y=\\
&=\nu_v(\varpi_v)^{n_v+\delta_v} \sum_{m\geqslant -n_v-\delta_v}q_v^{-n_v-\delta_v-ms}\int_{\cO_v^\times}(\chi_v\nu_v)(u \varpi_v^m)\psi_v(u\varpi_v^m)d^\times u.
\end{align*}

If $\chi_v\nu_v$ is ramified the above integral vanishes except for
$m=-\delta_v-c_v\geqslant -\delta_v-n_v$ in which case its value is the
Gauss sum $\tau(\chi_v\nu_v,\psi_v,d^{\times}))= \frac{q_v-1}{q_v}
\tau(\chi_v\nu_v,\psi_v,d_{\chi_v\nu_v})$ (see \eqref{eq:gauss}).

If $\chi_v\nu_v$ is unramified ($c_v=0$) then we have: 
\begin{align*}
\frac{((\chi_v\nu_v)(\varpi_v)q_v^{-s})^{\delta_v}}{ (q_v^{-1}\nu_v(\varpi_v))^{n_v+\delta_v}}\cdot Z_v= Q(\chi_v\nu_v,s)=
\sum_{m\geqslant -n_v}( (\chi_v\nu_v)(\varpi_v)q_v^{-s})^{m}
\int_{\cO_v^\times}\psi_v(u\varpi_v^{m-\delta_v})d^\times u,
\end{align*}
which is computed using the formula
$\displaystyle \int_{\cO_v^\times}\psi_v(u \varpi_v^{m-\delta_v})d^\times
u=\begin{cases} 1&, \text{ if } m\geqslant 0,\\
\frac{1}{1-q_v}&, \text{ if } m=-1,\\
0 &, \text{ if } m<-1.\end{cases}$
\end{proof}

\subsection{Periods for $(S,\gu)$-refined newforms}\label{ss:periods}
The normalized $(S,\gu)$-refined newform $\phi_{\widetilde{\pi}_S,\alpha_\gu}$ of the previous section is a Hilbert cusp form in the following sense:
\begin{definition}\label{hmf-def} A {\it holomorphic Hilbert automorphic form} of level $K$ and weight $(k,\sw)$ is a function 
	$\phi:G(\Q)\!\setminus G(\A)/K\to \C$ such that for all $\sigma\in \Sigma$, $z \in F_\sigma^\times $ and $r(\theta) \in K_\sigma^+ $
	\[\phi(\cdot \begin{psmallmatrix}
z&\\&z\end{psmallmatrix} r(\theta))= z^\sw e^{-i\theta k_\sigma}\phi \]
	and, for all $g_f\in G(\A_f)$, the function $\phi\left(g_f \begin{psmallmatrix} y_\infty & x_\infty \\ 0 & 1\end{psmallmatrix} \right)$ is holomorphic in $x_\sigma+ i y_\sigma$ in the upper half plane for every $\sigma\in \Sigma$. It is a cusp form if $\int_{\A_F/F} \phi\left(\left(\begin{smallmatrix} 1 &x \\ 0 & 1 \end{smallmatrix}\right)g\right) dx=0$ for all $g\in G(\A)$. 
\end{definition}

Note that a classical weight $k$ modular form for $F=\Q$ has $\sw=2-k$. 
The restriction to $G(\A_f) G_\infty^+$ of the Fourier expansion of $\phi$ as above is supported on totally positive elements, {\it i.e.},
\begin{align}\label{eq:Fourier}
\phi(g)=\sum_{\xi\in F^\times_+}
W_{\phi}\left(\begin{psmallmatrix} \xi &\\& 1\end{psmallmatrix} g\right), \text{ for all } g\in G(\A_f) G_\infty^+. 
\end{align}
By Lemma \ref{W-infty}
the normalized holomorphic cusp form
$\phi_{\widetilde{\pi}_S,\alpha_\gu}$ can be written as a pure tensor of the form $\phi_{\widetilde{\pi}_S,\alpha_\gu}= \phi_{\widetilde{\pi}_S,\alpha_\gu,f} \otimes \bigotimes_{\sigma\in \Sigma}\phi_\sigma$. Recall
that for a character $\epsilon: \{\pm 1\}^\Sigma\to \{\pm 1\}$ we constructed in \eqref{eq:XI} a cohomology class
$\Xi_{\pi_\infty}^\epsilon \in \rH^d(\gg_\infty, K_\infty^+, L_{k,\sw}^\vee(\mathbb{C})\otimes \pi_\infty)$ yielding a map
\begin{align}\label{Theta}
\Theta^\epsilon_{\pi}: \pi_f^K\xrightarrow{ \Xi_{\pi_\infty}^\epsilon\otimes} \rH^d(\gg_\infty, K_\infty^+, L_{k,\sw}^\vee(\C)\otimes \pi^K)^{\epsilon}\hookrightarrow 
	\rH^d_{\cusp}(Y_K, \cL_{k,\sw}^\vee(\C))^{\epsilon}
\end{align}
where $K=K(\widetilde{\pi}_S,\gu)$ is as in Def. \ref{d:newline}. For $E$ and $\gm_{\widetilde{\pi}_S}$ is as in Def. \ref{d:mpi-tilde} we consider the line:  
\begin{align}\label{eq:E-line}
\rH^d_{\cusp}(Y_K, \cL_{k,\sw}^\vee(E))^\epsilon_{\gm_{\widetilde{\pi}_S}}.
\end{align}

\begin{definition} \label{of-the-period-omega-pi}
Given a basis $b_{\widetilde{\pi}_S,\alpha_\gu}^{\epsilon}$ of the $E$-line \eqref{eq:E-line}, we 
let $\Omega_{\widetilde{\pi}_S}^\epsilon\in\C^\times$ be
such that $\Theta^\epsilon_{\pi}(\phi_{\widetilde{\pi}_S,\alpha_\gu,f})=\Omega_{\widetilde{\pi}_S}^\epsilon b_{\widetilde{\pi}_S,\alpha_\gu}^{\epsilon}$. When $\epsilon$ is the trivial character we denote this period simply by $\Omega_{\widetilde{\pi}_S}$.
\end{definition} 
Since $\alpha_\gu, \beta_{\gu}\in E$ the period can be taken the same for either choice of Hecke parameter at $\gu$.

The precise choice of a $p$-refined automorphic newform in \S\ref{ss:whittaker} allows us to 
 prove the following formula describing the behavior of the periods $\Omega_{\widetilde\pi}^\epsilon$  under  twisting by characters. 
 Here $S=S_p$.

\begin{prop}\label{p:twisting-periods}
One can choose the bases  $b_{\widetilde{\pi},\alpha_\gu}^{\epsilon}$ in Def. \ref{of-the-period-omega-pi} such that
for every algebraic Hecke character $\chi$ of weight $w$ and $p$-power conductor, one has 
$\Omega_{\widetilde{\pi\otimes\chi}}^{\epsilon\chi_\infty}= i^{-d w}\chi_f(\varpi_\gd)\Omega_{\widetilde{\pi}}^{\epsilon\omega_{p,\infty}^w}$
for any character $\epsilon: \{\pm 1\}^\Sigma\to \{\pm 1\}$, 
where  $\widetilde{\pi\otimes\chi}= (\pi\otimes\chi, \{\nu_v\otimes\chi_v\}_{v\in S_p})$. 
\end{prop}
\begin{proof}
We drop $\alpha_\gu$ to avoid cumbersome notation. 
By \eqref{eq:whit-p-stabilized} and Definition \ref{d:newform}  we have 
$W_{\widetilde{\pi\otimes\chi},f}=\chi(\varpi_\gd) W_{\widetilde{\pi},f}\cdot \chi_f\circ\det$ (compare with  \cite[Thm.1.1]{raghuram-shahidi:periods}),  hence $\phi_{\widetilde{\pi\otimes\chi}}=\chi(\varpi_\gd) \phi_{\widetilde{\pi}}\cdot \chi\circ\det$. 

Since $\begin{psmallmatrix}-1 & \\ &1 \end{psmallmatrix}\cdot\eval^{k,\sw+2w}_{i}= (-1)^w 
\begin{psmallmatrix}-1 & \\ &1 \end{psmallmatrix}\cdot\eval^{k,\sw}_{i}$,  definition \eqref{eq:XI} implies that: 
\begin{align}\label{g-K-relation}
\Xi_{\pi_\infty\otimes\chi_\infty}^\epsilon \otimes
\phi_{\widetilde{\pi\otimes\chi},f}
& =i^{\frac{((2-\sw-2w)t-k)}{2}} \sum_{s_\infty\in \{\pm 1\}^\Sigma} \epsilon(s_\infty) 
(s_\infty\cdot (w_\infty^*\otimes \phi_{\widetilde{\pi\otimes\chi}} \otimes \eval^{k,\sw+2w}_{i}))=\\
&=i^{-d w} \chi(\varpi_\gd)  \Xi_{\pi_\infty}^{\epsilon\chi_\infty\omega_{p,\infty}^w}\otimes
(\phi_{\widetilde{\pi},f}\cdot \chi_f\circ\det). \nonumber
\end{align}

For $v\in S_p$ let $K'_v$ be as in \eqref{eq:nu-v} and 
 $K''_v$ be the analogous subgroup for $(\pi_v\otimes\chi_v, \nu_v\otimes\chi_v)$. 
Letting $K''=K^p\prod_{v\in S_p} (K'_v\cap K''_v)$,  we see that $\chi_f$ factors through 
$\pi_0(Y_{K''})\simeq \A_{F,f}^\times/F_+^\times\det(K'')$. Writing 
 $\mathrm{pr}^*b_{\widetilde{\pi}}^{\epsilon}=(c_\eta)_{\eta\in \pi_0(Y_{K''})}\in \bigoplus\limits_{\eta\in \pi_0(Y_{K''})}
\rH^d_{\cusp}(Y_{K''}[\eta], \cL_{k,\sw}^\vee(E)_{|Y_{K''}[\eta]})$, where $\mathrm{pr}: Y_{K''}\to Y_{K}$ is the natural projection, 
one sees that $(\chi_f(\eta) c_\eta)_{\eta\in \pi_0(Y_{K''})}$ is $E$-rational as well, as the rational structure on its Betti cohomology 
is imposed component-wise. Identifying the local systems $\cL_{k,\sw}^\vee(E)$ and $\cL_{k,\sw+2w}^\vee(E)$, and 
choosing the basis $b_{\widetilde{\pi\otimes\chi}}^{\epsilon\chi_\infty\omega_{p,\infty}^w}$ to correspond to $(\chi_f(\eta) c_\eta)_{\eta\in \pi_0(Y_{K''})}$ then yields the desired  relation in view of  \eqref{g-K-relation} and Definition  \ref{of-the-period-omega-pi}.  
\end{proof}

\section{Overconvergent cohomology and partial nearly finite slope families}\label{s:overconvergent-cohomology}

In this section we introduce overconvergent cohomology spaces for individual weights and in families which naturally interpolate the spaces $\rH^{i}_c(Y_K,\cL_{k,\sw}^\vee(L))$ for cohomological weights $(k,\sw)$. Moreover, we establish, in Theorem \ref{t:classicality}, a classicality criterion from which we deduce in Corollary \ref{p:non-criticality} that 
cohomological $\widetilde\pi$ of non-critical slope are non-critical. 
Furthermore, we construct a cohomological cuspidal Hilbert eigenvariety in a neighborhood of such a non-critical $\widetilde\pi$ and show that it is etale over the weight space. 
 We remark that our local families do not {\it a priori } patch in Buzzard's eigenvarieties machine as one cannot guarantee the projectivity of overconvergent cohomology groups beyond $\rH^0$ and $\rH^1$. Instead we adapt Hida's axiomatic construction of nearly ordinary families to the rigid analytic context and prove equidimensionality and etaleness using the fact that cuspidal cohomology is supported in middle degree.

Finally we construct partial nearly finite slope $p$-adic families that impose no restriction on the local representation at a set of places $S\subset S_p$. These results are crucial to carry out the construction of $p$-adic $L$-functions for families, and to control the behavior of the local representation at $v\in S_p\bs S$ in partial families with fixed weights at $\Sigma_v$.

\subsection{Weight spaces} \label{weights}

Let $\cX$ be the $(d+1)$-dimensional rigid analytic space over $\Q_p$ such that:
 \begin{align}\label{eq:coh-weight-space}
 \cX(\C_p)=\left\{ \lambda \in \Hom_{\mathrm{cont}} 
  (T(\Z_p),\C_p^{\times}) \Big{|} \,\exists \,\sw_\lambda \in \Hom_{\mathrm{cont}} 
  (\Z_p^\times ,\C_p^{\times}),\,  \lambda\left(\begin{psmallmatrix} z &\\& z\end{psmallmatrix}\right)=\sw_\lambda (z^t)\right\}.
   \end{align}

  Letting $k_\lambda(z)=\lambda\left(\begin{psmallmatrix} z &\\& z^{-1}\end{psmallmatrix}\right)\cdot z^{2 t} $ for $z\in (\cO_F\otimes\Z_p)^\times$ there is a finite morphism: 
  \begin{align}\label{eq:weight-finite-map}
  \cX(\C_p)\to \Hom_{\mathrm{cont}}\left((\cO_F\otimes\Z_p)^\times\times \Z_p^\times,\C_p^\times\right),
  \lambda\mapsto (k_\lambda,\sw_\lambda).
     \end{align}
 The cohomological weights of $G$ (see Def. \ref{d:coh-weight}) all belong to $\cX$ and 
 are very Zariski dense in it.

 Given an affinoid $\cU \subset \cX$ we let $\cO(\cU)$ denote the ring of its rigid analytic functions and consider the universal locally analytic character (see \cite[Lem.3.4.6]{urban}): 
 \[\left\langle \cdot \right\rangle_{\cU}: T(\Z_p) \to \cO(\cU)^{\times},\,\,\, t\mapsto 
 (\lambda\mapsto \lambda(t)).\]
 
\begin{definition}\label{defn:XKS}
Fix a cohomological weight $(k,\sw)$. 
Given a subset $S$ of $S_p$ we let $\cX_{S}$, resp. $\cX'_S $, denote the 
rigid analytic subspaces of $\cX$ parametrizing weights which agree with $(k,\sw)$ on
\[  \prod\limits_{v\in S_p\!\setminus S} \begin{psmallmatrix} \cO_v^{\times} & 0 \\ 0 & \cO_v^{\times} \end{psmallmatrix}, \, \text{ resp.  } 
 \prod\limits_{v\in S_p\!\setminus S} \begin{psmallmatrix} \cO_v^\times & 0 \\ 0 & 1\end{psmallmatrix}.\]
 \end{definition}

We have $\cX_{S_p}=\cX'_{S_p}=\cX$ and $\dim \cX_{S}=\dim \cX'_{S}-1= |\Sigma_S|$ for any $S\subsetneq S_p$.
The space $\cX'_S$ is the natural place to consider partially improved $p$-adic $L$-functions
(see \S\ref{ss:norms} and \S\ref{ss:improved}), whereas the partially finite slope families of Hilbert modular cusp forms live on its subspace $\cX_S$. We thank the referees for their suggestion to highlight the difference between these spaces which is important for understanding the results of this paper.

\subsection{Overconvergent modules}\label{sect:oc-modules} 
 Following \cite{urban} we introduce certain modules that will be later used to define  overconvergent sheaves on the Hilbert modular variety.

Let $L$ be a finite extension of $\Q_p$ and $X$ be an open compact subset 
of a finite dimensional $\Q_p$-vector space. 
Given $n \in \Z_{\geqslant 0}$, we let $A_n(X, L)$ denote the Banach $L$-vector space of $n$-locally analytic functions on $X$ and $D_n(X, L)$ its Banach dual (see \cite[\S3.2.1]{urban}). More generally, given an admissible affinoid $\cU\subset \cX$
we consider the orthonormalizable Banach $\cO(\cU)$-modules $A_n(X, \cO(\cU)) = A_n(X, L)\widehat{\otimes}_{L}\cO(\cU)$ and 
$D_n(X, \cO(\cU)) = D_n(X, L)\widehat{\otimes}_{L}\cO(\cU)$ (see \cite[\S2.2]{hansen}). 
The space 
$A(X, \cO(\cU))= \bigcup_{n \in \Z_{\geqslant 0}}A_n(X, \cO(\cU))$ of locally analytic $ \cO(\cU)$-valued functions of $X$, endowed with the inductive limit topology, is a Fr\'echet $\cO(\cU)$-module. The natural maps $ D_{n+1}(X, \cO(\cU))\to D_n(X, \cO(\cU))$ are compact, 
and $D(X, \cO(\cU))= \varprojlim D_n(X, \cO(\cU))$
 is a compact Fr\'echet $\cO(\cU)$-module. 
 There is functorial in $\cO(\cU)$ pairing: 
\begin{align}\label{eq:pairing}
\langle\cdot , \cdot \rangle: D(X, \cO(\cU))\times A(X, \cO(\cU))\to \cO(\cU), 
\end{align}
yielding $D(X, \cO(\cU))\hookrightarrow \Hom_{\cO(\cU)}(A(X, \cO(\cU)), \cO(\cU))$ but, 
as observed in \cite[Rem.3.1]{bellaiche:critical}, the natural injective maps $D_n(X, \cO(\cU))\hookrightarrow \Hom_{\cO(\cU)}(A_n(X, \cO(\cU)), \cO(\cU))$ need not be surjective. 
The above constructions applies to $X=\cO_F\otimes\Z_p$
considered as an open compact subset of $\Q_p^d$. 

We fix a cohomological weight $(k, \sw)$ and a subset $S$ of $S_p$. 
We will now introduce certain {\it partial} overconvergent distributions over an admissible affinoid $\cU_S\subset \cX_S$ containing $(k, \sw)$. These distributions will allows us to construct $p$-adic families parametrized by $\cU_S$ containing $\pi$ even when 
its local components $\pi_v$ for $v\in S_p\!\setminus S$ have critical slope,
{\it e.g.} are supercuspidal, and to attach to them what appears to be a genuinely new kind of $p$-adic $L$-function
 (see \S\ref{ss:partial}). 
 
 We consider the semigroup
\begin{align}\label{eq:Lambda-S}
\varLambda_S=\prod_{v\in S_p\!\setminus S} \GL_2(F_v)
 \prod_{v\in S} \GL_2(F_v) \cap 
\left( F_v^\times\cdot \begin{psmallmatrix}\cO_v & \cO_v \\ \varpi_v \cO_v & \cO_v^\times \end{psmallmatrix} \right)
\end{align}
and we define the partial Iwahori subgroup $I_S =
\varLambda_S\cap G(\mathbb{Z}_p)=\prod\limits_{v\in S_p\!\setminus S} \GL_2(\cO_v) \cdot \prod\limits_{v\in S}I_v$.

Let $K\subset G(\A_{f})$ be an open compact subgroup satisfying \eqref{neat} such that  $K_p \subset I_S$. In particular, we allow $K_v$ to be the maximal compact 
subgroup $\GL_2(\cO_v)$ at places $v\in S_p\!\setminus S$. We let 
\begin{align}\label{eq:partial-ana-fct}
A_{S,\cU_S} = A(\cO_{F,S}, \cO(\cU_S))\otimes_L
\bigotimes_{\sigma\in  \Sigma_{S_p\!\setminus S}} L_{k_\sigma,\sw}(L),
\end{align}
be the subspace of $A(\cO_F\otimes\Z_p, \cO(\cU_S))$ consisting of functions which are
polynomial of degree at most $(k_\sigma-2)_{\sigma\in \Sigma_v}$ in the variables 
$(z_\sigma)_{\sigma\in \Sigma_v}$ for all $v\in S_p\!\setminus S$.
For $\lambda\in \cU_S(L)$ we let $A_{S,\lambda}= A_{S,\{\lambda\}}$. 
For $n\in \Z_{\geqslant 0}$ we let $A_{S,\lambda,n}=A_{S,\lambda}\cap A_n(\cO_F\otimes\Z_p, L)$
and we denote by $D_{S,\lambda,n}$ its topological dual. Finally we consider the 
Banach $\cO(\cU_S)$-module $D_{S,\cU_S,n}= D_{S,\lambda,n}\widehat{\otimes}_{L}\cO(\cU_S)$ and 
the compact Fr\'echet $\cO(\cU_S)$-module $D_{S,\cU_S}= \varprojlim D_{S,\cU_S, n}$. 

\begin{defn} \label{d:action-functions}
We consider the following continuous right action of $\gamma\in I_S$ on $f \in A_{S,\cU_S}$ 
 \begin{align}\label{iwahori-action-bis}
 f_{|\gamma}(z)=f\left( \dfrac{az+b}{cz+d} \right) \left\langle \begin{psmallmatrix} (cz+d) & 0 \\ 0 & \det(\gamma)\cdot (cz+d)^{-1} \end{psmallmatrix} \right\rangle_{\cU_S}, 
 \text { where } z \in \cO_F\otimes\Z_p, \gamma=\begin{pmatrix}a & b \\ c & d \end{pmatrix}.
 \end{align}
 Furthermore, if for all $v\in S$ and all integers $r\geqslant s$ we let 
$ f_{\big|\begin{psmallmatrix}\varpi_v^r & 0 \\ 0 & \varpi_v^s\end{psmallmatrix} }(z)= f(\varpi_v^{r-s} z)$.
\end{defn}
A direct computation shows that the above actions uniquely extend to a continuous right action of 
$\varLambda_S$ on $A_{S,\cU_S}$ inducing, via the pairing \eqref{eq:pairing}, a continuous left action of $\varLambda_S$ on $D_{S,\cU_S}$:
 \begin{align}\label{iwahori-action}
 (\gamma\cdot \mu)(f)= \mu(f_{|\det(\gamma)^{-1}\cdot \gamma}) \text{, for all } \mu \in D_{S,\cU_S}, f\in A_{S,\cU_S}. 
 \end{align} 
 
 For $v\in S$ one has $\left(\begin{psmallmatrix} \varpi_v & 0 \\ 0 & 1\end{psmallmatrix}\cdot \mu\right)(f)= \mu(f(\varpi_v \cdot))$, for all  $\mu \in D_{S,\cU_S}, f\in A_{S,\cU_S}$. 
 Thus the element $\prod\limits_{v\in S}\begin{psmallmatrix} \varpi_v & 0 \\ 0 & 1\end{psmallmatrix}
 \in \varLambda_S$ induces a compact endomorphism on $D_{S,\cU_S}$ (see \cite[\S3.4.12]{urban}). 
 
For $\lambda \in \cU_S(L)$ we consider the natural 
$\varLambda_S$-equivariant specialization map
\begin{align}\label{eq:spec-map}
D_{S,\cU_S}\to D_{S,\cU_S}\otimes_{\cO(\cU_S),\lambda} L= D_{S,\lambda}.
\end{align}

Let $(k,\sw)$ be a cohomological weight (see Def. \ref{d:coh-weight}). 
Using \eqref{right-action} and \eqref{left-action} one sees that the 
natural injection $L_{k,\sw}(L)\hookrightarrow A_{S,(k,\sw)}$ is equivariant for the right 
$I_S$-action, yielding a natural homomorphism of left $I_S$-modules 
\begin{align}\label{eq:vartheta}
\vartheta_S :D_{S,(k,\sw)}\to L_{k,\sw}^\vee(L),
\end{align} 
However, $\vartheta_S$ is not $\varLambda_S$-equivariant, since for all $v\in S$
and $\mu\in D_{S,(k,\sw)}$, one has 
\begin{align}\label{action-uniformizer}
\vartheta_S \left(\begin{psmallmatrix}\varpi_v & 0 \\ 0 & 1 \end{psmallmatrix}\cdot \mu\right) \prod_{\sigma\in \Sigma_v}\sigma(\varpi_v)^{(2-\sw- k_{\sigma})/2}=\begin{psmallmatrix}\varpi_{v} & 0 \\ 0 &1 \end{psmallmatrix} \cdot \vartheta_S(\mu). 
\end{align}

 When $S= S_p$ we will drop it from the notations, {\it e.g.},  
 $A_{\cU}= A_{S_p,\cU_{S_p}}$, $D_{\cU}= D_{S_p,\cU_{S_p}}$, $\varLambda= \varLambda_{S_p}$.

\subsection{Slope decomposition for overconvergent cohomology} \label{ss:slope-dec}

Let $\cU$ be an $L$-affinoid. 
We consider a compact  Fr\'echet $\cO(\cU)$-module $M= \varprojlim M_n$ 
such that for all $n \in \Z_{\geqslant 0}$ the Banach $\cO(\cU)$-module $M_n$ is orthornomalizable, endowed with 
a compact endomorphism $U: M\to M$, {\it i.e.}, a system of maps $U_n: M_n\to M_n$ factoring through the natural projections $M_n\to M_{n-1}$ which are compact. 
For $h\in \Q_{\geqslant 0}$, if $M$ admits a slope $\leqslant h$ decomposition with respect to $U$ written as $M^{\leqslant h}\oplus M^{>h}$ (see \cite[Def.2.3.1]{hansen}), then $M^{\leqslant h}$ is a finitely generated Banach $\cO(\cU)$-module.

The following result is a generalization to compact Fr\'echet $\cO(\cU)$-modules
of a well-known proposition about Banach $\cO(\cU)$-modules. If $\cU' \subset \cU$ is a sub-affinoid, we let $U_{\cU'}$ denote the endomorphism of $M_{\cU'}= M\otimes_{\cO(\cU)}\cO(\cU')$ induced by $U$. 

\begin{prop} \label{dec frechet} Given any $\lambda \in \cU(L)$ and any $h \in \Q_{\geqslant 0}$, there exists an admissible $L$-affinoid neighborhood $\cU' \subset \cU$ of $\lambda$ such that 
	$M_{\cU'}$ admits a slope $\leqslant h$ decomposition with respect to $U_{\cU'}$.
\end{prop}
\begin{proof} When $\cU$ is a point, {\it i.e.}, $\cO(\cU)$ is a $p$-adic field, then this is proven in \cite[Lem.2.3.13]{urban}.

By \cite[Prop.2.3.3]{hansen} applied to the orthonormalizable Banach $\cO(\cU)$-module 
$M_n$ endowed with the compact endomorphism $U_n: M_n \rightarrow M_n$, 
there exists an $L$-affinoid neighborhood $\cU' \subset \cU$ of $\lambda$ such that 
 $M_{n,\cU'}$ admits a slope $\leqslant h$ decomposition with respect to $U_{n,\cU'}$ as
 $M_{n,\cU'}^{\leqslant h}\oplus M_{n,\cU'}^{>h}$. Since  the Fredholm determinant  $\det(1- x\cdot U_n\mid M_n)$ is independent of $n$ (see \cite[Lem.2.7]{buzzard}), one can take the same $\cU'$ for all $n$. Passing to the limit we obtain a direct sum decomposition 
  $M_{\cU'}= \varprojlim M_{n,\cU'}^{\leqslant h}\oplus \varprojlim M_{n,\cU'}^{>h}$. 
  It remain to see that this is a slope $\leqslant h$ decomposition.   
  Since the endomorphism $U$ is compact, applying exactly the same steps as in the proof
 of \cite[Cor.2.3.4]{urban} yields that the natural maps $M_{n,\cU'}^{\leqslant h} \to 
	M_{n-1,\cU'}^{\leqslant h}$ are all isomorphisms, hence 
	$M_{\cU'}^{\leqslant h} \simeq M_{n,\cU'}^{\leqslant h} $ 
	is a finitely generated Banach $\cO(\cU')$-module for all $n$.	\end{proof}

Let $K \subset G(\A_f)$ be an open compact subgroup satisfying \eqref{neat} and $\varLambda \subset G(\A_f)$ a semi-group containing $K$. Given a Fr\'echet $\cO(\cU)$-module $M$ as above, we suppose that it is endowed with a continuous left action of $\varLambda$ such that \eqref{center} holds, and we let $\cM$ denote the associated sheaf on $Y_K$ (see \S \ref{local-sys}).

\begin{prop} \label{prop dec coho} Suppose that $x \in \varLambda$ induces a compact endomorphism on $M$. For $\lambda \in \cU(L)$ and $h \in \Q_{\geqslant 0}$ there is an admissible $L$-affinoid neighborhood $\cU' \subset \cU$ of $\lambda$ such that $\rH_{c}^\bullet(Y_K, \cM_{\cU'})$ admits a slope $\leqslant h$ decomposition with respect to the Hecke operator $[KxK]$. \end{prop}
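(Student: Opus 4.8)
The plan is to establish the slope $\leqslant h$ decomposition first on a complex computing the cohomology and then descend it to the cohomology groups, combining Lemma~\ref{good complex} with Proposition~\ref{dec frechet}.

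First I would invoke Lemma~\ref{good complex}: for $?=\varnothing,c$ the groups $\rH^\bullet_?(Y_K,\cM_\gU)$ are the cohomology of a bounded complex $\rRG^\bullet_?(K,M_\gU)$ each of whose terms is a finite direct sum of copies of the compact Fr\'echet $\cO(\gU)$-module $M_\gU$, on which $[KxK]$ acts as an endomorphism of complexes inducing the Hecke operator on cohomology. Since $x$ induces a completely continuous endomorphism of $M$, the discussion following Lemma~\ref{good complex} shows that $[KxK]$ is completely continuous on each term, and this persists after any affinoid base change. Applying Proposition~\ref{dec frechet} to each of the finitely many terms with operator $[KxK]$ yields admissible $L$-affinoid neighborhoods $\gU_i'\subset\gU$ of $\lambda$ over which $\rRG^i_?(K,M_{\gU_i'})$ admits a slope $\leqslant h$ decomposition; putting $\gU'=\bigcap_i\gU_i'$, still an admissible $L$-affinoid neighborhood of $\lambda$, every term of the base-changed complex $\rRG^\bullet_?(K,M_{\gU'})$ acquires such a decomposition with respect to $[KxK]$.

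Next I would assemble these into a decomposition of the whole complex. Since the differentials are $\cO(\gU')$-linear and commute with $[KxK]$, they carry slope $\leqslant h$ elements to slope $\leqslant h$ elements; to see that they also respect the complementary summands and to obtain one idempotent valid in all degrees I would, after a further shrinking of $\gU'$, factor the Fredholm determinant $D(t)=\prod_i\det\bigl(1-t\,[KxK]\mid\rRG^i_?(K,M_{\gU'})\bigr)$ as $D=Q\cdot S$ with $Q$ a polynomial of slope $\leqslant h$ with unit leading coefficient, $S$ entire of slope $>h$ with $S(0)=1$, and $Q,S$ coprime, exactly as in the proof of Proposition~\ref{dec banach}. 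The associated Riesz idempotent $e$ is a limit of polynomials in $[KxK]$, hence commutes with the differentials, and one checks that $e$ restricts on each $\rRG^i_?(K,M_{\gU'})$ to the projector onto its slope $\leqslant h$ part. Thus $\rRG^\bullet_?(K,M_{\gU'})=\rRG^{\bullet,\leqslant h}_?\oplus\rRG^{\bullet,>h}_?$ as complexes, and taking cohomology gives $\rH^\bullet_?(Y_K,\cM_{\gU'})=\rH^\bullet(\rRG^{\bullet,\leqslant h}_?)\oplus\rH^\bullet(\rRG^{\bullet,>h}_?)$.

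Finally I would verify the three axioms of a slope $\leqslant h$ decomposition for this decomposition of the cohomology. The two summands are $[KxK]$-stable; the first is finitely generated over the noetherian ring $\cO(\gU')$, being the cohomology of a bounded complex of finitely generated modules, and is annihilated by $Q^\times([KxK])$, so every class it contains has slope $\leqslant h$; and for every $P\in\cO(\gU')[t]$ of slope $\leqslant h$ with unit leading coefficient $P^\times([KxK])$ is an automorphism of $\rH^\bullet(\rRG^{\bullet,>h}_?)$, because $Q^\times([KxK])$ is invertible on the subcomplex $\rRG^{\bullet,>h}_?$ so that this subcomplex and its cohomology contain no nonzero slope $\leqslant h$ element, which forces injectivity of $P^\times([KxK])$, surjectivity following as in Lemma~\ref{dec urban} and \cite[Lemma~3.1]{buzzard} from the coprimality of $P$ with the Fredholm determinant of $[KxK]$ on the $>h$-part. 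I expect the main obstacle to be precisely this passage from the complex to its cohomology: matching the termwise slope decompositions in different cohomological degrees, whose Fredholm determinants differ, into a single decomposition of the complex compatible with all differentials, and then confirming axiom (iii) for the induced decomposition of the cohomology — which is why the Riesz idempotent must be built from the Fredholm determinant of the entire complex rather than degree by degree.
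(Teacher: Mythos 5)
Your argument is correct and parallels the paper's proof up through the termwise decomposition of the complex; where the paper then simply invokes \cite[Prop. 2.3.2]{hansen} to pass from a slope $\leqslant h$ decomposition of each term of $\rRG^\bullet_?(K,M_{\gU'})$ to one of the cohomology, you instead make that passage explicit by building a single Riesz idempotent from the product of the Fredholm determinants over all degrees. Both routes work; the hands-on version has the advantage of making visible exactly what the cited lemma is doing, at the cost of a longer verification of axiom (iii). One small remark on your stated ``main obstacle'': matching the termwise decompositions across cohomological degrees is in fact not a genuine difficulty, and one does not strictly need a Fredholm determinant common to all degrees. By axiom (iii) in the definition of a slope $\leqslant h$ decomposition, \emph{every} polynomial $Q$ of slope $\leqslant h$ with unit leading coefficient has $Q^\times([KxK])$ invertible on \emph{any} $>h$ summand, not only on the summand from which $Q$ was extracted. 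So if $m\in C^{i,>h}$ and $Q$ is chosen (via finite generation) so that $Q^\times([KxK])$ kills $C^{i+1,\leqslant h}$, then the $U$-equivariant map $\mathrm{pr}^{\leqslant h}\circ d:C^{i,>h}\to C^{i+1,\leqslant h}$ is annihilated on both sides by $Q^\times([KxK])$, which is invertible on the source; hence it vanishes, and the differentials automatically preserve the two summands degree by degree. This is presumably the content of Hansen's lemma, and your global idempotent built from $D=\prod_i\det(1-t[KxK]\mid \rRG^i_?)$ is a perfectly good way to package the same observation. A second, purely cosmetic, difference: the paper applies Proposition~\ref{dec frechet} once to $M$ and then observes that each term of the complex, being a finite sum of copies of $M_{\gU'}$, inherits the decomposition, whereas you apply it term by term and intersect the resulting affinoids; both give the same $\gU'$ after shrinking.
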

\begin{proof} By Proposition \ref{dec frechet} there exists an admissible $L$-affinoid $\cU' \subset \cU$ containing $\lambda$ such that $M_{\cU'}$ admits a slope $\leqslant h$ decomposition with respect to the endomorphism induced by $x$.
By \cite[Lem.2]{barrera} the cohomology $\rH_{c}^{\bullet}(Y_K, \cM)$ can be 
computed by a bounded complex $\rRG_{c}^{\bullet}(K,M_{\cU'})$ whose terms are compact Fr\'echet $\cO(\cU')$-modules on which the Hecke operator $[KxK]$ acts compactly. 
Thus $\rRG_{c}^{\bullet}(K,M_{\cU'})$ admits a slope $\leqslant h$ decomposition with respect to $[KxK]$ and the proposition follows from \cite[Prop.2.3.2]{hansen}. 
\end{proof}

\begin{definition} \label{def:many-slopes} 
Let $(U_i)_{i \in I}$ be a family of $\cO(\cU)$-linear endomorphisms of an $\cO(\cU)$-module $M$. 
 Given $h_I=(h_i)_{i \in I} \in \Q_{\geqslant 0}^I$ we let 
$M^{\leqslant h_I}$ denote the subspace consisting of elements having slope $\leqslant h_i$ with respect to $U_i$ for all $i\in I$. 
\end{definition}

We fix  $S\subset S_p$ such that $K_p\subset I_S$. 
When condition \eqref{center} is satisfied by $(k,\sw)$, it is also satisfied by all weights in $\cU_S\subset \cX_S$ sufficiently small containing $(k,\sw)$, yielding a sheaf $\cD_{S,\cU_S}$ on $Y_K$.

Considering the family $(\begin{psmallmatrix} \varpi_v & 0 \\ 0 & 1\end{psmallmatrix})_{v\in S}$ of mutually commuting endomorphisms of $D_{S,\cU_S}$ and applying 
 Proposition \ref{prop dec coho} to their product, which is compact, has the following consequence.

\begin{corollary} \label{fg} 
For any $h_S \in \Q_{\geqslant 0}^S$ and any cohomological weight $(k,\sw)$ satisfying \eqref{center}, there exists an admissible affinoid $\cU_S \subset \cX_{S}$ containing $(k,\sw)$ such that $\rH_{c}^\bullet(Y_K, \cD_{S,\cU_S})^{\leqslant h_S}$ is a finitely generated $\cO(\cU_S)$-module, 
 where the slope condition is with respect to the family $(U_{\varpi_v})_{v\in S}$.
\end{corollary}

\subsection{Classicality} For  $S\subset S_p$ and $K \subset G(\A_f)$ as in \S\ref{ss:slope-dec}, 
the map resulting from \eqref{eq:vartheta} 
\begin{align}\label{specialization}
\vartheta_S: \rH^{\bullet}_{c}(Y_K, \cD_{S,(k,\sw)}) \to \rH^{\bullet}_{c}(Y_K,\cL_{k,\sw}^\vee(L)), 
\end{align}
intertwines for $v\in S$ the $U_{\varpi_v}$-action on $\rH^{\bullet}_{c}(Y_K, \cD_{S,(k,\sw)})$ 
with the action of the normalized 
\begin{align}\label{eq:U0}
U_{\varpi_v}^\circ= \left(\prod_{\sigma\in \Sigma_v}\sigma(\varpi_v)^{(k_\sigma+\sw-2)/2} \right)\cdot U_{\varpi_v}\end{align}
 on $\rH^{\bullet}_{c}(Y_K, \cL_{k,\sw}^\vee(L))$. 
We remark that the $U_{\varpi_v}^\circ$-action is independent of $\sw$. 
\begin{thm}\label{t:classicality} Let $h_S=(h_v)_{v \in S}\in \Q_{\geqslant 0}^{S}$ be such that $\displaystyle e_v h_v < \min_{\sigma \in \Sigma_v}(k_\sigma -1)$ for all $v\in S$. 
Then \eqref{specialization}	induces an isomorphism of slope $\leqslant h_S$ subspaces in the sense of 
Def. \ref{def:many-slopes}:	
	\begin{align}\label{e: claissicality}
	\vartheta_S: \rH^{\bullet}_{c}(Y_K, \cD_{S,(k,\sw)})^{\leqslant h_S} \xrightarrow{\sim}\rH^{\bullet}_{c}(Y_K, \cL_{k,\sw}^\vee(L))^{\leqslant h_S},
	\end{align}
	where we consider $\{U_{\varpi_v},v \in S\}$ on the left hand side 
	 and  $\{U_{\varpi_v}^\circ,v \in S\}$ on the right hand side.
\end{thm} 
\begin{proof} If $S=S_p$ the this follows from \cite[Thm.8.7]{barrera-williams}. 
For a general $S$ we will use a partial version of the the locally analytic BGG resolution. 
For $\sigma\in \Sigma_S$  the image of $(k,\sw)$ by a generator of the Weyl group of $G_\sigma$ yields 
a cohomological weight $(k^\sigma, \sw)$, where
$k^\sigma_{\sigma}=2-k_{\sigma}$ and $k^\sigma_{\sigma'}=k_{\sigma'}$ for all $\sigma' \in \Sigma-\{\sigma\}$. The restriction to $A_{S,(k,\sw)}$ of the map introduced in \cite[Prop.3.2.11]{urban} yields an $I_S$-equivariant map $\Theta_{S,\sigma}: A_{S,(k,\sw)}\to A_{S,(k^\sigma, \sw)}$,  whose dual $\Theta_{S, \sigma}^{\vee}: D_{S,(k^\sigma, \sw)} \rightarrow D_{S,(k,\sw)}$ is $I_S$-equivariant as well. From {\it loc. cit.} the cokernel of 
the map 
 \begin{align}\label{e: partial BGG}
 \sum_{\sigma \in \Sigma_S}\Theta_{S, \sigma}^{\vee}
: \bigoplus_{\sigma \in \Sigma_S}D_{S,(k^\sigma, \sw)}
\longrightarrow D_{S,(k,\sw)}, 
\end{align}
is given by the continuous dual of the subspace of locally algebraic functions in $A_{S,(k,\sw)}$. The proof of the Theorem then proceeds exactly as \cite[Thm.8.7]{barrera-williams}
 using the fact that the finite slope parts of the cohomology of algebraic and locally algebraic distributions coincide  (see \cite[Lem.4.3.8]{urban}), and the following computation 
  for $v \in S$ and $\mu \in D_{S,(k^\sigma, \sw)}$:
	\[\Theta_{S, \sigma}^{\vee}((\begin{smallmatrix}\varpi_v & 0 \\ 0 & 1 \end{smallmatrix})\cdot \mu)=\begin{cases} \varpi_v^{-k_{\sigma}+1}(\begin{smallmatrix}\varpi_v & 0 \\ 0 & 1 \end{smallmatrix})\cdot \Theta_{S, \sigma}^{\vee}(\mu) & \text{, if } \sigma \in \Sigma_v, \\
	 (\begin{smallmatrix}\varpi_v & 0 \\ 0 & 1 \end{smallmatrix})\cdot \Theta_{S, \sigma}^{\vee}(\mu)& \text{, if } \sigma \in \Sigma_S\!\setminus \Sigma_v.\end{cases}\qedhere \] 
	\end{proof} 

\begin{remark}The map (\ref{e: partial BGG}) is a locally-analytic analogue of Lepowsky's generalized BGG resolution for $G$ relative to the parabolic subgroup given by $\mathrm{GL}_2$ at the places outside $S$ and the upper triangular matrices at the places in $S$ (see \cite[Thm.4.3]{lepowsky}). On the other hand, in \cite{loeffler2009} the author uses the general Lepowsky's generalized BGG resolution to prove classicality results for his construction of eigenvarieties for reductive algebraic groups whose real points are compact modulo their center. 
\end{remark}

\subsection{Axiomatic control and freeness} \label{ss:axiomatic}
We will generalize a strategy due to Hida, establishing an exact control
theorem and freeness results of the overconvergent cohomology and the Hecke algebra acting on it. The motivating principle is that while, in general, one cannot establish torsion-freeness for cohomology, this can be done when the cohomology is supported in a single degree.
We begin with an adaptation of \cite[Lem.7.1]{hida} to the setting of analytic families. 

Let $A$ be a regular local
ring with maximal ideal $\gm$ and let $\cC$ be
a sub-category of the category of $A$-modules such that if $M$
is in $\cC$ and $I$ is an ideal of $A$ then $M\otimes_A
A/I$ is also in $\cC$. Henceforth $\cH^\bullet$ will denote a cohomology
functor on $\cC$ such that:
\begin{enumerate}
\item $\cH^\bullet$ sends short exact sequences to long
exact ones, and
\item $\cH^\bullet(M)$ is a finitely generated $A$-module for every $M$ in $\cC$.
\end{enumerate}

\begin{lemma}\label{l:hida control}
Suppose $M$ in $\cC$ is $A$-flat and $\cH^\bullet(M\otimes_A A/\gm)$ is supported in
degree $d$. Then 
\begin{enumerate}
\item $\cH^\bullet(M)$ is supported in degree $d$ and
$\cH^d(M)\otimes_{A}A/\gm\simeq \cH^d(M\otimes_A A/\gm)$.
\item $\cH^d(M)$ is $A$-torsion-free. In particular, if $A$ is a discrete valuation ring then $\cH^d(M)$ is a free $A$-module of rank $\dim_{A/\gm}\cH^d(M\otimes_A A/\gm)$. 
\end{enumerate}
\end{lemma}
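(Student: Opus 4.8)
The plan is a dévissage reducing the base ring step by step along a regular system of parameters of $A$; to make the induction self-sustaining I will prove along the way the stronger statement that $\mathcal{H}^d(M)$ is a \emph{free} $A$-module, which contains the asserted torsion-freeness and, combined with the isomorphism in (i), the rank formula in the discrete valuation ring case. I would induct on $n=\dim A$. When $n=0$, $A$ is a field and $\mathfrak{m}=0$, so everything is trivial. For $n\geqslant 1$, choose $t$ in a regular system of parameters, so $\bar A:=A/tA$ is regular local of dimension $n-1$.

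Since $M$ is flat over $A$ (a domain, being regular local), multiplication by $t$ is injective on $M$, giving a short exact sequence $0\to M\xrightarrow{t}M\to M/tM\to 0$ of sheaves on $Y_K$. The sheaf $M/tM=M\otimes_AA/tA$ lies in $\mathcal{C}$, is flat over $\bar A$, and satisfies $(M/tM)\otimes_{\bar A}\bar A/\bar{\mathfrak{m}}=M\otimes_AA/\mathfrak{m}$, so the inductive hypothesis applies to it over $\bar A$: the groups $\mathcal{H}^\bullet(M/tM)$ are concentrated in degree $d$, $\mathcal{H}^d(M/tM)$ is a free $\bar A$-module, and $\mathcal{H}^d(M/tM)\otimes_{\bar A}\bar A/\bar{\mathfrak{m}}\cong\mathcal{H}^d(M\otimes_AA/\mathfrak{m})$. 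Feeding the short exact sequence into the long exact cohomology sequence, for every $i\neq d$ one gets $\mathcal{H}^i(M/tM)=0$, hence multiplication by $t$ is surjective on the finitely generated module $\mathcal{H}^i(M)$; as $t\in\mathfrak{m}$, Nakayama forces $\mathcal{H}^i(M)=0$, which is the first half of (i). The long exact sequence then collapses to
\[0\to\mathcal{H}^d(M)\xrightarrow{t}\mathcal{H}^d(M)\to\mathcal{H}^d(M/tM)\to 0.\]
Setting $N:=\mathcal{H}^d(M)$, this says $t$ is a non-zero-divisor on $N$ and $N/tN\cong\mathcal{H}^d(M/tM)$; tensoring with $\bar A/\bar{\mathfrak{m}}$ and using the inductive isomorphism gives $N\otimes_AA/\mathfrak{m}\cong\mathcal{H}^d(M\otimes_AA/\mathfrak{m})$, completing (i).

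For (ii) I would now upgrade to freeness. By induction $N/tN\cong\mathcal{H}^d(M/tM)$ is free over $\bar A$, of some rank $r$; lifting a basis to $x_1,\dots,x_r\in N$ produces a map $\phi\colon A^r\to N$ with $\phi\otimes_A\bar A$ an isomorphism. Nakayama applied to $\coker\phi$ shows $\phi$ is surjective, and since $t$ is $N$-regular one has $\operatorname{Tor}_1^A(N,\bar A)=0$; tensoring $0\to\ker\phi\to A^r\to N\to 0$ with $\bar A$ then forces $\ker\phi=t\cdot\ker\phi$, whence $\ker\phi=0$ by Nakayama and $N\cong A^r$. Torsion-freeness follows, and over a discrete valuation ring a finitely generated torsion-free module is free of rank $\dim_{A/\mathfrak{m}}(N/\mathfrak{m}N)=\dim_{A/\mathfrak{m}}\mathcal{H}^d(M\otimes_AA/\mathfrak{m})$ by the isomorphism just obtained.

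The Nakayama steps and the $\operatorname{Tor}$-vanishing are routine; the one real decision — and what I see as the crux — is to strengthen the inductive assertion from torsion-freeness to freeness so that the ``lift a basis'' argument can close the loop, together with the (mechanical) check that $M/tM$ genuinely satisfies the hypotheses over $\bar A=A/tA$: one runs the induction for the category of flat $\bar A$-module sheaves lying in $\mathcal{C}$ with $\mathcal{H}^\bullet$ restricted, noting that finite generation over $A$ is finite generation over $\bar A$ via the surjection $A\twoheadrightarrow\bar A$.
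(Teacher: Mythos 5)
Your proof is correct, and it establishes a strictly stronger conclusion than the paper records. The underlying dévissage is the same: strip off one element of a regular system of parameters at a time, use the long exact sequence together with Nakayama to kill $\mathcal{H}^i$ for $i\neq d$, and pass to the resulting short exact sequence in degree $d$. You organize this as an ascending induction on $\dim A$, whereas the paper runs a single descending induction along the filtration $I_0\subset I_1\subset\cdots\subset I_k=\mathfrak{m}$; that is only a difference in bookkeeping. The genuine departure is your strengthening of (ii): by carrying \emph{freeness} of $\mathcal{H}^d$ through the induction and closing the loop with the basis-lifting plus $\operatorname{Tor}_1^A(N,A/tA)=0$ argument, you show $\mathcal{H}^d(M)$ is free over any regular local $A$, not just $A$-torsion-free with freeness asserted only in the discrete valuation ring case. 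This is a real gain: the paper obtains freeness over the rigid localization $\cO(\gU)_\lambda$ only in the subsequent Lemma \ref{l:free rigid}, via an analytic argument that slices into affinoid lines precisely so as to invoke the DVR case of the present lemma; your version delivers that conclusion directly once one knows $\cO(\gU)_\lambda$ is a regular noetherian local ring. The one step you gesture at rather than spell out — that the category of flat $\bar A$-module sheaves lying in $\mathcal{C}$, with $\mathcal{H}^\bullet$ restricted, satisfies the hypotheses of the lemma over $\bar A=A/tA$ (closure under $\otimes_{\bar A}\bar A/J$, finite generation over $\bar A$) — is indeed mechanical and you identify it correctly.
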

\begin{proof} Let $T_1, T_2,\ldots, T_k$ be a regular sequence in $A$ and consider the 
filtration
\[I_0=0\subset I_1=(T_1)\subset I_2=(T_1,T_2)\subset\ldots
\subset I_k=(T_1,\ldots, T_k)=\gm.\]

For $i\neq d$ we will prove by descending induction on $r$ that
$\cH^{i}(M\otimes_{A}A/I_r)=0$. The base case $r=k$ follows from the
hypothesis. By flatness, we have a short exact sequence \[0\to M\otimes A/I_{r-1}\xrightarrow{\cdot
 T_r}M\otimes A/I_{r-1}\to M\otimes A/I_{r}\to 0.\]
 The corresponding long exact sequence yields an injection
\[\cH^{i}(M\otimes A/I_{r-1})\otimes_{A/I_{r-1}}A/I_r\hookrightarrow \cH^{i}(M\otimes A/I_{r}).\]
By the inductive hypothesis we have $\cH^{i}(M\otimes A/I_{r-1})\otimes_{A/I_{r-1}}A/I_r=0$. 
Since the $A/I_{r-1}$-module $\cH^{i}(M\otimes A/I_{r-1})$ is finitely generated, Nakayama's lemma yields $\cH^{i}(M\otimes A/I_{r-1})=0$.
Finally, let $i=d$. The long exact sequence and the vanishing result in degree $d+1$ yield
\[\cH^d(M\otimes A/I_{r-1})\otimes_{A/I_{r-1}}A/I_r\simeq \cH^d(M\otimes A/I_r),\]
and concatenating these isomorphisms for $1\leqslant r\leqslant k$ yields
part (i).

For part (ii), note that it suffices to show that $\cH^d(M)$ has no $T$-torsion where
the non-zero divisor $T$ can be assumed to be $T_1$. The arguments of part (i) imply that multiplication by $T_r$ is injective on $\cH^d(M\otimes A/I_{r-1})$ by descending induction on $r$. The case $r=1$ yields that multiplication by $T$ is injective on $\cH^d(M)$ as desired. Finally, when $A$ is a discrete valuation ring the module $\cH^d(M)$ is free of rank $\dim_{A/\gm}\cH^d(M)\otimes A/\gm=\dim_{A/\gm} \cH^d(M\otimes A/\gm)$.
\end{proof}

We will now apply the abstract paradigm of Lemma \ref{l:hida
 control} to the setting
of overconvergent sheaves and Hecke algebras. 
The rigid localization of an admissible $L$-affinoid $\cU\subset \cX$ at a point 
$\lambda\in \cU(L)$ is defined as $\cO(\cU)_\lambda=
\displaystyle \varinjlim_{\lambda\in\cU'\subset
 \cU}\cO(\cU')$, where the limit is taken over all admissible open sub-affinoids $\cU'$
 in $\cU$ containing $\lambda$. It is a local ring which contains the algebraic localization of $\cO(\cU)$ at the maximal ideal $\gm_\lambda$ at $\lambda$. For an $\cO(\cU)$-module $\cF$ we let 
 \begin{align}\label{eq:rigid-loc}
 \cF_\lambda = \cF\otimes_{\cO(\cU)}\cO(\cU)_\lambda=
 \displaystyle \varinjlim_{\lambda\in\cU'\subset \cU}(\cF\otimes_{\cO(\cU)}\cO(\cU')). 
\end{align}
 
\begin{lemma}\label{l:delocalization}
Let $\cF$, $\cG$ be finitely generated $\cO(\cU)$-modules. If there exists an $\cO(\cU)_\lambda$-linear isomorphism $\cF_{\lambda}\xrightarrow{\sim} 
\cG_{\lambda}$, then there exists an admissible open sub-affinoid
 $\cU'\subset \cU$ containing $\lambda$ such that $\cF\otimes_{\cO(\cU)} \cO(\cU')\simeq
\cG\otimes_{\cO(\cU)} \cO(\cU')$. 
In particular, if $\cF_\lambda$ is free over $\cO(\cU)_\lambda$, then there exists an admissible open affinoid $\cU'\subset \cU$ containing $\lambda$ such that $\cF\otimes_{\cO(\cU)}\cO(\cU')$ is $\cO(\cU')$-free.
\end{lemma}
\begin{proof}
Suppose $f_1,\ldots, f_M$ generate $\cF$
as an $\cO(\cU)$-module. Let
$\phi:\cF_{\lambda}\to
\cG_\lambda$ be the given isomorphism. The
map $\phi$ is uniquely determined by the elements
$\phi(f_1),\ldots,\phi(f_M)\in
\cG_\lambda$. Let $\cU'$ be an
admissible neighborhood of $\lambda$ in $\cU$ over which the
elements $\phi(f_i)$ are all defined. By
$\cO(\cU')$ linearity we get a homomorphism
$\phi_{\cU'}:\cF\otimes_{\cO(\cU)}\cO(\cU')\to \cG\otimes_{\cO(\cU)}\cO(\cU')$.
Since $\cO(\cU')$ is noetherian, $\ker(\phi_{\cU'})$ and $\coker(\phi_{\cU'})$
are finitely generated $\cO(\cU')$-modules, whose localizations at $\lambda$ vanish. 
It follows that, by further shrinking $\cU'$, one may ensure that generators for $\ker(\phi_{\cU'})$ and $\coker(\phi_{\cU'})$ vanish, 
proving that $\phi_{\cU'}$ is an isomorphism. The last statement follows by taking $\cG$ to be the free $\cO(\cU)$-module of rank equal to the rank of $\cF_\lambda$.
\end{proof}

Our final abstract lemma is an application of Lemma
 \ref{l:hida control} to rigid spaces. Fix $S \subset S_p$. 

\begin{lemma}\label{l:free rigid}
Let $A=\cO(\cU)_\lambda$, where $\cU\subset \cX_S$ is an admissible neighborhood 
of $\lambda$ and suppose that $\cH^\bullet(\cD_{S,\lambda})$ is supported in degree $d$. 
Then, after possibly shrinking $\cU$, $\cH^d((\cD_{S,\cU})_\lambda)$ is $\cO(\cU)_\lambda$-free.
\end{lemma}
\begin{proof} Since $(\cD_{S,\cU})_\lambda$ is $\cO(\cU)_\lambda$-flat, applying Lemma \ref{l:hida control}(i)
yields an isomorphism of $\cO(\cU)_\lambda$-modules $\cH^d((\cD_{S,\cU})_\lambda)
\otimes_{\cO(\cU)_\lambda} L\xrightarrow{\sim} \cH^d(\cD_{S,\lambda})$. It follows then from Nakayama's lemma that the module $\cH^d((\cD_{S,\cU})_\lambda)$ can be
generated over $\cO(\cU)_\lambda$ by $r=\dim_L \cH^d(\cD_{S,\lambda})$
generators $m_1,\ldots, m_r$. 

Suppose there exists a relation $f_1m_1+\cdots +f_rm_r=0$ with
$f_1,\ldots, f_r\in \cO(\cU)_\lambda$ not all $0$. Then, for example, $f_1\in \cO(\cU')\!\setminus\{0\}$ for some closed polydisc $\cU'\subset\cU$ containing $\lambda$. Since $f_1$ is analytic, there exists a $1$-dimensional disk $\cV\subset \cU'$ such that the image of $f_1$ in $\cO(\cV)$
is non-zero, yielding a dependence relation of $m_1,\ldots, m_r$ over the discrete valuation ring $\cO(\cV)_\lambda$. 
This contradicts Lemma \ref{l:hida control}(ii) since the $\cO(\cV)_\lambda$-rank of
$\cH^d(\cD_{S,\cV}\otimes \cO(\cV)_\lambda)$ would be $\leqslant r-1$. 
\end{proof}

\subsection{Etaleness at non-critical points}\label{subsection-controls}
Our main interest lies with compactly supported cohomology, which is not supported in middle
degree. To account for this, we will localize at the maximal ideal defined by $\widetilde{\pi}$
 and will obtain etaleness at non-critical points in both full and partial $p$-adic families.

Let $S\subset S_p$ and let $\widetilde{\pi}_S=(\pi,\{\nu_v\}_{v\in S})$ be a regular $S$-refinement of $\pi$ 
(see Def. \ref{d:hilbert-nearly}).

Henceforth we let $K=K(\widetilde{\pi}_S,\gu)$ (see Def. \ref{d:newline}).
By Corollary \ref{fg} for any $h_S \in \Q_{\geqslant 0}^S$ there exists an $L$-affinoid neighborhood $\cU_S$ of $(k,\sw)$ in $\cX_{S}$
such that $\rH_c^d(Y_K, \cD_{S,\cU_S})^{\leqslant h_S}$ is a finitely generated $\cO(\cU_S)$-module. 
Since $\cO(\cU_S)$ is noetherian the $\cO(\cU_S)$-algebra $\T_{S,\cU_S}^{\leqslant h_S}$, generated by the image of $\widetilde\T_S$ (see Def. \ref{d:mpi-tilde}) in $\End_{\cO(\cU_S)}(\rH_c^d(Y_K,\cD_{S,\cU_S})^{\leqslant h_S})$, is finite. 
For {\it any} sub-affinoid $\cU'_S\subset \cU_S$ containing $(k,\sw)$ consider the maximal ideal 
of $\widetilde\T_S\otimes_{E,\iota_p} \cO(\cU'_S)$ generated by $\gm_{\widetilde{\pi}_S}$ and by 
$\gm_{(k,\sw)}$ and, by an abuse of notation, let  $\gm_{\widetilde{\pi}_S}$ denote its image in $\T_{S,\cU'_S}^{\leqslant h_S}$, as well as the corresponding maximal ideal of the rigid analytic localization $(\T_{S,\cU'_S}^{\leqslant h_S})_{(k,\sw)}$ (see \eqref{eq:rigid-loc}). 
The rigid localization of $\Sp(\T_{S,\cU_S}^{\leqslant h_S})$ at the point $\widetilde{\pi}_S$
corresponding to the maximal ideal $\gm_{\widetilde{\pi}_S}\subset \T_{S,\cU_S}^{\leqslant h_S}$ is given by 
 the limit 
 \[ (\T_{S,\cU_S}^{\leqslant h_S})_{\widetilde{\pi}_S}\simeq \varinjlim_{\widetilde{\pi}_S\in\cV }\cO(\cV)\] 
 over all admissible neighborhoods $\cV$ of $\widetilde{\pi}_S$ in $\Sp(\T_{S,\cU_S}^{\leqslant h_S})$.
 The weight map $\kappa: \Sp(\T_{S,\cU_S}^{\leqslant h_S})\to \cU_S$ induces a ring homomorphism $\cO(\cU_S)_{(k,\sw)}\to (\T_{S,\cU_S}^{\leqslant h_S})_{\widetilde{\pi}_S}$. 
  More generally, given a $\T_{S,\cU_S}^{\leqslant h_S}$-module $\cF$ we let $\cF_{\widetilde{\pi}_S} = 
  \cF\otimes_{\T_{S,\cU_S}^{\leqslant h_S}}(\T_{S,\cU_S}^{\leqslant h_S})_{\widetilde{\pi}_S}$. 
 The natural map $\cF_{(k,\sw)} \to  \cF_{\widetilde{\pi}_S}$ induces an isomorphism  
   \begin{align}\label{eq:rigid-bis-loc}
  (\cF_{(k,\sw)})_{\gm_{\widetilde{\pi}_S}}  \xrightarrow{\sim}  \cF_{\widetilde{\pi}_S} . 
\end{align}
 
\begin{definition} \label{d:non-critical}
We say that $\widetilde{\pi}_S$ is non-critical if the $\gm_{\widetilde{\pi}_S}$-localization 
\[\vartheta_S: \rH_c^\bullet(Y_K,\cD_{S,(k,\sw)})_{\gm_{\widetilde{\pi}_S}}
\to \rH^{\bullet}_{c}(Y_K,\cL_{k,\sw}^\vee(L))_{\gm_{\widetilde{\pi}_S}}\]
of \eqref{specialization} is an 
isomorphism. When $S=S_p$ we let $\T_{\cU}=\T_{S_p,\cU_{S_p}}$ and will say that $\widetilde{\pi}$ is {\it non-critical}.
\end{definition}

Theorem \ref{t:classicality} applied to $h_S=(h_{\widetilde{\pi}_v})_{v\in S}$ (see Def. \ref{d:non-critical-slope}) has the following direct consequence. 

\begin{corollary}\label{p:non-criticality}
If $\widetilde{\pi}_S$ has non-critical slope, then $\widetilde{\pi}_S$ is non-critical. 
\end{corollary}

 The main result of this section is the following control and freeness theorem for compactly supported cohomology. 
Fix a character  $\epsilon$ of $\{\pm 1\}^\Sigma$. 

 \begin{thm} \label{free-etale}
Suppose that $\widetilde{\pi}_S$ is non-critical. Then, 
after possibly shrinking $\cU_S\subset \cX_S$:
\begin{enumerate}
\item $\rH^\bullet_c(Y_K, \cD_{S,\cU_S})^{\epsilon}_{\widetilde{\pi}_S}$ is a free $\cO(\cU_S)_{(k,\sw)}$-module of rank 1 and is
supported in degree $d$. 
\item The weight map $\kappa: \Sp(\T_{S,\cU_S}^{\leqslant h_S})\to \cU_S$ is etale at $\widetilde{\pi}_S$, {\it i.e.}, 
there exists an irreducible component $\cV_S$ of $\Sp(\T_{S,\cU_S}^{\leqslant h_S})$ containing $\widetilde{\pi}_S$ such that $\kappa: \cV_S\to \cU_S$ is an isomorphism of affinoids. Moreover 
$\rH^\bullet_c(Y_K, \cD_{S,\cU_S})^{\epsilon,\leqslant h_S}\otimes_{\T_{S,\cU_S}^{\leqslant h_S}}\cO(\cV_S)$ is supported in degree $d$ and is free of rank 1 over $\cO(\cU_S)$.
 \item For any cohomological weight $\lambda\in \cU_S$, $\kappa^{-1}(\lambda)\in \cV_S$ corresponds to a non-critically $S$-refined weight $\lambda$ cuspidal automorphic representation $\widetilde{\pi}_{\lambda,S}$ of $G(\A)$. 
\end{enumerate}
\end{thm}

 \begin{proof} 
 (i) By non-criticality and cuspidality $\rH^\bullet_c(Y_K,\cD_{S,(k,\sw)})^{\epsilon}_{\gm_{\widetilde{\pi}_S}} \xrightarrow{\sim}
 \rH_c^\bullet(Y_K,\cL_{k,\sw}^\vee(L))^\epsilon_{\gm_{\widetilde{\pi}_S}}$
 is supported in degree $d$ and has dimension equal to 
 $\dim (\pi_f^K)_{\gm_{\widetilde{\pi}_S}}=1$  (see Lemma \ref{l:multiplicity one}). 
Applying Lemma \ref{l:hida control}(i) and Lemma \ref{l:free rigid} to the cohomology functor $\cH^\bullet(-)=\rH^\bullet_c(Y_K, -)^{\epsilon}_{\gm_{\widetilde{\pi}_S}}$ yields that 
 \[ \rH^\bullet_c(Y_K,(\cD_{S,\cU_S})_{(k,\sw)})^{\epsilon}_{\gm_{\widetilde{\pi}_S}}\xrightarrow{\sim}
\rH^\bullet_c(Y_K, \cD_{S,\cU_S})^{\epsilon}_{\widetilde{\pi}_S}
 \]
 is also supported in degree $d$ and is $\cO(\cU_S)_{(k,\sw)}$-free of rank $1$ (see \eqref{eq:rigid-bis-loc})

(ii) It follows from (i) that the natural map $\cO(\cU_S)_{(k,\sw)}\to (\T_{S,\cU_S})_{\widetilde{\pi}_S}$ is an isomorphism
and that $\rH^\bullet_c(Y_K, \cD_{S,\cU_S})^{\epsilon}_{\widetilde{\pi}_S}$ has rank $1$ and is supported in degree $d$.  Both claims then follow straightforwardly using Lemma \ref{l:delocalization}.

(iii) After shrinking $\cU_S$, we may assume that any cohomological $\lambda\in \cU_S\!\setminus \{(k,\sw)\}$ is non-constant, {\it i.e.} 
$k_\lambda\ne 2t$, and such  that $e_v h_{\widetilde{\pi}_v}<\min\limits_{\sigma\in \Sigma_v}(k_{\lambda,\sigma}-1)$ for all  $v\in S$. 
Let $\gm\subset \widetilde\T_S$ be the maximal ideal corresponding to the map 
$\widetilde\T_S\to \mathbb{T}_{S,\cU_S}^{\leqslant h_S}\to \cO(\cV_S) \to L$ induced by $\kappa^{-1}(\lambda)$. 
Using the same abuse of notation for $\gm$ as we did for $\gm_{\widetilde{\pi}_S}$ (see the paragraph above 
 \eqref{eq:rigid-bis-loc}), (ii) yields an isomorphism 
\[ \rH_c^\bullet(Y_K, (\cD_{S,\cU_S})_\lambda)_{\gm}^{\epsilon} \xrightarrow{\sim}
\rH_c^\bullet(Y_K, \cD_{S,\cU_S})^{\epsilon,\leqslant h_S}\otimes_{\mathbb{T}_{S,\cU_S}^{\leqslant h_S}}\cO(\cV_S)_{\kappa^{-1}(\lambda)} = \rH_c^\bullet(Y_K, \cD_{S,\cU_S})^{\epsilon}_{\kappa^{-1}(\lambda)}\] 
of free $\cO(\cU_S)_{\lambda}$-modules of rank $1$ supported in degree $d$. 
By using the long exact sequences from the proof of Lemma \ref{l:hida control} for the functor 
$\cH^\bullet(-)=\rH_c^\bullet(Y_K,-)_{\gm}^{\epsilon}$ one can perform this time an {\it ascending}  induction showing that
the $L$-vector space $\rH_c^\bullet(Y_K, \cD_{S,\lambda})^{\epsilon}_{\gm}$ is $1$-dimensional and supported in degree $d$. 
The non-critical slope assumption on $\lambda$ implies, via Theorem \ref{t:classicality}, that
\[\rH_c^\bullet(Y_K,\cD_{S,\lambda})^{\epsilon}_{\gm} \xrightarrow{\sim} 
\rH_c^\bullet(Y_K, \cL_{\lambda}^\vee(L))^{\epsilon}_{\gm} \]
is concentrated is degree $d$ and is one dimensional, and therefore $\kappa^{-1}(\lambda)$ corresponds to a non-critically $S$-refined automorphic representation $\widetilde{\pi}_{\lambda,S}$ of $G(\A)$ of weight $\lambda$ (see  Corollary \ref{p:non-criticality}). 
Finally,   $\pi_\lambda$ is necessarily cuspidal as  otherwise it would necessarily be an Eisenstein series and contribute to the usual cohomology
in all degrees between $d$ and  $2d-1$, hence in 
all degrees between $1$ and  $d$ of the compactly supported one (see, for example, \cite{MSYZ}). 
\end{proof}

 \begin{remark} Our construction of partial eigenvarieties will be crucially used in \S\ref{s:ezc} for the calculations of higher order derivatives of $p$-adic $L$-functions.  In the literature we find other examples of the use of partial eigenvarieties in arithmetic applications. For example in \cite{johansson-newton} the authors use partial eigenvarieties for definite quaternion algebras to study the parity conjecture for Hilbert modular forms. Another example is \cite{chenevier:application} where partial eigenvarieties for definite unitary groups are used to attach Galois representations to conjugate self-dual automorphic representations of $\mathrm{GL}_n$ over CM fields.  
\end{remark}

\section{Automorphic symbols and $p$-adic distributions}\label{automorphic-symbols} 

In this section we use the automorphic cycles introduced in \cite{dimitrov} to construct, for any ideal $\gf\subset \cO_F$
supported in $S\subset S_p$, natural evaluations on the cohomology of the Hilbert modular variety with general coefficients, and show that they satisfy certain distribution relations as $\gf$ varies. When 
applied to a finite slope $U_p$-eigenclass in overconvergent cohomology the construction yields a distribution of controlled growth on $\Gal_{p\infty}$ in a canonical way, {\it i.e.}, independent of choices of uniformizers or representatives in idele class groups. This will allow us in \S \ref{s:p-adic-L} to attach  $p$-adic $L$-functions to nearly finite slope families of Hilbert cusp forms, generalizing \cite{barrera}. 

When the support of the ideal $\gf$ is small, a case not previously considered, the relations established when $S$ varies 
will be used in \S\ref{ss:improved} to construct partially ``improved'' $p$-adic $L$-functions. 
 
 For  $S\subsetneq S_p$ our construction 	allows us to attach to 	a family  $\cV_S\xrightarrow{\sim} \cU_S$ containing a 
 non-critical $S$-refinement $\widetilde{\pi}_S$ (see Thm. \ref{free-etale}), 
 a rather mysterious $\cO(\cU_S)$-valued distribution on $\Gal_{S\infty}$.

\subsection{Fundamental classes and automorphic cycles}\label{ss:fundamental}
For an ideal $\gf$ the $d$-dimensional open manifold
$X^+_{\gf}=E(\gf)\!\setminus F_\infty^{\times+}$ is
orientable with orientation induced by an orientation on
$F_\infty^{\times+}$. The Borel-Moore homology
$\rH_i^{\BM}(X^+_{\gf})$ can be computed as the
relative homology $\rH_i({\overline{X}}^+_{\gf},
{\overline{X}}^+_{\gf}\!\setminus
X^+_{\gf})$ where
${\overline{X}}^+_{\gf}$ is the two-point
compactification of $X^+_{\gf}$ \cite[Def.1.5]{dimitrov}. When $i=d$,
$\rH_d^{\BM}(X^+_{\gf})\simeq \Z$ and one can
choose a fundamental class $\theta_{\gf}=[X^+_{\gf}]\in
\rH_d^{\BM}(X^+_{\gf})$ in a compatible way as $\gf$ varies, such that for all $\gf\mid
\gf'$ the finite map $\pi:X_{\gf'}^+\to X^+_{\gf}$ induces  a commutative diagram:
\[\xymatrix{
	\rH_d^{\BM}(X^+_{\gf})\ar[rr]^-{\sim}\ar@<-.5ex>[d]_{\pi^*}&&\Z\ar@<-.5ex>[d]_{\id}\\
	\rH_d^{\BM}(X_{\gf'}^+)\ar@<-.5ex>[u]_{\pi_*}\ar[rr]^-{\sim}&&\Z\ar@<-.5ex>[u]_{{[E(\gf):E(\gf')]\cdot }}
}\]
In top degree the map $\rH_d^{\BM}(X^+_{\gf})\to \rH_c^d(X^+_{\gf})^\vee$ sending $\theta$ to $c\mapsto \displaystyle c\cap \theta=\int_{\theta}c$ is an isomorphism. When $d\geqslant 2$ one has $\rH_d^{\BM}(X^+_{\gf})=\rH_d({\overline{X}}^+_{\gf}, {\overline{X}}^+_{\gf}\!\setminus X^+_{\gf})\simeq \rH_d({\overline{X}}^+_{\gf})$. We let 
 \[X_{\gf}=\A_F^{ \times}/F^{\times} U(\gf).\]
 For any $\eta\in \A_F^{\times}$ representing a class $[\eta]\in \Cl(\gf)\simeq \pi_0(X_{\gf})$ we denote by
$X_{\gf}[\eta]$ the connected component of $\eta$ in $X_{\gf}.$
The map $X^+_{\gf} \xrightarrow{\cdot \eta}X_{\gf}[\eta]$
yields an isomorphism $\rH_d^{\BM}(X^+_{\gf})\xrightarrow{\eta_*}
\rH_d^{\BM}(X_{\gf}[\eta])$ independent of the choice of
the representative $\eta$. Hence $\theta_{\gf}$ yields a fundamental class
\begin{align}\label{eq:theta}
\theta_{\gf,[\eta]}=\eta_*(\theta_{\gf})\in
\rH_d^{\BM}(X_{\gf}[\eta]).
\end{align}
In this entire section $S\subset S_p$, $\gf\subset\cO_{F}$ is an ideal supported in $S$ and 
$K\subset G(\A_f)$ is an open compact subgroup containing 
$\begin{psmallmatrix} U(\gf) & \widehat\cO_F \\ 0 & 1\end{psmallmatrix} $
such that the image $K_p$ of $K$ in $G(\Q_p)$ is contained in $I_S$. 

We define the {\it automorphic cycle of level} $\gf$ as 
\[ C_{\varpi_{\gf},K}: X_\gf \longrightarrow Y_K \text{ , } 
y\mapsto 
\left[\begin{psmallmatrix} y & 0 \\ 0 & 1\end{psmallmatrix}
\begin{psmallmatrix} \varpi_{\gf} & 1_{\gf} \\ 0 & 1\end{psmallmatrix}\right]. \]
which is a well-defined, continuous, finite map (see \cite[Cor. 1.23]{dimitrov-pune}).
For any $\eta\in \A_F^{\times}$ we define $C_{\varpi_{\gf},K}^{\eta}:{\overline{X}}^+_{\gf}\to Y_K$ as the composition of the maps $C_{\varpi_{\gf},K}$ and $X^+_{\gf} \xrightarrow{\cdot \eta}X_{\gf}$.

When $K$ is clear from context it will be dropped from the notation of the automorphic cycle.

\subsection{Evaluations}\label{evaluations}
We will axiomatize and generalize some constructions from \cite{dimitrov} and \cite{barrera} on the evaluations of automorphic cycles on the cohomology of the Hilbert modular varieties. 

Let $M$ be a left $\varLambda_S$-module (see \eqref{eq:Lambda-S}) and let $\cM$ be the sheaf on $Y_K$ attached 
as in \S\ref{local-sys} to $M$ on which $K$ acts via $K_p\subset I_S \subset \varLambda_S$. The pullback by $C_{\varpi_{\gf}}$ induces a homomorphism: 
\begin{align}\label{ev1} 
C_{\varpi_{\gf}}^*: \rH_{c}^{d}(Y_K, \cM) \to \rH_{c}^{d}(X_\gf,C_{\varpi_{\gf}}^*\cM).
\end{align}

Since  $ \begin{psmallmatrix} \varpi_{\gf} & 1_{\gf} \\ 0 & 1\end{psmallmatrix}\begin{psmallmatrix} u & (u- 1)1_{\gf}\varpi_{\gf}^{-1} \\ 0 & 1 \end{psmallmatrix}=
\begin{psmallmatrix} u & 0 \\ 0 & 1 \end{psmallmatrix}
\begin{psmallmatrix} \varpi_{\gf} & 1_{\gf} \\ 0 & 1\end{psmallmatrix}$ one sees that 
$C_{\varpi_{\gf}}^*\cM$ is the sheaf of locally constant sections of the local system: 
\begin{align}\label{loc-sys-1} 
C_{\varpi_{\gf}}^*M= F^{\times}\!\setminus (\A_{F}^{\times}\times M) / U(\gf) \to X_\gf,
\end{align}
where $\xi(y, m) u= (\xi y u, \begin{psmallmatrix} u & (u- 1)1_{\gf}\varpi_{\gf}^{-1} \\ 0 & 1 \end{psmallmatrix} ^{-1}\cdot m)$ for $\xi \in F^{\times}$, $y \in \A_{F}^{\times}$, $m \in M$ and $u \in U(\gf)$.

Let $\cM_\gf$ be the sheaf of locally constant sections of the local system: 
\begin{align}\label{loc-sys-2} 
F^{\times}\!\setminus (\A_{F}^{\times}\times M) / U(\gf) \to X_\gf,
\end{align}
where $\xi(y, m) u= (\xi y u, \begin{psmallmatrix} u & 0 \\ 0 & 1 \end{psmallmatrix} ^{-1}\cdot m)$ for $\xi \in F^{\times}$, $y \in \A_{F}^{\times}$, $m \in M$ and $u \in U(\gf)$.

Since the image of 
$\begin{psmallmatrix}\varpi_{\gf} & 1_{\gf} \\ 0 & 1 \end{psmallmatrix}$ in $G(\Q_p)$ belongs to $\varLambda_S$, one can consider the map:
\[\A_{F}^{\times}\times M \to \A_{F}^{\times}\times M, \,
(y, m)\mapsto (y,\begin{psmallmatrix}\varpi_{\gf} & 1_{\gf} \\ 0 & 1 \end{psmallmatrix}\cdot m),\]
which sends the local system \eqref{loc-sys-1} to the one from \eqref{loc-sys-2}. The resulting homomorphism of sheaves 
$\tw_{\varpi_{\gf}}: C_{\varpi_{\gf}}^*\cM \to \cM_\gf$ over $X_\gf$ yields a homomorphism of cohomology groups
\begin{align}\label{ev2} 
\tw_{\varpi_{\gf}}: \rH_{c}^{d}(X_\gf,C_{\varpi_{\gf}}^*\cM) \to \rH_{c}^{d}(X_\gf,\cM_\gf).
\end{align} 

Let $M_{E(\gf)}$ denote the $E(\gf)$-coinvariants of $M$. Consider the sheaf $\cM_{E(\gf)}$ 
attached to the local system
$F^{\times}\!\setminus \A^\times_F\times M_{E(\gf)}/U(\gf)$ with $\xi(y,m)u=(\xi yu, \begin{psmallmatrix}u&\\ &1 \end{psmallmatrix}^{-1}\cdot m)$. There is a natural map
\[\coinv_{\gf}:\rH_c^d(X_{\gf},\cM_{\gf})\to
\rH^d_c(X_{\gf}, \cM_{E(\gf)}).\]
Trivializing the sheaf $ \cM_{E(\gf)}$ requires to choose a representative $\eta\in \A_F^{\times}$ 
of $[\eta]\in \Cl(\gf)$. Then 
\[ \triv_\eta: X_{\gf}[\eta] \times M_{E(\gf)}\to \left(\cM_{E(\gf)}\right)_{|X_{\gf}[\eta]}, \quad 
(a\eta u_\infty u,m)\mapsto (a\eta u_\infty
u, \begin{psmallmatrix}u^{-1}&\\ &1 \end{psmallmatrix}\cdot m)\]
is an isomorphism of local systems yielding the desired trivialization map
\[\triv_\eta^*:\rH^d_c(X_{\gf}[\eta], \cM_{E(\gf)})\to \rH^d_c(X_{\gf}[\eta], \Z)\otimes M_{E(\gf)}\]
Capping with $\theta_{\gf, [\eta]}$ from \eqref{eq:theta} yields an isomorphism $\rH^d_c(X_{\gf}[\eta], \Z)\otimes M_{E(\gf)}\simeq M_{E(\gf)}$. We define 
\begin{align}\label{ev}
\ev_{\varpi_{\gf}}^\eta(M)=(-\cap \theta_{\gf,[\eta]})\circ \triv_\eta^*\circ \coinv_{\gf}\circ\tw_{\varpi_{\gf}}
\circ \, C_{\varpi_{\gf},K}^*:\rH^{d}_{c}(Y_K, \cM) \to M_{E(\gf)}.
\end{align}

Where $M$ is clear from context we will drop it from the notation of the evaluation map.

\begin{lemma} \label{l:ev-functorial}The evaluation maps $\ev_{\varpi_{\gf}}^{\eta}$ are covariant in $M$, in the sense that if $\vartheta: M\to M'$ is a morphism of left $\varLambda_S$-modules then one has $\vartheta\circ \ev_{\varpi_{\gf}}^{\eta}(M)= \ev_{\varpi_{\gf}}^{\eta}(M')\circ \vartheta$. 
\end{lemma}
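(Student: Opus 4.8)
The plan is to unwind the definition \eqref{ev} of $\ev_{i_\gf}^\eta(M)$ as the composite $(-\cap\theta_{[\eta]})\circ\triv_\eta^*\circ\coinv_{\gf}\circ\tw_{i_{\gf}}\circ C_{i_\gf,K}^*$ and to check that each of the five constituent maps is natural in the coefficient module with respect to morphisms of left $\Lambda_p$-modules; the lemma then follows by pasting the five resulting commutative squares. Since $\vartheta\colon M\to M'$ is in particular $K$-equivariant (as $K_p\subset I_p\subset\Lambda_p$), the second construction of \S\ref{local-sys} turns it into a morphism of sheaves $\cM\to\cM'$ on $Y_K$, and pullback of sheaves is functorial, which handles $C_{i_\gf,K}^*$ of \eqref{ev1}; likewise $\vartheta$ induces compatible morphisms $C_{i_\gf}^*\cM\to C_{i_\gf}^*\cM'$ and $\cM_\gf\to\cM'_\gf$ of the local systems \eqref{loc-sys-1} and \eqref{loc-sys-2} on $X_\gf$.

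For the twist \eqref{ev2}: by its very construction $\tw_{i_\gf}$ comes from the map of local systems $(y,m)\mapsto\bigl(y,\begin{psmallmatrix}i_\gf & 1_\gf\\ 0 & 1\end{psmallmatrix}\cdot m\bigr)$, whose image in $G(\Q_p)$ lies in $\Lambda_p$, so the relevant square commutes precisely because $\vartheta$ is $\Lambda_p$-equivariant; this is the one place where $\Lambda_p$-equivariance, as opposed to mere $K$- or $I_p$-equivariance, is essential. Next, $\vartheta$ descends to the $E(\gf)$-coinvariants, $M_\gf^+\to (M')_\gf^+$, because the $E(\gf)$-action on $M$ entering the definition of $M_\gf^+$ is realized by matrices of the form $\begin{psmallmatrix}\varepsilon_p & 0\\ 0 & 1\end{psmallmatrix}\in\Lambda_p$ (as is implicit in the well-definedness verification preceding \eqref{ev}), whence $\coinv_{\gf}$ is natural. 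The trivialization $\triv_\eta$ is defined using the matrices $\begin{psmallmatrix}u^{-1} & 0\\ 0 & 1\end{psmallmatrix}\in\Lambda_p$, so $\vartheta$ commutes with it on the level of local systems and hence of cohomology; and finally the cap product $-\cap\theta_{[\eta]}$ against the coefficient-independent fundamental class $\theta_{[\eta]}\in\rH_d^{\BM}(X_\gf[\eta])$ of \eqref{eq:theta} is manifestly natural in the abelian-group coefficients $M_\gf^+$.

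No genuine difficulty is anticipated: the argument is a diagram chase built on the naturality of pullback, coinvariants, trivialization and cap product. The only point deserving explicit care is the compatibility of $\vartheta$ with the $E(\gf)$-coinvariance in the third step, which simply requires recording which elements of $\Lambda_p$ implement the $E(\gf)$-action on $M$; everything else is formal.
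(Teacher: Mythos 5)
Your proof is correct and takes the only reasonable route: the paper states Lemma~\ref{l:ev-functorial} without proof, treating it as a routine verification, and your unwinding of \eqref{ev} into the five maps of \eqref{ev1}, \eqref{ev2}, $\coinv_{\gf}$, $\triv_\eta^*$, and the cap product, followed by a check that each commutes with $\vartheta$, is exactly the verification the authors have in mind. You have also correctly isolated the one step where $\Lambda_p$-equivariance (as opposed to mere $I_p$-equivariance) is genuinely needed — the twist $\tw_{i_\gf}$, since $\begin{psmallmatrix}i_\gf & 1_\gf\\ 0 & 1\end{psmallmatrix}$ lies in $\Lambda_p$ but typically not in $I_p$ — whereas the coinvariants and trivialization only involve the diagonal torus in $I_p$.
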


It follows from \eqref{eq:U0} that there is commutative diagram
	\begin{align}\label{eq:ev-functorial}
	\xymatrix{ \rH^{d}_{c}(Y_K, \cD_{S,(k,\sw)}) \ar[d]^{\vartheta_S} \ar[rrrr]^-{\varpi_{\gf}^{((2-\sw)t-k)/2}\cdot 
\ev_{\varpi_{\gf}}^{\eta}(D_{S,(k,\sw)})} & &&& (D_{S,(k,\sw)})_{E(\gf)} \ar[d]^{\vartheta_S} \\
		\rH^{d}_{c}(Y_K, \cL_{k,\sw}^\vee(L)) \ar[rrrr]^-{\ev_{\varpi_{\gf}}^{\eta}(L_{k,\sw}^\vee(L))} & &&& L_{k,\sw}^\vee(L)_{E(\gf)}.} 
	\end{align}

\begin{lemma}\label{l:ev-eta}
	For $\eta\in \A_F^{\times}$ and $\eta'\in F^{\times} \eta u F_\infty^{\times+}$ with $u\in U(\gf)$ we have 
	$\ev_{\varpi_{\gf}}^{\eta'}= \begin{psmallmatrix}u_p^{-1}& \\ & 1 \end{psmallmatrix}\cdot \ev_{\varpi_{\gf}}^{\eta}$.
\end{lemma}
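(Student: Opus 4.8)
The plan is to isolate the parts of the evaluation map \eqref{ev} that actually depend on the idele $\eta$. Among the maps $C_{i_\gf,K}^*$, $\tw_{i_\gf}$ and $\coinv_\gf$ occurring in \eqref{ev} none involves $\eta$: they are all defined on $X_\gf$ before one restricts to a connected component. The fundamental class $\theta_{[\eta]}$ of \eqref{eq:theta} depends on $\eta$ only through its class in $\Cl(\gf)$, so $\theta_{[\eta']}=\theta_{[\eta]}$ under the hypothesis $[\eta]=[\eta']$. Hence the whole statement reduces to comparing the trivialization maps $\triv_\eta^*$ and $\triv_{\eta'}^*$ on the (identical) component $X_\gf[\eta]=X_\gf[\eta']$.

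For this I would first write $\eta'=\xi\,\eta\, u\, w_\infty$ with $\xi\in F^\times$, $u\in U(\gf)$ and $w_\infty\in F_\infty^+$, which is exactly what $[\eta]=[\eta']$ provides. Given $z\in X_\gf[\eta]$ with a lift $z=a\eta u_\infty v$ ($a\in F^\times$, $u_\infty\in F_\infty^+$, $v\in U(\gf)$, determined up to $E(\gf)=F^\times\cap F_\infty^+U(\gf)$), substituting $\eta=\xi^{-1}\eta'u^{-1}w_\infty^{-1}$ yields the lift $z=(a\xi^{-1})\,\eta'\,(u_\infty w_\infty^{-1})\,(u^{-1}v)$ relative to $\eta'$, so the $U(\gf)$-component relative to $\eta'$ is $u^{-1}v$. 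Feeding this into the defining formula for $\triv$, applied once with $\eta$ and once with $\eta'$, and using that only the $p$-components of elements of $U(\gf)$ act on $M_\gf^+$, one obtains $\triv_\eta(z,m)=\triv_{\eta'}\bigl(z,\begin{psmallmatrix}u_p^{-1}&\\&1\end{psmallmatrix}\cdot m\bigr)$; the $E(\gf)$-ambiguity in the lift is harmless because $E(\gf)$ acts trivially on $M_\gf^+$ by construction, which is the same well-definedness check already carried out for $\triv_\eta$. In other words $\triv_\eta=\triv_{\eta'}\circ g$ as isomorphisms of local systems on $X_\gf[\eta]$, where $g$ is the automorphism of the constant sheaf $\underline{M_\gf^+}$ acting on stalks by $\begin{psmallmatrix}u_p^{-1}&\\&1\end{psmallmatrix}$.

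Passing to compactly supported cohomology, the relation $\triv_\eta=\triv_{\eta'}\circ g$ gives $\triv_{\eta'}^*=\begin{psmallmatrix}u_p^{-1}&\\&1\end{psmallmatrix}\circ\triv_\eta^*$, the matrix acting through the factor $M_\gf^+$ of $\rH^d_c(X_\gf[\eta],\Z)\otimes M_\gf^+$. Since $-\cap\theta_{[\eta]}$ only pairs the $\Z$-valued factor against $\theta_{[\eta]}$, it is $M_\gf^+$-linear, hence commutes with the action of $\begin{psmallmatrix}u_p^{-1}&\\&1\end{psmallmatrix}$; combining this with $\theta_{[\eta']}=\theta_{[\eta]}$ and precomposing with the $\eta$-independent maps $\coinv_\gf\circ\tw_{i_\gf}\circ C_{i_\gf,K}^*$ yields $\ev_{i_\gf}^{\eta'}=\begin{psmallmatrix}u_p^{-1}&\\&1\end{psmallmatrix}\cdot\ev_{i_\gf}^\eta$.

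The only point requiring care — and it is bookkeeping rather than a real obstacle — is tracking left versus right actions and confirming that it is precisely the $p$-component $u_p$, not $u$ as a global idele, that survives, together with the observation that the $E(\gf)$-ambiguity in the lift of $z$ is absorbed into the passage to $E(\gf)$-coinvariants. Everything else is a direct substitution into the formula defining $\triv_\eta$.
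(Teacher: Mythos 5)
Your proof is correct and follows essentially the same route as the paper: the only $\eta$-dependent ingredients in $\ev_{i_\gf}^\eta$ are $\triv_\eta^*$ and $\theta_{[\eta]}$; the latter is unchanged because $[\eta]=[\eta']$, and the identity then reduces to comparing the two trivializations, which you work out by substituting a lift. Your intermediate identity $\triv_\eta(z,m)=\triv_{\eta'}\bigl(z,\begin{psmallmatrix}u_p^{-1}&\\&1\end{psmallmatrix}\cdot m\bigr)$ is the correct one (the paper's displayed version has the roles of $\eta,\eta'$ reversed relative to what the substitution $\eta=\xi^{-1}\eta'u^{-1}w_\infty^{-1}$ actually gives, though the conclusion $\triv_{\eta'}^*=(\id\otimes\begin{psmallmatrix}u_p^{-1}&\\&1\end{psmallmatrix})\circ\triv_\eta^*$ stated there is the same), and your handling of the $E(\gf)$-ambiguity and the commutation with $-\cap\theta_{[\eta]}$ is exactly what is needed.
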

\begin{proof}
	Since in \eqref{ev} only $\triv_\eta^*$ 	depends on $\eta$ it suffices to check that
	$ \triv_{\eta'}^*	= (\id\otimes\begin{psmallmatrix}u_p^{-1}& \\ & 1 \end{psmallmatrix})\cdot
	\triv_\eta^*$. This follows from the fact that $\triv_{\eta'}(y,m)=\triv_\eta\left(y, \begin{psmallmatrix}u_p^{-1}& \\ & 1 \end{psmallmatrix}\cdot m\right)$.
	\end{proof}
\begin{lemma}\label{l:ev-uniformizer}
If $v\mid \gf$ then for all $\delta\in \cO_v^\times$ we have 
$\ev_{\varpi_{\gf}\delta}^{\eta}=\ev_{\varpi_{\gf}}^{\eta}\circ U_{\delta}$. 
\end{lemma}

\subsection{Relations}\label{relations}
We will prove a fundamental relation between the evaluations
defined in \S \ref{evaluations}, which will later be used to prove an interpolation property and growth condition 
of certain $p$-adic distributions on Galois groups, as well as a relation between the corresponding $p$-adic $L$-functions and their 
improved counterparts.

\begin{prop} \label{prop-relations} For $v \in S$ we let $\pr_{\gf v,\gf}:\Cl(\gf v)\to \Cl(\gf)$ denote the natural projection.
	 Choose a	representative $\eta\in \A_F^{\times}$ for $[\eta] \in	\Cl(\gf)$ and for each $[\delta]\in\pr_{\gf v,\gf}^{-1}([\eta])$ let $\delta\in	\A_F^{\times}$ and $u_{\delta}\in U(\gf)$ be such
	that $\delta \in F^{\times} \eta u_\delta F_\infty^{\times+}$. Then
	\[\sum_{[\delta]\in\pr_{\gf v,\gf}^{-1}([\eta])}\begin{psmallmatrix} u_{\delta}&\\ &1\end{psmallmatrix}\cdot\ev_{\varpi_{\gf v}}^{\delta}=
	\begin{cases} \ev_{\varpi_{\gf}}^{\eta}\circ U_{\varpi_v} & \text{, if } v\mid\gf, \\
	\ev_{\varpi_{\gf}}^{\eta}\circ U_{\varpi_v}- 
	\begin{psmallmatrix} \varpi_{v} & 0 \\ 0 & 1 \end{psmallmatrix} \cdot \ev_{\varpi_{\gf}}^{\eta\varpi_v}& \text{, if } v\nmid\gf. \end{cases} \]
\end{prop}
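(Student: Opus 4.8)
The plan is to deduce the relation directly from Lemma \ref{cycles and U}(iii), which compares the automorphic cycles $(C_{i_\gf}^\eta)|_{U_{\varpi_v}}$ with a disjoint union of cycles $C_{i_{\gf v}}^{\delta}$ (plus one extra cycle $C_{i_\gf}^{\eta\varpi_v}$ in the case $v\nmid\gf$), together with the functoriality properties of the evaluation maps established in Lemmas \ref{l:ev-functorial}, \ref{l:ev-eta} and \ref{l:ev-uniformizer}. First I would unwind the definition \eqref{ev} of $\ev_{i_\gf}^\eta$ as the composite $(-\cap\theta_{[\eta]})\circ\triv_\eta^*\circ\coinv_\gf\circ\tw_{i_\gf}\circ C_{i_\gf,K}^*$, and observe that the Hecke operator $U_{\varpi_v}$ on $\rH^d_c(Y_K,\cM)$ is computed by the coset decomposition $K\begin{psmallmatrix} i_v&0\\0&1\end{psmallmatrix}K=\coprod_{b\bmod\varpi_v}\begin{psmallmatrix}\varpi_v&b\\0&1\end{psmallmatrix}K$. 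The heart of the argument is that pulling back along $C_{i_\gf,K}^*$ and then applying $U_{\varpi_v}$ equals pulling back along $(C_{i_\gf}^\eta)|_{U_{\varpi_v}}$; by Lemma \ref{cycles and U}(iii) the latter is homotopic to $\coprod_{[\delta]\in\pr_{\gf v,\gf}^{-1}([\eta])}C_{i_{\gf v}}^\delta$ when $v\mid\gf$, and to that union together with $C_{i_\gf}^{\eta i_v}$ when $v\nmid\gf$. Under the isomorphism $\rH^d_c$ of a disjoint union being the direct sum, this decomposes $\ev_{i_\gf}^\eta\circ U_{\varpi_v}$ as a sum of contributions, one for each $[\delta]$, plus (in the split case) the contribution of $C_{i_\gf}^{\eta\varpi_v}$.

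Next I would identify each summand. The contribution indexed by $[\delta]$ is, up to the twisting maps $\tw$, the evaluation along $C_{i_{\gf v}}^\delta$ followed by $\coinv_{\gf}$, $\triv$ and cap product; since the level has increased from $\gf$ to $\gf v$ one has to compare $\coinv_\gf\circ\tw_{i_\gf}$ with $\coinv_{\gf v}\circ\tw_{i_{\gf v}}$ composed with the natural trace/projection $X_{\gf v}\to X_\gf$, and to keep track that the chosen representative $\delta$ of $[\delta]\in\Cl(\gf v)$ differs from $\eta$ (viewed in $\Cl(\gf)$) by $u_\delta\in U(\gf)$, which by Lemma \ref{l:ev-eta} produces the left factor $\begin{psmallmatrix}u_\delta&\\&1\end{psmallmatrix}$. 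This yields $\begin{psmallmatrix}u_\delta&\\&1\end{psmallmatrix}\cdot\ev_{i_{\gf v}}^\delta$ as the $[\delta]$-summand. In the case $v\nmid\gf$, the extra cycle $C_{i_\gf}^{\eta i_v}$ contributes $\ev_{i_\gf}^{\eta\varpi_v}$ precomposed with the effect of replacing $i_\gf$ by $i_\gf$ but evaluating at the translate $\eta i_v$; tracking the matrix $\begin{psmallmatrix}\varpi_v&0\\0&1\end{psmallmatrix}$ through $\tw$ and the local system action (and using $i_v$ acts on $\cO_F\otimes\Z_p$ as multiplication by $\varpi_v$) shows this summand equals $\begin{psmallmatrix}\varpi_v&0\\0&1\end{psmallmatrix}\cdot\ev_{i_\gf}^{\eta\varpi_v}$, and it enters with a minus sign once moved to the left-hand side. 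Assembling the summands gives exactly the two stated formulas.

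The main obstacle I anticipate is the bookkeeping around the coinvariants and the trivialization maps when passing between levels $\gf$ and $\gf v$: one must check that the orientations/fundamental classes $\theta_{[\eta]}\in\rH_d^{\BM}(X_\gf[\eta])$ and $\theta_{[\delta]}\in\rH_d^{\BM}(X_{\gf v}[\delta])$ are compatible under the finite covering maps (this is precisely the compatibility square for $\theta_\gf^\circ$ recorded in \S\ref{ss:fundamental}, including the index factor $[E(\gf):E(\gf v)]$), that $\triv_\eta$ and $\triv_\delta$ intertwine correctly with the covering, and that the twisting sheaf map $\tw_{i_{\gf v}}$ is compatible with $\tw_{i_\gf}$ after accounting for the identity $\begin{psmallmatrix}i_{\gf v}&1_{\gf v}\\0&1\end{psmallmatrix}=\begin{psmallmatrix}\varpi_v&b\\0&1\end{psmallmatrix}\begin{psmallmatrix}i_\gf&1_\gf\\0&1\end{psmallmatrix}$ for appropriate $b$. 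None of this is conceptually hard, but it is where sign errors and stray unit factors would creep in; I would organize it as a single commutative diagram, much as in the proof of Lemma \ref{l:ev-uniformizer}, so that the relation falls out by chasing it.
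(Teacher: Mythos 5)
Your proposal is essentially the same strategy as the paper's proof, and it is correct. The paper does not formally invoke Lemma \ref{cycles and U}(iii) as a black box; instead it re-derives the geometric content of that lemma at the level of cohomology of local systems by building the commutative diagrams \eqref{hilbert vs X} (resp.\ \eqref{hilbert vs X-bis} for $v\nmid\gf$), \eqref{compatibility hecke 1}, and \eqref{compatibility hecke 2}, so that the pullback of $U_{\varpi_v}$, the twist $\tw$, the coinvariants $\coinv$, the trivialization $\triv$, and the cap product are tracked in one chase. Your observation that the heart of the matter is the decomposition of $\pr_1\circ(\cdot\gamma)\circ\pr_2^{-1}$ restricted to the automorphic cycle, together with the compatibility of fundamental classes under the covering $X_{\gf v}\to X_\gf$ and the appearance of $\begin{psmallmatrix}u_\delta&\\&1\end{psmallmatrix}$ via Lemma \ref{l:ev-eta}, is precisely what those diagrams encode; and your suggestion to organize the bookkeeping into one commutative diagram as in Lemma \ref{l:ev-uniformizer} is exactly how the paper executes it. One small remark: a homotopy of cycle maps alone does not immediately give equality of evaluations, because $\ev_{i_\gf}^\eta$ involves the level-dependent twist $\tw_{i_\gf}$ and the trivialization on $\cM_\gf^+$; you flagged this as the ``main obstacle,'' and indeed the matrix identity you wrote (together with its analogue $\begin{psmallmatrix}i_\gf&1_\gf\\0&1\end{psmallmatrix}\begin{psmallmatrix}i_v&0\\0&1\end{psmallmatrix}=\begin{psmallmatrix}i_{\gf v}&1_{\gf v}\\0&1\end{psmallmatrix}$ when $v\mid\gf$) is what makes diagram \eqref{compatibility hecke 1} commute. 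So the gap you identified in your own outline is exactly the work the paper carries out, and the approach is sound.
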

\begin{proof} We first recall the definition of $U_{\varpi_v}$. Let $\gamma= \begin{psmallmatrix} \varpi_{v} & 0 \\ 0 & 1 \end{psmallmatrix}$ and consider the natural projections
	$\pr_1:Y_{K_{0}(v)} \to Y_K$ and $\pr_2:Y_{K^{0}(v)} \to Y_K$ where $K^{0}(v)= K \cap \gamma K \gamma^{-1}$ and $K_{0}(v)= K \cap \gamma^{-1} K \gamma$. For clarity we will denote by $\cM_K$ the local system on $Y_K$, in which case $\pr_1^* \cM_K = \cM_{K_0(v)}$ and $\pr_2^* \cM_K = \cM_{K^0(v)}$. Define 
	\begin{align}\label{Uv-YK}
	U_{\varpi_v}= \Tr(\pr_2) \circ[\gamma]\circ \pr_1^*:\rH_c^d(Y_K, \cM_K)\to \rH_c^d(Y_K, \cM_K),
	\end{align}
where $[\gamma]:\rH_c^d(Y_{K_0(v)},\cM_{K_0(v)})\to \rH_c^d(Y_{K^0(v)},\cM_{K^0(v)})$ is induced by the morphism of local systems given by $(g,m)\mapsto (g \gamma^{-1},\gamma\cdot m)$ (note that image of $\gamma$ in $G(\Q_p)$ belongs to $\varLambda_S$), $\pr_1^*:\rH_c^d(Y_K, \cM_K)\to \rH_c^d(Y_{K_0(v)},\cM_{K_0(v)})$ is the pullback, and $\Tr(\pr_2):\rH_c^d(Y_{K^0(v)}, \cM_{K^0(v)})= \rH_c^d(Y_K, p_{2*}\pr_2^*\cM_K)\to \rH_c^d(Y_K, \cM_K)$ is the trace attached to the finite map $\pr_2$.

	We will now define analogous maps on $X_{\gf}$. The map 
	\[\widetilde{C}_{\gf,K^{0}(v)}: X_{\gf v}\longrightarrow Y_{K^{0}(v)} \text{ , } [y]\mapsto
	\left[\begin{psmallmatrix} y & 0\\ 0 & 1\end{psmallmatrix}
	\begin{psmallmatrix} \varpi_{\gf} & 1_{\gf v} \\ 0 & 1\end{psmallmatrix}
	\right],\] 
	is well-defined since for all $\xi \in F^{\times}$ and $u \in U(\gf v)$, we have $\begin{psmallmatrix} u & (u-1) 1_{\gf v}\varpi_{\gf}^{-1} \\ 0 & 1\end{psmallmatrix} \in K^{0}(v)$ and 
	\begin{align}\label{eq:tilde}
	\begin{psmallmatrix} \xi yu & 0 \\ 0 & 1\end{psmallmatrix}
	\begin{psmallmatrix} \varpi_{\gf} & 1_{\gf v} \\ 0 & 1\end{psmallmatrix}
	= \begin{psmallmatrix} \xi & 0 \\ 0 & 1\end{psmallmatrix}
	\begin{psmallmatrix} y & 0 \\ 0 & 1\end{psmallmatrix}
	\begin{psmallmatrix} \varpi_{\gf} & 1_{\gf v} \\ 0 & 1\end{psmallmatrix}
	\begin{psmallmatrix} u & (u-1) 1_{\gf v}\varpi_{\gf}^{-1}\\ 0 & 1\end{psmallmatrix}.
	\end{align}

	{\it Case 1: } $v \mid \gf$. In this case $1_{\gf v}=1_{\gf}$. 
	Denoting by $\cdot \gamma$ the right translation, one checks that the following diagram commutes: 
	\begin{align}\label{hilbert vs X}
	\xymatrix{
		Y_{K} && Y_{K^{0}(v)} \ar^{\cdot \gamma}[rr]\ar_{\pr_2}[ll]&& Y_{K_{0}(v)} \ar^{\pr_1}[rr]&& Y_{K}\\
		X_{\gf} \ar^{C_{\varpi_{\gf}}}[u] && X_{\gf v} \ar_{\pr_{\gf v,\gf}}[ll] \ar^{\widetilde{C}_{\gf,K^{0}(v)}}[u]\ar@{=}[rr]&& X_{\gf v} \ar@{=}[rr] \ar^{ C_{\varpi_{\gf v},K_{0}(v)} }[u]&& X_{\gf v} \ar^{C_{\varpi_{\gf v},K}}[u]
	}
	\end{align}
	We consider the local system
	$\widetilde{C}_{\gf,K^{0}(v)}^* \cM = F^{\times}\!\setminus \A_F^{\times}\times M/U(\gf v)$ on $X_{\gf v}$, where $u\in U(\gf v)$ acts on $M$ by $\begin{psmallmatrix} u & (u-1) 1_{\gf}\varpi_{\gf}^{-1} \\ 0 & 1\end{psmallmatrix}$. Since $\begin{psmallmatrix} \varpi_{v} & 0 \\ 0 & 1\end{psmallmatrix} \begin{psmallmatrix} u & (u-1)1_{\gf v}\varpi_{\gf v}^{-1} \\ 0 & 1\end{psmallmatrix}= \begin{psmallmatrix} u & (u-1)1_{\gf}\varpi_{\gf}^{-1} \\ 0 & 1\end{psmallmatrix} \begin{psmallmatrix} \varpi_{v} & 0 \\ 0 & 1\end{psmallmatrix}$
	one has a morphism of local systems 
	\[ C_{\varpi_{\gf v},K}^{\ast}\cM=C_{\varpi_{\gf v},K_{0}(v)}^{\ast}\cM\to \widetilde{C}_{\gf,K^{0}(v)}^{\ast},\, [(y, m)] \mapsto [(y, \begin{psmallmatrix} \varpi_{v} & 0 \\ 0 & 1\end{psmallmatrix}\cdot m )],\]
	inducing a homomorphism on the cohomology: 
	\[[\varpi_v]: \rH^{d}_{c}(X_{\gf v}, C_{\varpi_{\gf v},K_{0}(v)}^{\ast}\cM) \to \rH^{d}_{c}(X_{\gf v}, \widetilde{C}_{\gf,K^{0}(v)}^{\ast}\cM).\]
	Pulling back the $U_{\varpi_v}$ defined in \eqref{Uv-YK} by the vertical maps in \eqref{hilbert vs X}, 
	and noticing that the etale maps $\pr_2$ and $\pr_{\gf v,\gf}$ have the same degree, 
	yields a homomorphism:
	\begin{align}\label{Uv-X}
	U_{\varpi_v}= \Tr(\pr_{\gf v,\gf})\circ [\varpi_v]: \rH^{d}_{c}(X_{\gf v}, C_{\varpi_{\gf v},K}^{\ast}\cM) = \rH^{d}_{c}(X_{\gf v}, C_{\varpi_{\gf v},K_{0}(v)}^{\ast}\cM) \to \rH^{d}_{c}(X_{\gf}, C_{\varpi_{\gf}}^{\ast}\cM).
	\end{align}

	Next, we pull back the $U_{\varpi_v}$ action by the twisting
	operators. By \eqref{eq:tilde} and the fact that
	$\begin{psmallmatrix} \varpi_{\gf}&1_{\gf}\\&1\end{psmallmatrix}\begin{psmallmatrix} 1&-1_{\gf}\varpi_{\gf}^{-1}\\ & 1\end{psmallmatrix}=\begin{psmallmatrix} \varpi_{\gf} & \\ & 1\end{psmallmatrix}$ belongs to the torus, we have 
	morphism of local systems 
	\[\widetilde{\tw}_{\gf}:\widetilde{C}_{K^0(v),\gf}^* \cM \to \cM_{\gf v}, \, (y,m)\mapsto \left(y, \begin{psmallmatrix}
	\varpi_{\gf} & 1_{\gf}\\& 1\end{psmallmatrix}\cdot m\right).\] 
	
	Moreover, as $v\mid \gf$ we have $\begin{psmallmatrix}
	\varpi_{\gf} & 1_{\gf} \\ 0 &
	1\end{psmallmatrix} \begin{psmallmatrix} \varpi_{v} & 0 \\ 0 &
	1\end{psmallmatrix}= \begin{psmallmatrix} \varpi_{\gf v} &
	1_{\gf v} \\ 0 & 1\end{psmallmatrix}$ hence the following diagram:
	\begin{align}\label{comp-hecke-1}
	\xymatrix{
		\rH_{c}^{d}(X_{\gf v}, C_{\varpi_{\gf v},K}^{\ast}\cM) \ar^{[\varpi_v]}[rr]\ar^{\mathrm{tw}_{\gf v}}[d]&& \rH^{d}_{c}(X_{\gf v}, \widetilde{C}_{\gf,K^{0}(v)}^{\ast}\cM)\ar^{\Tr(\pr_{\gf v,\gf})}[rr]\ar^{\widetilde{\mathrm{tw}}_{\gf}}[d] & &\rH_{c}^{d}(X_{\gf}, C_{\varpi_{\gf}}^{\ast}\cM)\ar^{\mathrm{tw}_{\gf}}[d]\\
		\rH_{c}^{d}(X_{\gf v}, \cM_{\gf v})\ar@{=}[rr] && \rH_{c}^{d}(X_{\gf v}, \cM_{\gf v}) \ar^{\Tr(\pr_{\gf v,\gf})}[rr]&& \rH_{c}^{d}(X_{\gf}, \cM_{\gf})}
	\end{align}
	is commutative. 	Taking coinvariants yields:
	\begin{align}\label{comp-hecke-2}
	\xymatrix{
		\bigoplus_{[\delta]\in\pr_{\gf v,\gf}^{-1}([\eta])} \rH_c^d(X_{\gf v}[\delta],
		\cM_{E(\gf v)})\ar[rr]\ar[d]^{\oplus \triv_\delta^*}&& \rH_c^d(X_{\gf}[\eta], \cM_{E(\gf)})\ar[d]^{ \triv_\eta^*}\\
		\bigoplus_{[\delta]} \rH_c^d(X_{\gf v}[\delta])\otimes M_{E(\gf v)}\ar[rr]^{\oplus \begin{psmallmatrix} u_\delta&\\ &1\end{psmallmatrix}\cdot}\ar[d]^{\oplus -\cap \theta_{\gf v, [\delta]}}&& \rH_c^d(X_{\gf}[\eta])\otimes
		M_{E(\gf)}\ar[d]^{ -\cap \theta_{\gf, [\eta]}}\\
		\oplus_{[\delta]\in\pr_{\gf v,\gf}^{-1}([\eta])} M_{E(\gf v)}\ar[rr]_{ (m_{[\delta]})_{[\delta]} \mapsto \sum_{[\delta]} m_{[\delta]}}&& M_{E(\gf)}
	}
	\end{align}
	where the map $M_{E(\gf v)}\to M_{E(\gf)}$ is the canonical projection. 
	The commutativity of the upper square follows from the proof of Lemma \ref{l:ev-eta}, while the commutativity of the bottom square follows from the compatible choice of fundamental classes $\theta_{\gf,[\eta]}$ and $\theta_{\gf v,[\delta]}$ in \S \ref{ss:fundamental}. The proposition then follows from \eqref{ev}, \eqref{comp-hecke-1}, and \eqref{comp-hecke-2}.

	{\it Case 2: } $v \nmid \gf$. The extra term comes from the fact $\pr_{\gf v,\gf}$ has degree one less than the degree of $\pr_2$. Instead of \eqref{hilbert vs X} we consider the following commutative diagram:
	\begin{align}\label{hilbert vs X-bis}
	\xymatrix{
		Y_{K} && Y_{K^{0}(v)} \ar^{\cdot \gamma}[rr]\ar_{\pr_2}[ll]&& Y_{K_{0}(v)} \ar^{\pr_1}[rr]&& Y_{K}\\
		X_{\gf} \ar^{C_{\varpi_{\gf}}}[u] && X_{\gf}\sqcup X_{\gf v} \ar_{\mathrm{id}\sqcup \pr_{\gf v,\gf}}[ll] \ar^{C_{\varpi_{\gf},K^{0}(v)}}_{\widetilde{C}_{\gf,K^{0}(v)}}[u]\ar^{\cdot \varpi_v\sqcup \id}[rr]&& X_{\gf} \sqcup X_{\gf v} \ar@{=}[rr] \ar^{C_{\varpi_{\gf},K_{0}(v)}}_{ C_{\varpi_{\gf v},K_{0}(v)} }[u]&& X_{\gf} \sqcup X_{\gf v} \ar^{C_{\varpi_{\gf}}}_{ C_{\varpi_{\gf v},K}}[u]
	}
	\end{align}
	As in \eqref{Uv-X}, pulling back the $U_{\varpi_v}$ defined in \eqref{Uv-YK} by the vertical maps in 
	\eqref{hilbert vs X-bis} yields: 
	\[U_{\varpi_v}=U_{v, 1}+U_{v, 2}: \rH_{c}^{d}(X_{\gf}, C_{\varpi_{\gf}}^{\ast}\cM)\oplus \rH_{c}^{d}(X_{\gf v}, C_{\varpi_{\gf v},K}^{\ast}\cM) \to \rH_{c}^{d}(X_{\gf}, C_{\varpi_{\gf}}^{\ast}\cM),\]
	where $U_{v, 2}$ is given by the same formulas as $U_{\varpi_v}$ in Case 1, whereas $U_{v, 1}$ comes from the map $(y,m)\mapsto(\varpi_v^{-1} y,\begin{psmallmatrix} \varpi_v&\\ &1\end{psmallmatrix}\cdot m)$. 
	Applying $\coinv_{\gf}\circ\tw_{\varpi_{\gf}}$ to both sides of the map $U_{v, 1}$, one completes the proof by checking the commutativity of the following diagram:
	\[\xymatrix{
		\rH_{c}^{d}(X_{\gf}[\eta\varpi_v], \cM_{E(\gf)}) \ar[r]^{U_{v,1}}
		\ar_{(-\cap \theta_{\gf,[\eta\varpi_v]})\circ \triv_{\eta\varpi_v}^*}[d] &
		\rH_{c}^{d}(X_{\gf}[\eta], \cM_{E(\gf)})\ar^{(-\cap \theta_{\gf,[\eta]})\circ \triv_{\eta}^*}[d] \\
		M_{E(\gf)}\ar[r]^{\begin{psmallmatrix} \varpi_v&\\ &1\end{psmallmatrix}}& M_{E(\gf)}. } \qedhere\]
\end{proof}

\begin{remark} This proposition completes and generalizes \cite[Lem.5.1]{barrera}. The second part of this proposition generalizes \cite[Prop.5.8(i)]{greenberg-stevens} used to obtain a relation between the standard and improved $p$-adic $L$-function in the context of modular curves. Such relations will be vastly generalized in Proposition \ref{p:improved-relation}. 
\end{remark}

\subsection{Distributions on Galois groups}\label{ss:distributions}
The evaluation map $\ev_{\varpi_{\gf}}^{\eta}(M)$ constructed in \S\ref{evaluations}
 depends on a representative $\eta\in \A_F^{\times}$ of the class $[\eta]\in \Cl(\gf)$ and on the choice of uniformizers. In this section we will focus on the case where $M=D_{\cU}$
 (see \S\ref{sect:oc-modules}), with $\cU$ 
 an $L$-affinoid of the weight space $\cX$ (see \S\ref{weights}), and produce distributions on Galois groups which are independent of the above choices. These in turn will be used in \S \ref{s:p-adic-L} to construct $p$-adic $L$-functions. 

By class field theory for any integral ideal $\gf$ supported in $S_p$ there is an exact sequence:
 \begin{align}\label{eq:cft}
 1\to U(\gf)_p/\overline{E(\gf)}\xrightarrow{\iota_{\gf}}\Gal_{p\infty}\to \Cl(\gf)\to 1, 
 \end{align}
 where $U(\gf)_p = (\cO_F\otimes \Z_p)^\times\cap (1+\gf(\cO_F\otimes \Z_p))$ and $\overline{E(\gf)}$ is the $p$-adic closure of $E(\gf)$ in $U(\gf)_p$.
We have $(D_{\cU})_{E(\gf)}\subset \Hom_{\cO(\cU)}(A_{\cU}^{E(\gf)}, \cO(\cU))$, where $A_{\cU}^{E(\gf)}$ consists of $f\in A_{\cU}$
such that $f_{\big|\begin{psmallmatrix} e&\\
	&1\end{psmallmatrix}}=f$ for all $e\in E(\gf)$ (see Def. \ref{d:action-functions}). As in
\cite{barrera} we define an ``extension by zero'' morphism
\begin{align}\label{eq:times}
A(U(\gf)_p/\overline{E(\gf)}, \cO(\cU))\to A_{\cU}^{E(\gf)}, \quad f\mapsto f^\times(z)= 
\begin{cases}
\left\langle \begin{psmallmatrix} z & 0 \\ 0 & 1 \end{psmallmatrix} \right\rangle_{\cU} f(z) & \text{, if } z \in U(\gf)_p, \\ 
0 & \text {, if } z \notin U(\gf)_p. 
\end{cases}
\end{align}
Dualizing, we obtain a map $(\cD_{\cU})_{E(\gf)}\to D(U(\gf)_p/\overline{E(\gf)}, \cO(\cU))$ which we denote by $\mu\mapsto \mu^\times$. 

Let $\Gal_{p\infty}[\eta]$ denote the pre-image of $[\eta]\in \Cl(\gf)$ in $\Gal_{p\infty}$. Multiplication by the image of $\eta\in \A_F^{\times}$ in $\Gal_{p\infty}$
under the Artin recipocity map yields a bijection:
\begin{align}\label{eq:iota}
\iota_\eta: U(\gf)_p/ \overline{E(\gf)}\xrightarrow{\sim}\Gal_{p\infty}[\eta],\,\,\,
u_p\mapsto \eta\iota_{\gf}(u_p).
\end{align}
Dualizing, we obtain a map $\iota_\eta^*:D(U(\gf)_p/\overline{E(\gf)},\cO(\cU))\xrightarrow{\sim}D(\Gal_{p\infty}[\eta],\cO(\cU))$. Explicitly, for all $\mu\in D(U(\gf)_p/\overline{E(\gf)},
\cO(\cU))$ and $f\in A(\Gal_{p\infty}[\eta],\cO(\cU))$ we have
$\langle \iota_\eta^*{\mu},f\rangle = \langle \mu, f\circ \iota_{\eta}\rangle$.

\begin{lemma}\label{p:ev-gal} The following map does not depend on the representative $\eta$ of $[\eta]\in \Cl(\gf)$
\begin{align}\label{eq:ev-gal}
\ev_{\varpi_{\gf}}^{[\eta]}=\iota_\eta^*\circ\ev_{\varpi_{\gf}}^{\eta,\times}:\rH^d_c(Y_K, \cD_{\cU})\to D(\Gal_{p\infty}[\eta],\cO(\cU)). 
\end{align}
\end{lemma}

We omit the proof, which is a consequence of Lemma \ref{l:ev-eta}. 
The following result shows that, when $v$ divides $\gf$ for all $v\in S_p$, then the
passage to $\ev_{\varpi_{\gf}}^{\eta,\times}$ does not make one loose information. 	
\begin{lemma}\label{l:distribution support}
Given $\Phi\in \rH^d_c(Y_K, \cD_{\cU})$, one has $\ev_{\varpi_{\gf}}^\eta(\Phi)\in\begin{psmallmatrix}
	\varpi_{\gf}&1_{\gf}\\&1\end{psmallmatrix}\cdot	D_{\cU}$. 
	In particular, for any $f\in A_{\cU}$ one has  $\langle\ev_{\varpi_{\gf}}^\eta(\Phi),f\rangle=
	\langle\ev_{\varpi_{\gf}}^\eta(\Phi),f_{|1+\gf (\cO_F\otimes \Z_p)}\rangle$. 
\end{lemma}

\begin{definition}\label{d:eta-bracket}
We define 
	\[\ev_{\varpi_{\gf}}=\!\!\!\bigoplus_{[\eta]\in\Cl(\gf)}
	\ev_{\varpi_{\gf}}^{[\eta]}:\rH^d_c(Y_K, \cD_{\cU})\to
	D(\Gal_{p\infty},\cO(\cU)), \text {  } \displaystyle \langle \ev_{\varpi_{\gf}},f\rangle= \!\!\!\sum_{[\eta]\in \Cl(\gf)}\langle \ev_{\varpi_{\gf}}^{[\eta]},f_{|\Gal_{p\infty}[\eta]}\rangle.\]
\end{definition}

\begin{proposition}\label{p:ev-relation-canonical}
	Let $[\eta]\in \Cl(\gf)$. Then for any $v \in S_p$ we have
	\[\ev_{\varpi_{\gf}}^{[\eta]}\circ U_{\varpi_v}=\sum_{[\delta]\in\pr_{\gf v,\gf}^{-1}([\eta])} 
	\ev_{\varpi_{\gf v}}^{[\delta]}\text{ 	and } \displaystyle \ev_{\varpi_{\gf}}\circ U_{\varpi_v} = \ev_{\varpi_{\gf v}}.\]
\end{proposition}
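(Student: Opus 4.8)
The plan is to deduce this from the fundamental relation of Proposition \ref{prop-relations} by tracking how the ``extension by zero'' operation $f\mapsto f^\times$ and the transport maps $\iota_\eta^*$ interact with the correction terms $\begin{psmallmatrix}u_\delta&\\&1\end{psmallmatrix}\cdot$ that appear there. First I would treat the case $v\mid\gf$, where Proposition \ref{prop-relations} gives
\[\sum_{[\delta]\in\pr_{\gf v,\gf}^{-1}([\eta])}\begin{psmallmatrix} u_{\delta}&\\ &1\end{psmallmatrix}\cdot\ev_{i_{\gf v}}^{\delta}=\ev_{i_\gf}^{\eta}\circ U_{\varpi_v}.\]
Applying $(-)^\times$ and then $\iota_\eta^*$ to both sides, the right-hand side becomes $\ev_{i_\gf}^{[\eta]}\circ U_{\varpi_v}$ by Definition \ref{d:eta-bracket} (here I use that $U_{\varpi_v}$ commutes with $\ev^{\eta,\times}$, which is immediate since $U_{\varpi_v}$ acts on $\rH^d_c(Y_K,\cD_{\gU})$). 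For the left-hand side, the key computation is that for $[\delta]\in\pr_{\gf v,\gf}^{-1}([\eta])$ with representative $\delta\in F^\times\eta u_\delta F_\infty^+$, one has $\iota_{\eta}^*\circ\bigl(\begin{psmallmatrix} u_{\delta}&\\ &1\end{psmallmatrix}\cdot\ev_{i_{\gf v}}^{\delta}\bigr)^\times = \iota_{\delta}^*\circ\ev_{i_{\gf v}}^{\delta,\times}=\ev_{i_{\gf v}}^{[\delta]}$, using exactly the identity $(f\circ\iota_\eta)^\times_{|\begin{psmallmatrix}u_p&\\&1\end{psmallmatrix}}=(f\circ\iota_\eta\circ(u_p\cdot))^\times=(f\circ\iota_\delta)^\times$ and the relation $\iota_\eta(u_{\delta,p}z)=\iota_\delta(z)$ already exploited in the proof of Proposition \ref{p:ev-gal}. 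One must also check compatibility of the extension-by-zero maps for levels $\gf$ and $\gf v$, i.e.\ that $\ev_{i_{\gf v}}^{\delta,\times}$ and $\ev_{i_\gf}^{\eta,\times}$ land in $D(U(\gf v)_p/E(\gf v),\cO(\gU))$ resp.\ $D(U(\gf)_p/E(\gf),\cO(\gU))$ compatibly with the natural inclusion $U(\gf v)_p\subset U(\gf)_p$; this follows from Lemma \ref{l:distribution support}, which pins down the support of each evaluation.

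For the case $v\nmid\gf$, Proposition \ref{prop-relations} produces the extra term $\begin{psmallmatrix}\varpi_v&\\&1\end{psmallmatrix}\cdot\ev_{i_\gf}^{\eta\varpi_v}$. The point is that after applying $(-)^\times$ this term is killed: by Lemma \ref{l:distribution support}, $\ev_{i_\gf}^{\eta\varpi_v}$ has support in $1+\gf\cO_F\otimes\Z_p$, but $\begin{psmallmatrix}\varpi_v&\\&1\end{psmallmatrix}$ translates this support out of $U(\gf)_p$ at the place $v$ (since $\varpi_v\notin 1+\gf\cO_v$ because $v\nmid\gf$), so $\bigl(\begin{psmallmatrix}\varpi_v&\\&1\end{psmallmatrix}\cdot\ev_{i_\gf}^{\eta\varpi_v}\bigr)^\times=0$. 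Hence applying $\iota_\eta^*\circ(-)^\times$ to Proposition \ref{prop-relations} again yields $\sum_{[\delta]}\ev_{i_{\gf v}}^{[\delta]}=\ev_{i_\gf}^{[\eta]}\circ U_{\varpi_v}$, exactly as in Case 1. Summing over $[\eta]\in\Cl(\gf)$ and using that $\pr_{\gf v,\gf}$ is surjective so that $\{[\delta]\in\Cl(\gf v)\}=\coprod_{[\eta]}\pr_{\gf v,\gf}^{-1}([\eta])$, the decomposition $D(\Gal_{p\infty},\cO(\gU))\simeq\bigoplus_{[\eta]\in\Cl(\gf)}D(\Gal_{p\infty}[\eta],\cO(\gU))$ together with Definition \ref{d:eta-bracket} gives $\ev_{i_\gf}\circ U_{\varpi_v}=\ev_{i_{\gf v}}$.

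The main obstacle I anticipate is the bookkeeping in the first case: verifying carefully that the correction matrices $\begin{psmallmatrix}u_\delta&\\&1\end{psmallmatrix}$ from Proposition \ref{prop-relations} are precisely what is needed to convert $\iota_\eta^*$ into $\iota_\delta^*$, and that this is independent of all the choices of representatives $\delta$ and elements $u_\delta$ — this last independence is guaranteed by Proposition \ref{p:ev-gal}, but one should make sure the choices made in the statement of Proposition \ref{prop-relations} are compatible with those implicit in Definition \ref{d:eta-bracket}. The support/vanishing argument in the second case is the genuinely new ingredient but is short once Lemma \ref{l:distribution support} is in hand.
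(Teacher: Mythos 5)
Your proposal follows exactly the paper's argument: both reduce to Proposition \ref{prop-relations} and then verify (a) the identity $(f\circ\iota_\eta)^\times_{|\begin{psmallmatrix}u_\delta&\\&1\end{psmallmatrix}}=(f\circ\iota_\delta)^\times$ converting the correction matrices into the change of base point $\iota_\eta\rightsquigarrow\iota_\delta$, and (b) that the extra term $\begin{psmallmatrix}\varpi_v&\\&1\end{psmallmatrix}\cdot\ev_{i_\gf}^{\eta\varpi_v}$ dies upon pairing with $(f\circ\iota_\eta)^\times$ because $\varpi_v$ translates the support out of $U(\gf)_p$. The slight reorganization into the cases $v\mid\gf$ and $v\nmid\gf$ is cosmetic; the substance is identical.
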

 \begin{proof}
	Let $\Phi\in \rH^d_c(Y_K, \cD_{\cU})$ and $f\in A(\Gal_{p\infty},\cO(\cU))$. Using Proposition \ref{prop-relations} it suffices to show
	that $\displaystyle \left\langle \begin{psmallmatrix} \varpi_v&\\&1\end{psmallmatrix}\cdot\ev_{\varpi_{\gf}}^{\eta\varpi_{v}}(\Phi),(f\circ \iota_\eta)^\times\right\rangle = 0$ when $v\nmid \gf$ and 
	$\displaystyle \left\langle \begin{psmallmatrix} u_\delta&\\
	&1\end{psmallmatrix}\ev_{\varpi_{\gf	 v}}^{\delta},(f\circ \iota_\eta)^\times\right\rangle =
	\left\langle \ev_{\varpi_{\gf v}}^\delta, (f\circ \iota_\delta)^\times\right\rangle$ for all
	$v$, where $\delta\in F^{\times} \eta u_{\delta}
	F_\infty^{\times+}$ and $u_{\delta}\in U(\gf)$ are as in Proposition \ref{prop-relations}.
		The former follows from \eqref{eq:times} since for all $u_p \in U(\gf)_p$ we have 	$(f\circ \iota_\eta)^\times(\varpi_v u_p)=0$. The latter follows from the fact that if $u_p\in U(\gf)_p$ then applying Def. \ref{d:action-functions} and \eqref{eq:times} we have: 
	\begin{align*}
	{(f\circ \iota_\eta)^\times}_{\big|\begin{psmallmatrix}
		1&\\&u_\delta^{-1}\end{psmallmatrix}}(u_p)&=\left\langle \begin{psmallmatrix}
	u_\delta^{-1}&\\&1\end{psmallmatrix}\right\rangle_{\cU} (f\circ \iota_\eta)^\times(u_\delta
	u_p)= \left\langle \begin{psmallmatrix}
	u_p&\\&1\end{psmallmatrix}\right\rangle_{\cU} (f\circ \iota_\eta)(u_\delta u_p)=\\
	= \left\langle \begin{psmallmatrix}
	u_p&\\&1\end{psmallmatrix}\right\rangle_{\cU} f(u_\delta \eta u_p)
	&= \left\langle \begin{psmallmatrix}
	u_p&\\&1\end{psmallmatrix}\right\rangle_{\cU} f(\delta u_p)
	= \left\langle \begin{psmallmatrix}
	u_p&\\&1\end{psmallmatrix}\right\rangle_{\cU} (f\circ \iota_\delta)(u_p)
	=(f\circ \iota_\delta)^\times(u_p). \qedhere
	\end{align*}
\end{proof}

Suppose that $\Phi\in \rH^d_c(Y_K, \cD_{\cU})$ is such that $U_{\varpi_{\gf}} \Phi= \alpha_{\gf}^{\circ} \Phi$ with $\alpha_{\gf}^{\circ} \in \cO(\cU)^{\times}$. By Lemma \ref{l:ev-uniformizer} 
		\begin{align}\label{eq:universal-evaluation}
	\ev(\Phi)=(\alpha_{\gf}^{\circ})^{-1}\ev_{\varpi_{\gf}}(\Phi) \in D(\Gal_{p\infty},\cO(\cU))
		\end{align} 
	is independent of the choice of	uniformizers and by Prop. \ref{p:ev-relation-canonical} it is independent of 	$\gf$ as well. 

Our final result in this subsection concerns the growth of the distributions $\ev(\Phi)$ on $\Gal_{p\infty}$. This will be used in \S \ref{s:p-adic-L} to uniquely characterize by interpolation property the $p$-adic $L$-functions attached to non-critical nearly finite slope Hilbert cusp forms. 

Using the notations from \S \ref{sect:oc-modules}, 
$A(\Gal_{p\infty},\cO(\cU))$ is a union of orthonormalizable Banach $\cO(\cU)$-modules 
$A_n(\Gal_{p\infty},\cO(\cU))$, $n \in \Z_{\geqslant 0}$, and $D(\Gal_{p\infty},\cO(\cU))= \varprojlim D_n(\Gal_{p\infty},\cO(\cU))$. The 
 restriction of $\mu \in D(\Gal_{p\infty},\cO(\cU))$ to $A_n(\Gal_{p\infty},\cO(\cU))$ belongs to the orthonormalizable Banach $\cO(\cU)$-module $D_n(\Gal_{p\infty},\cO(\cU))$, and its norm is denoted by $\|\mu \|_n$. The following definition generalizes the notion of 
growth introduced by Amice-V\'elu and Vishik (see \cite[Def.4.1]{barrera}). 
\begin{definition}We say that a distribution $\mu \in D(\Gal_{p\infty},\cO(\cU))$ has growth at most $h \in \Q_{\geqslant 0}$ if there exists $C\geqslant 0$ such that for each $n \in \Z_{\geqslant 0}$ we have $\| \mu \|_n \leqslant p^{n h}C$.
\end{definition}
\begin{proposition}\label{p:ev-growth}
	Suppose $\Phi\in \rH^d_c(Y_K, \cD_{\cU})$ is such that $U_p \Phi= \alpha_p^{\circ} \Phi$ with $\alpha_p^{\circ} \in \cO(\cU)^{\times}$. Then $\ev(\Phi) \in D(\Gal_{p\infty},\cO(\cU))$ has growth at most $h_p$, where $h_p$ is the $p$-adic valuation of $\alpha_p^{\circ}$.
\end{proposition}
\begin{proof} This is proved in \cite[Prop.5.10]{barrera} when $\cU= \{\lambda\}$. 
Recall that from \cite[Lem.3.4.6]{urban} we know that there exists $m \in \Z_{\geqslant 0}$ such that the universal character $\langle \cdot \rangle_{\cU}$ is $m$-locally $\cO(\cU)$-analytic, and we may further assume that $K \supset \begin{psmallmatrix} U(p^{m}) & \widehat\cO_F \\ 0 & 1\end{psmallmatrix}$. Let
$\cO(\cU)^{\circ} \subset \cO(\cU)$ be the
 ring of rigid functions bounded by $1$ and denote by $D_{\cU,m}^{\circ}$ the $\cO(\cU)^{\circ}$-lattice in the $\cO(\cU)$-Banach space	$D_{\cU,m}$. After rescaling $\Phi$ we may assume that its image $\Phi_m$ under the natural restriction map 
 belongs to $\rH^d_c(Y_K, \cD_{\cU,m}^\circ)$.   
By \eqref{eq:ev-gal} and \eqref{eq:universal-evaluation} for $f\in A_n(\Gal_{p\infty},\cO(\cU))$, 
\[\langle (\alpha_p^{\circ})^{n} \cdot \ev(\Phi),f\rangle=
\langle \ev_{\varpi_p^n }(\Phi),f\rangle=
 \sum_{[\eta] \in	\Cl(p^{n})}
 \langle  \ev_{\varpi_p^n }^{\eta}(\Phi),(f\circ\iota_\eta)^\times \rangle.\]	
In view of \eqref{eq:times} and the fact that $\left|\langle T(\Z_p) \rangle_{\cU}\right|_p=1$, 
to prove the Proposition it suffices to bound the norms $\|\ev_{\varpi_p^n }^{\eta}(\Phi)\|_n$ in the Banach space 
$(D_{\cU,n})_{E(p^n)}$, for all $n\geqslant m$ and $\eta\in \A_F^\times$. 
By Lemma \ref{l:distribution support}, there exist $\mu\in (D_\cU)_{E_n}$ and $\mu'\in (D_{\cU,m}^\circ)_{E_n}$, where 
$E_n= \begin{psmallmatrix}	\varpi_p^n &1_{p}\\ 0&1\end{psmallmatrix}^{-1}	 
\begin{psmallmatrix}E(p^n) & 0\\ 0&1\end{psmallmatrix}
\begin{psmallmatrix}	\varpi_p^n &1_{p}\\ 0&1\end{psmallmatrix}\subset K $, such that $\ev_{\varpi_p^n }^{\eta}(\Phi)= \begin{psmallmatrix}	\varpi_p^n &1_{p}\\ 0&1\end{psmallmatrix}\cdot	\mu$ and 
$\ev_{\varpi_p^n }^{\eta}(\Phi_m)= \begin{psmallmatrix}	\varpi_p^n &1_{p}\\ 0&1\end{psmallmatrix}\cdot	\mu'$. 
By functoriality of the evaluation maps (see Lemma \ref{l:ev-functorial}) $\mu$ and $\mu'$ have the same restrictions to 
$\left(A_{\cU,m}^{E(p^n)}\right)_{\big|\begin{psmallmatrix}	\varpi_p^n &1_{p}\\ 0&1\end{psmallmatrix}}$. 

Since clearly $\left(A_{\cU,m}^{E(p^n)}\right)_{\big|\begin{psmallmatrix}	\varpi_p^n &1_{p}\\ 0&1\end{psmallmatrix}}=
\left(A_{\cU,n}^{E(p^n)}\right)_{\big|\begin{psmallmatrix}	\varpi_p^n &1_{p}\\ 0&1\end{psmallmatrix}} \subset 
A_{\cU,0}^{E_n}$, we deduce that 
	\[\|\ev_{\varpi_p^n }^{\eta}(\Phi)\|_n= 
	\left\| \left(\begin{psmallmatrix}	\varpi_p^n &1_{p}\\ 0&1\end{psmallmatrix}\cdot	\mu\right) \right\|_n= 
	\left\| \left(\begin{psmallmatrix}	\varpi_p^n &1_{p}\\ 0&1\end{psmallmatrix}\cdot	\mu' \right)\right\|_n
	\leqslant \|\mu' \|_0 \leqslant \|\mu' \|_m \leqslant 1. \qedhere\]
\end{proof}

\subsection{Distributions evaluated at norm maps}\label{ss:norms}

To compute higher derivatives of $p$-adic $L$-functions at central trivial zeros we need
to construct partially improved $p$-adic $L$-functions. These will be obtained
by evaluating the distributions $\ev_{\varpi_{\gf}}^\eta(D_{\cU})$ on certain partially polynomial 
functions in $A_{\cU}$ for certain well chosen sub-affinoids $\cU$ of $\cX$. The improvement comes from the fact that when $\gf$ is only divisible by certain primes above $p$, then 
 the support of $\ev_{\varpi_{\gf}}^\eta(D_{\cU})$ need no longer be contained in 
$(\cO_F\otimes\Z_p)^\times$ (see Lemma \ref{l:distribution support}). 	Part of the construction will also be used to attach a new kind of $p$-adic $L$-function to the partial families from Theorem \ref{free-etale}. 

Given an $L$-affinoid $\cU\subset \cX$ containing the cohomological weight $(k,\sw)$ and a subset $S\subset S_p$, 
we let $\cU'_{S}=\cU\cap \cX'_S$ (see Def. \ref{defn:XKS}). Henceforth we fix an integer $r$ such that 
\begin{align}\label{def:semi-crit}
	j_\sigma: = r-1+ \frac{\sw-2+k_\sigma}{2} \geqslant 0, \text{ for all }\sigma\in \Sigma_{S_p\!\setminus S}, 	
		\end{align}
and 	for $z_{S_p\!\setminus S} \in \cO_{F,S_p\!\setminus S} $ we let $z_{S_p\!\setminus S}^j=\prod\limits_{v\in S_p\!\setminus S} 
\prod\limits_{\sigma\in \Sigma_v}\sigma(z_v)^{j_\sigma}$. 
For the remainder of this section we will only consider ideals 	$\gf\subset \cO_F$ whose support in contained in $S$. 
We let $\overline{E(\gf)}$ denote the $p$-adic closure of $E(\gf)$ in $U(\gf)_S= \cO_{F,S}^\times\cap (1+\gf\cO_{F,S})$. Similarly to \eqref{eq:times} one considers the  map  
\begin{align}\label{eq:times-r}
 &A(U(\gf)_S/\overline{E(\gf)}, \cO(\cU'_{S})) \to A_{\cU'_{S}}^{E(\gf)},  f\mapsto f_{S,r}^\times, 
 \text{ where for } z=(z_S,z_{S_p\!\setminus S}) \in \cO_{F,S} \times \cO_{F,S_p\!\setminus S} \\&
 f_{S,r}^\times(z)= 
\begin{cases} f(z_S)\cdot z_{S_p\!\setminus S}^j \cdot \prod\limits_{v\in S}
\left\langle \begin{psmallmatrix}
	z_v &\\&1\end{psmallmatrix}\right\rangle_{\cU'_{S} } \mathrm{N}^{r-1}_{F_v/\Q_p}(z_v) & \text{, if } z_S \in U(\gf)_S, \\ 0 & \text{, if } z_S \notin U(\gf)_S. \nonumber
\end{cases}
\end{align}

Dualizing we obtain a map $(\cD_{\cU'_{S}})_{E(\gf)}\to D(U(\gf)_{S}/E(\gf), \cO(\cU'_{S}))$ denoted $\mu\mapsto \mu_{S,r}^{\times}$.

Note that for all $v\in S_p\!\setminus S$ and $z_v\in \cO_v^\times$ one has 
$\prod\limits_{\sigma\in \Sigma_v}\sigma(z_v)^{j_\sigma}=\left\langle \begin{psmallmatrix}
	z_v &\\&1\end{psmallmatrix}\right\rangle_{\cU'_{S} } \mathrm{N}^{r-1}_{F_v/\Q_p}(z_v)$.

\begin{defn} \label{def:crit}
	We say that $r\in \Z$ is $S$-critical for the cohomological weight $(k,\sw)$ if 
\[	0 \leqslant r-1+ \frac{\sw-2+k_\sigma}{2} \leqslant k_\sigma-2 \text{ for all }\sigma\in \Sigma_{S_p\!\setminus S}.	\]
When 	$S=\varnothing$ we say that $r$ is critical. 
	\end{defn}

\begin{remark}\label{rem:crit-range}
\begin{enumerate}
\item The inequality \eqref{def:semi-crit} holds for any cohomological weight in $\cX'_S$. 
\item If $r$ is $S$-critical for $(k,\sw)$, then it is $S$-critical as well for any cohomological weight in $\cX_S$. Furthermore 
if $\cU_S\subset \cX_S$ is an $L$-affinoid containing $(k,\sw)$, then for all $f\in A(U(\gf)_S/\overline{E(\gf)}, \cO(\cU_{S}))$
 one has $f_{S,r}^\times\in A_{S,\cU_{S}}^{E(\gf)}$. 
\item
Note that $r\in \Z$ is critical for $(k,\sw)$ if and only if $r-\tfrac{1}{2}$ is a critical point for the $L$-function of an automorphic representation $\pi$ of cohomological weight $(k,\sw)$, in the sense of Deligne.
 Moreover, the central point $\frac{1-\sw}{2}$ is critical if and only if $\sw$ is even.
Using \eqref{right-action} and \eqref{left-action}	one checks that $(L_{k,\sw}^\vee(L))_{E(\cO_F)}$ has a basis consisting of linear forms $\mu\mapsto \mu(z^{j})$ with $j=\frac{k+(\sw-2)t}{2}+(r-1)t$, as $r$ ranges across all critical integers for $(k,\sw)$.
\end{enumerate}
\end{remark} 

  Moreover $\vartheta_{S,v}\circ (f\circ (z_S\mapsto z_{S\!\setminus \{v\}}) _{S,r}^{\times}= (\vartheta_{S,v}\circ f)_{S\!\setminus \{v\},r}^\times$
  for $f\in A(U(\gf)_{S\!\setminus\{v\}} /\overline{E(\gf)}, \cO(\cU'_{S}))$ and $v\in S$, where  $\vartheta_{S,v}: \cO(\cU'_{S}) \to \cO(\cU'_{S\!\setminus\{v\}} )$ is the restriction map. 
Applying Lemma \ref{l:ev-functorial}  yields		 
 \begin{align}\label{eq:improved-support}
 \vartheta_{S,v}\circ \ev_{\varpi_{\gf}}^\eta(D_{\cU'_{S}})=\ev_{\varpi_{\gf}}^\eta(D_{\cU'_{S\!\setminus\{v\}}})\circ \vartheta_{S,v}.
 \end{align}
 As in \eqref{eq:iota}, for $\eta\in \A_F^{\times}$, there is an isomorphism $\iota_{\eta}^*:D(U(\gf)_{S}/E(\gf),\cO(\cU))\xrightarrow{\sim}D(\Gal_{S\infty}[\eta],\cO(\cU))$.

\begin{lemma}\label{p:ev-improved}
The following map 	does not depend on the representative $\eta\in \A_F^\times$ of 
$[\eta]\in \Cl(\gf)$ 
\begin{align}\label{eq:ev-improved}
	\ev^{[\eta],r}_{\varpi_{\gf},S} =\chi^{r-1}_{\cyc}(\eta) \left[ \iota_{\eta}^*\circ(\ev_{\varpi_{\gf}}^\eta)_{S,r}^{\times}\right] :
	 \rH^d_c(Y_K,\cD_{\cU'_{S}})	 \to D(\Gal_{S\infty}[\eta], \cO(\cU'_{S})). 
		\end{align}
	\end{lemma}

\begin{proof} Suppose $\eta'\in F^{\times} \eta u F_\infty^{\times+}$ with $u\in U(\gf)$. By Lemma \ref{l:ev-eta} we have 	\begin{align*}
	\langle \iota_{\eta}^*\circ(\ev_{\varpi_{\gf}}^{\eta})_{S,r}^\times, f
	\rangle = \langle \ev_{\varpi_{\gf}}^{\eta},
	(f\circ \iota_{\eta})_{S,r}^\times\rangle=
	\left\langle \begin{psmallmatrix} u_p&\\
	&1\end{psmallmatrix}\ev_{\varpi_{\gf}}^{\eta'},(f\circ
	\iota_{\eta})_{S,r}^\times\right\rangle
	=\left\langle \ev_{\varpi_{\gf}}^{\eta'},{(f\circ
	\iota_{\eta})_{S,r}^\times}_{\big|\begin{psmallmatrix} 1&\\
		&u_p^{-1}\end{psmallmatrix}}\right\rangle
	\end{align*}
	Using Def. \ref{d:action-functions} and the fact that $\iota_{\eta}(u_p \cdot )=\iota_{\eta'}$ we find
\begin{align}\label{eq:useful-relation}
	{(f\circ	\iota_{\eta})_{S,r}}^\times_{\big|\begin{psmallmatrix} 1&\\ &u_p^{-1}\end{psmallmatrix}} =\left\langle 
		\begin{psmallmatrix} u_p^{-1}&\\&1\end{psmallmatrix}\right\rangle_{\cU'_{S}} (f\circ
	\iota_{\eta})_{S,r}^\times(u_p\cdot)
	=\chi^{r-1}_{\cyc}(u_p)(f\circ\iota_{\eta'})_{S,r}^\times
	\end{align}
	hence $\langle \iota_{\eta}^*\circ(\ev_{\varpi_{\gf}}^{\eta})_{S,r}^\times, f \rangle = \chi^{r-1}_{\cyc}(u_p)\left\langle \ev_{\varpi_{\gf}}^{\eta'},(f\circ\iota_{\eta'})_{S,r}^\times\right\rangle =	\chi^{r-1}_{\cyc}(u_p)
	\langle \iota_{\eta'}^*\circ(\ev_{\varpi_{\gf}}^{\eta'})_{S,r}^\times, f \rangle$.
\end{proof}

The above lemma allows one to introduce the following notation analogous to Def. \ref{d:eta-bracket}: 
\[	\ev_{\varpi_{\gf},S} ^r:	 \rH^d_c(Y_K,\cD_{\cU'_{S}})	 \to D(\Gal_{S\infty}, \cO(\cU'_{S})).\]

We first state a distribution relation extending Proposition \ref{p:ev-relation-canonical}, whose proof is very similar and uses the single additional  fact that ${f_{S,r}^{\times} }_{\big|\begin{psmallmatrix}		\varpi_v&\\&1\end{psmallmatrix}}= 0$, for all $v\in S$.

\begin{proposition} \label{p:improved-distribution}
For $v \in S$ we let $\pr_{\gf v,\gf}:\Cl(\gf v)\to \Cl(\gf)$ be the natural projection. Then
	\[\ev^{[\eta],r}_{\varpi_{\gf},S}\circ U_{\varpi_v} =\sum_{[\delta]\in\pr_{\gf v,\gf}^{-1}([\eta])} 
		\ev^{[\delta],r}_{\varpi_{\gf v},S} \text{ and } \ev^{r}_{\varpi_{\gf}, S} \circ U_{\varpi_v} = \ev^{r}_{\varpi_{\gf v},S}. \]
\end{proposition}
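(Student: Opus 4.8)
The plan is to deduce Proposition \ref{p:improved-distribution} from the already-established Proposition \ref{prop-relations}, following the same template that produced Proposition \ref{p:ev-relation-canonical}, but carrying along the extra polynomial/norm factors built into the $(-)_S^\times$ construction. First I would fix $\Phi\in \rH^d_c(Y_K,\cD_{\gU^r_S})$, a representative $\eta\in\A_F^\times$ of $[\eta]\in\Cl(\gf)$, and for each $[\delta]\in\pr_{\gf v,\gf}^{-1}([\eta])$ a representative $\delta\in F^\times\eta u_\delta F_\infty^+$ with $u_\delta\in U(\gf)$, exactly as in Proposition \ref{prop-relations}. Since $v\in S$ and $\gf$ is supported in $S$, we are in Case 1 of that proposition (i.e. $v\mid\gf v$ is the relevant divisibility, and no ``extra term'' appears), so
\[\ev_{i_{\gf v}}^\eta\circ U_{\varpi_v} \text{ is replaced by the identity } \sum_{[\delta]}\begin{psmallmatrix} u_\delta&\\ &1\end{psmallmatrix}\cdot\ev_{i_{\gf v}}^{\delta}=\ev_{i_\gf}^\eta\circ U_{\varpi_v}.\]
Applying this to $\Phi$ and pairing both sides against $(f\circ\iota_\eta)_S^\times$ for an arbitrary $f\in A(\Gal_{S\infty}[\eta],\cO(\gU^r_S))$, then multiplying by $\chi_{\cyc}^{r-1}(\eta)$, reduces the first claimed identity to a single local computation.

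The key step is that computation: I must check that for each $[\delta]$,
\[\chi_{\cyc}^{r-1}(\eta)\left\langle \begin{psmallmatrix} u_\delta&\\ &1\end{psmallmatrix}\cdot\ev_{i_{\gf v}}^{\delta}(\Phi),\,(f\circ\iota_\eta)_S^\times\right\rangle = \chi_{\cyc}^{r-1}(\delta)\left\langle \ev_{i_{\gf v}}^{\delta}(\Phi),\,(f\circ\iota_\delta)_S^\times\right\rangle,\]
so that the left side reassembles into $\ev_{i_{\gf v},S}^{[\delta],r}(\Phi)$ evaluated at $f_{|\Gal_{S\infty}[\delta]}$. By the adjunction of the $\begin{psmallmatrix} u_\delta&\\ &1\end{psmallmatrix}$-action this amounts to showing
\[{(f\circ\iota_\eta)_S^\times}_{|\begin{psmallmatrix} u_p&\\ &1\end{psmallmatrix}} = \chi_{\cyc}^{r-1}(u_p)\,(f\circ\iota_\delta)_S^\times,\]
which is precisely the relation \eqref{eq:useful-relation} already derived in the proof of Lemma \ref{p:ev-improved}, combined with $\chi_{\cyc}^{r-1}(\delta)=\chi_{\cyc}^{r-1}(\eta)\chi_{\cyc}^{r-1}(u_p)$ (using that $\chi_{\cyc}$ is trivial on $F^\times F_\infty^+$). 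Here I use Lemma \ref{l:distribution support} to know the distribution $\ev_{i_{\gf v}}^\delta(\Phi)$ is supported on $U(\gf v)_p\subset(\cO_F\otimes\Z_p)^\times$, so only the values of the functions on units matter, where $f_S^\times$ and the plain $f^\times$ differ only by the honest character $z\mapsto\prod_{v\in S_p\setminus S}\prod_\sigma\sigma(z_v)^{j_\sigma}\prod_{v\in S}N_{F_v/\Q_p}(z_v)^{r-1}$ times the universal character, all of which transform correctly under left translation by $u_p$.

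The second identity $\ev_{i_\gf,S}^r\circ U_{\varpi_v}=\ev_{i_{\gf v},S}^r$ then follows formally by summing the first over $[\eta]\in\Cl(\gf)$ and using that $\Cl(\gf v)=\bigsqcup_{[\eta]}\pr_{\gf v,\gf}^{-1}([\eta])$, together with the compatible decomposition $D(\Gal_{S\infty},\cO(\gU^r_S))=\bigoplus_{[\eta]}D(\Gal_{S\infty}[\eta],\cO(\gU^r_S))$. I do not anticipate a genuine obstacle: the whole argument is a bookkeeping exercise transporting Proposition \ref{prop-relations} through the $(-)_S^\times$ map. The one point requiring care — and the place where the hypotheses $v\in S$ and ``$\gf$ supported in $S$'' are essential — is ensuring we land in Case 1 of Proposition \ref{prop-relations} (no correction term) and that the $\begin{psmallmatrix}\varpi_v&\\&1\end{psmallmatrix}$-equivariance \eqref{eq:unif-action} of $f_S^\times$ kills exactly the terms it should; verifying the cocycle-type bookkeeping of the factors $\chi_{\cyc}^{r-1}(\eta)$ versus $\chi_{\cyc}^{r-1}(\delta)$ is the only spot where a sign or normalization slip could occur.
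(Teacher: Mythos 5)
Your proof follows the paper's route and the core computation — transporting Proposition \ref{prop-relations} through the maps $f\mapsto f_S^\times$ via \eqref{eq:useful-relation} and reassembling the $\chi_{\cyc}^{r-1}$ factors — is correct and matches the paper's. There is, however, a slip in the case analysis. You assert that $v\in S$ together with ``$\gf$ supported in $S$'' places you in Case~1 of Proposition \ref{prop-relations}, writing that ``$v\mid\gf v$ is the relevant divisibility.'' The actual dichotomy there is $v\mid\gf$ versus $v\nmid\gf$, and ``supported in $S$'' only means every prime dividing $\gf$ lies in $S$; it does not force $v\mid\gf$ for a given $v\in S$ (for instance $\gf=\cO_F$ is supported in every $S$). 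So in general you can land in Case~2, with the correction term $\begin{psmallmatrix}\varpi_v&\\&1\end{psmallmatrix}\cdot\ev_{i_\gf}^{\eta\varpi_v}$. The paper's proof handles this uniformly by observing, via \eqref{eq:unif-action}, that this correction term pairs to zero against $(f\circ\iota_\eta)_S^\times$ precisely because $v\in S$ (the $\begin{psmallmatrix}\varpi_v&\\&1\end{psmallmatrix}$-action annihilates $f_S^\times$), so the Case-1 identity holds regardless. Your closing paragraph names the right tool, but the main argument should invoke \eqref{eq:unif-action} explicitly to kill the Case~2 correction rather than claim to always be in Case~1.
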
		
	
The next result will be used in 
\S \ref{s:p-adic-L} to compare $p$-adic $L$-functions and improved ones.

\begin{lemma}\label{l:improved-ev-relation}
We have $\left\langle \ev^r_{\varpi_{\gf},S_p}, \cdot \right\rangle= \left\langle \ev_{\varpi_{\gf}}, \chi^{r-1}_{\cyc} \cdot\right\rangle$. 
\end{lemma}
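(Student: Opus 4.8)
The statement $\left\langle \ev^r_{i_{\gf},S_p}, f \right\rangle= \left\langle \ev_{i_\gf},  \chi^{r-1}_{\cyc} f\right\rangle$ is, in the case $S=S_p$, essentially a comparison of two auxiliary functions on $\Gal_{p\infty}[\eta]$ (or on $U(\gf)_p/E(\gf)$) against the \emph{same} distribution $\ev_{i_\gf}^\eta(\Phi)$; all the genuinely geometric content has already been packaged into the definition of $\ev_{i_\gf}^\eta$. So the plan is to unwind the definitions on each connected component $\Gal_{p\infty}[\eta]$ and reduce to an identity of functions in $A_{\gU}^{E(\gf)}$.

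First I would fix a representative $\eta\in \A_F^\times$ for $[\eta]\in\Cl(\gf)$ and a function $f\in A(\Gal_{p\infty}[\eta],\cO(\gU))$, and write out both sides using Definition \ref{d:eta-bracket} and Lemma \ref{p:ev-improved}. The left-hand side is, by definition of $\ev^{[\eta],r}_{i_\gf,S_p}$ and by \eqref{eq:ev-improved} with $S=S_p$,
\[
\left\langle \ev^{[\eta],r}_{i_\gf,S_p}(\Phi), f\right\rangle = \chi^{r-1}_{\cyc}(\eta)\,\big\langle \ev_{i_\gf}^\eta(\Phi),\, (f\circ\iota_\eta)_{S_p}^\times\big\rangle .
\]
The right-hand side is $\big\langle \ev_{i_\gf}^{[\eta]}(\Phi), \chi^{r-1}_{\cyc}f \big\rangle = \big\langle \ev_{i_\gf}^\eta(\Phi),\, ((\chi^{r-1}_{\cyc}f)\circ\iota_\eta)^\times\big\rangle$. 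Hence it suffices to prove the pointwise identity of functions in $A_{\gU}^{E(\gf)}$:
\[
\chi^{r-1}_{\cyc}(\eta)\cdot(f\circ\iota_\eta)_{S_p}^\times(z)\;=\;\big((\chi^{r-1}_{\cyc}f)\circ\iota_\eta\big)^\times(z)
\quad\text{for all }z\in\cO_F\otimes\Z_p.
\]
Both sides vanish off $U(\gf)_p$ (by the definitions of $(-)^\times$ in \eqref{eq:times} and of $(-)_{S}^\times$, noting that for $S=S_p$ one has $\prod_{v\in S_p}\cO_v^\times$ as the relevant domain and no $S_p\setminus S$ factors), so fix $z\in U(\gf)_p$. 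On the right, $((\chi^{r-1}_{\cyc}f)\circ\iota_\eta)^\times(z)=\langle\begin{psmallmatrix}z&\\&1\end{psmallmatrix}\rangle_{\gU}\,\chi^{r-1}_{\cyc}(\iota_\eta(z))\,f(\iota_\eta(z))$. On the left, with $S=S_p$ (so $S_p\setminus S=\varnothing$), $(f\circ\iota_\eta)^\times_{S_p}(z)=\big(\prod_{v\in S_p}\langle\begin{psmallmatrix}z_v&\\&1\end{psmallmatrix}\rangle_{\gU}\,N_{F_v/\Q_p}(z_v)^{r-1}\big)f(\iota_\eta(z))=\langle\begin{psmallmatrix}z&\\&1\end{psmallmatrix}\rangle_{\gU}\,\chi^{r-1}_{\cyc}(z)\,f(\iota_\eta(z))$, using the product formula $\prod_{v\mid p}N_{F_v/\Q_p}(z_v)=\chi_{\cyc}(z)$ valid on $U(\gf)_p$ (where the prime-to-$p$ adelic norm is $1$). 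The two sides then agree precisely because $\chi^{r-1}_{\cyc}(\eta)\chi^{r-1}_{\cyc}(z)=\chi^{r-1}_{\cyc}(\widetilde\eta\,\iota_\gf(z))=\chi^{r-1}_{\cyc}(\iota_\eta(z))$ by \eqref{eq:iota}; this is where the twist by $\chi^{r-1}_{\cyc}(\eta)$ in the definition of $\ev^{[\eta],r}$ does its job. Summing over $[\eta]\in\Cl(\gf)$ via Definition \ref{d:eta-bracket} and the decomposition $D(\Gal_{p\infty},\cO(\gU))=\bigoplus_{[\eta]}D(\Gal_{p\infty}[\eta],\cO(\gU))$ gives the global identity.

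\textbf{Main obstacle.} There is no serious obstacle: the whole proof is bookkeeping with the normalization factors. The one point requiring care is checking that the cyclotomic character, restricted to $U(\gf)_p$ and composed with $\iota_\gf$, equals $z\mapsto\prod_{v\mid p}N_{F_v/\Q_p}(z_v)$ — i.e.\ matching the place-by-place formula for $\langle\cdot\rangle_{\gU}^{r-1}\cdot N(-)^{r-1}$ appearing inside $(-)_{S_p}^\times$ against $\chi^{r-1}_{\cyc}\circ\iota_\eta$ — and that the extra factor $\chi^{r-1}_{\cyc}(\eta)$ exactly compensates the component-label shift in $\iota_\eta$. All of this is immediate from the Notations section's description of $\chi_{\cyc}$ as the idele class character $y\mapsto\prod_{v\in S_p}N_{F_v/\Q_p}(y_v)|y_f|_F$ together with \eqref{eq:iota}.
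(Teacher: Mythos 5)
Your proof is correct and follows the same route as the paper's: both reduce to the pointwise identity $(f\circ\iota_\eta)_{S_p}^\times = (f\circ\iota_\eta)^\times\cdot z^{t(r-1)}$ on $U(\gf)_p$ together with the observation that $\chi_{\cyc}^{r-1}(\eta)\,z^{t(r-1)} = \chi_{\cyc}^{r-1}(\iota_\eta(z))$, which is exactly the paper's one-line computation unwound. Your version spells out the class-field-theoretic identification $\prod_{v\mid p}N_{F_v/\Q_p}(z_v) = \chi_{\cyc}(\iota_\gf(z))$ on units more explicitly, but the argument is the same.
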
		
		\begin{proof} Note that $\cU'_{S_p}=\cU$. 
		 By definition $f^\times_{S_p,r}(z)=f^\times(z) z^{t(r-1)}$ for $f\in A(U(\gf)_p/\overline{E(\gf)},\cO(\cU))$. 
		Using \eqref{eq:ev-improved} and Def. \ref{d:eta-bracket} we find that for any $[\eta]\in \Cl(\gf)$ and for 	any  $f\in A(\Gal_{p\infty}[\eta],\cO(\cU))$:
		 \[\left\langle \ev^{[\eta],r}_{\varpi_{\gf},S_p}, f\right\rangle=\chi^{r-1}_{\cyc}(\eta)
		 \left\langle \ev_{\varpi_{\gf}}^{\eta}, (f\circ\iota_\eta)^\times_{S_p,r}\right\rangle=
		 \left\langle \ev_{\varpi_{\gf}}^{\eta}, (f\circ\iota_\eta)^\times (\chi^{r-1}_{\cyc}\circ\iota_\eta)
		 \right\rangle=\left\langle \ev_{\varpi_{\gf}}^{[\eta]}, f \chi^{r-1}_{\cyc}\right\rangle. \qedhere
		 \]	 \end{proof}

Finally we relate the improved evaluations when $S$ varies. 
 
 \begin{proposition} \label{p:improved-relation}
 For any $v\in S$, $v\nmid \gf$ we have 
			\[\displaystyle(\ev^{[\eta],r}_{\varpi_{\gf},{S\!\setminus\{v\}}}\circ\vartheta_{S,v} - \vartheta_{S,v}\circ \ev^{[\eta],r}_{\varpi_{\gf},S})\circ U_{\varpi_v}=
			\left(q_v^{r-1}\prod_{\sigma\in \Sigma_v}\sigma(\varpi_v)^{\frac{\sw-2+k_\sigma}{2}}\right)
			 \iota_{\varpi_v^{-1}}^*\circ\ev^{[\eta\varpi_v],r}_{\varpi_{\gf}, S\!\setminus\{v\}}\circ\vartheta_{S,v}, \text{ where }\]
			 $\iota_{\varpi_v^{-1}}: \Gal_{(S\!\setminus\{v\})\infty}[\eta\varpi_v] \xrightarrow{\cdot \varpi_v^{-1}} \Gal_{(S\!\setminus\{v\})\infty}[\eta]$ and 
 $\vartheta_{S,v}: \cO(\cU'_{S}) \to \cO(\cU'_{S\!\setminus\{v\}} )$ is the restriction. 
			\end{proposition}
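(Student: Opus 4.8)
The plan is to derive this from Proposition \ref{prop-relations} Case 2 (the case $v\nmid\gf$), which already contains the extra term $\begin{psmallmatrix}\varpi_v&0\\0&1\end{psmallmatrix}\cdot\ev_{i_\gf}^{\eta\varpi_v}$ that, unlike in Proposition \ref{p:ev-relation-canonical} and Proposition \ref{p:improved-distribution}, no longer dies after pairing with the relevant test functions — because at $v\in S$ with $v\nmid\gf$ the support condition on $f_S^\times$ does not force vanishing at $\varpi_v\cdot(\text{units})$. The key point is to carefully track how the ``extension by zero'' map $f\mapsto f_S^\times$ of \S\ref{ss:norms} interacts with the operator $\begin{psmallmatrix}\varpi_v&0\\0&1\end{psmallmatrix}$ and with the difference between the $S$-version and the $(S\backslash\{v\})$-version.

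First I would spell out, using Lemma \ref{l:ev-functorial} applied to the restriction map $\vartheta_{S,v}:D_{\gU^r_S}\to D_{\gU^r_{S\backslash\{v\}}}$, that $\vartheta_{S,v}\circ\ev_{i_\gf}^\eta(D_{\gU^r_S})=\ev_{i_\gf}^\eta(D_{\gU^r_{S\backslash\{v\}}})\circ\vartheta_{S,v}$, so both terms on the left of the desired identity live naturally in $D(\Gal_{S\backslash\{v\}\infty}[\eta],\cO(\gU^r_{S\backslash\{v\}}))$. Next, pair $\vartheta_{S,v}\bigl(\ev^{[\eta],r}_{i_\gf,S}\circ U_{\varpi_v}\bigr)$ against an arbitrary $f\in A(\Gal_{S\backslash\{v\}\infty}[\eta],\cO(\gU^r_{S\backslash\{v\}}))$: unwinding Definition \ref{d:eta-bracket}, Lemma \ref{p:ev-improved} and the definition $\ev^{[\eta],r}_{i_\gf,S}=\chi_\cyc^{r-1}(\eta)\,\iota_\eta^*\circ(\ev_{i_\gf}^\eta)_S^\times$, this becomes $\chi_\cyc^{r-1}(\eta)\langle\ev_{i_\gf}^\eta\circ U_{\varpi_v},(f\circ\iota_\eta)_S^\times\rangle$ up to the restriction $\vartheta_{S,v}$. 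Now apply Proposition \ref{prop-relations}, Case 2: the main term $\ev_{i_\gf}^\eta\circ U_{\varpi_v}$ minus $\begin{psmallmatrix}\varpi_v&0\\0&1\end{psmallmatrix}\cdot\ev_{i_\gf}^{\eta\varpi_v}$; for $v\in S$ with $v\nmid\gf$ there is exactly one class $[\delta]\in\pr_{\gf v,\gf}^{-1}([\eta])$ (so the $(S\backslash\{v\})$-improved evaluation for the ideal $\gf$ reappears via \eqref{eq:improved-support}, which is precisely the first term $\ev^{[\eta],r}_{i_\gf,S\backslash\{v\}}\circ\vartheta_{S,v}$), and the genuinely new contribution is $-\chi_\cyc^{r-1}(\eta)\bigl\langle\begin{psmallmatrix}\varpi_v&0\\0&1\end{psmallmatrix}\cdot\ev_{i_\gf}^{\eta\varpi_v},(f\circ\iota_\eta)_S^\times\bigr\rangle$, which by \eqref{iwahori-action} equals $-\chi_\cyc^{r-1}(\eta)\bigl\langle\ev_{i_\gf}^{\eta\varpi_v},{(f\circ\iota_\eta)_S^\times}_{|\begin{psmallmatrix}\varpi_v&0\\0&1\end{psmallmatrix}}\bigr\rangle$.

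The crux is then a direct computation of ${(f\circ\iota_\eta)_S^\times}_{|\begin{psmallmatrix}\varpi_v&0\\0&1\end{psmallmatrix}}$. By Definition \ref{d:action on analytic functions} this is $z\mapsto (f\circ\iota_\eta)_S^\times(\varpi_v z)$; using the explicit formula for $f_S^\times$ in \S\ref{ss:norms} together with $N_{F_v/\Q_p}(\varpi_v)=q_v$ and the defining relation of $\gU^r_{S}$ (namely $\langle\begin{psmallmatrix}z_v&0\\0&1\end{psmallmatrix}\rangle_\lambda=\prod_{\sigma\in\Sigma_v}\sigma(z_v)^{j_\sigma-r+1}$ for $v\in S_p\backslash S$, here applied after $v$ has been removed from the improving set when restricting via $\vartheta_{S,v}$), one gets a scalar $q_v^{r-1}\prod_{\sigma\in\Sigma_v}\sigma(\varpi_v)^{j_\sigma-r+1}$ times $(f\circ\iota_\eta)^\times_{S\backslash\{v\}}$ evaluated after the shift by $\varpi_v$, i.e.\ composed with $\iota_{\varpi_v^{-1}}:\Gal_{S\backslash\{v\}\infty}[\eta\varpi_v]\xrightarrow{\cdot i_v^{-1}}\Gal_{S\backslash\{v\}\infty}[\eta]$. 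Re-packaging this via the definition of $\ev^{[\eta\varpi_v],r}_{i_\gf,S\backslash\{v\}}$ and the relation $\chi_\cyc^{r-1}(\eta)=\chi_\cyc^{r-1}(\eta\varpi_v)\cdot\chi_\cyc^{r-1}(\varpi_v^{-1})$ (absorbing $\chi_\cyc^{r-1}(\varpi_v^{-1})$, which up to the $\langle\cdot\rangle$-normalization is again a power of $q_v$ times $\prod_\sigma\sigma(\varpi_v)^{\cdots}$, into the scalar — one must be slightly careful, as $\chi_\cyc(\varpi_v)=N_{F_v/\Q_p}(\varpi_v)q_v^{-1}$ for $v\mid p$) yields exactly $q_v^{r-1}\prod_{\sigma\in\Sigma_v}\sigma(\varpi_v)^{j_\sigma-r+1}\,\iota_{\varpi_v^{-1}}^*\circ\ev^{[\eta\varpi_v],r}_{i_\gf,S\backslash\{v\}}\circ\vartheta_{S,v}$, as claimed. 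The main obstacle I anticipate is precisely the bookkeeping of normalization factors in this last step: getting the power of $q_v$, the $\prod_\sigma\sigma(\varpi_v)$ exponents, and the $\chi_\cyc$-versus-$\langle\cdot\rangle$ discrepancy to combine into the stated clean constant, and ensuring all terms are tested against a dense enough space of $f$ to conclude the identity of distributions.
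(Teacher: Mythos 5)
Your overall strategy — expand via Proposition \ref{prop-relations} Case 2 and track the extra $\begin{psmallmatrix}\varpi_v&0\\0&1\end{psmallmatrix}\cdot\ev_{i_\gf}^{\eta\varpi_v}$ term — is the same as the paper's, and your computation of the scalar $q_v^{r-1}\prod_\sigma\sigma(\varpi_v)^{j_\sigma-r+1}$ via \eqref{eq:unif-action} and $\chi_{\cyc}(\varpi_v)q_v=N_{F_v/\Q_p}(\varpi_v)$ is correct. But you have expanded from the wrong side of the identity, and this leads you into a genuine error about where the extra term comes from.

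Concretely: you pair $\vartheta_{S,v}\circ\ev^{[\eta],r}_{i_\gf,S}\circ U_{\varpi_v}$ against $f$, which by the definition of $\ev^{[\eta],r}_{i_\gf,S}$ unwinds to a pairing against $(f\circ\iota_\eta)_{S}^\times$. You then claim that the term $\bigl\langle\begin{psmallmatrix}\varpi_v&0\\0&1\end{psmallmatrix}\cdot\ev_{i_\gf}^{\eta\varpi_v},(f\circ\iota_\eta)_S^\times\bigr\rangle$ is nonzero because $v\nmid\gf$. This is false: by construction $f_S^\times$ is supported on $\prod_{w\in S}\cO_w^\times\prod_{w\notin S}\cO_w$ \emph{for every} $v\in S$, irrespective of whether $v$ divides $\gf$, and $\begin{psmallmatrix}\varpi_v&0\\0&1\end{psmallmatrix}\cdot\ev_{i_\gf}^{\eta\varpi_v}$ is supported on $v$-components in $\varpi_v\cO_v$. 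So this pairing vanishes — this is precisely the argument used in the proof of Proposition \ref{p:improved-distribution} (the displayed line $\langle\begin{psmallmatrix}\varpi_v&0\\0&1\end{psmallmatrix}\cdot\ev_{i_\gf}^{\eta\varpi_v},(f\circ\iota_\eta)_S^\times\rangle=0$). Starting from $\vartheta_{S,v}\circ\ev^{[\eta],r}_{i_\gf,S}\circ U_{\varpi_v}$ you therefore only reproduce the conclusion of Proposition \ref{p:improved-distribution} restricted along $\vartheta_{S,v}$ and never see the new term.

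The correct starting point is the \emph{other} term $\ev^{[\eta],r}_{i_\gf,S\backslash\{v\}}\circ\vartheta_{S,v}\circ U_{\varpi_v}$, whose test function $(f\circ\iota_\eta)_{S\backslash\{v\}}^\times$ has $v$-component of support equal to \emph{all of} $\cO_v$ (since $v\notin S\backslash\{v\}$). Against this test function the $\begin{psmallmatrix}\varpi_v&0\\0&1\end{psmallmatrix}$ term does survive, giving the right-hand side of the proposition, while the $\sum_\delta$ term — which is a genuine sum (the fibers of $\pr_{\gf v,\gf}$ have cardinality $\tfrac{q_v-1}{[E(\gf):E(\gf v)]}$, not one, so your claim of a single class is also incorrect) — matches up with $\vartheta_{S,v}\circ\ev^{[\eta],r}_{i_\gf,S}\circ U_{\varpi_v}$ via \eqref{eq:improved-support} and Proposition \ref{p:improved-distribution}. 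In other words, you have the identification of the three terms backwards: the $\sum_\delta$ piece produces $\vartheta_{S,v}\circ\ev^{[\eta],r}_{i_\gf,S}\circ U_{\varpi_v}$, not $\ev^{[\eta],r}_{i_\gf,S\backslash\{v\}}\circ\vartheta_{S,v}$. Once you start from $\ev^{[\eta],r}_{i_\gf,S\backslash\{v\}}\circ\vartheta_{S,v}\circ U_{\varpi_v}$ and pair against $(f\circ\iota_\eta)_{S\backslash\{v\}}^\times$, your scalar bookkeeping goes through verbatim and gives the paper's proof.
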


		\begin{proof} 		
Using ${f_{S\!\setminus\{v\},r}^\times}_{\big|\begin{psmallmatrix}		\varpi_v&\\&1\end{psmallmatrix}}= 
		 \prod\limits_{\sigma\in
		\Sigma_v}\sigma(\varpi_v)^{j_\sigma}f_{S\!\setminus\{v\},r}^\times$ and  $ \chi_{\cyc}(\varpi_v)q_v =\mathrm{N}_{F_v/\Q_p}(\varpi_v)$ we obtain:
	\begin{align*}
	&\chi^{r-1}_{\cyc}(\eta)
	\left\langle \begin{psmallmatrix} \varpi_{v} & 0 \\ 0 & 1 \end{psmallmatrix} \cdot \ev_{\varpi_{\gf}}^{\eta\varpi_v}, (f\circ	\iota_{\eta})_{S\!\setminus\{v\},r}^\times\right \rangle=	\chi^{r-1}_{\cyc}(\eta)
	 \prod_{\sigma\in	\Sigma_v}\sigma(\varpi_v)^{j_\sigma}
	 \left\langle \ev_{\varpi_{\gf}}^{\eta\varpi_v}, (f\circ	\iota_{\eta})_{S\!\setminus\{v\},r}^\times\right \rangle=\\
	& =
	\chi^{r-1}_{\cyc}(\eta\varpi_v) \left\langle \iota_{\eta}^*(\ev_{\varpi_{\gf}}^{\eta\varpi_v})_{S\!\setminus\{v\},r}^\times, f\right \rangle
	=q_v^{r-1}\prod_{\sigma\in	\Sigma_v}\sigma(\varpi_v)^{j_\sigma-r+1} \left\langle \iota_{\varpi_v^{-1}}^*		
			\ev^{[\eta\varpi_v],r}_{\varpi_{\gf}, S\!\setminus\{v\}}, f\right \rangle.
	 \end{align*}
	
Since $j_\sigma-r+1 =\frac{\sw-2+k_\sigma}{2}$, Proposition \ref{prop-relations} applied to the case $v\nmid \gf$ yields
	\begin{align*} &\chi^{1-r}_{\cyc}(\eta)	\left\langle 
\ev^{[\eta],r}_{\varpi_{\gf},{S\!\setminus\{v\}}}\circ U_{\varpi_v}-
			q_v^{r-1}\prod_{\sigma\in \Sigma_v}\sigma(\varpi_v)^{\frac{\sw-2+k_\sigma}{2}}
			 \iota_{\varpi_v^{-1}}^*			\ev^{[\eta\varpi_v],r}_{\varpi_{\gf}, S\!\setminus\{v\}} ,f \right \rangle= \\
			& 
	 =\left\langle \ev_{\varpi_{\gf}}^{\eta}\circ U_{\varpi_v}- 
	\begin{psmallmatrix} \varpi_{v} & 0 \\ 0 & 1 \end{psmallmatrix} \cdot \ev_{\varpi_{\gf}}^{\eta\varpi_v}, 
	(f\circ	\iota_{\eta})_{S\!\setminus\{v\},r}^\times\right \rangle=	
	\sum_{[\delta]\in\pr_{\gf v,\gf}^{-1}([\eta])}
\left\langle \begin{psmallmatrix} u_{\delta}&\\ &1\end{psmallmatrix}\cdot\ev_{\varpi_{\gf v}}^{\delta}, (f\circ	\iota_{\eta})_{S\!\setminus\{v\},r}^\times \right \rangle.
\end{align*}

By \eqref{eq:improved-support} 	and a computation similar to the proof 
 of Proposition \ref{p:ev-relation-canonical} we obtain 	
\[\displaystyle 
 \left\langle \begin{psmallmatrix} u_{\delta}&\\ &1\end{psmallmatrix}\cdot\ev_{\varpi_{\gf v}}^{\delta}
 \circ\vartheta_{S,v}
 , (f\circ	\iota_{\eta})_{S\!\setminus\{v\},r}^\times \right \rangle= \chi^{1-r}_{\cyc}(\eta) \cdot \vartheta_{S,v} \left(\left\langle \ev^{[\delta],r}_{\varpi_{\gf v},S},f\right\rangle\right).\]
 
 Finally, by 		Proposition \ref{p:improved-distribution} 
	we find $ \displaystyle \sum_{[\delta]\in\pr_{\gf v,\gf}^{-1}([\eta])} \vartheta_{S,v}\circ \ev^{[\delta],r}_{\varpi_{\gf v},S}	
		= \vartheta_{S,v}\circ \ev^{[\eta],r}_{\varpi_{\gf},S}\circ U_{\varpi_v}$.
		\end{proof}

		Let $\Phi\in \rH^d_c(Y_K, \cD_{\cU'_{S}})$ be such that for all $v\in S$ we have $U_{\varpi_v} \Phi= \alpha^{\circ}_{v} \Phi$ with $\alpha_v^\circ \in \cO(\cU'_{S})^{\times}$. 
Letting $\alpha_{\gf}^{\circ} =\prod_{v\in S} (\alpha_v^{\circ})^{n_v}$, where 
$n_v$ denotes the valuation of $\gf$ at $v$, the distribution 
\begin{align}\label{eq:universal-improved-evaluation}
\ev_{S}^{r}(\Phi)=(\alpha_{\gf}^{\circ})^{-1}\ev_{\varpi_{\gf}, S}^{r}(\Phi) \in D(\Gal_{S\infty},\cO(\cU'_{S}))
\end{align} 
 is independent of the choice of uniformizers (see Lem. \ref{l:ev-uniformizer}) as well of the
the ideal $\gf$ (see  Prop. \ref{p:improved-distribution}). Lemma \ref{l:improved-ev-relation} 
and \eqref{eq:universal-evaluation} then imply that: 
\begin{align}\label{eq:improved-evK-relation}
\left\langle \ev^r_{S_p}(\Phi), f \right\rangle= \left\langle \ev(\Phi), \chi^{r-1}_{\cyc} f\right\rangle
\text{ for all } f\in A(\Gal_{p\infty},\cO(\cU'_{S_p}))=A(\Gal_{p\infty},\cO(\cU)).
\end{align}

The following important consequence of 
Proposition \ref{p:improved-relation} will be used in \S\ref{ss:improved}. 
\begin{corollary}\label{c:ev-evtilde} 
For $\Phi\in \rH^d_c(Y_K, \cD_{\cU'_{S}})$ as above, letting $\displaystyle \alpha_v = \alpha_v^\circ\prod_{\sigma\in \Sigma_v}\sigma(\varpi_v)^{\frac{2-\sw-k_\sigma}{2}} $, then for any continuous character $\chi: \Gal_{(S\!\setminus\{v\})\infty}\rightarrow \cO(\cU'_{S\!\setminus\{v\}})^{\times}$ we have:
 \[ \left\langle \vartheta_{S,v}(\ev_{S}^{r}(\Phi)), \chi \right\rangle= \left\langle \ev_{ S\!\setminus\{v\}}^{r}(\vartheta_{S,v}(\Phi)), \chi \right\rangle \left(1- \frac{q_v^{r-1}}{\vartheta_{S,v}(\alpha_v)\chi(\varpi_v)} \right). \]
 \end{corollary}

\section{$p$-adic $L$-functions}\label{s:p-adic-L}
In this section we use the distribution valued maps from \S\ref{automorphic-symbols} to attach cyclotomic $p$-adic $L$-functions to 
rigid analytic families of non-critically refined Hilbert cusp forms, which are uniquely determined by an interpolation property
(see Theorem \ref{thm:multi-L_p}). We also construct improved $p$-adic $L$-functions, as well as `partial' $p$-adic $L$-functions for families of $S$-refined cusp forms, which do not appear to have been previously brought into light. 

Let $\pi$ be a cuspidal automorphic representation of $G(\A)$ of central character  $\omega_\pi$ 
and  cohomological weight $(k,\sw)$ (see Def. \ref{d:coh-weight}). 
Throughout this section we assume that $\pi_v$ has nearly finite slope for all $v\in S_p$, except in \S\ref{ss:partial} where we only 
assume this at $S\subsetneq S_p$. 
For $\widetilde{\pi}= (\pi, \{\nu_v\}_{v\in S_p})$ a (regular) non-critical $p$-refinement (see Def. \ref{d:hilbert-nearly} and Def. \ref{d:non-critical}) we consider  the neat open compact subgroup $K=K(\widetilde{\pi},\gu)\subset G(\A_f)$ from Def. \ref{d:newline} and the $(p,\gu)$-refined newforms $\phi_{\widetilde{\pi},\alpha_\gu}$, $\phi_{\widetilde{\pi},\beta_\gu}$ from Def. \ref{d:newform}.

\subsection{$p$-adic $L$-functions for nearly finite slope Hilbert cusp forms}\label{ss:p-adic L-functions attached}
 Let $L/\Q_p$ be a finite extension containing the image by $\iota_p$ of the number field $E$ from Def. \ref{d:mpi-tilde}. 
By cuspidality and non-criticality of $\widetilde{\pi}$, for each character $\epsilon: \{\pm 1\}^\Sigma \to \{\pm 1\}$, 
the basis $\iota_p(b_{\widetilde{\pi},\alpha_\gu}^{\epsilon})$
of $\rH^d_{\cusp}(Y_K, \cL_{k,\sw}^\vee(L))^\epsilon_{\gm_{\widetilde{\pi}}}=
\rH^{\bullet}_{c}(Y_K,\cL_{k,\sw}^\vee(L))^\epsilon_{\gm_{\widetilde{\pi}}}$
(see \eqref{eq:E-line}) lifts canonically to a basis of  $\Phi_{\widetilde{\pi},\alpha_\gu}^{\epsilon}$ of 
$\rH_c^d(Y_K,\cD_{(k,\sw)})^\epsilon_{\gm_{\widetilde{\pi}}}$ having the same $U_{\varpi_v}$-eigenvalue
$\alpha_v^\circ\in L^\times$, $v\in S_p$. For $\gf=\prod_{v\in S_p} v^{n_v}$ we let 
\begin{align}\label{eq:alphas}\alpha_\gf^\circ=\prod_{v\mid
 p} (\alpha_v^\circ)^{n_v} \text{ and } \alpha_\gf=\prod_{v\mid
 p} \alpha_v^{n_v},\textrm{ where }
\alpha_v=\nu_v(\varpi_v)=\alpha_v^\circ\prod_{\sigma\in
 \Sigma_v}\sigma(\varpi_v)^{\frac{2-\sw-k_\sigma}{2}}.
\end{align}

Consider the distribution $\ev(\Phi_{\widetilde{\pi},\alpha_\gu}^{\epsilon})\in D(\Gal_{p\infty}, L)$ defined in \eqref{eq:universal-evaluation}. In order to attach a $p$-adic $L$-function to $\widetilde{\pi}$ without missing Euler factors at $\gu$ we need to also consider the distribution $\ev(\Phi_{\widetilde{\pi},\beta_\gu}^{\epsilon})\in D(\Gal_{p\infty}, L)$
using the other Hecke parameter $\beta_\gu\neq \alpha_\gu$ of $\pi_\gu$ (see Def. \ref{d:u}). We let 
\begin{align}\label{def:padicL}
\cL_p(\widetilde{\pi})= \sum_{\epsilon: \{\pm 1\}^\Sigma\to \{\pm 1\}}
\frac{ \alpha_{\gu} \ev(\Phi_{\widetilde{\pi},\alpha_{\gu }}^{\epsilon})
	-\beta_{\gu}	 \ev(\Phi_{\widetilde{\pi},\beta_{\gu }}^{\epsilon})}{\alpha_{\gu}-\beta_{\gu}}\in D(\Gal_{p\infty}, L). 
\end{align}

For any $f\in A(\Gal_{p\infty},L)$, we let $\cL_p(\widetilde{\pi},f)=\cL_p(\widetilde{\pi})(f)$. 

By Prop. \ref{p:ev-growth} the distribution $\cL_p(\widetilde{\pi})$ has growth at most $\sum\limits_{v\in S_p}e_{v}h_{\widetilde{\pi}_v}$  
(see Def. \ref{d:non-critical-slope}), and we will next show that it interpolates critical values of the archimedean 
$L$-function of $\pi$ and its twists.

We let $r$ be a critical integer for the weight $(k,\sw)$ and  $j = (r-1)t +\frac{(\sw-2)t+k}{2}\geqslant 0$ (see Def. \ref{def:crit}). 
As observed in Rem. \ref{rem:crit-range}(ii) we have $z^j \in L_{k,\sw}(L)^{E(\cO_F)}\subset A_{(k,\sw)}(L)^{E(\cO_F)}$.
 Let  $\Omega_{\widetilde{\pi}}^{\epsilon} \in \C^{\times}$ be the period from Def. \ref{of-the-period-omega-pi}. 
 The following key Proposition allows us to relate the values at $z^j$ of the distributions
constructed in \S \ref{automorphic-symbols} to certain adelic integrals. 

\begin{proposition}\label{p:ev-critical}
	Let $\gf\mid p^\infty$ be an integral ideal, $\eta \in \mathbb{A}_F^\times$, and let 
	$\ev_{\varpi_{\gf}}^{\eta}=\ev_{\varpi_{\gf}}^{\eta}(D_{(k,\sw)})$. Then 
		\[\frac{\alpha_\gf \chi^{r-1}_{\cyc} (\eta)}{\alpha_\gf^\circ} 
	\langle \ev_{\varpi_{\gf}}^{\eta}(\Phi_{\widetilde{\pi},\alpha_\gu}^{\epsilon}), z^j \rangle = \frac{i^{(r-1)d}}{\Omega_{\widetilde{\pi}}^\epsilon}
	\sum_{s_\infty\in \{\pm 1\}^{\Sigma}}s_\infty^{(r-1)t}
	\epsilon(s_\infty)\int_{X_{\gf}[\eta s_\infty ]}\phi_{\widetilde{\pi},\alpha_\gu}\begin{pmatrix}
	y \varpi_{\gf} &y 1_{\gf}\\&1\end{pmatrix} |y|_F^{r-1}
	d^\times y.\]
\end{proposition}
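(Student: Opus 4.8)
The strategy is to unwind the definition \eqref{ev} of $\ev_{i_\gf}^\eta$ step by step, tracking the chain of maps $C_{i_\gf,K}^* \to \tw_{i_\gf} \to \coinv_\gf \to \triv_\eta^* \to (-\cap\theta_{[\eta]})$, and to use the commutative diagram \eqref{eq:ev-functorial} to descend from the overconvergent class $\Phi_{\widetilde\pi,\alpha_\mathfrak{u}}^\epsilon$ to the classical class $b_{\widetilde\pi,\alpha_\mathfrak{u}}^\epsilon$; then to recognize the result as an integral of the classical cohomology class over the automorphic cycle $C_{i_\gf}^\eta$, and finally to match that against the adelic integral of the cusp form $\phi_{\widetilde\pi,\alpha_\mathfrak{u}}$ using the Fourier expansion \eqref{eq:Fourier} together with the archimedean description of the class $\Xi_\infty^\epsilon$ from \eqref{eq:XI}.

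\textbf{Key steps.} First I would apply \eqref{eq:ev-functorial}: the left vertical map $\vartheta$ sends $\Phi_{\widetilde\pi,\alpha_\mathfrak{u}}^\epsilon$ to $b_{\widetilde\pi,\alpha_\mathfrak{u}}^\epsilon$, and the top horizontal arrow $\lambda(i_\gf^{-1},1)\cdot\ev_{i_\gf}^\eta(D_{k,\sw})$ composed with evaluation at $z^j$ corresponds, under $\vartheta$, to $\ev_{i_\gf}^\eta(L_{k,\sw}^\vee(L))$ evaluated at the functional $\mu\mapsto\mu(z^j)$ (Lemma \ref{l:critical}); this accounts for the factor $\lambda(i_\gf,1)=\alpha_\gf^\circ\prod_\sigma\sigma(\varpi_v)^{\cdots}$, and the normalization $\alpha_\gf/\alpha_\gf^\circ$ together with $\chi_{\cyc}^{r-1}(\eta)$ comes from this bookkeeping (cf.\ the definition of $\ev^{[\eta],r}$ in \eqref{eq:ev-improved}). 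Second, I would compute $\ev_{i_\gf}^\eta(L_{k,\sw}^\vee(E))$ on the classical class explicitly: the pullback $C_{i_\gf,K}^*$ followed by $\tw_{i_\gf}$, $\coinv_\gf$, $\triv_\eta^*$ and capping with $\theta_{[\eta]}=\eta_*(\theta_\gf^\circ)$ amounts to integrating the pulled-back differential form over $X_\gf^\circ=E(\gf)\backslash F_\infty^+$, twisted by $\begin{psmallmatrix} i_\gf & 1_\gf\\0&1\end{psmallmatrix}$; this is precisely the period integral appearing in \cite{dimitrov}. Third, I would use the de Rham/Betti comparison realizing $b_{\widetilde\pi,\alpha_\mathfrak{u}}^\epsilon=(\Omega_{\widetilde\pi}^\epsilon)^{-1}\Theta_\epsilon(\phi_{\widetilde\pi,\alpha_\mathfrak{u},f})$ (Definition \ref{of-the-period-omega-pi}): the cohomology class corresponds to the harmonic form built from $\phi_{\widetilde\pi,\alpha_\mathfrak{u}}$ wedged with $\bigotimes_\sigma\Xi_\sigma^{\epsilon_\sigma}$, and integrating over $F_\infty^+$ picks out the holomorphic Fourier coefficients; the archimedean normalization of $W_\sigma$ (with $W_\sigma(\begin{psmallmatrix} y&\\&1\end{psmallmatrix})=y^{(k_\sigma+\sw)/2}e^{-2\pi y}$) produces the $i^{(r-1)d}$ factor and the $\Gamma$-factors, and the sum over $s_\infty\in\{\pm1\}^\Sigma$ with weights $s_\infty^{(r-1)t}\epsilon(s_\infty)$ arises because the evaluation over the oriented cycle only sees the component of $\phi$ on $F_\infty^+$ whereas the cohomology class is $\epsilon$-isotypic for $F_\infty^\times/F_\infty^+$.

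\textbf{Main obstacle.} The genuinely delicate part is the third step: matching the cap-product-against-fundamental-class output on the Betti side with the adelic integral on the automorphic side, keeping precise track of orientations, the $\{\pm1\}^\Sigma$-action, the power $z^j$ versus the polynomial $(cz+d)^{k-2t}$ in \eqref{left-action}, and the various normalizing powers of $i$, $q_v$, and uniformizers. Concretely one must check that evaluating the linear functional $\eval_{\pm i}$ against the polynomial part of the class, then integrating $W_\sigma$ against $|y|^{r-1}$ over $F_{v,\infty}^+$ for each infinite place, reproduces exactly $i^{(r-1)d}s_\infty^{(r-1)t}$; this is the archimedean zeta-integral computation implicit in \cite{raghuram-tanabe} and I would carry it out place-by-place, reducing to $\GL_2(\R)$ via K\"unneth. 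Everything else — the manipulation of Hecke eigenvalues, the independence-of-$\eta$ statements, the twist by $\begin{psmallmatrix} i_\gf & 1_\gf\\0&1\end{psmallmatrix}$ — is bookkeeping already set up in \S\ref{evaluations} and \S\ref{ss:whittaker}, and I expect it to go through mechanically once the archimedean normalization is pinned down.
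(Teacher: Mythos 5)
Your overall strategy is the right one — unwind the chain of maps defining $\ev_{i_\gf}^\eta$, use the functoriality relation \eqref{eq:ev-functorial} to pass from the overconvergent class to the classical one, then compare the Betti cap-product with an adelic integral via the automorphic cycle geometry of \cite{dimitrov} — and this is indeed the skeleton of the paper's argument. However, two things in your write-up would have to be corrected before it could become a proof.

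First, you do not address the question of \emph{why} the two sides can be compared at all. The left-hand side lives a priori in a $p$-adic field $L$ while the right-hand side is a complex number. The paper's proof opens with a rationality argument: a commutative diagram comparing the $G(\Q)$-construction of $\cL_{k,\sw}^\vee(E)$ with the $K_p$-construction of $\cL_{k,\sw}^\vee(L)$ (the horizontal arrows being $(g,v)\mapsto(g,g^{-1}\cdot v)$), whose bottom arrow $E\hookrightarrow L$ is multiplication by $\eta_p^{(1-r)t}$, establishing equality \eqref{eq:ev-rational} showing the relevant evaluation lies in the number field $E$. Only then can the same computation be redone over $\C$ and compared with the adelic integral. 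This is also the true source of the factor $\chi_{\cyc}^{r-1}(\eta)$: the identity $\chi_{\cyc}^{r-1}(\eta)\eta_p^{(1-r)t}=|\eta|_{F,f}^{r-1}$ converts the $p$-adic bookkeeping to the complex side. Your attribution of that factor to "the definition of $\ev^{[\eta],r}$" is not where it comes from in this proposition.

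Second, your third step pulls in material that belongs to the proof of Theorem~\ref{t:interpolation-one-form}, not to Proposition~\ref{p:ev-critical}. There is no Fourier expansion, no Whittaker zeta-integral $\int W_\sigma\begin{psmallmatrix}y&\\&1\end{psmallmatrix}|y|^{r-1}d^\times y$, and no $\Gamma$-factor in the conclusion here: the proposition only reduces the evaluation to the raw adelic period $\int_{X_\gf[\eta]}\phi_{\widetilde\pi,\alpha_\mathfrak{u}}\begin{psmallmatrix}ys_\infty i_\gf&y1_\gf\\&1\end{psmallmatrix}|y|_F^{r-1}d^\times y$. Consequently the factor $i^{(r-1)d}$ does not come from the archimedean normalization of $W_\sigma$; it comes from combining the normalization constant $i^{(2-\sw-k_\sigma)/2}$ built into $\Xi_\sigma^\pm$ with $\eval_i(z^j)=i^j$ and the identity $j_\sigma+\tfrac{2-\sw-k_\sigma}{2}=r-1$. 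Likewise the factor $s_\infty^{(r-1)t}$ arises from the transformation $(\begin{psmallmatrix}u_\infty&\\&1\end{psmallmatrix}\cdot\eval_i)(z^j)=u_\infty^{(r-1)t}i^j$ under $u_\infty\mapsto s_\infty u_\infty$, not from the oriented-cycle argument you sketch. The genuinely delicate computation that the paper carries out — and that your proposal only gestures at — is the differential-geometric one: realizing the class as a harmonic form via relative Lie algebra cohomology, pulling back along the explicit diagram $E(\gf)\backslash F_\infty^+\to\Gamma_\eta\backslash G_\infty^+/K_\infty^+$ from \cite{dimitrov-pune}, computing the tangent-space map $\iota^*:(\mathfrak{g}/\mathfrak{k})^*\to\mathfrak{h}^*$ (which sends $z_\sigma^*$ to $u_\sigma^*$ because horizontal translations preserve $dy$), and identifying the resulting top-degree invariant differential with Haar measure $d^\times y$.

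In short: your plan has the right architecture, but it skips the rationality step that legitimizes the comparison and misplaces several of the normalizing constants by importing computations from the later interpolation theorem.
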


\begin{proof}Since the right hand side in the above formula is in $\C$, we will 
	first prove that the left hand side, which is a priori a $p$-adic number, belongs in fact to $\iota_p(E)$. 
	Denote by $\cL_{k,\sw}^\vee(E)$ the $G(\Q)$-construction of
	a local system attached to $L_{k,\sw}^\vee(E)$. Recall that $\cL_{k,\sw}^\vee(L)$ denotes
	the local system attached to $L_{k,\sw}^\vee(L)$ by the $K_p$-construction. The following diagram commutes: 
		\begin{align}\label{eq:big-diagram}
		\xymatrix{
		\rH^d_c(Y_K,\cL_{k,\sw}^\vee(E))\ar[d]^{\cT_{\gf}}\ar@{^{(}->}[rrrrr]^{(g,v)\mapsto (g, g^{-1}\cdot v)} &&&&& \rH^d_c(Y_K,\cL_{k,\sw}^\vee(L))\ar[d]^{\tw_{\varpi_{\gf}}\circ C_{\varpi_{\gf}}^*}\\
		\rH^d_c(X_{\gf}, \cL_{k,\sw}^\vee(E))\ar@{^{(}->}[rrrrr]^{(y,v)\mapsto
			\left(y, \begin{psmallmatrix} y^{-1}&\\&1\end{psmallmatrix}\cdot
			v\right)} \ar[d]^{\triv_\xi^*\circ\coinv_{\gf}}&&&&& \rH^d(X_{\gf}, \cL_{k,\sw}^\vee(L))\ar[d]^{\triv_\eta^*\circ\coinv_{\gf}}\\
		\rH^d_c(X_{\gf}[\eta])\otimes
		L_{k,\sw}^\vee(E)_{E(\gf)}\ar[d]^{\langle -\cap \theta_{\gf,[\eta]}, z^j\rangle}\ar@{^{(}->}[rrrrr]^{\id \otimes \begin{psmallmatrix} \eta^{-1} &\\&1\end{psmallmatrix}_p}
		&&&&& \rH^d_c(X_{\gf}[\eta])\otimes
		L_{k,\sw}^\vee(L)_{E(\gf)}\ar[d]^{\langle -\cap \theta_{\gf,[\eta]}, z^j\rangle}\\
		E\ar@{^{(}->}[rrrrr]^{ (\eta_p^{-1})^{(r-1)t} }&&&&&L
	}
	\end{align}
	where the horizontal maps are induced from the morphisms of local systems written above them, 
	the map $\cT_{\gf}$ is induced from the morphisms of local systems $(y,v)\mapsto \left(\begin{psmallmatrix} y \varpi_{\gf}&y 1_{\gf}\\&1\end{psmallmatrix}, v\right)$ and, for $\xi\in F^\times$, $\triv_\xi$ 
	is induced from the morphisms of local systems:
	\[ X_{\gf}[\eta] \times L_{k,\sw}^\vee(E)_{E(\gf)} \to
	\left(\cL_{k,\sw}^\vee(E)_{E(\gf)}\right)_{|X_{\gf}[\eta]} \text{ , } (y,v)\mapsto\left(y,\begin{psmallmatrix}\xi &\\&1 \end{psmallmatrix}^{-1}\cdot v\right).\]
	
	By definition of the evaluations in \S \ref{automorphic-symbols} and by the  functoriality relation \eqref{eq:ev-functorial}, the 
composition of the maps in the right column sends $\iota_p(b_{\widetilde{\pi},\alpha_\gu}^{\epsilon})$ to 
	$\langle \ev_{\varpi_{\gf}}^{\eta}(L_{k,\sw}^\vee)(b_{\widetilde{\pi},\alpha_\gu}^{\epsilon}), z^j \rangle=
	\frac{\alpha_\gf}{\alpha_\gf^\circ}  \langle \ev_{\varpi_{\gf}}^{\eta}(\Phi_{\widetilde{\pi},\alpha_\gu}^{\epsilon}), z^j \rangle$.

	The commutativity of the diagram then yields
	\begin{align}\label{eq:ev-rational}
	\langle (\triv_\xi^*\circ\coinv_{\gf}\circ\cT_{\gf} )(b_{\widetilde{\pi},\alpha_\gu}^{\epsilon})\cap \theta_{\gf,[\eta]}, z^j \rangle
	=\frac{\alpha_\gf}{\alpha_\gf^\circ} 
	\eta_p^{(r-1)t} \langle \ev_{\varpi_{\gf}}^{\eta}(\Phi_{\widetilde{\pi},\alpha_\gu}^{\epsilon}), z^j \rangle\in E.
	\end{align}	
	Since the left column in \eqref{eq:big-diagram} can be reproduced with 
	$\cL_{k,\sw}^\vee(\C)$ instead of $\cL_{k,\sw}^\vee(E)$, it follows that the left hand side of \eqref{eq:ev-rational} can be computed via analytic methods, namely the comparison between Betti and de Rham cohomology over $\C$. 
	Since $\chi^{r-1}_{\cyc} (\eta)\eta_p^{(1-r)t}=|\eta|_{F,f}^{r-1}$ one has to show
	\begin{align}\label{eq:ev-critical-equiv}
	&|\eta|_{F,f}^{r-1}
	\langle (\triv_\xi^*\circ\coinv_{\gf}\circ\cT_{\gf} )(\Theta^\epsilon_{\pi}(\phi_{\widetilde{\pi},\alpha_\gu,f}))\cap \theta_{\gf,[\eta]}, z^j \rangle= \\ 
	&=i^{(r-1)d}
	\sum_{s_\infty\in \{\pm 1\}^{\Sigma}} s_\infty^{(r-1)t} \epsilon(s_\infty)\int_{X_{\gf}[\eta]}\phi_{\widetilde{\pi},\alpha_\gu}\left( \begin{psmallmatrix} y s_\infty \varpi_{\gf} &y 1_{\gf}\\&1\end{psmallmatrix}\right) |y|_F^{r-1} d^\times y. \nonumber
	\end{align}

	Explicitly, $\Theta^\epsilon_{\pi}(\phi_{\widetilde{\pi},\alpha_\gu,f})\in \rH^d_{\cusp}(Y_K, \cL_{k,\sw}^\vee(\C))= \rH^d_{\dR,!}(Y_K, \cL_{k,\sw}^\vee(\C))$ is obtained as follows. For each $\eta\in G(\A_f)$ 
	the relative Lie algebra differential 
	\[
	\bigotimes_{\sigma\in \Sigma}w_\sigma^*\otimes  \eval_{i}\otimes\phi_\sigma 
	\in  \rH^d(\gg_\infty, K_\infty^+,  L_{k,\sw}^\vee(\C)\otimes\pi_\infty )\]
	yields a left-invariant $d$-form $\phi_{\widetilde{\pi},\alpha_\gu}\left(\begin{psmallmatrix} \eta & \\&1\end{psmallmatrix} g_\infty\right) (g_\infty\cdot \eval_{i}) (g_\infty^{-1})^*(\wedge_\sigma w_\sigma^*) $
	on $G_\infty^+/K_{\infty}^+ $ which descends to $\Gamma_\eta\!\setminus G_\infty^+/K_{\infty}^+$
	and, once translated by $\begin{psmallmatrix} \eta & \\&1\end{psmallmatrix}$, yields a 
	$d$-form on $Y_K[\eta]$:
	\begin{equation*}
	\displaystyle \phi_{\widetilde{\pi},\alpha_\gu}(g) (g_\infty\cdot \eval_{i}) (g^{-1})^*(\wedge_\sigma w_\sigma^*).
	\end{equation*}
Then $\displaystyle 
	\Theta^\epsilon_{\pi}(\phi_{\widetilde{\pi},\alpha_\gu,f})= i^{\sum_{\sigma\in \Sigma}(2-\sw-k_\sigma)/2}
	\sum_{s_\infty\in 
		\begin{psmallmatrix} \pm 1 & \\&1\end{psmallmatrix}^{\Sigma} } \epsilon(s_\infty)
	\phi_{\widetilde{\pi},\alpha_\gu}(g s_\infty) (g_\infty s_\infty\cdot \eval_{i}) ((g s_\infty)^{-1})^*(\wedge_\sigma w_\sigma^*)$.
	
	By \cite{dimitrov-pune} there exists $v\in F$ and a commutative diagram: 
	\[\xymatrix{
		\Gamma_\eta\!\setminus G_\infty^+/K_{\infty}^+ \ar^{\begin{psmallmatrix} y & \\&1\end{psmallmatrix}}[rr]&& Y_K[\eta] \\
		E(\gf) \!\setminus F_\infty^{\times+}  \ar^{\cdot y }[rr]\ar^{u_\infty \mapsto 
			\begin{psmallmatrix} u_\infty & -v_\infty \\&1\end{psmallmatrix}}[u]&& X_{\gf}[\eta] \ar^{C_{\varpi_{\gf}}}[u]} 
	\]
	inducing for $g=C_{\varpi_{\gf}}(y)$ and $\gh$ the Lie algebra of $\GL_1$ a commutative diagram
	\[\xymatrix{
		(\gg/\gk)^* \ar^{(g^{-1})^*}[rr] \ar^{\iota^*}[d] && (T_g Y_K[\eta])^* \ar^{C_{\varpi_{\gf}}^*}[d]\\
		\gh^* \ar^{(y^{-1})^* }[rr]&& (T_y X_{\gf}[\eta])^*
	} \]
	
	While $\gg/\gk$ denotes the tangent space of $G_\infty^+/K_{\infty}^+$ at $\begin{psmallmatrix} 1 & -v_\infty \\&1\end{psmallmatrix}$ and not at the identity, the map $\iota:\gh \to \gg/\gk$ is still given by $u_\sigma=1\mapsto \begin{psmallmatrix} 1 & 0 \\0 & 0\end{psmallmatrix}=
	(w_\sigma+\bar w_\sigma)$ since horizontal translations in the upper half plane do not change $dy$. Hence 
	\[
	\cT_\gf( \phi_{\widetilde{\pi},\alpha_\gu}(g) (g_\infty\cdot \eval_{i}) (g^{-1})^*(\wedge_\sigma w_\sigma^*))
	= \phi_{\widetilde{\pi},\alpha_\gu} \begin{psmallmatrix} y \varpi_{\gf}&y 1_{\gf}\\&1\end{psmallmatrix}
	(\begin{psmallmatrix} y_\infty & \\&1\end{psmallmatrix}\cdot \eval_{i})
	(y^{-1})^*(\wedge_\sigma u_\sigma^*)
	\]
	Since $y=\xi\eta u u_\infty$ we have $y_\infty=\xi_\infty u_\infty$ and 
	\[\triv_\xi^*\circ\coinv_{\gf} (\cT_\gf( \phi_{\widetilde{\pi},\alpha_\gu}(g) (g_\infty\cdot \eval_{i}) (g^{-1})^*(\wedge_\sigma w_\sigma^*)))=
	(\phi_{\widetilde{\pi},\alpha_\gu} \begin{psmallmatrix} y \varpi_{\gf}&y 1_{\gf}\\&1\end{psmallmatrix}
	(\begin{psmallmatrix} u_\infty & \\&1\end{psmallmatrix}\cdot \eval_{i})
	(y^{-1})^*(\wedge_\sigma u_\sigma^*)).
	\]
	
	Now, a top degree invariant differential is a Haar measure, hence $(y^{-1})^*(\wedge_\sigma u_\sigma^*)=d^\times y$. By \eqref{left-action} we have 
	$(\begin{psmallmatrix} u_\infty & \\&1\end{psmallmatrix}\cdot \eval_{i})(z^j)= u_\infty^{(r-1)t} i^j$ 
	and since $|y|_F=|\eta|_{F,f} u_\infty^t$ we deduce: 
	\[|\eta|_{F,f}^{r-1}
	\langle (\triv_\xi^*\circ\coinv_{\gf}\circ\cT_{\gf} )( \phi_{\widetilde{\pi},\alpha_\gu}(g) (g_\infty\cdot \eval_{i}) (g^{-1})^*(\wedge_\sigma w_\sigma^*))\cap \theta_{\gf,[\eta]}, z^j \rangle=\]
	\[ =i^j
	\int_{X_{\gf}[\eta]}\phi_{\widetilde{\pi},\alpha_\gu}\left( \begin{psmallmatrix} y \varpi_{\gf} &y1_{\gf} \\&1\end{psmallmatrix}\right) |y|_F^{r-1} d^\times y.\]
	
	Since $j_\sigma+\tfrac{2-\sw-k_\sigma}{2}=r-1$ for all $\sigma\in \Sigma$, and since right translating $g$ by elements of $ \begin{psmallmatrix} \pm 1 & \\&1\end{psmallmatrix}^{\Sigma} $ amounts to translating $u_\infty$ by 
	elements of $\{\pm 1\}^{\Sigma}$, one obtains \eqref{eq:ev-critical-equiv} and hence the claim. 
\end{proof}

We now prove the main interpolation relation between $p$-adic and complex $L$-functions. 

\begin{theorem}\label{t:interpolation-one-form}
	Let $\chi$ be a finite order character of $\Gal_{p\infty}$ and for 
	$v$ dividing $p$ denote by $c_v$ the conductor of $\chi_v\nu_v$ . Let $r\in \Z$ be a critical integer for
	$(k,\sw)$. Letting $\mathrm{N}_{F/\Q}(i)=i^{d}$, one has
	\begin{align}\label{eq:interpolation-formula}
	\cL_p(\widetilde{\pi}, \chi\cdot \chi^{r-1}_{\cyc}) = 
	\frac{\mathrm{N}_{F/\Q}^{r-1}(i\gd)\chi(\varpi_\gd^{-1})}{\Omega_{\widetilde{\pi}}^{\chi_\infty\omega_{p,\infty}^{r-1}}} 
	L(\pi\otimes\chi,r-\tfrac{1}{2}) \prod_{v\in S_p} E(\widetilde{\pi}_v,\chi_v,r)\text{, where }
	\end{align}
	\[ E(\widetilde{\pi}_v,\chi_v,s)=\begin{cases}
q_v^{s c_v} (\chi_v\nu_v)(\varpi_v^{\delta_v}) \tau(\chi_v\nu_v,\psi_v,d_{\chi_v\nu_v})&, \text{ if } c_v \geqslant 1
\text{ and }  \chi_v\omega_\pi\nu_v^{-1} \text{ ramified},  \\
\left(1-\frac{(\chi_v\omega_\pi\nu_v^{-1})(\varpi_v)}{q_v^{s-1}}\right)&, \text{ if } c_v \geqslant 1
\text{ and }  \chi_v\omega_\pi\nu_v^{-1} \text{ unramified},  \\
\left(1-\frac{(\chi_v\omega_\pi\nu_v^{-1})(\varpi_v)}{q_v^{s-1}}\right)\left(1-\frac{q_v^{s-1}}{(\chi_v\nu_v)(\varpi_v)}\right)&, 
\text{ if }  \pi_v\otimes \chi_v^{-1} \text{ is unramified}, \\
\left(1-\frac{q_v^{s-1}}{(\chi_v\nu_v)(\varpi_v)}\right)&, \text{ otherwise}.
\end{cases}\]
\end{theorem}

\begin{proof}
	 We will evaluate $\cL_p(\widetilde{\pi}, \chi\cdot \chi^{r-1}_{\cyc})$ using automorphic symbols of 
	level $\gf=\prod_{v\in S_p} v^{n_v}$ such that $n_v\geqslant \max(c_v,1)$ 
	and will see that the result does not depend on $\gf$, as expected. 
	
	Using the notations from \eqref{eq:alphas},  \eqref{eq:universal-evaluation},
	Def. \ref{d:eta-bracket}, \eqref{eq:ev-gal} and \eqref{eq:times}, one gets: 
		\begin{align*}
  &\langle \ev(\Phi_{\widetilde{\pi},\alpha_\gu}^{\epsilon}), \chi\cdot \chi^{r-1}_{\cyc}\rangle
	=(\alpha_{\gf}^\circ)^{-1}\langle
  \ev_{\varpi_{\gf}}(\Phi_{\widetilde{\pi},\alpha_\gu}^{\epsilon}),
  \chi\cdot \chi^{r-1}_{\cyc}\rangle=	\\
&=(\alpha_{\gf}^\circ)^{-1}
\sum_{[\eta]\in\Cl(\gf)}\chi(\eta)\langle \ev_{\varpi_{\gf}}^\eta(\Phi_{\widetilde{\pi},\alpha_\gu}^{\epsilon}),
(\chi^{r-1}_{\cyc}\circ\iota_\eta)^\times\rangle
=\alpha_{\gf}^{-1} \sum_{[\eta]\in\Cl(\gf)}\chi(\eta)
\frac{\alpha_\gf \chi^{r-1}_{\cyc} (\eta)}{\alpha_\gf^\circ} 
\langle	\ev_{\varpi_{\gf}}^\eta(\Phi_{\widetilde{\pi},\alpha_\gu}^{\epsilon}),z^j\rangle.
	\end{align*}
		By Proposition \ref{p:ev-critical} the latter sum vanishes unless 		$\epsilon=\chi_\infty\omega_{p,\infty}^{r-1}$, 
		in which case it equals: 		
		\begin{align*}
	\cI=\frac{2^d i^{(r-1)d}}{\Omega_{\widetilde{\pi}}^{\epsilon}}
	\int_{X_{\gf}}\chi(y)\phi_{\widetilde{\pi},\alpha_\gu}\begin{pmatrix} y \varpi_{\gf} &y 1_{\gf} \\&1\end{pmatrix}|y|_F^{r-1} d^\times y
	\end{align*}
	Since $X_{\gf}=F^\times_+\!\setminus	\A_{F,f}^\times F_\infty^{\times+}/U(\gf)$ and the Haar measure on $\A_F^\times$ gives $U(1)$ volume $1$, we have
	\begin{align*}
\cI=	\frac{2^d i^{(r-1)d}}{ \Omega_{\widetilde{\pi}}^\epsilon}\prod_{v\in S_p} q_v^{n_v} \left(1-\tfrac{1}{q_v}\right) \cdot 
	\int_{F^\times_+\!\setminus \A_{F,f}^\times F_\infty^{\times+}}\chi(y)\phi_{\widetilde{\pi},\alpha_\gu}\begin{pmatrix} y \varpi_{\gf}&y 1_{\gf}\\&1\end{pmatrix}|y|_F^{r-1} d^\times y.
	\end{align*}
	Since $\begin{psmallmatrix} y \varpi_{\gf}&y 1_{\gf}\\&1\end{psmallmatrix}\in G(\A_f) G_\infty^+$, 
	using the Fourier expansion formula \eqref{eq:Fourier} we further compute: 
	\begin{align*}
	\int_{F^\times_+\!\setminus \A_{F,f}^\times F_\infty^{\times+}}\chi(y)\phi_{\widetilde{\pi},\alpha_\gu}\begin{psmallmatrix}
	y \varpi_{\gf}&y 1_{\gf}\\&1\end{psmallmatrix}|y|_F^{r-1} d^\times y =
	\int_{ \A_{F,f}^\times F_\infty^{\times+}}\chi(y)W_{\widetilde{\pi},\alpha_\gu}\begin{psmallmatrix}
	y \varpi_{\gf}&y 1_{\gf}\\&1\end{psmallmatrix}|y|_F^{r-1} d^\times y= 
	\prod_{v}Z_v.
	\end{align*}
A standard calculation (see, e.g., \cite[(16)]{dimitrov}) shows that:
	\[Z_\sigma=\int_{\R^\times_+}W_\sigma \begin{psmallmatrix}
	y&\\&1\end{psmallmatrix}y^{r-2} dy= \int_0^\infty e^{-2\pi y} y^{j_\sigma}dy=
	\frac{j_\sigma !}{(2\pi)^{j_\sigma+1}}=
	\tfrac{1}{2} L(\pi_\sigma,r-\tfrac{1}{2})\text{, if }
	\sigma\mid\infty,\]
A straightforward generalization of \cite[Prop.3.5]{raghuram-tanabe} from $\chi_v$ trivial to $\chi_v$ unramified yields:
	\begin{align*}
	& Z_v=\int_{F_v^\times}\chi_v(y)W_v^{\new} \begin{psmallmatrix}
	y&\\&1\end{psmallmatrix}|y|_{v}^{r-1} d^\times y=
	(q_v^{1-r} \chi_v(\varpi_v))^{-\delta_v}L(\pi_v\otimes
	\chi_v,r-\tfrac{1}{2})\text{, if } v\nmid p
	\gu\infty \text{, and }\\
	& Z_{\alpha_\gu}=\displaystyle \int_{F_{\gu}^\times}\chi_{\gu}(y)W_{\gu}^{\alpha}\begin{psmallmatrix}
	y&\\&1\end{psmallmatrix}|y|_{\gu}^{r-1}d^\times y_{\gu}=L(\pi_\gu \otimes\chi_\gu, r-\tfrac{1}{2})\left(1-\tfrac{\chi_\gu(\varpi_\gu) \beta_\gu}{q_\gu^r}\right), 
	\end{align*}
hence 	$\dfrac{\alpha_\gu Z_{\alpha_\gu} -\beta_\gu Z_{\beta_\gu}}{\alpha_\gu-\beta_\gu}=L(\pi_\gu \otimes \chi_\gu, r-\tfrac{1}{2})$. 
		Finally, for  $v\in S_ p$ the integral  	 $Z_v$ is computed in Proposition \ref{p:local-whittaker-integrals}
		and $E(\widetilde{\pi}_v,\chi_v,s)=Q(\chi_v\nu_v,s)/ L(\pi_v\otimes\chi_v,s)$.
	Putting everything together yields the desired formula. 
\end{proof}  

\begin{remark} The interpolation formula \eqref{eq:interpolation-formula} is independent of the choice of uniformizers $\varpi_v$ at $v\in S_p$, since
$\Omega_{\widetilde{\pi}}^{\epsilon} \prod\limits_{v\in S_p} \nu_v(\varpi_v^{-\delta_v})$ is independent of that choice 
by Proposition \ref{p:twisting-periods}. 
\end{remark}

We end this subsection by classifying the trivial zeros of $\cL_p(\widetilde{\pi})$, {\it i.e.}, by determining 
when $\cL_p(\widetilde{\pi}, \chi\cdot \chi^{r-1}_{\cyc})$ 
 in \eqref{eq:interpolation-formula} 
 vanishes regardless of the value of $L(\pi\otimes\chi,r-\tfrac{1}{2})$.

\begin{proposition}\label{p:trivial-zeros-location} Given a finite order character 
 $\chi_v$ of $F_v^\times$ and $r\in \Z$, one has $E(\widetilde{\pi}_v,\chi_v,r)=0$ if and only if 
\begin{enumerate}
\item either $\pi_v\otimes \nu_v^{-1}$ is the Steinberg representation, $r=\tfrac{2-\sw}{2}$,  
and $\chi_v=\nu_v^{-1}\cdot \mathrm{unr}(q_v^{-\sw/2})$, 
\item or $\pi_v$ is a principal series, $r=\tfrac{3-\sw}{2}$
(resp. $r=\tfrac{1-\sw}{2}$) and $\chi_v=\nu_v^{-1}\cdot \mathrm{unr}(q_v^{(1-\sw)/2})$
(resp. $\chi_v=\nu_v\omega_\pi^{-1}\cdot \mathrm{unr}(q_v^{-(1+\sw)/2})$. 
\end{enumerate}

In the first case $\sw$ is necessarily even and the trivial zero occurs at a central critical point, while 
in the second case $\sw$ is odd and the trivial zero occurs at a nearly-central critical point. 
\end{proposition}
\begin{proof}
If $\pi_v$ is a twist (necessarily by $\nu_v$) of the unitary Steinberg representation, then 
$E(\widetilde{\pi}_v,\chi_v,r)$ is non-zero, unless $c_v=0$ in which case $E(\widetilde{\pi}_v,\chi_v,r)=\left(1-\frac{q_v^{r-1}}{(\chi_v\nu_v)(\varpi_v)}\right)$. 
By local-global compatibility $\nu_v(\varpi_v)$ is a Weil number of weight $-\sw$, {\it i.e.}, an algebraic number whose absolute values are all equal to $q_v^{-\sw/2}$. Hence 
 $E(\widetilde{\pi}_v,\chi_v,r)$ vanishes precisely as stated.

If $\pi_v$ is principal series, then 
\[E(\widetilde{\pi}_v,\chi_v,r)=
(1-(\chi_v\nu_v)^{-1}(\varpi_v)q_v^{r-1} ) (1-(\chi_v\omega_\pi\nu_v^{-1})(\varpi_v)q_v^{1-r}),\]
where the first (resp. second) factor is dropped if $\chi_v\nu_v$ (resp. $\chi_v\omega_\pi\nu_v^{-1}$) is ramified. 
The Ramanujan conjecture for the cuspidal automorphic representation 
$\pi$, proven in \cite[Thm.1]{blasius:ramanujan}, implies that 
$\nu_v(\varpi_v)$ is a Weil number of weight $1-\sw$. Since $\omega_\pi$ has purity weight $\sw$, 
the first (resp. second) factor can only vanish for $r=\tfrac{3-\sw}{2}$ (resp. $r=\tfrac{3-\sw}{2}$) precisely as stated. 
\end{proof}

\subsection{Multi-variable $p$-adic $L$-functions}\label{ss:2var}
Since the construction  of $\cO(\cU)$-valued distributions over $\Gal_{p\infty}$ presented in \S\ref{ss:distributions} 
is functorial in the $L$-affinoid $\cU$, following the same steps as in \S\ref{ss:p-adic L-functions attached} would allow us to attach a $p$-adic $L$-function to a rigid analytic family containing $\widetilde{\pi}$. 
In addition to the cyclotomic variable, this function will have several weight variables. 
In conjunction with its improvements which will be constructed in the next subsection, this multi-variable 
 $p$-adic $L$-functions will allow us to prove the Trivial Zero Conjecture for $\pi$ at the central point in the second part of this paper. 
 
 Let $\widetilde{\pi}$ be a regular non-critical $p$-refinement of a cuspidal automorphic representation $\pi$ of $G(\A)$ of cohomological weight $(k,\sw)$. 
 Letting  $h= (h_{\widetilde{\pi}_v})_{v \in S_p}$ (see Def. \ref{d:non-critical-slope}), by 
 Theorem \ref{free-etale} there exists an $L$-affinoid neighborhood $\cU$ of $(k,\sw)$ in $\cX$ and a connected
component $\cV$ of $\Sp(\T_{\cU}^{\leqslant h})$ containing
$\widetilde{\pi}$ such that the weight map 
 $\kappa:\cV\xrightarrow{\sim}\cU$ is an isomorphism. By shrinking $\cU$ one can assume that $\sw_\lambda\circ\omega_p=\omega_p^{\sw}$ and 
\begin{align}\label{eq:alpha-u}
\alpha_{\gu}(\kappa^{-1}(\lambda))^2\omega_p^{-\sw}(\varpi_{\gu})\sw_\lambda^{-1}(\langle \varpi_{\gu} \rangle)\ne 
q_{\gu}^i,  \text{ for } i\in \{0,1,2\} \text{ and for all } \lambda\in \cU,
\end{align}
since the left hand side is an analytic function on $\lambda\in \cU$ and \eqref{eq:alpha-u} holds at $(k,\sw)$.

\begin{definition}\label{d:Phi-alpha}
Given a character $\epsilon: \{\pm 1\}^\Sigma\to \{\pm 1\}$ we let 
$\Phi_{\cU,\alpha_\gu }^\epsilon\in \rH^d_c(Y_K,\cD_{\cU})^{\leqslant h}$ be a basis of the free 
rank $1$ $\cO(\cU)$-module $\rH^d_c(Y_K, \cD_{\cU})^{\epsilon,\leqslant h}\otimes_{\T_{\cU}^{\leqslant h}}\cO(\cV)$
(see Thm. \ref{free-etale}(ii)) such that $(k,\sw)\circ \Phi^\epsilon_{\cU,\alpha_\gu }= \Phi_{\widetilde{\pi},\alpha_\gu }^\epsilon$. For $v\in S_p$ we let  $\alpha_v^\circ \in \cO(\cU)^{\times}$ denote the $U_{\varpi_v}$-eigenvalue on $\Phi^\epsilon_{\cU,\alpha_\gu}$.
\end{definition}

When $\lambda\in \cU(L)$ is cohomological, by Theorem \ref{free-etale}(iii) there exists a $p$-refined nearly finite slope cuspidal automorphic representation $\widetilde{\pi}_\lambda$ of weight $\lambda$ whose system of Hecke eigenvalues corresponds to $\kappa^{-1}(\lambda)\in \cV$.  Using the specialization map at such a cohomological $\lambda$, we obtain a class $\lambda\circ \Phi^\epsilon_{\cU,\alpha_\gu }\in \rH^d_c(Y_K, \cD_\lambda)^{\leqslant h}$ which generates the same line as the class $\Phi_{\widetilde{\pi}_\lambda,\alpha_\gu(\lambda) }^\epsilon$ from  \S\ref{ss:p-adic L-functions attached}. 
\begin{definition} \label{d:padic-period}
Let $C_{\lambda}^\epsilon \in L^{\times}$ be  such that $\lambda\circ \Phi^\epsilon_{\cU,\alpha_\gu }= C_{\lambda}^\epsilon \cdot \Phi_{\widetilde{\pi}_\lambda,\alpha_\gu(\lambda) }^\epsilon$. 
\end{definition}
Note that whereas $C_{(k,\sw)}^\epsilon=1$ by definition, $C_{\lambda}^\epsilon\in L^{\times} $ is a $p$-adic period analogous to those considered in \cite{greenberg-stevens}, and cannot in general be rescaled to be  $1$, since  the individual periods $\Omega_{\widetilde{\pi}_\lambda}^\epsilon$ for  $\lambda$ cohomological are well-defined up to $\overline{\Q}^\times$. 
Since $\alpha_\gu, \beta_{\gu}\in \cO(\cU)^\times$, one can analogously consider $\Phi^\epsilon_{\cU,\beta_\gu }$ and rescale it 
so that it yields the same $p$-adic periods $C_{\lambda}^\epsilon $. 

 The multi-variable $p$-adic $L$-function is defined as the distribution (see \eqref{eq:universal-evaluation}): 
\begin{align}\label{d:2var-L_p}
\cL_p= \sum_{\epsilon: \{\pm 1\}^\Sigma\to \{\pm 1\}}
\frac{ \alpha_{\gu} \ev(\Phi_{\cU,\alpha_{\gu }}^{\epsilon})
	-\beta_{\gu}	 \ev(\Phi_{\cU,\beta_{\gu }}^{\epsilon})}{\alpha_{\gu}-\beta_{\gu}}\in D(\Gal_{p\infty}, \cO(\cU)). 
\end{align}
For any finite order character $\chi: \Gal_{p\infty} \to L^\times$,  the natural projection $\Gal_{p\infty}\twoheadrightarrow \Gal_{\cyc}$
yields 
\begin{align}\label{d:2var-L_p-chi}
\cL_{p,\chi}= \frac{ \alpha_{\gu} \langle \ev(\Phi_{\cU,\alpha_{\gu }}^{\chi_\infty}), \chi \cdot \rangle
	-\beta_{\gu}	 \langle \ev(\Phi_{\cU,\beta_{\gu }}^{\chi_\infty}), \chi \cdot \rangle}{\alpha_{\gu}-\beta_{\gu}}\in D(\Gal_{\cyc}, \cO(\cU)). 
\end{align}

\begin{theorem} \label{thm:multi-L_p} Let $\widetilde{\pi}$ and $\cU$ be as above. 
 Fix a finite order character $\chi: \Gal_{p\infty} \to L^\times$. 
\begin{enumerate}
 \item  For any $\lambda\in \cU(L)$ cohomological, we have $\cL_{p,\chi}(\lambda) =C_{\lambda}^{\chi_\infty} \cL_{p}(\widetilde{\pi}_{\lambda}, \chi \cdot )$ in  $D(\Gal_{\cyc}, L)$.

\item $\cL_{p,\chi}$ has order of growth at most $\sum\limits_{v\in S_p}e_{v}h_{\widetilde{\pi}_v}$ and
is uniquely determined by its values 
\[\cL_{p,\chi}(\lambda, \chi' \chi_{\cyc}^{r-1})=\cL_p(\lambda,\chi \chi' \chi_{\cyc}^{r-1} ),\] 
at finite order characters $\chi'$ of $\Gal_{\cyc}$ and $r\in\Z$ critical for $\lambda$ cohomological (see \eqref{eq:interpolation-formula}). 
 \end{enumerate}
\end{theorem}

\begin{proof} (i) follows from the definition and the functoriality of $\ev$.

(ii) For  $\lambda\in \cU$ cohomological such that $\widetilde{\pi}_\lambda$ has very non-critical slope (see \eqref{eq:very-non-critical-slope}), the distribution  $\cL_{p,\chi}(\lambda)$ on $\Gal_{\cyc}$ 
has order of growth strictly less than the number $ \min\limits_{\sigma\in \Sigma} (k_{\lambda, \sigma}-1)$ of critical integers for $\lambda$. 
A well-known result of Vishik \cite[Thm.2.3, Lem.2.10]{vishik}, proven independently by Amice-V\'elu, implies that $\cL_{p,\chi}(\lambda)$ 
is uniquely determined by its values at $\chi' \chi_{\cyc}^{r-1}$, where 
 $\chi'$ is a  finite order character of $\Gal_{\cyc}$ and $r\in\Z$ is critical for $\lambda$. 
The claim is deduced by noticing that  such $\lambda$ form a very Zariski dense subset of $\cU$. 
\end{proof}

\begin{remark}
When Leopoldt's conjecture holds for $F$ at $p$, the kernel of the natural projection $\Gal_{p\infty}\twoheadrightarrow \Gal_{\cyc}$
is a finite abelian group and $\cL_{p}$ is merely a collection of $\cL_{p,\chi}$ with $\chi$ running over the characters of that group. 
 \end{remark}

\begin{remark} \label{rem:BH}
When $\widetilde{\pi}$ is non-critical, but has critical slope, then interpolation formula \eqref{eq:interpolation-formula}  
does not suffice to determine $\cL_p(\widetilde{\pi})$ uniquely, and we are indebted to J.~Bella{\"\i}che for having explained to one of us 
how the smoothness of the eigenvariety can be used to palliate this indeterminacy. 
When $\pi$ is Iwahori spherical at all  places above $p$, a  similar approach has also been successfully used by Bergdall and Hansen \cite{bergdall-hansen} who construct $\cL_p$ for regular $\widetilde{\pi}$ which are  either non-critical, or 
such that $\rH^\bullet_c(Y_K, \cD_{k,\sw})_{\gm_{\widetilde{\pi}}}$ is concentrated in degree $d$ and the 
adjoint Bloch-Kato Selmer group $\rH^1_f(F, \mathrm{ad}(V_{\pi}))$  vanishes.
 \end{remark}

It will be essential in \S\ref{exc-padicL} to control $\cL_{p,\chi}$ under simultaneous
variation in $\sw_\lambda$ and the cyclotomic variable. This, however, can only be achieved after a renormalization of the $p$-adic periods, whose variations in $\cU$ are {\it a priori}   well-defined up to an invertible analytic function. This is equivalent to rescaling the basis $\Phi_{\cU, \alpha_{\gu}}^\epsilon$ by an invertible element of  $\mathcal{O}(\cU)$.

\begin{proposition}\label{p:renormalization-padic-periods} For  any finite order character $\chi: \Gal_{p\infty} \to L^\times$, 
the $p$-adic periods $C_\lambda^{\chi_\infty}$ of Def.\ref{d:padic-period} can be renormalized so that for  $z\in \cO_{\C_p}$ and $\lambda \in \cU$ such that
$\lambda(z)=(k_\lambda,\sw_\lambda \langle \cdot\rangle^{2z})\in \cU$,  one has
\begin{align}\label{eq:twisting-bis}
\cL_{p,\chi}(\lambda(z))=\cL_{p,\chi}(\lambda,\langle\cdot \rangle_p^{z}\cdot). 
\end{align}  
\end{proposition}

\begin{proof} By analyticity it  suffices to check \eqref{eq:twisting-bis}  for $z\in \Z$
and for cohomological $\lambda\in\cU$ such that $\widetilde\pi_{\lambda}$ has very non-critical slope, as such pairs $(\lambda,z)$ are very Zariski dense
in $\cU\times  \cO_{\C_p}$.

By Theorem \ref{free-etale} the weight map $\kappa:\cV \to \cU$ is etale at $\lambda$ and so, by the
assumption on $z$, if $\kappa^{-1}(\lambda) =
\widetilde{\pi}_\lambda$ then
$\kappa^{-1}(\lambda(z))=\widetilde{\pi_\lambda\otimes|\cdot|^z}$.
Noting
that an integer $r$ is critical for $\pi$ if
and only if $r-z$ is critical for $\pi\otimes|\cdot|^{z}$, Propositions \ref{p:twisting-periods} and \ref{t:interpolation-one-form}
imply the following equality 
\[\cL_p(\widetilde{\pi_\lambda \otimes|\cdot|^z},
\langle\cdot \rangle_p^{-z}\chi \cdot )=\cL_p(\widetilde{\pi}_\lambda, \chi \cdot) \text{ in }   D(\Gal_{\cyc}, L). \]
As a result, $C_{\lambda(z)}^{\chi_\infty}=C_\lambda^{\chi_\infty} 
\cL_{p,\chi}(\lambda(z), \langle\cdot \rangle_p^{z}\cdot)/\cL_{p,\chi}(\lambda)$ for  $\lambda\in\cU$ cohomological and for 
$z\in \Z$ sufficiently small ($p$-adically). Letting $\sw_\lambda= \langle \cdot\rangle^{2z}\sw$, the 
 function  $\lambda\mapsto \cL_{p,\chi}(\lambda, \langle\cdot \rangle_p^{z}\cdot)/\cL_{p,\chi}((k_\lambda,\sw))$ 
belongs to $\mathcal{O}(\cU)^\times$, for $\cU$ a sufficiently small,  and interpolates 
$C_{\lambda}^{\chi_\infty}/C_{(k_\lambda,\sw)}^{\chi_\infty}$ for $\lambda$ cohomological. 
We may therefore renormalize the periods $C_\lambda^{\chi_\infty}$ to guarantee \eqref{eq:twisting-bis}.
\end{proof}

\begin{remark} 
Exactly as in  Prop. \ref{p:renormalization-padic-periods},  for any finite order characters $\chi$ of $\Gal_{p\infty}$ one has
\begin{align}\label{eq:twisting-Lp}
\cL_{p}(\widetilde{\pi\otimes\chi}, \cdot)= \cL_{p} (\widetilde{\pi}, \chi\cdot ) \text{ in }  D(\Gal_{\cyc}, L).
\end{align}
 
If there exists a finite order character $\chi$ of $\Gal_{p\infty}$ such that 
$\chi_{|\cO_v^\times}=\nu_v^{-1}{}_{|\cO_v^\times}$ for all $v\in S_p$ , then 
$\pi\otimes\chi$ has finite slope. Such a character always exists when $F=\Q$ allowing one to reduce to the finite slope case. For general $F$, such a character may have auxiliary ramification forcing the 
twisted finite slope form to have a different tame level from the original one. 
 \end{remark}

\subsection{Improved $p$-adic $L$-functions} \label{ss:improved}
  When $\cL_p(\widetilde{\pi})$ 
 has a trivial zero at a critical integer $r$ the interpolation formula for $\cL_p(\widetilde{\pi},\chi\chi^{r}_{\cyc})$ 
 `misses' the special $L$-value $L(\pi\otimes\chi,r-\tfrac{1}{2})$.
 An idea due to Greenberg and Stevens \cite{greenberg-stevens} is to then construct a so-called improved $p$-adic $L$-function having only weight variables and interpolating, with non-vanishing extra factors, the critical  $L$-value.
 In order to retrieve the `missed' $L$-value even in the case where several local factors $E(\widetilde{\pi}_v,\chi_v,r)$ simultaneously vanish we will  construct for any  $S\subset S_p$ a rigid analytic function $L_{S}(\lambda, \chi, r)$ over an  $(|\Sigma_S|+1)$-dimensional affinoid 
 $\cU'_{S}= \cX'_S \cap \cU$, where $\cU$ is as in \S\ref{ss:2var}. 

 Consider the subset $ S \subset S_p$ containing all places $v\in S_p$ such that $\nu_v$ is ramified (note that this is always the case if 
 $\pi$ has finite slope). 
For each character $\epsilon: \{\pm 1\}^\Sigma\to \{\pm 1\}$ let $\Phi^\epsilon_{\cU,\alpha_\gu}$ be as in \S \ref{ss:2var} and denote by $\alpha^{\circ}_{v}$ its $U_{\varpi_v}$-eigenvalue. Denote by $\Phi^\epsilon_{\cU'_{S} ,\alpha_\gu }$ the image of $\Phi^\epsilon_{\cU,\alpha_\gu }$ in $\rH^d_c(Y_K, \cD_{\cU'_{S} })^{\leqslant h}$. 
By Def. \ref{defn:XKS} and \eqref{eq:alphas}, for all $v \in S_p\!\setminus S $ we have
\[\alpha_v(\lambda) = \alpha_v^\circ(\lambda)\prod\limits_{\sigma\in
 \Sigma_v}\sigma(\varpi_v)^{\frac{2-\sw-k_\sigma}{2}} \in
\cO(\cU'_{S})^{\times}.\]

 Observe that for any $v \in S_p\!\setminus S $, the rigid 	analytic function $\left(1-\frac{q_v^{r-1}}{\chi_v(\varpi_v)\alpha_v(\cdot)}\right)\in \cO(\cU'_{S})$ specializes at $\lambda$ cohomological to the 
 interpolation factor $ \left(1-\frac{q_v^{r-1}}{(\chi_v\nu_{\lambda,v})(\varpi_v)}\right)$ from Theorem \ref{t:interpolation-one-form}. 
Our aim is to show that the meromorphic  quotient  $\cL_p(\lambda,\chi\chi^{r}_{\cyc}) \prod\limits_{S_p\!\setminus S} 
\left(1-\frac{q_v^{r-1}}{\chi_v(\varpi_v)\alpha_v(\cdot)}\right)$ is in fact analytic and to compute its value at $(k,\sw)$. 
Achieving this requires to take a step back and define the improved $p$-adic $L$-functions using the tools developed in \S \ref{ss:norms}. 

\begin{definition} For $\chi:\Gal_{S\infty}\to L^\times$ a finite order character, we define the  improved $p$-adic $L$-function as
	\[L_{S}(\cdot, \chi, r) =\frac{\alpha_{\gu}\langle \ev_{S}^{r}(\Phi^{\chi_\infty\omega_{p,\infty}^{r-1}}_{\cU'_{S} ,\alpha_\gu }), \chi \rangle-\beta_{\gu}\langle \ev_{S}^{r}(\Phi^{\chi_\infty\omega_{p,\infty}^{r-1}}_{\cU'_{S} ,\beta_\gu }), \chi \rangle}{\alpha_\gu-\beta_\gu}\in \cO(\cU'_{S}).\]
\end{definition}

It follows from \eqref{eq:improved-evK-relation} that $L_{S_p}(\lambda, \chi, r)=\cL_p(\lambda,\chi \chi^{r-1}_{\cyc})$.

\begin{theorem}\label{t:improved-Lp}
	\begin{enumerate}
		\item For $v\in S$, we have the following equality in $\cO(\cU'_{S\!\setminus\{v\}}) $:
	\[\vartheta_{S,v}\left(L_{S}(\cdot, \chi, r)\right) = L_{S\!\setminus\{v\}}(\cdot, \chi, r)\left(1-\frac{q_v^{r-1}}{\alpha_v(\cdot)\chi(\varpi_v)}\right).\]
		\item For any cohomological $\lambda\in \cU'_S(L)$ and any integer $r$ critical for $\lambda$, we have
\[L_{S}(\lambda, \chi , r) = 
	\frac{\mathrm{N}_{F/\Q}^{r-1}(i\gd)\chi(\varpi_\gd^{-1})}{\Omega_{\widetilde{\pi}_{\lambda}}^{\epsilon}} 
	\cdot C_{\lambda}^{\epsilon}\cdot L(\pi_{\lambda}\otimes\chi,r-\tfrac{1}{2}) \prod_{v\in S} E(\widetilde{\pi}_v,\chi_v,r)
	\prod_{\ontop{v\in S_p\!\setminus S}{ \pi_v \text{unram.}}} \left(1-\frac{(\chi_v\omega_\pi\nu_v^{-1})(\varpi_v)}{q_v^{r-1}}\right), 
\]
	where $\epsilon=\chi_\infty\omega_{p,\infty}^{r-1}$ and  $C_{\lambda}^{\epsilon}$ is the $p$-adic period introduced in Def. \ref{d:padic-period}. 	
    \end{enumerate}
\end{theorem}

\begin{proof} (i) It suffices to apply  Corollary \ref{c:ev-evtilde} to $\Phi^{\epsilon}_{\cU'_{S} ,\alpha_\gu}$ and to 
$ \Phi^{\epsilon}_{\cU'_{S} ,\beta_\gu}$. 
 
 (ii) Let $\gf= \prod_{v\in S}v^{n_v}$ be such that $n_v\ge \mathrm{max}(c_v, 1)$. Using the definition of $\ev_{S}^{r}$ we obtain:
\[ \langle \ev_{S}^{r}(\Phi^{\epsilon}_{\cU'_{S} ,\alpha_\gu }), \chi \rangle (\lambda)=  (\alpha_{\gf}^{\circ}(\lambda))^{-1} \sum_{[\eta]\in\Cl(\gf)}\chi(\eta)
\chi^{r-1}_{\cyc} (\eta) 
\langle \ev_{\varpi_{\gf}}^\eta(\Phi^{\epsilon}_{\cU'_{S} ,\alpha_\gu }), 1^{\times}_{S, r} \rangle (\lambda) \]

Letting $j_{\lambda}= (r-1+ \frac{\sw_{\lambda}- 2+ k_{\lambda, \sigma}}{2})_{\sigma \in \Sigma}$, we remark that $\lambda\circ 1^{\times}_{S, r}$ and $z^{j_{\lambda}}$ agree on the support of 
$\ev_{\varpi_{\gf}}^\eta(\Phi_{\widetilde{\pi}_{\lambda},\alpha_\gu}^{\epsilon})$. Together with the definition of $C_\lambda^{\epsilon}$ this implies that:
\[ \langle \ev_{S}^{r}(\Phi^{\epsilon}_{\cU'_{S} ,\alpha_\gu }), \chi \rangle (\lambda)=  C_\lambda^{\epsilon}(\alpha_{\gf}^{\circ}(\lambda))^{-1} \sum_{[\eta]\in\Cl(\gf)}\chi(\eta)
\chi^{r-1}_{\cyc} (\eta) 
\langle	\ev_{\varpi_{\gf}}^\eta(\Phi_{\widetilde{\pi}_{\lambda},\alpha_\gu(\lambda)}^{\epsilon}),z^{j_{\lambda}}\rangle.\]
The remainder of the proof follows from Proposition \ref{p:ev-critical} as in the proof of Theorem \ref{t:interpolation-one-form}.
\end{proof}

\subsection{Partial $p$-adic $L$-functions} \label{ss:partial}

We will now explain how the construction of the previous subsection can be adapted to the partial families constructed in Theorem \ref{free-etale}. 
We are indebted to the referees for their insight and encouragement to present this construction. 

Henceforth we fix $S\subsetneq S_p$ and we suppose we are given a regular non-critical $S$-refinement 
 $\widetilde{\pi}_S= (\pi, \{\nu_v\}_{v\in S})$ of $\pi$ (see Def. \ref{d:hilbert-nearly} and Def. \ref{d:non-critical}). 
 We consider  the neat open compact subgroup $K=K(\widetilde{\pi}_S,\gu)\subset G(\A_f)$ from Def. \ref{d:newline}. By non-criticality of $\widetilde{\pi}_S$, for each character $\epsilon: \{\pm 1\}^\Sigma \to \{\pm 1\}$, 
the basis $\iota_p(b_{\widetilde{\pi}_S,\alpha_\gu}^{\epsilon})$
of $\rH^d_c(Y_K, \cL_{k,\sw}^\vee(L))^\epsilon_{\gm_{\widetilde{\pi}_S}}$ (see Def. \ref{of-the-period-omega-pi})
 lifts canonically to a basis of  $\Phi_{\widetilde{\pi}_S,\alpha_\gu}^{\epsilon}$ of 
$\rH_c^d(Y_K,\cD_{S, (k,\sw)})^\epsilon_{\gm_{\widetilde{\pi}_S}}$. 
 Letting  $h_S= (h_{\widetilde{\pi}_v})_{v \in S}$,  by 
 Theorem \ref{free-etale} there exists an $L$-affinoid neighborhood $\cU_S$ of $(k,\sw)$ in $\cX_S$ and a connected
component $\cV_S$ of $\Sp(\T_{S,\cU_S}^{\leqslant h_S})$ containing
$\widetilde{\pi}_S$ such that the weight map induces an
isomorphism $\kappa:\cV_S\xrightarrow{\sim}\cU_S$. 
We can choose $\cU_S$ sufficiently small so that it satisfies the technical properties stated in \S\ref{ss:2var}.

\begin{definition}
Fix  a basis $\Phi_{\cU_S,\alpha_\gu }^\epsilon$ of the free 
$\cO(\cU_S)$-module $\rH^d_c(Y_K, \cD_{S,\cU_S})^{\epsilon,\leqslant h_S}\otimes_{\T_{S,\cU_S}^{\leqslant h_S}}\cO(\cV_S)$ of rank $1$, 
 such that $(k,\sw)\circ \Phi^\epsilon_{\cU_S,\alpha_\gu }= \Phi_{\widetilde{\pi}_S,\alpha_\gu }^\epsilon$. For $v\in S$ we let  $\alpha_v^\circ \in \cO(\cU_S)^{\times}$ denote the $U_{\varpi_v}$-eigenvalue on $\Phi^\epsilon_{\cU_S,\alpha_\gu}$.
\end{definition}

Given an integer $r$ which is $S$-critical for $(k, \sw)$ (see Def. \ref{def:crit}) and an ideal $\gf\subset \cO_F$ supported at primes in $S$,  using Rem. \ref{rem:crit-range}(ii) yields a map $(\cD_{S,\cU_{S}})_{E(\gf)}\to D(U(\gf)_{S}/E(\gf), \cO(\cU_{S}))$. The construction performed in \S\ref{ss:norms} applies {\it mutatis mutandis} and yields a well-defined map: 
\[\ev_{\varpi_\gf, S}^{r}: \rH^d_c(Y_K, \cD_{S,\cU_S}) \rightarrow D(\mathrm{Gal}_{S\infty}, \cO(\cU_S))\]

For $\chi:\Gal_{S\infty}\to L^\times$ a finite order character, we define the partial $p$-adic $L$-function
	\[L^{S}(\cdot, \chi, r) =\frac{\alpha_{\gu}\langle \ev_{S}^{r}(\Phi^{\chi_\infty\omega_{p,\infty}^{r-1}}_{\cU_{S} ,\alpha_\gu }), 
	\chi \rangle-\beta_{\gu}\langle \ev_{S}^{r}(\Phi^{\chi_\infty\omega_{p,\infty}^{r-1}}_{\cU_{S} ,\beta_\gu }), 
	\chi \rangle}{\alpha_\gu-\beta_\gu}\in \cO(\cU_{S}).\]

 For any cohomological $\lambda\in \cU_S(L)$, Theorem \ref{free-etale}(iii) yields an $S$-refined cuspidal automorphic representation $\widetilde{\pi}_{\lambda,S}$ of weight $\lambda$ whose system of Hecke eigenvalues corresponds to $\kappa^{-1}(\lambda)\in \cV_S$.  
 Given any character $\epsilon: \{\pm 1\}^\Sigma \to \{\pm 1\}$, 
 the specialization $\lambda\circ \Phi^\epsilon_{\cU_S,\alpha_\gu }$ of $\Phi^\epsilon_{\cU_S,\alpha_\gu }$ at $\lambda$ generates the same line 
 in $\rH^d_c(Y_K, \cD_{S, \lambda})^{\epsilon, \leqslant h_S}$ as $\Phi_{\widetilde{\pi}_{\lambda,S},\alpha_\gu(\lambda) }^\epsilon$, hence there exists 
\[C_{\lambda,S}^\epsilon \in L^{\times}\text{ such that } \lambda\circ \Phi^\epsilon_{\cU_S,\alpha_\gu }= C_{\lambda,S}^\epsilon \cdot \Phi_{\widetilde{\pi}_{\lambda,S},\alpha_\gu(\lambda) }^\epsilon.\]

\begin{theorem}\label{t:partial} Given $\chi$ a finite order character of $\Gal_{S \infty}$, and given a
 cohomological $\lambda\in \cU_S(L)$ for which $r$ is critical, we have
						\[L^{S}(\lambda, \chi , r) = 
	\frac{\mathrm{N}_{F/\Q}^{r-1}(i\gd)\chi(\varpi_\gd^{-1})}{\Omega_{\widetilde{\pi}_{\lambda, S}}^{\chi_\infty\omega_{p,\infty}^{r-1}}} 
	C_{\lambda,S}^{\chi_\infty \omega_{p,\infty}^{r-1}}L(\pi_{\lambda}\otimes\chi,r-\tfrac{1}{2}) \prod_{v\in S} E(\widetilde{\pi}_v,\chi_v,r).
	\]
\end{theorem} 
 \begin{proof} The proof is based on Proposition \ref{p:ev-critical}
  in the same way as the proof of Theorem \ref{t:interpolation-one-form}. 
\end{proof}

One may also define partial $p$-adic $L$-functions which are improved at places lying in a  subset of  $S$. We leave the  details of the construction to the interested reader.

\part{The Trivial Zero Conjecture at the central critical point}

Throughout this part $\widetilde{\pi}= (\pi, \{\nu_v\}_{v\in S_p})$ will be a
regular non-critical $p$-refinement of a cuspidal automorphic representation $\pi$ of $G(\A)$ of cohomological weight $(k,\sw)$ 
and tame conductor $\gn$ satisfying the following assumption:
\begin{align}\label{A12} \!
 \pi \text{ has central character } \omega_\pi\!=\!|\cdot|_F^{\sw} \text{ with } \sw \text{ even, and } \pi_v 
\text{ is Iwahori spherical for all } v\!\in\! S_p. 
\end{align}

It follows that $\nu_v$ is unramified for all $v\in S_p$ and we let $\alpha_v= \nu_v(\varpi_v)$. 
The set $S_p$ is then partitioned into $\St_p$ consisting of $v$ such that $\pi_v$ is an unramified
	twist (by $\nu_v$) of the unitary Steinberg representation and its complement $S_p\!\setminus \St_p$ consisting of $v$ such that $\pi_v$ is unramified.

\section{Galois representations and arithmetic $\mathscr{L}$-invariants}\label{s:p-adic-hodge}
The Trivial Zero Conjecture stated in the introduction posits a relationship between the special values of $p$-adic $L$-functions
and arithmetic $\mathscr{L}$-invariants. In this section we turn to the Galois representation side of $p$-adic
families and explain the connection between two types of arithmetic $\mathscr{L}$-invariants 
 associated to Hilbert modular forms. Moreover we will express them in terms of 
derivatives of Hecke eigenvalues, which will allow to relate them to $p$-adic $L$-functions in the last section.

\subsection{Galois representations for Hilbert modular forms}\label{ss:gal HMF}
By Theorem \ref{free-etale} there exists an $L$-affinoid neighborhood $\cU $ of $(k,\sw)$ in $\cX$ and $\kappa:\cV\xrightarrow{\sim}\cU$ a family containing $\widetilde{\pi}$ such that for any  $\lambda\in \cU$ cohomological, $\kappa^{-1}(\lambda)\in \cV$ corresponds to 
$p$-refined cuspidal automorphic representation $\widetilde{\pi}_\lambda$ of weight $\lambda$. The 
cohomological points being Zariski dense in $\cU$, there exists a unique two-dimensional pseudo-character $\mathrm{G}_F\to \cO(\cU)$ interpolating the traces of the corresponding $p$-adic Galois representations $ \rho_{\pi_\lambda}: \mathrm{G}_F\to \Aut_L(V_{\pi_\lambda})$ attached to $\pi_\lambda$. Since $V_{\pi}$ is absolutely irreducible, using a result of Nyssen \cite{nyssen} and Rouquier \cite{rouquier} one shows that, after possibly further shrinking $\cU$, there exists a continuous Galois representation
\begin{align}\label{eq:rhoU}
\rho_{\cU}:\mathrm{G}_F\to
\GL_2(\cO(\cU))
\end{align}
whose specialization at every cohomological $\lambda\in \cU$ is isomorphic to $V_{\pi_\lambda}$. 
Further shrinking $\cU$ one can assume that the map $\lambda\mapsto (k_\lambda,\sw_\lambda)$ defined in 
\eqref{eq:weight-finite-map} is injective on $\cU$, that $\sw_\lambda\circ\omega_p=\omega_p^{\sw}$, and that \eqref{eq:alpha-u}
holds. 
When $\cU$ satisfies all these assumptions and, in addition, the tame conductor of $\pi_\lambda$ equals $\gn$ for every cohomological $\lambda\in \cU$
(see Lem. \ref{l:tame}), we denote:
\begin{align}\label{eq:Xpi}
\cX(\widetilde{\pi})=\cU.
\end{align}

\begin{lemma}\label{l:tame} The tame conductor of 
$\pi_\lambda$ is $\gn$ for all cohomological $\lambda$ sufficiently close to $(k,\sw)$. 
\end{lemma}
\begin{proof}
 By construction, for all cohomological $\lambda\in \cU$ the tame level $\gn_\lambda$ of $\pi_\lambda$ divides $\gn\gu$. 
 We will use the Galois representation $\rho_{\cU}$ to show
 that, after shrinking $\cU$, one has $\gn_\lambda=\gn$. The local-global compatibility at a  finite place $v\notin S_p$, established by Carayol \cite{carayol:hmf-l-adic} and Taylor \cite{taylor},  asserts that the Frobenius semi-simplification of the Weil-Deligne representation $(r_{v,\lambda},N_{v,\lambda})$ attached to 
$\rho_{\pi_\lambda|\mathrm{G}_{F_v}}$ corresponds, via the local Langlands correspondence, to  $\pi_{\lambda, v}\otimes|\cdot|^{-1/2}_{v}$. 
On the other hand, by \cite[Lem.7.8.14]{bellaiche-chenevier}, one can attach to $\rho_{\cU|\mathrm{G}_{F_v}}$ a Weil-Deligne representation $(r_{v,\cU},N_{v,\cU})$.  By \cite[Lem.7.8.17]{bellaiche-chenevier}, after possibly shrinking $\cU$, the restriction of $r_{v,\cU}$ to the inertia subgroup at $v$ has finite image whose  specialization at any cohomological $\lambda\in \cU$ 
 is isomorphic to the restriction of $r_{v,\lambda}$ to the same inertia subgroup. 
Therefore, to conclude that $\gn_\lambda=\gn$ it suffices to show that $N_{v,\lambda} = N_{v,\cU}$. From \cite[Prop.7.8.19]{bellaiche-chenevier} it follows that $N_{v,\lambda}$ lies in the $p$-adic closure of the conjugacy class of $N_{v,\cU}$, which, together with the fact that $\gn_\lambda$ divides $\gn \gu$ implies equality except possibly when $v=\gu$. Finally $\pi_{\lambda,\gu}$ is unramified for all cohomological $\lambda\in \cU$, as being an unramified twist of the Steinberg representation is excluded by \eqref{eq:alpha-u}. 
\end{proof}

To compute derivatives of analytic functions on $\cX(\widetilde{\pi})$ in the following sections, we will consider a subset
of $\cX(\widetilde{\pi})$ which can be parametrized with the  variables $((k_{\lambda,\sigma})_{\sigma\in \Sigma},\sw_\lambda)$ 
corresponding via \eqref{eq:weight-finite-map} to  characters of the form 
\begin{align}\label{eq:weight-parametrization}
\prod_{v\in S_p} \cO_v^\times\times \Z_p^\times \to \C_p^\times,  z=((z_v)_{v\in S_p}, z_0)\mapsto 
(k,\sw)(z)\cdot   \langle z_0\rangle_p^{\sw_\lambda-\sw} \prod_{v\in S_p} \prod_{\sigma\in \Sigma_v} \sigma(\langle
 z_v\rangle_v)^{k_{\lambda,\sigma}-k_\sigma}, 
\end{align}
 where $\langle\cdot\rangle_v:\mathcal{O}_v^\times\to 1+(\varpi_v)$ is the natural projection map. This allows us to parametrize 
 $\cX(\widetilde{\pi})$ by  a neighborhood, denoted  $\cX^{\an}(\widetilde{\pi})$,  of $(k,\sw)$ in the  space 
 $\prod_{\sigma\in \Sigma} (k_\sigma+2p \mathcal{O}_{\mathbb{C}_p})\times (\sw+\mathcal{O}_{\mathbb{C}_p})$ of analytic weights.
 If we impose the weights to vary only  in parallel direction per place above $p$, {\it i.e.}, if $k_{\lambda,\sigma}-k_\sigma=x_v\in \mathcal{O}_{\mathbb{C}_p}$ for all 
$\sigma\in \Sigma_v$, then    \eqref{eq:weight-parametrization} becomes  
\begin{align}\label{eq:weight-parametrization-bis}
\prod_{v\in S_p} \cO_v^\times\times \Z_p^\times \to \C_p^\times, \quad z=((z_v)_{v\in S_p}, z_0)\mapsto 
(k,\sw)(z)\cdot   \langle z_0\rangle_p^{\sw_\lambda-\sw} \prod_{v\in S_p} \langle
 \mathrm{N}_{F_v/\Q_p}(z_v)\rangle_p^{x_v}. 
\end{align}

\subsection{Fontaine-Mazur $\mathscr{L}$-invariants}\label{ss:FM}
Consider the $2$-dimensional $L$-vector space $V=V_{\pi}(\tfrac{2-\sw}{2})$
 endowed with continuous action of $\mathrm{G}_F$. 
The local-global compatibility is also known at places $v\in S_p$ from the 
work of Saito \cite{saito:hmf-hodge}, Blasius-Rogawski \cite{blasius-rogawski} and Skinner \cite{skinner:hmf-hodge}. 
Letting $V_v=V_{| \mathrm{G}_{F_v}}$, $\cD_{\st}(V_v)=(V_v\otimes_{\Q_p}\mathrm{B}_{\st})^{\mathrm{G}_{F_v}}$
 is a free $L\otimes F_{v,0}$-module of rank $2$ ($F_{v,0}$ is the maximal unramified subfield of $F_v$) carrying a semilinear Frobenius  $\varphi_v$ and a nilpotent linear map $N_v$ both inherited from Fontaine's ring $\mathrm{B}_{\st}$ and such that 
 $N_v\circ \varphi_v = p \varphi_v\circ N_v$.  
 To be more precise, the linear map $\varphi_v^{f_v}$, where
$f_v$ is the inertial index at $v$, has eigenvalues
$q_v^{(\sw-2)/2}\alpha_v$ and $q_v^{-\sw/2} \alpha_v^{-1}$, and
the monodromy matrix vanishes if and only if $\pi_v$ is
unramified.
 Moreover the $L\otimes _{\Q_p} F_{v}$-module $\cD_{\st}(V_v)\otimes_{F_{v,0}} F_v$   is endowed with a decreasing  de Rham filtration $\Fil^\bullet (\cD_{\st}(V_v)\otimes_{F_{v,0}} F_v)$   whose jumps, called the labeled Hodge-Tate weights, 
  occur precisely at the integers $\left(\tfrac{k_\sigma-2}{2},\tfrac{-k_\sigma}{2}\right)_{\sigma\in \Sigma_v}$
(we recall that we are using the convention in which  the cyclotomic character has Hodge-Tate weight $-1$).
  In particular  $\Fil^0(\cD_{\st}(V_v)\otimes_{F_{v,0}} F_v)$ is free of rank $1$ over $L \otimes_{\Q_p} F_v$.

In the context when $\pi_v\otimes|\cdot|^{-\sw/2}$ is the Steinberg representation, there is a unique (up to scalar) basis $(e_1,e_2)$ of 
$\mathcal{D}_{\st}(V_v)$ such that  $\varphi_v^{f_v}(e_1)=q_v^{(\sw-2)/2}\alpha_v\cdot e_1$,  
$\varphi_v^{f_v}(e_2)=q_v^{-\sw/2} \alpha_v^{-1}\cdot e_2$ and $N_v(e_2)=e_1$. The Fontaine-Mazur $\mathscr{L}$-invariant is the 
 unique $\mathscr{L}_{\FM}(V_v)\in L\otimes_{\Q_p}F_{v}$ such that 
\[\Fil^0(\cD_{\st}(V_v)\otimes_{F_{v,0}} F_v)=(L\otimes_{\Q_p}F_v)\cdot (e_1+\mathscr{L}_{\FM}(V_v)\cdot  e_2).\]

We next relate $\mathscr{L}_{\FM}(V_v)$ to derivatives of Hecke
eigenvalues. We will consider the
$U_{\varpi_v}^\circ$-eigenvalues $\alpha_v^\circ\in
\cO(\cX(\widetilde\pi))$  (see Def. \ref{d:Phi-alpha}) 
as functions $\alpha_v^\circ((k_{\lambda,\sigma})_{\sigma\in \Sigma},\sw_\lambda)$ by restriction to $\cX^{\an}(\widetilde{\pi})$, and we will denote by $\dlog_u\alpha_v^\circ$ denote the logarithmic derivative 
at $(k,\sw)$ in the direction $u=((u_\sigma)_{\sigma\in \Sigma},u_0)$, {\it i.e.},  
$\dlog_u\alpha_v^\circ= \frac{1}{\alpha_v^\circ(k,\sw)}\frac{d}{dx} \alpha_v^\circ((k,\sw)+ x \cdot u)|_{x=0}$.

 \begin{proposition}\label{p:FM-derivatives}
If $u_\sigma=1=-u_0$ for all $\sigma\in \Sigma_v$, then $e_v^{-1}\cdot \Tr_{F_v/\Q_p}(\mathscr{L}_{\FM}(V_v))=-2\dlog_u \alpha_v^\circ$, where $e_v$ is the ramification index at $v$.
\end{proposition}
\begin{proof}
The line defined by the direction $u$ lies in the space $\cX_{S_p\!\setminus \{v\}}'$, and we denote by $\cU'$ the portion of this line 
inside the ball $\cX^{\an}(\widetilde{\pi})$. We will write $\rho_{\cU'}$ for the analytic Galois representation on
$\cX^{\an}(\widetilde{\pi})$ along $\cU'$, and note that on this line $\alpha_v=\alpha_v^\circ\prod_{\sigma\in
 \Sigma_v}\sigma(\varpi_v)^{(2-k_\sigma-\sw)/2}$ is an analytic function. By \cite{kisin:overconvergent},
$\cD_{\st}(\rho_{\cU'})^{\varphi_v^{f_v}=\alpha_v}$ is an $\cO(\cU') \otimes_{\mathbb{Q}_p}F_{v,0}$-module of rank $1$  and
therefore we may apply \cite[Thm.1.1]{zhang:FM}. Since $\det\rho_{\cU'}=\chi_{\cyc}^{\sw_\lambda-1}=\chi_{\cyc}^{\sw-1-x u_0}$ we have the following equality of differentials at $x=0$:
\[\frac{d \alpha_v((k,\sw)+x u)}{\alpha_v(k,\sw)} =
\frac{1}{2e_v}
\Tr_{F_v/\mathbb{Q}_p}(\mathscr{L}_{\FM}(V_v) d (x u_0)),\]
which immediately implies the formula, as $u_0=-1$. 
\end{proof}

The fact that  $\dlog_u \alpha_v^\circ$ does not
depend on $u$, as long as $u_\sigma=1=-u_0$ for $\sigma\in \Sigma_v$, 
 will be used in the final section.
 
\begin{definition}\label{d:L-inv}
We define $\mathscr{L}(\widetilde{\pi}) = \prod\limits_{v\in E}e_v^{-1} \Tr_{F_v/\Q_p}(\mathscr{L}_{\FM}(V_v))$, where 
$E\subset \St_p$ consists of those places $v$ for which $\pi_v\otimes|\cdot|^{-\sw/2}$ is the Steinberg representation.
\end{definition}

\subsection{Greenberg-Benois $\mathscr{L}$-invariants}\label{ss:greenberg-benois}
The connection between the analytic Galois representation and the $p$-adic family runs deeper than the above description, and in order to state this connection precisely, we introduce the
 category of $(\varphi,\Gamma)$-modules over Robba rings (we refer to \cite[\S1]{benois:L-invariant} for more details). 
For $v\in S_p$ the Robba ring $\mathcal{R}_{F_v}$ is an
$F_v$-algebra endowed with a continuous map $\varphi_v$ and a
continuous action of $\Gamma_v =
\Gal(F_v(\mu_{p^\infty})/F_v)$. Writing $\cU= \cX(\widetilde{\pi})$, a $(\varphi_v,\Gamma_v)$-module over $\mathcal{R}_{F_v,\cU}=\mathcal{R}_{F_v}\widehat{\otimes}_{\Q_p}\mathcal{O}(\cU)$ is a coherent,
locally free sheaf $D$ over $\cR_{F_v,\cU}$ of finite rank endowed with a $\varphi_v$-semilinear map $\varphi_{D}$ which gives an isomorphism $\varphi_D^*D\cong D$,
and a semilinear action of $\Gamma_v$ commuting with $\varphi_{D}$. Furthermore, there exists a functor $\cD_{\rig}^\dagger$ 
associating to a continuous $\cO(\cU)$-linear representation of $\mathrm{G}_{F_v}$ a $(\varphi_v,\Gamma_v)$-module over $\cR_{F_v,\cU}$. 
Since for  $\lambda\in \cU$ cohomological $\rho_{\pi_\lambda|\mathrm{G}_{F_v}}$ has 
 labeled Hodge-Tate weights   $\left(\tfrac{k_{\lambda,\sigma}-\sw_\lambda}{2},\tfrac{2-\sw_\lambda-k_{\lambda,\sigma}}{2}\right)_{\sigma\in \Sigma_v}$, we deduce from 
 \cite[Thm 1.8]{liu:triangulation} (taking into account that \cite[Def 1.7(f)]{liu:triangulation} has a typo, the exponents should be $\kappa_i(z)_\tau$), that there exists a triangulation
\begin{align}\label{eq:drig triangulation}\cD_{\rig}^\dagger(\rho_{\cU|\mathrm{G}_{F_v}})\sim \begin{pmatrix}
\psi_{v,1}&*\\ &\psi_{v,2}\end{pmatrix},
\end{align}
where $\psi_{v,1},\psi_{v,2}:F_v^\times \to
\cO(\cU)^\times$ are continuous characters such that $\psi_{v,1}(\varpi_v)=\alpha_v^\circ$ 
is the analytically varying renormalized Hecke eigenvalue, $\psi_{v,2}(\varpi_v)=\chi_{\cyc}^{\sw_\lambda-1}(\varpi_v)(\alpha_v^\circ)^{-1}$, and for $z_v\in \cO_v^\times$:
\begin{align*}
\psi_{v,1}(z_v)(\lambda)=\prod_{\sigma\in \Sigma_v}\sigma(z_v)^{(\sw_\lambda+k_{\lambda,\sigma}-2)/2},\text{ and }\psi_{v,2}(z_v)(\lambda)=\prod_{\sigma\in \Sigma_v}\sigma(z_v)^{(\sw_\lambda-k_{\lambda,\sigma})/2}.\end{align*}

Suppose the Galois representation $V=V_{\pi}(\tfrac{2-\sw}{2})$ satisfies $\rH^1_f(F, V)=0$, 
as predicted by the Bloch-Kato conjecture when $L(\pi,\tfrac{1-\sw}{2})\neq 0$. For $v\in S_p$, the $(\varphi_v,N_v)$-submodule
$D_v=\mathcal{D}_{\st}(\nu_v(\tfrac{2-\sw}{2})) \subset
\mathcal{D}_{\st}(V_v)$ is  regular  in the sense of Perrin-Riou
\cite[\S3.1.2]{perrin-riou:Lp} (see also \cite{panchishkin:motives}). In this context, the technical conditions of 
 Greenberg \cite{greenberg:trivial-zeros} and Benois \cite{benois:L-invariant} mentioned in the introduction are all satisfied  and there
 is a well-defined arithmetic $\mathscr{L}$-invariant $\mathscr{L}_{\GB}(V, \{D_v\})$
 (this also uses \cite{harron-jorza,rosso:l-invariants-gsp4} extending the construction to 
  an arbitrary $F$). We will not recall its intricate construction, but will instead  show
that it can be computed in this instance by a formula similar to
that of Proposition \ref{p:FM-derivatives}. 

\begin{proposition}\label{p:GB derivatives}
Under the hypothesis $\rH^1_f(F,V)=0$ we have $ \mathscr{L}_{\GB}(V, \{D_v\})=\mathscr{L}(\widetilde{\pi})\cdot \prod\limits_{v\in E}f_v^{-1} $.
\end{proposition}
\begin{proof}
Consider the triangulation 
 of $\rho_\cU(\frac{2-\sw}{2})$ induced from \eqref{eq:drig triangulation} restricted to the $1$-dimensional affinoid $ \cX(\widetilde{\pi}) \cap \cX'_\varnothing$ (see Def. \ref{defn:XKS}). Prop. \ref{p:trivial-zeros-location} implies that $V$ has a trivial zero contribution exactly from
places $v\in E$. Therefore we can apply
\cite[Thm. 4.1, Prop. 4.13]{rosso:l-invariants-gsp4} to compute the $\mathscr{L}$-invariant in terms of  $\dlog$ in the 
parallel direction $u_\sigma=1=-u_0$ for all $\sigma\in \Sigma$ (the theorem in {\it loc. cit.}  assumes parallel weights, but its proof applies verbatim to general weights deforming in a parallel direction). Thus 
\[\mathscr{L}_{\GB}(V, \{D_v\})=\prod_{v\in
 E}\frac{f_v^{-1}\dlog_u ((\psi_{1,v}\psi_{2,v}^{-1})(\varpi_v))}{-\dlog_u
 ((\psi_{1,v}\psi_{2,v}^{-1})(z_v))/\log_p (\mathrm{N}_{F_v/\Q_p}(z_v))}=\prod_{v\in E}-2f_v^{-1}\dlog
 \alpha_v^\circ,\]
for any units $z_v\in \cO_v^\times$, $v\in E$. The desired formula follows from the fact that $\dlog
 (\psi_{1,v}\psi_{2,v}^{-1}(z_v)) =\sum\limits_{\sigma\in \Sigma_v} u_\sigma \log_p\sigma(z_v)=\log_p \mathrm{N}_{F_v/\Q_p}(z_v)$. 
\end{proof}

Finally, we
  remark that $D=\bigoplus_{v\mid
    p}\Ind_{\mathrm{G}_{F_v}}^{G_{\mathbb{Q}_p}}D_v$ is a regular
  submodule of $(\Ind_{\mathrm{G}_F}^{\mathrm{G}_\Q}V)_{|\mathrm{G}_{\mathbb{Q}_p}}$. In
  the Main Theorem we are proving the Trivial Zero Conjecture
  for the pair $(\Ind_{\mathrm{G}_F}^{\mathrm{G}_\Q}V, D)$.
 
\section{Functional equations} \label{s:functional-equation}
In this section we establish functional equations for the $p$-adic $L$-functions constructed in \S\ref{s:p-adic-L} based on their growth, interpolation properties, and the Godement-Jacquet functional equation of the corresponding
complex $L$-functions. These results will be used in the proof of the Trivial Zero Conjecture at the central point in \S \ref{s:ezc}.

\subsection{Functional equations for archimedean $L$-functions}\label{ss:functional-equation-archimedean}
The Jacquet-Langlands global $L$-function and $\varepsilon$-factor are products over all places of $F$
\[L(\pi,s)=\prod_v L(\pi_v,s)\text{ and } \varepsilon(\pi,s)=\prod_v \varepsilon(\pi_v,\psi_v,dx_v,s),\]
where $\psi_v$ and $dx_v$ are as in \S\ref{s:notations} and will henceforth be dropped for the notation. 
Under the assumption \eqref{A12}, $\pi\otimes|\cdot|_F^{-\sw/2}$ is unitary and self-dual, and its root number is given by
\begin{align}\label{eq:eps-pi}
\varepsilon_\pi=\varepsilon(\pi,\tfrac{1-\sw}{2})\in \{\pm 1\}. 
\end{align}
Let $\gc_\pi=\gn \prod\limits_{v\in \St_p}v$
 be the conductor of $\pi$. The functional equation states (see \cite[Thm.11.1]{jacquet-langlands}):
\begin{align}\label{eq:FE-twist}
L(\pi,s)=\varepsilon(\pi,s)L(\pi^\vee, 1-s) \text{, where } \varepsilon(\pi,s) =\varepsilon_\pi\cdot 
(\mathrm{N}_{F/\Q}(\gd^2\gc_\pi))^{\frac{1-\sw}{2}-s}.
\end{align}
\begin{prop}\label{functional equation}
Let $\chi$ be a character of $\Gal_{p\infty}$ of conductor $\gc_\chi= \prod\limits_{v\in S_p} v^{c_v}$. Then
		\begin{align*}	
		\gc_{\pi\otimes\chi}=\gn_{\chi} \gc_\chi^2, \text{ and } \quad
		\varepsilon_{\pi\otimes\chi}=\varepsilon_\pi \mathrm{N}_{F/\Q}(\gc_\chi)	\chi(\varpi_{\gn_{\chi}}) \tau(\overline{\chi})^2
	\prod_{{v\in \St_p}, {c_v>0}}\varepsilon(\pi_v,\tfrac{1-\sw}{2}), 	
	 \end{align*}
 where $\tau(\chi)$ is the Gauss sum defined in \eqref{eq:global-gauss-sum} and $ \gn_{\chi}=\gn \cdot\prod\limits_{{v\in \St_p},{c_v=0}}v$. 
	\end{prop}

\begin{proof} The conductor formula follows from \eqref{A12}. The formula involving the $\varepsilon$-factors 
will be checked by decomposing both sides as products of local terms and using \cite[\S1.3]{jacquet-langlands}. 

 If either $\chi_v$ or $\pi_v$ is unramified then using that 
 $|\omega_\pi|_F^{\sw}=q_v^{-\sw}$ one has
		\[\varepsilon(\pi_v\otimes\chi_v,\tfrac{1-\sw}{2})=\varepsilon(\pi_v,\tfrac{1-\sw}{2}) q_v^{c_v}\chi_v(\varpi_v)^{\gc_{\pi_v}}\tau(\overline{\chi}_v,\psi_v,d_{\chi_v})^2.\]

If $v\in \St_p$ and $\chi_v$ is ramified, then using $\varepsilon(\pi_v,\tfrac{1-\sw}{2}) = -\nu_v(\varpi_v)^{-1}q_v^{-\sw/2}$
 one has	\[\varepsilon(\pi_v\otimes\chi_v,\tfrac{1-\sw}{2})=\varepsilon(\pi_v,\tfrac{1-\sw}{2})
 q_v^{c_v}\tau(\overline{\chi}_v,\psi_v,d_{\chi_v})^2 \varepsilon(\pi_v,\tfrac{1-\sw}{2}) . \qedhere\]
\end{proof}

\subsection{Interpolation formulas} Under the assumption \eqref{A12}, Theorem \ref{t:improved-Lp} and Proposition \ref{p:trivial-zeros-location} take the following more familiar form, involving the global Gauss sum $\tau(\chi)$ (see \eqref{eq:global-gauss-sum}). 

\begin{corollary}\label{mtt-interpolation}
For any $S\subset S_p$, $r\in \Z$ critical for $(k,\sw)$ and $\chi$ character of conductor $\prod\limits_{v\in S} v^{c_v}$
\begin{equation*}
L_S(\widetilde{\pi}, \chi, r) = L_S((k, \sw), \chi, r)=
\frac{\mathrm{N}_{F/\Q}^{r-1}(i\gd)}{\Omega_{\widetilde{\pi}}^{\chi_\infty \omega_{p,\infty}^{r-1}}} 
L(\pi\otimes\chi,r-\tfrac{1}{2}) \tau(\chi) \prod_{v\in S_p}E_S(\widetilde{\pi}_v,\chi_v,r), \text{ where}  
 \end{equation*}
\[\prod_{v\in S_p}E_S(\widetilde{\pi}_v,\chi_v,r)= 
\prod_{c_v>0}\frac{q_v^{r c_v}}{\alpha_v^{c_v}} 
\prod_{\ontop{c_v=0}{v\notin \St_p }}\left(1-\frac{\chi_v(\varpi_v)}{\alpha_v q_v^{r+\sw-1}}\right)
\prod_{\ontop{c_v=0}{ v\in S }} \left(1-\frac{q_v^{r-1}}{\alpha_v\chi_v(\varpi_v)}\right).\]

Moreover $E_S(\widetilde{\pi}_v,\chi_v,r)$ vanishes if and only if $r=\tfrac{2-\sw}{2}$ is the central critical point, 
 $v\in S\cap \St_p$ and $\chi_v(\varpi_v)=\alpha_v^{-1}q_v^{-\sw/2}=-\varepsilon(\pi_v,\tfrac{1-\sw}{2})\in\{\pm 1 \}$. 
\end{corollary}

In \S \ref{s:ezc} we will prove that the order of vanishing of $L_p(\widetilde{\pi},s)$ at $s=\tfrac{2-\sw}{2}$ is at least as large as the number of places $v$ where $E_S(\widetilde{\pi}_v,\chi_v,\tfrac{2-\sw}{2})=0$. We remark that the interpolation formula from Corollary \ref{mtt-interpolation} only gives information about the vanishing of the $p$-adic $L$-function, and leaves completely unanswered the question of higher orders of vanishing.

The next result will be used in \S\ref{ss:functional-equation} to prove the functional equation for $p$-adic $L$-functions.

\begin{corollary}\label{c:FE-formula}
Suppose $\widetilde\pi$ satisfies   \eqref{A12} and let $\widetilde{\varepsilon}_\pi=\varepsilon_\pi \cdot \prod_{v\in \St_p}\varepsilon(\pi_v,\tfrac{1-\sw}{2})\in \{\pm 1\}$. Then 
 \begin{align}\label{eq:functional} 
 \cL_p(\widetilde{\pi},\chi \langle \cdot \rangle_p^{r-1})	=\widetilde{\varepsilon}_\pi\cdot 
	(\chi\omega_{p}^{\sw/2})(-\varpi_{\gn}) \cdot \langle \gn\rangle_p^{r-1+\sw/2}
	\cL_p(\widetilde{\pi},\chi^{-1} \omega_p^{-\sw}\langle \cdot \rangle_p^{1-\sw-r}), 
	\end{align}
for any finite order characters 	 $\chi:\Gal_{p\infty}\to L^\times$ and any integer 
 $r$ critical for $(k,\sw)$.
\end{corollary}

\begin{proof}
Since $\pi^\vee= \pi\otimes |\cdot|_F^{-\sw}$, 
 the archimedean functional equation \eqref{eq:FE-twist} for $\pi\otimes\chi$, Proposition \ref{functional equation} and Corollary \ref{mtt-interpolation} for $S=S_p$ yield 
 \begin{align*}
	&\frac{\Omega_{\widetilde{\pi}}^{\chi_\infty \omega_{p,\infty}^{r-1}} \cL_p(\widetilde{\pi},\chi \chi^{r-1}_{\cyc})}{\mathrm{N}_{F/\Q}^{r-1}(i\gd)\tau(\chi)\prod\limits_{v\in S_p}E(\widetilde{\pi}_v,\chi_v,r)}=
	\varepsilon_\pi \prod_{{v\in \St_p}, {c_v>0}}\varepsilon(\pi_v,\tfrac{1-\sw}{2}) 	
			\mathrm{N}_{F/\Q}(\gc_\chi)\tau(\overline{\chi})^2 \times\\ & \times 
			\mathrm{N}_{F/\Q}(\gd^2\gn_{\chi}
	\gc_\chi^2 )^{1-\sw/2-r}\chi(\varpi_{\gn_{\chi}})		
	\frac{\Omega_{\widetilde{\pi}}^{\chi_\infty \omega_{p,\infty}^{r-1+\sw}} \cL_p(\widetilde{\pi},\chi^{-1}\chi^{1-\sw-r}_{\cyc})}{\mathrm{N}_{F/\Q}^{1-\sw-r}(i\gd)\tau(\overline{\chi})\prod\limits_{v\in S_p}E(\widetilde{\pi}_v,\chi_v^{-1},2-\sw-r)},
	\end{align*}

 Using the global Gauss sum identity $\mathrm{N}_{F/\Q}(\gc_\chi)\tau(\chi)\tau(\overline{\chi})=\chi_f(-1)=\chi_\infty(-1)$  we get:
	\begin{align*}
	\cL_p(\widetilde{\pi},\chi \chi^{r-1}_{\cyc})=\varepsilon_\pi \prod_{{v\in \St_p}, {c_v>0}}\varepsilon(\pi_v,\tfrac{1-\sw}{2}) \cdot 
	\frac{\chi(-\varpi_{\gn_{\chi}})\cL_p(\widetilde{\pi},\chi^{-1}\chi^{1-\sw-r}_{\cyc})}{\mathrm{N}_{F/\Q}(-\gn_{\chi}\gc_\chi^2)^{r+\sw/2-1}}  \prod_{v\in S_p}\frac{E(\widetilde{\pi}_v, \chi_v,r)}{E(\widetilde{\pi}_v,\chi_v^{-1},2-\sw-r)}.
	\end{align*}
 When
$c_v=0$ and $\pi_v$ is unramified then $\alpha_v \beta_v = q_v^{1-\sw}$ and so $E(\widetilde{\pi}_v,
 \chi_v,r)=E(\widetilde{\pi}_v,\chi_v^{-1},2-\sw-r)$. 
 When $c_v>0$ then $\displaystyle \frac{E(\widetilde{\pi}_v,
 \chi_v,r)}{E(\widetilde{\pi}_v,\chi_v^{-1},2-\sw-r)}=q_v^{(2r+\sw-2)c_v}$. 
   Finally, when $c_v=0$ and $v\in \St_p$ then 
\[ \frac{E(\widetilde{\pi}_v,
 \chi_v,r)}{E(\widetilde{\pi}_v,\chi_v^{-1},2-\sw-r)}=-\alpha_v 
 q_v^{r-1+\sw}\chi_v(\varpi_v)^{-1}
= \varepsilon(\pi_v,\tfrac{1-\sw}{2}) q_v^{r-1+\sw/2}\chi_v(\varpi_v)^{-1}.\]
Putting everything together and using that $\gn_{\chi}=\gn \cdot\prod\limits_{v\in \St_p, c_v=0} v$ we obtain:
\[\cL_p(\widetilde{\pi},\chi \chi^{r-1}_{\cyc})
	=\widetilde{\varepsilon}_\pi \chi(-\varpi_{\gn})\mathrm{N}_{F/\Q}(-\varpi_{\gn})^{1-r-\sw/2}\cL_p(\widetilde{\pi},\chi^{-1}\chi^{1-\sw-r}_{\cyc}). \]
 Replacing $\chi$ by $\chi \omega_p^{1-r}$, and noting that $\chi_{\cyc}\omega_p^{-1}=\langle \cdot \rangle_p$ is an even character, 
 yields \eqref{eq:functional}. 
  \end{proof}

When $\widetilde\pi$ has very non-critical slope, Corollary \ref{c:FE-formula} provides enough relations to establish a functional equation for 
$\cL_p(\widetilde{\pi},\chi\bullet)\in D(\Gal_{\cyc},\cO(\cU))$ given its growth. 
When $\widetilde\pi$ has critical slope, but is still is non-critical, then the functional equation will be proven in the next subsection using the unique $p$-adic family containing $\widetilde\pi$ from Theorem \ref{free-etale}. 

\subsection{Functional equations for $p$-adic $L$-functions}\label{ss:functional-equation}

We recall that by construction the multi-variable $p$-adic $L$-function 
$\cL_p\in D(\Gal_{p\infty}, \cO(\cX(\widetilde{\pi})))$ from \eqref{d:2var-L_p} interpolates the $p$-adic $L$-functions 
$\cL_p(\widetilde{\pi}_\lambda, \bullet) \in D(\Gal_{p\infty}, \cO(\cX(\widetilde{\pi})))$ for all cohomological $\lambda\in \cX(\widetilde{\pi})$. 

The cyclotomic (resp. multi-variable) $p$-adic $L$-function attached to $\widetilde{\pi}$ is defined as
\begin{align}\label{eq:padicL}
L_p(\widetilde{\pi},s)=\cL_p(\widetilde{\pi}, \omega_p^{-\sw/2} \langle \cdot \rangle_p^{s-1} ), 
\text{ resp. } L_p(\lambda,s)=\cL_p(\lambda, \omega_p^{-\sw/2} \langle \cdot \rangle_p^{s-1} ), \enspace s\in \cO_{\C_p}, \lambda \in \cX(\widetilde{\pi}). 
\end{align}
By Proposition \ref{p:renormalization-padic-periods} for $z\in \cO_{\C_p}$ and $(k_\lambda,\sw_\lambda)\in
\cX^{\an}(\widetilde{\pi})$ such that $(k_\lambda,\sw_\lambda \langle \cdot\rangle^{2z})\in \cX^{\an}(\widetilde{\pi})$ one has 
\begin{align}\label{eq:twisting}
L_p((k_\lambda,\sw_\lambda+2z), s)=L_p((k_\lambda,\sw_\lambda), s+z).
\end{align}

\begin{theorem}\label{t:functional-equation-2}
The sign $\widetilde{\varepsilon}_{\pi_\lambda}$ of $\widetilde{\pi}_\lambda$ is independent of the cohomological weight $\lambda\in
\cX(\widetilde{\pi})$. For any $\lambda \in \cX(\widetilde{\pi})$,  $f\in A(\Gal_{\cyc},L)$  and for any finite order character  $\chi:\Gal_{p\infty}\to L^\times$, one has
\begin{align}\label{eq:functional-equation-2}
 \cL_p(\lambda,\chi \cdot f)	=\widetilde{\varepsilon}_\pi\cdot 
	(\chi \omega_{p}^{\sw/2} f)(-\varpi_\gn) \langle \gn\rangle_p^{\sw_\lambda/2}
	\cL_p(\lambda,\chi^{-\sw_\lambda}_{\cyc} ( (\chi\cdot f)\circ (\cdot )^{-1})). 
\end{align}

In particular, $L_p(\widetilde{\pi}_\lambda,s) =
\widetilde{\varepsilon}_{\pi} \cdot \langle \gn
\rangle_p^{s-1+\sw_\lambda/2}
L_p(\widetilde{\pi}_\lambda,2-\sw_\lambda-s)$ as analytic
functions in $s$.
\end{theorem}

\begin{proof} 
We remark that the slope is constant in the family and equals $\sum\limits_{v\in S_p}e_{v}h_{\widetilde{\pi}_v}$ (see Def. \ref{d:non-critical-slope}). Consider a cohomological weight $\lambda'\in \cX(\widetilde{\pi})$ having very non-critical slope (see \eqref{eq:very-non-critical-slope}). 
Since the elements $\cL_p(\lambda',\chi \cdot)$ and $\cL_p(\lambda',\chi_{\mathrm{cyc}}^{-\sw_{\lambda'}} \chi^{-1} \cdot)$ of $D(\Gal_{\cyc},\cO(\cU))$ both have growth at most $\sum\limits_{v\in S_p}e_{v}h_{\widetilde{\pi}_v}$, by \cite[Thm.2.3, Lem.2.10]{vishik}  it suffices to check 
\eqref{eq:functional-equation-2} for all $f=\chi' \langle \cdot \rangle_p^{r-1}$, where $r$ is a 
critical integer for $(k,\sw)$ and $\chi' $ is a finite order character of $\Gal_{\cyc}$ . This is precisely formula \eqref{eq:functional}
applied to $\chi\chi'$, except that there is $\widetilde{\varepsilon}_{\pi_{\lambda'}}$ instead of $\widetilde{\varepsilon}_{\pi}$. 

We now turn to the problem of showing that the sign is generically constant in the family. Choose $\lambda'\in \cX(\widetilde{\pi})$ 
as above for which $2-\tfrac{\sw_{\lambda'}}{2}$ is a critical integer. 
The absolute convergence of 
 $L(\pi_{\lambda'}\otimes\chi', s)$ for $\Re(s)>1-\frac{\sw_{\lambda'}}{2}$ implies that 
 $L(\pi_{\lambda'} \otimes \chi\omega_{p}^{-1},(3-\sw_{\lambda'})/2)\neq 0$. Moreover Proposition \ref{p:trivial-zeros-location} implies that 
 $E(\widetilde{\pi}_{\lambda',v}, \chi_{v}\omega_{p}^{-\sw/2}, 2-\frac{\sw_{\lambda'}}{2}) \neq 0$ for any $v\in S_p$, and therefore 
 $\cL_p(\lambda',\chi'\chi_{\cyc}^{1-\frac{\sw_{\lambda'}}{2}})\neq 0$ by Corollary \ref{mtt-interpolation}.
 Letting $f=\omega_p^{-\sw/2} \langle\cdot\rangle_p^{s-1}$, the quotient
 \[\varepsilon(\lambda,s)=
 \frac{\langle \gn\rangle_p^{1-s-\sw_\lambda/2}\cL_p(\lambda,\chi\omega_p^{-\sw/2}\langle\cdot\rangle_p^{s-1})}{\chi(-\varpi_{\gn}) \cL_p(\lambda,\chi^{-1}\omega_p^{-\sw/2} \langle\cdot\rangle_p^{1-\sw_{\lambda}-s})},\]
 is a well-defined, non-identically zero meromorphic function in the variables $(\lambda,s)\in \cX(\widetilde{\pi})\times \cO_{\C_p}$. 
 Indeed we have shown that its numerator does not vanish at $(\lambda',2-\frac{\sw_{\lambda'}}{2})$, and $
 \varepsilon(\lambda',2-\frac{\sw_{\lambda'}}{2})= \widetilde{\varepsilon}_{\pi_{\lambda'}}$ by the  functional equation. 
 Similarly $\varepsilon(\lambda,s)=\widetilde{\varepsilon }_{\pi_\lambda}\in \{\pm 1\}$ 
for all cohomological $\lambda\in \cX(\widetilde{\pi})$ having very non-critical slope and such that $\varepsilon(\lambda,s)$ is well-defined, 
and the Zariski density of such points implies  that $\varepsilon(\lambda,s)$ is constant with value $\widetilde{\varepsilon }\in \{\pm 1\}$, 
independent of $\chi$.

To finish the proof of \eqref{eq:functional-equation-2} it suffices to check that
$\widetilde{\varepsilon}=\widetilde{\varepsilon}_{\pi_\lambda}$ for all cohomological $\lambda\in \cX(\widetilde{\pi})$. By a theorem of Rohrlich \cite{rohrlich:nonvanishing} applied at the central critical point, there exists a finite order character $\chi'$ of $\Gal_{p\infty}$ such that $L(\pi_\lambda \otimes \chi',\tfrac{1-\sw_\lambda}{2})\neq 0$ and $E(\widetilde{\pi}_{\lambda,v}, \chi'_{v}\omega_{p}^{-\sw/2}, 1-\tfrac{\sw_\lambda}{2})\neq 0$ for all $v\in S_p$ . Then Corollary \ref{mtt-interpolation} implies that
$\cL_p(\lambda, \chi'\chi_{\mathrm{cyc}}^{-\sw_{\lambda}/2})\ne 0$, 
and it follows then from  Corollary \ref{c:FE-formula} that 
$\widetilde{\varepsilon}_{\pi_{\lambda}}=\widetilde{\varepsilon}$.
\end{proof}

\section{The Trivial Zero Conjecture}\label{s:ezc}

By Corollary \ref{mtt-interpolation}, the set $E\subset S_p$ of places at which the local interpolation factor of $L_p(\widetilde{\pi},s)$
vanishes at the central point $\tfrac{2-\sw}{2}$ consists precisely of $v\in \St_p$ such that 
$\varepsilon(\pi_v,\tfrac{1-\sw}{2})=-1$.

\begin{theorem}[Trivial zero conjecture at the central critical point]\label{t:ezc-hmf} 
The $p$-adic $L$-function $L_p(\widetilde{\pi},s)$ has order of vanishing at
least $e=|E|$ at $\tfrac{2-\sw}{2}$ and
\[\frac{L_p^{(e)}(\widetilde{\pi},\tfrac{2-\sw}{2})}{e!} =
\mathscr{L}(\widetilde{\pi})
 \frac{L(\pi,\tfrac{1-\sw}{2})}{\mathrm{N}_{F/\Q}^{\sw/2}(i\gd) \Omega_{\widetilde{\pi}}^{\omega_{p,\infty}^{\sw/2}}}
  \cdot 2^{|\St_p\!\setminus E|}
 \prod_{v\in S_p\!\setminus \St_p}
 (1-\alpha_v^{-1}q_v^{-\sw/2})^2.\]
Moreover, if the Greenberg-Benois arithmetic $\mathscr{L}$-invariant is defined, the Trivial Zero Conjecture of the introduction holds for the Galois representation $\Ind_{\mathrm{G}_F}^{\mathrm{G}_\Q}V_\pi(\tfrac{2-\sw}{2})$ with the choice of regular submodule as in \S\ref{ss:greenberg-benois}.
\end{theorem}
The proof of Theorem \ref{t:ezc-hmf} will occupy the remainder of this section.

\subsection{Local behavior in partial families}
Crucial for the computation of Taylor coefficients of our $p$-adic $L$-functions is the following technical lemma.

\begin{lemma}\label{l:steinberg}
Let $S=S_p\bs \{v\}$ for some $v\in \St_p$. After possibly shrinking $\cX(\widetilde{\pi})$, for any 
cohomological $\lambda\in \cX_S\cap \cX(\widetilde{\pi})$ the local representation $\pi_{\lambda,v}$ is an unramified twist of the Steinberg representation (see Def. \ref{defn:XKS} and \eqref{eq:Xpi}).
\end{lemma}
\begin{proof} Let us first present  an argument using $p$-adic Hodge theory and the existence of the analytic Galois representation
$\rho_\cU$ from \eqref{eq:rhoU}.
Let $\mathcal{Z}\subset \cX(\widetilde{\pi})\cap \cX_S$ be the Zariski closure of the subset of cohomological  points $\lambda$
such that  $\pi_{\lambda,v}$ is unramified, {\it i.e.}, $\rho_{\pi_\lambda,v}$ is crystalline. 
Since the  labeled Hodge-Tate weights of $\rho_{\pi_\lambda,v}$ are constant for all cohomological $\lambda\in \cX_S$, 
a Theorem of Berger and Colmez \cite{berger-colmez} (see also \cite[Prop.3.17]{chenevier:application}) implies that 
$(k,\sw)\notin \mathcal{Z}$, hence the claim. 

We now give a proof  based on the theory of partially
finite slope families developed in \S\ref{subsection-controls},  which we believe is of  independent interest.
Let $K=K(\widetilde{\pi},\gu)$ (which, by assumption
\eqref{A12}, is the same as $K(\widetilde{\pi}_S,\gu)$),
$h=(h_{\widetilde{\pi}_w})_{w\in S_p}$ and
$h_S=(h_{\widetilde{\pi}_w})_{w\in S}$. Writing $\cU_S=\cX_S\cap
\cX(\widetilde{\pi})$, the natural restriction map
$\mathcal{D}_{\cU_S}\to \mathcal{D}_{S,\cU_S}$ yields a
$\widetilde{\mathbb{T}}_S$-equivariant map
$H_c^d(Y_K,\mathcal{D}_{\cU_S})^{\epsilon,\leqslant h}\to
H_c^d(Y_K,\mathcal{D}_{S,\cU_S})^{\epsilon,\leqslant h_S}$ such
that the action of $U_{\varpi_v}$ on the left agrees with that
of $U_{\varpi_v}^\circ$ on the right. Under this map, the localization $H_c^d(Y_K,\mathcal{D}_{\cU_S})^{\epsilon,\leqslant h}_{\widetilde{\pi}}$ maps to $H_c^d(Y_K,\mathcal{D}_{S,\cU_S})^{\epsilon,\leqslant h_S}_{\widetilde{\pi}_S}$ further localized at the ideal generated by $U_{\varpi_v}^\circ-\alpha_v^\circ$ and $U_{\delta}-\nu_v(\delta)$ for $\delta\in \mathcal{O}_v^\times$, which does not vanish.
Therefore, Theorem \ref{free-etale}(ii) applied to both $S_p$ and $S$
implies that, after possibly shrinking
$\cX(\widetilde{\pi})$, there exist components $\mathcal{V}$ of $\mathbb{T}_{\cU_S}^{\leqslant h}$ and $\mathcal{V}_S$ of $\mathbb{T}_{S,\cU_S}^{\leqslant h_S}$ such that the weight maps $\kappa:\cV\xrightarrow{\sim} \cU_S$ and $\kappa_S: \cV_S\xrightarrow{\sim} \cU_S$ are isomorphisms and there exists an $\cO(\cU_S)$-linear natural map between free rank one $\cO(\cU_S)$-modules
\begin{align}\label{eq:compatibility-S}
\rH_c^\bullet(Y_K, \cD_{\cU_S})^{\epsilon, \leqslant h}\otimes_{\T_{\cU_S}^{\leqslant h}}
\cO(\cV)\xrightarrow{\sim} \rH_c^\bullet(Y_K,
\cD_{S,\cU_S})^{\epsilon, \leqslant
 h_S}\otimes_{\T_{S,\cU_S}^{\leqslant h_S}}
\cO(\cV_S).
\end{align}
After fixing bases, the above isomorphism is necessarily given by multiplication by some 
$g\in \cO(\cU_S)$. Specializing at $(k,\sw)$, the classicality Theorem
\ref{t:classicality} implies that $g(k,\sw)\neq 0$, since multiplication by $g(k,\sw)$ takes
the class corresponding to $\widetilde{\pi}_f^K$ to the class
corresponding to $\widetilde{\pi}_{S,f}^K$.
We then shrink $\cX(\widetilde{\pi})$ so as to ensure that $g$ is
nowhere vanishing on $\cU_S$. 

By the proof of Theorem \ref{free-etale}(iii), specializing
\eqref{eq:compatibility-S} at $\lambda \in \cU_S(L)$ cohomological yields \[\rH_c^\bullet(Y_K,\cL_{\lambda}^\vee(L))^{\epsilon,\leqslant h}\otimes_{\T^{\leqslant h}_{\kappa^{-1}(\lambda)}}L\xrightarrow{\sim}\rH_c^\bullet(Y_K,\cL_{\lambda}^\vee(L))^{\epsilon,\leqslant h_S}\otimes_{\T^{\leqslant h_S}_{S,\kappa_S^{-1}(\lambda)}}L,\]
which is an isomorphism since multiplication by $g(\lambda)$ is a non-zero map between $L$-lines. We conclude that $\widetilde{\pi}_{\lambda}$ and $\widetilde{\pi}_{\lambda,S}$ have the same Hecke eigenvalues away from a bad set of primes, and therefore $\pi_\lambda\simeq\pi_{\lambda,S}$ by Strong Multiplicity One for $\GL_2$.

It therefore suffices to show that $\pi_{\lambda,S,v}$ is
Steinberg for all $\lambda\in \cU_S$. However, Theorem
\ref{free-etale}(iii) and its proof imply that $\widetilde{\pi}_{\lambda,S}$
is a non-critical $S$-refinement, hence  we have:
\[1=\dim \rH_c^\bullet(Y_K,\cD_{S,\lambda})^\epsilon_{\gm_{\widetilde{\pi}_{\lambda,S}}}=\dim(\pi_{\lambda,S,f}^K)_{\gm_{\widetilde{\pi}_{\lambda,S}}}.\]
Decomposing as tensor product we deduce that $\dim(\pi_{\lambda,S,v}^{I_v})=1$, as we are not localizing using 
Hecke operators at $v\notin S$ (see Def. \ref{d:mpi-tilde}). Hence $\pi_{\lambda,v}\simeq \pi_{\lambda,S,v}$ 
is an unramified twist of the Steinberg representations, as unramified representations have a 2-dimensional $I_v$-invariants.
\end{proof}

\subsection{Taylor coefficients}\label{exc-padicL}

In this section we use the interplay between properties of partially improved $p$-adic $L$-functions and 
the variation of the root number in partial finite slope families to 
establish the vanishing of many Taylor coefficients of the $p$-adic $L$-function of the family. 

\begin{definition}
Let $u\in 4 \mathcal{O}_{\mathbb{C}_p}$ and $x=(x_v)_{v\in E}\in (2p \mathcal{O}_{\mathbb{C}_p})^E$. For any subset $S\subset E$ we denote $x_S= (x_v)_{v\in S}$ and define $\lambda_{x,u}^S=(k_\lambda, \sw_\lambda) \in \cX^{\an}(\widetilde{\pi})$ by  
\[  \sw_\lambda=\sw-u \text{ and } 
k_{\lambda,\sigma}=\begin{cases} k_\sigma &,  \text{ for }\sigma\in \Sigma_{S_p\bs E},\\
k_\sigma+u  &,  \text{ for } \sigma\in \Sigma_{E\bs S},\\
k_\sigma+x_v &, \text{ for } \sigma\in \Sigma_S. \end{cases} \]
\end{definition}

Writing $\displaystyle \mathbb{L}_p(x, u) = \langle
\gn\rangle_p^{u/4}L_p(\lambda_{x,u}^E, \tfrac{2-\sw}{2})$, 
Theorem \ref{t:functional-equation-2} implies that 
\begin{align}\label{eq:odd-u} 
\mathbb{L}_p(x,-u) = \widetilde{\varepsilon} \cdot \mathbb{L}_p(x, u), \text{ with } \widetilde{\varepsilon}=(-1)^e\varepsilon_\pi.
\end{align}
In particular we may write $\mathbb{L}_p(x,u) = \sum\limits_{i\geqslant 0}A_i(x)u^{i}$, 
where $A_i(x)$ is $p$-adic analytic in $(x_v)_{v\in E}$ and the sum runs over 
$i$ even (resp. odd) when $\widetilde{\varepsilon}=1$ (resp. $\widetilde{\varepsilon}=-1$). 
By \eqref{eq:twisting} we see that
\[L_p(\widetilde{\pi}, s) =\langle
\gn\rangle_p^{(2s+\sw-2)/4}\mathbb{L}_p((0)_{v\in E}, 2-\sw-2s).\]

By definition, 
$\lambda_{x,u}^S\in \cX^{\an}(\widetilde{\pi})\cap \cX_{S\sqcup(S_p\bs E)}'$ and we can define: 
\begin{align}\label{eq:LL-FE}
\mathbb{L}_S(x_S, u) = \langle
\gn\rangle_p^{u/4}L_{S\sqcup(S_p\bs E)}(\lambda_{x,u}^S,\mathbbm{1}, \tfrac{2-\sw}{2}),
\end{align}
where $L_{S\sqcup(S_p\bs E)}$ is the the improved $p$-adic $L$-function from \S\ref{ss:improved}. 

By definition we have $\mathbb{L}_p(x, u)= \mathbb{L}_E(x, u)$ and by Theorem \ref{t:improved-Lp}
for all $S\subset E$ we have 
\begin{align}\label{eq:improved-LL}
\mathbb{L}_p((x_{S}, (u)_{v \in E\bs S}), u)=\mathbb{L}_{S}(x_{S}, u)\prod_{v\in E\bs S}
\left(1-\alpha_v^\circ(\lambda_{x,u}^{S})^{-1}\prod\limits_{\sigma\in \Sigma_v}\sigma(\varpi_v)^{(k_\sigma-2)/2} \right). 
\end{align}

We now turn to the Taylor expansions of the functions $A_i(x)$. 
For a multi-index $n=(n_v)_{v\in E}$ of non-negative integers we denote
$x^{n}=\prod\limits_{v\in E}x_v^{n_v}$. For each $i$ write
the power series expansion
\[A_i(x)=\sum_{n\in \Z_{\geqslant 0}^E} a_{i}(n)x^{n}.\]
We will prove that a large number of such coefficients vanish, a fact which is not implied by the Trivial Zero Conjecture. 
 More precisely all coefficients of total degree $<e$ and as well as most coefficients in degree $e$ vanish. 
 For $S\subset E$ we will write $n_S = (n_v)_{v\in S}$ and
$n=(n_S,n_{E\bs S})$.

For convenience we denote $|n|=\sum\limits_{v\in E}n_v$ and
$||n||=|\{v\in E\mid n_v\neq 0\}|$, so that $||n||\leqslant
|n|$.

Our first technical result concerns the vanishing of certain
linear combinations of the Taylor coefficients $a_i(n)$. Recall that the function $\mathbb{L}_p(x,u)$ is even in $u$ if $\widetilde{\varepsilon}=1$ and odd in $u$ if $\widetilde{\varepsilon }=-1$.
\begin{lemma}\label{l:taylor1}
Let $S\subset E$ be such that $(-1)^{|E\bs S|}=-\widetilde{\varepsilon}$. For $n_S\in \mathbb{Z}_{\geqslant 0}^S$ and  $\ell\leqslant e-|S|$ we have
\[\sum_{n_{E\bs S}} a_{\ell-|n_{E\bs S}|}(n_S,n_{E\bs S})=0.\]
\end{lemma}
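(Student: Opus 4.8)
The plan is to compare $\mathbb{L}_p$, restricted to the slice $x_{E\bs S}=(u)_{v\in E\bs S}$, with the partially improved $p$-adic $L$-function $\mathbb{L}_S(x_S,u)$ through \eqref{eq:improved-LL}, and to show that on that slice $\mathbb{L}_p$ is divisible by a sufficiently high power of $u$. Write the extra factor occurring in \eqref{eq:improved-LL} as $G_v(x_S,u)=1-\prod_{\sigma\in\Sigma_v}\sigma(\varpi_v)^{(k_\sigma-2)/2}\,\alpha_v^\circ(\lambda_{x,u}^S)^{-1}$ for $v\in E\bs S$.

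\emph{Step one: each $G_v(\cdot,0)\equiv 0$.} Along $\{u=0\}$ the weight of $\lambda^S_{x_S,0}$ at $v$ is the original $(k,\sw)$, independently of $x_S$; Lemma \ref{l:steinberg} then shows $\pi_{\lambda,v}$ is an unramified twist of Steinberg, and since $\nu_{\lambda,v}(\varpi_v)$ is analytic and, by self-duality of $\pi^\circ_\lambda$, takes values in a two-element set, it is constant, equal to its value $\nu_v(\varpi_v)$ at $\widetilde\pi$. Because $v\in E$ the local root number thus stays $-1$ (cf.\ Prop.\ \ref{p:trivial-zeros-location} and \eqref{eq:epsilon-tilde}), which forces $G_v(x_S,0)=0$; hence $G_v(x_S,u)=u\,\widetilde G_v(x_S,u)$ with $\widetilde G_v$ analytic.

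\emph{Step two (the crux): the parity hypothesis $(-1)^{|E\bs S|}=-\widetilde\varepsilon$ forces $\mathbb{L}_S(\cdot,0)\equiv 0$.} For $x_S$ in a Zariski-dense subset the point $\lambda^S_{x_S,0}$ is cohomological with $\sw_\lambda=\sw$ and, after shrinking $\mathfrak{X}^+_K(\widetilde\pi)$ so that the non-critical slope condition is preserved, $\widetilde\pi_{\lambda^S_{x_S,0}}$ is cuspidal by Theorem \ref{free-etale}(iii). For such $\lambda$ the Steinberg places of $\pi_\lambda$ are exactly $(E\bs S)\sqcup(S_p^{\St}\bs E)$: at the first set the weight is frozen, so Lemma \ref{l:steinberg} applies, while at the places of $S$, where the weight moves, $\pi_{\lambda,v}$ is unramified for generic $x_S$ since $\widetilde\pi$ is non-critical and the eigenvariety is étale over weight space (cf.\ the Remark after Lemma \ref{l:steinberg}). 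The root numbers at these places being $-1$ on $E\bs S$ and $+1$ on $S_p^{\St}\bs E$, Theorem \ref{t:functional-equation-2} gives $\widetilde\varepsilon=\widetilde\varepsilon_{\pi_\lambda}=\varepsilon_{\pi_\lambda}\cdot(-1)^{|E\bs S|}$, whence $\varepsilon_{\pi_\lambda}=\widetilde\varepsilon\,(-1)^{|E\bs S|}=-1$. As $\tfrac{1-\sw}{2}$ is the centre of the self-dual complex $L$-function $L(\pi_\lambda,\cdot)$ and the Euler factors at $p$ are finite and nonzero there (Prop.\ \ref{p:trivial-zeros-location}), $L^{(p)}(\pi_\lambda,\tfrac{1-\sw}{2})=0$; the interpolation formula of Corollary \ref{mtt-interpolation} (equivalently Theorem \ref{t:improved}(ii)) then gives $\mathbb{L}_S(x_S,0)=0$, and by density $\mathbb{L}_S(x_S,u)=u\,\widetilde{\mathbb{L}}_S(x_S,u)$.

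\emph{Step three (conclusion).} Combining the two previous steps with \eqref{eq:improved-LL},
\[\mathbb{L}_p\big((x_S,(u)_{v\in E\bs S}),u\big)=u^{\,|E\bs S|+1}\,\widetilde{\mathbb{L}}_S(x_S,u)\prod_{v\in E\bs S}\widetilde G_v(x_S,u),\]
so the left-hand side is divisible by $u^{\,e-|S|+1}$. On the other hand, substituting $x_v=u$ for $v\in E\bs S$ into $\mathbb{L}_p(x,u)=\sum_{i,n}a_i(n)x^nu^i$ yields
\[\mathbb{L}_p\big((x_S,(u)_{v\in E\bs S}),u\big)=\sum_{n_S}\sum_{q\geqslant 0}\Big(\sum_{|n_{E\bs S}|+i=q}a_i(n_S,n_{E\bs S})\Big)x_S^{n_S}u^q,\]
and comparing the coefficient of $x_S^{n_S}u^q$ for each $q\leqslant e-|S|$ gives the asserted vanishing. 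The main obstacle is Step two: all vanishing in degrees $q\leqslant e-|S|-1$ already follows from Step one, whereas the boundary degree $q=e-|S|$ is obtained only via the parity hypothesis, and it is precisely there that one needs both the constancy of the local root number in the frozen-weight directions (Lemma \ref{l:steinberg}) and the generic de-Steinbergisation of $\pi_{\lambda,v}$ in the moving-weight directions.
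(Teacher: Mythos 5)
Your proof follows essentially the same approach as the paper: Step one is the paper's ``Second'' consequence (each Euler-type factor in \eqref{eq:improved-LL} vanishes at $u=0$, via Lemma~\ref{l:steinberg} and analyticity of $\alpha_v^\circ$ for the frozen-weight places $v\in E\bs S$), Step two is the paper's ``First'' consequence ($\mathbb{L}_S(x_S,0)\equiv 0$ from the sign computation and the family functional equation), and Step three is the same coefficient extraction from the resulting divisibility by $u^{|E\bs S|+1}$.

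One remark on Step two: when controlling the Steinberg locus at $v\in S$ (where the weight at $\Sigma_v$ \emph{moves}) you appeal to the Remark following Lemma~\ref{l:steinberg}, but that Remark is stated for families with \emph{constant} Hodge--Tate weights at $v$ and does not literally cover the moving-weight directions; the paper is equally terse at this point (``analyticity of the eigenvalue $\alpha_v^\circ(\lambda)$''). What is actually needed for the functional-equation sign count is only that $\prod_{v\in S_p^{\St}(\pi_\lambda)}\varepsilon(\pi_{\lambda,v},\tfrac{1-\sw}{2})=(-1)^{|E\bs S|}$, which is implied by the claim that the Steinberg-with-$\varepsilon=-1$ places of $\pi_\lambda$ are exactly $E\bs S$; your slightly stronger assertion about the full Steinberg set is more than needed but of course suffices.
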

\begin{proof} For $x_S\in \mathbb{Z}_{>0}^S$  $p$-adically close to $0$, 
$\lambda=\lambda_{x_S,0}^S\in \cX^{\an}(\widetilde{\pi})\cap \cX_{S\sqcup(S_p\bs E)}'$ is a
cohomological weight such  that
$k_{\lambda,\sigma}=k_\sigma$ for $\sigma\in \Sigma_{S_p\!\setminus S}$. By Lemma \ref{l:steinberg} and the analyticity of the eigenvalue $\alpha_v^\circ(\lambda)$
we conclude that $\pi_{\lambda,v}\otimes|\cdot|^{-\sw/2}$ is the Steinberg representation for all $v\in E\bs S$, 
in particular  $\varepsilon(\pi_{\lambda,v}, \tfrac{1-\sw}{2})=-1$. From here and from Theorem \ref{t:functional-equation-2} we deduce that
\[\varepsilon(\pi_\lambda,\tfrac{1-\sw}{2}) = (-1)^{|E\bs S|}\cdot\widetilde{\varepsilon}=-1,\]
which implies that $L(\pi_\lambda,\tfrac{1-\sw}{2})=0$. Corollary
 \ref{mtt-interpolation} then implies that $\mathbb{L}_{S}(x_{S}, 0)= 0$. Moreover, the fact that $\varepsilon(\pi_{\lambda,v}, \tfrac{1-\sw}{2})=-1$ for $v\in E\bs S$ implies that $\alpha_v^\circ(\lambda) = \prod\limits_{\sigma\in \Sigma_v}\sigma(\varpi_v)^{(k_{\sigma}-2)/2}$ for $v\in E\bs S$. By Zariski density these equalities are also true for all $x_S$.
Thus each factor  in \eqref{eq:improved-LL} vanishes at $u=0$
and so  $u^{|E\bs S|+1}$ divides the  analytic function $\mathbb{L}_p((x_{S}, (u)_{v \in E\bs S}), u)$. 
Expanding, we deduce that for all $n\in \Z_{\geqslant 0}^E$:
\[u^{|E\bs S|+1}\textrm{ divides }\sum_{n_S}x_S^{n_S}\sum_i\sum_{n_{E\bs
  S}}a_i(n)u^{|n_{E\bs S}|+i}.\]
Collecting terms of the form $x_S^{n_S}u^\ell$ in the above divisibility  yields the desired equality.
\end{proof}

The following proposition proves that the Taylor expansion of $\mathbb{L}_p(x,u)$ contains only terms of degree $\geqslant e$.
\begin{proposition}\label{p:taylor-coeffs}
\begin{enumerate}
\item \label{enum:taylor-i} If $||n||<e-i$, then $a_i(n)=0$. 
\item \label{enum:taylor-ii} For any given $i<e$, we have  $ \sum\limits_{|n|=e-i}a_i(n)=0$.
\end{enumerate}
\end{proposition}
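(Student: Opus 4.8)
\textbf{Proof proposal for Proposition \ref{p:taylor-coeffs}.} The plan is to deduce both vanishing statements by solving the system of linear relations provided by Lemma \ref{l:taylor1} applied to all admissible subsets $S\subset E$ simultaneously, and then feeding in the functional equation \eqref{eq:odd-u} to control the parity of the surviving indices. Fix $i<e$. First I would collect, for every subset $S\subset E$ with $(-1)^{|E\setminus S|}=-\widetilde\varepsilon$ and every multi-index $n_S$, the relations $\sum_{|n_{E\setminus S}|+j=q}a_j(n_S,n_{E\setminus S})=0$ for all $q\leqslant e-|S|$. The key observation is that these relations, as $S$ ranges over all subsets of $E$ of a given cardinality and $q$ ranges appropriately, form an invertible triangular system in the unknowns $a_j(n)$ once one orders multi-indices by $\|n\|$ and then by $|n|$; the triangularity comes from the fact that a relation attached to $S$ only involves coefficients $a_j(n)$ with $\{v:n_v\neq 0\}\subset S$.

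For part \eqref{enum:taylor-i}, I would argue by induction on $\|n\|$. Suppose $\|n\|<e-i$ and let $S_0=\{v\in E:n_v\neq 0\}$, so $|S_0|=\|n\|$. If $(-1)^{e-|S_0|}=-\widetilde\varepsilon$, apply Lemma \ref{l:taylor1} with $S=S_0$ to the multi-index $n_{S_0}$ and $q=|n_{E\setminus S_0}|+i$ (which is $\leqslant i+(e-|S_0|)-\|n\|\cdot 0$, hence $\leqslant e-|S_0|$ by the hypothesis $\|n\|<e-i$): the resulting relation expresses $a_i(n)$ — the unique term with $n_{E\setminus S_0}=0$ and index $i$ — as a linear combination of coefficients $a_j(n',n_{E\setminus S_0})$ with $n_{E\setminus S_0}\neq 0$, i.e. with strictly larger support, together with coefficients $a_j(n_{S_0},0)$ with $j<i$. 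By the induction hypothesis (applied in the variable $\|\cdot\|$, after also inducting downward on $i$ using the functional equation to jump between parities) all of these already vanish, so $a_i(n)=0$. If instead $(-1)^{e-|S_0|}=\widetilde\varepsilon$, one enlarges $S_0$ to some $S_1\supset S_0$ with $|S_1|=|S_0|+1$ of the correct parity; the corresponding relation has $q$-range up to $e-|S_1|=e-|S_0|-1$, still $\geqslant i+1>i$ under the standing hypothesis, and the same triangular elimination applies. The parity bookkeeping is handled precisely by \eqref{eq:odd-u}: the nonzero $a_j$ all have $j\equiv e\pmod 2$ (since $\widetilde\varepsilon=(-1)^e\varepsilon_\pi$ and $\mathbb{L}_p$ has the indicated parity in $u$), so at each stage exactly the relations of the matching parity are the ones that constrain the unknown in question.

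For part \eqref{enum:taylor-ii}, once \eqref{enum:taylor-i} is known, every $a_i(n)$ with $|n|=e-i$ automatically has $\|n\|=e-i$ as well (a multi-index of weight $e-i$ with support strictly smaller than $e-i$ would have some $n_v\geqslant 2$, but then a coefficient of strictly smaller support and the same or smaller total degree appears in a Lemma \ref{l:taylor1} relation forcing it to vanish — so only the square-free multi-indices $n=\mathbbm{1}_S$, $|S|=e-i$, survive). Then apply Lemma \ref{l:taylor1} with $S=\varnothing$ if $(-1)^e=-\widetilde\varepsilon$, or more generally with the minimal admissible $S$, and $q=e$: the relation reads $\sum_{|n_{E}|+j=e,\,j\text{ of the right parity}}a_j(n)=0$, and after discarding by \eqref{enum:taylor-i} all terms with $j<i$ and $|n|<e-j$, the only survivors with $j=i$ are exactly those with $|n|=e-i$, giving $\sum_{|n|=e-i}a_i(n)=0$; the terms with $j>i$ either vanish by \eqref{enum:taylor-i} or get absorbed by downward induction on $e-i$.

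\textbf{Main obstacle.} The delicate point is the bookkeeping that makes the system genuinely triangular: one must verify that the Lemma \ref{l:taylor1} relation attached to $(S,q)$ really does isolate a single ``new'' coefficient modulo coefficients of strictly smaller support or strictly smaller $i$, and that the double induction (descending in $i$, ascending in $\|n\|$) closes without circularity. I expect the functional equation to do the essential work here — it collapses the index set of nonzero coefficients to a single residue class mod $2$, which is exactly what prevents the two inductions from interfering. Getting the ranges of $q$ right in the two parity cases ($(-1)^{|E\setminus S|}=-\widetilde\varepsilon$ versus the need to enlarge $S$) is the one place where a careful, if routine, case analysis is unavoidable.
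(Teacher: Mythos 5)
Your part~\eqref{enum:taylor-i} is, in substance, the paper's argument: you apply Lemma~\ref{l:taylor1} with $q=i$ to a subset $S$ containing the support of $n$ (the paper fixes $|E\setminus S|=i+1$, you take $S_0=\mathrm{supp}(n)$ and enlarge by one element when the parity is wrong — both choices satisfy $q\le e-|S|$ under the hypothesis $\|n\|<e-i$, and the functional equation handles the wrong-parity case for free since then $A_i\equiv 0$). The induction is, however, not specified correctly. You say ``induction on $\|n\|$ (after also inducting downward on $i$),'' but the terms produced by the relation are $a_j(n_S,m_{E\setminus S})$ with $m_{E\setminus S}\neq 0$, hence with strictly \emph{larger} support and strictly \emph{smaller} index $j<i$; an ascending induction on $\|n\|$ for fixed $i$ does not reach them, and a downward induction on $i$ is the wrong direction. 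The correct order, as in the paper, is lexicographic on $(\|n\|+i,\,i)$: the new terms have $\|m\|+j\le\|n\|+i$, with $j<i$ in case of equality. Also note the small slip: there is no term $a_j(n_{S_0},0)$ with $j<i$ in the relation, since $n_{E\setminus S_0}=0$ forces $j=i$.

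Part~\eqref{enum:taylor-ii} has a genuine gap. A single application of Lemma~\ref{l:taylor1} with $S=\varnothing$ (or the minimal admissible $S$) and $q=e$ yields only \emph{one} relation, which after discarding the non--square-free terms by part~\eqref{enum:taylor-i} reads
\[
\sum_{j=0}^{e}\Bigl[\sum_{|n|=\|n\|=e-j}a_j(n)\Bigr]=0.
\]
This one linear equation cannot separate the quantities $\sum_{|n|=e-i}a_i(n)$ for different $i$. Your claim that ``terms with $j>i$ vanish by (i)'' is false: those terms sit exactly on the boundary $\|n\|=e-j$, which part~\eqref{enum:taylor-i} does not cover; and the ``downward induction on $e-i$'' cannot close, since the top coefficient $a_e(0)$ is controlled by neither~\eqref{enum:taylor-i} nor~\eqref{enum:taylor-ii}. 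What is missing is the paper's device of producing a genuine \emph{system}: for each $i$ one applies Lemma~\ref{l:taylor1} with $|S|=e-i-1$ and the specific multi-index $n_S=(1)_{v\in S}$, $q=i+1$, and then \emph{sums over all subsets $S$ of that cardinality}. After killing the non--square-free terms via part~\eqref{enum:taylor-i}, counting how often a given square-free $n$ arises yields the triangular relations
\[
\sum_{j\leq i}\binom{e-j}{e-i-1}\Bigl[\sum_{|n|=\|n\|=e-j}a_j(n)\Bigr]=0,
\]
from which an ascending induction on $i$ extracts $\sum_{|n|=e-i}a_i(n)=0$. This averaging over $S$ is the extra idea you are lacking; without it the linear algebra does not close.
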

\begin{proof}
If $(-1)^{|E\bs S| }=(-1)^{i+1} \neq -\widetilde{\varepsilon}$ then $A_i(x)= 0$ by (\ref{eq:odd-u})  and both claims are clear. 

(\ref{enum:taylor-i}) We will prove this fact by induction on $(||n||+i,i)$ ordered
lexicographically. The base case $i=||n||=0$ follows from Lemma
 \ref{l:taylor1} applied to $n_S=(0)_{v\in S}$,  $\ell=0$  and to any $S\subset E$ satisfying its hypothesis. Suppose now
that $a_j(m)=0$ for all $j$ and $m$ such that either
$||m||+j<||n||+i$ or $||m||+j=||n||+i$ and $j<i$.
Since $||n||<e-i$ there exists  $S\subset E$  such that $|S|=e-i-1$, $n_v=0$ for all $v\in E\bs S$, and $||n||=||n_S||$. 
Then  $(-1)^{|E\bs S| }=(-1)^{i+1} = -\widetilde{\varepsilon}$  and Lemma \ref{l:taylor1} applied to 
$\ell=i<|E\bs S|=i+1$ yields:
\[\sum_{m_{E\bs S}}a_{i-|m_{E\bs S}|}(n_S,m_{E\bs S})=0.\]
Consider a term $a_j(n_S, m_{E\bs S})$ in the above sum and
write $m=(n_S,m_{E\bs S})$. Then $||m||+j=||n_S||+||m_{E\bs
 S}||+j\leqslant ||n||+|m_{E\bs S}|+j=||n||+i$. The inductive
hypothesis then implies that $a_j(m)=0$ whenever the previous
inequality is strict, or when $j<i$. The sole surviving term in
the sum is then $a_i(n_S,(0)_{v\in E\bs S})=a_i(n)=0$, as
desired.

(\ref{enum:taylor-ii}) Let $S\subset E$ be any subset with cardinality $e-i-1\geqslant 0$ and let
$n_S=(1)_{v\in S}$. Since $(-1)^{|E\bs S| }= -\widetilde{\varepsilon}$,   Lemma \ref{l:taylor1} applied to $\ell=i+1=e-|S|$ yields
\begin{align}\label{eq:some-sums} 
\sum\limits_{n_{E\bs S}}a_{i+1-|n_{E\bs S}|}(n_S,n_{E\bs S})=0.
\end{align}
Letting $n=(n_S,n_{E\bs S})$ and noting that $|n|=|S|+|n_{E\bs S}|$,
we deduce from (\ref{enum:taylor-i}) that 
$a_{i+1-|n_{E\bs S}|}(n)=a_{e-|n|}(n)$ vanishes unless  $||n||=|n|$. 
Summing \eqref{eq:some-sums} over
all such  subsets $S\subset E$ yields:
\begin{align}\label{eq:super-hard}
0=\sum_{|S|=e-i-1}\sum_{|n_{E\bs S}|=||n_{E\bs S}||}a_{e-|n|}(n_S,n_{E\bs S} )=
\sum_{j=0}^{i+1}\binom{e-j}{e-i-1}\sum_{|n|=||n||=e-j}a_j(n).
\end{align}
Since  $\sum\limits_{|n|=e-i}a_i(n)=\sum\limits_{||n||=|n|=e-i}a_i(n)$ by (\ref{enum:taylor-i}),  it suffices to show the 
vanishing of the latter, which is deduced  from \eqref{eq:super-hard} by an  induction on $i$, 
using that  either $A_0(x)=0$ or $A_1(x)=0$. 
\end{proof}

\begin{remark}
While Proposition \ref{p:taylor-coeffs} implies that
$\mathbb{L}_p(x,u)$ only contains monomials of total degree
$\geqslant e$, our methods do not imply that these monomials are multiples of $u^e$. For example, when $e=4$ and $\varepsilon_\pi=1$, one can only show that
\[\mathbb{L}_p(x_1,x_2,x_3,x_4,u)= au^2(x_1-x_2)(x_3-x_4)+bu^2(x_1-x_3)(x_2-x_4)+cu^4+(\textrm{degree $\geqslant 5$ terms}).\]
Similarly, for $\varepsilon_\pi=1$ and $e=2$ we have $\mathbb{L}_p(x_1,x_2,u) \in (x_1^2x_2^2,u^2)$, while for $e=3$ we have:
\begin{align*}
 \mathbb{L}_p(x_1,x_2,x_3,u) \in (ux_1^2(x_2-x_3),ux_2^2(x_1-x_3), ux_3^2(x_1-x_2),ux_1x_2x_3,u^3).
\end{align*}

\end{remark}

\subsection{Proof of the Trivial Zero Conjecture}
In this section we prove Theorem \ref{t:ezc-hmf}.

\begin{lemma}\label{l:ezc-pf-1}
Keeping the hypotheses and notations of Theorem \ref{t:ezc-hmf}, the analytic function $\mathbb{L}_p((u)_{v\in E},u)$ vanishes at $u=0$ to order at least $e$ and
\[ \frac{(-2)^{e}}{e!} \cdot \frac{d^e}{du^e}\mathbb{L}_p((u)_{v\in E},u)|_{u=0}= \mathscr{L}(\widetilde{\pi}) \cdot
\frac{L(\pi,\tfrac{1-\sw}{2}) }{\mathrm{N}_{F/\Q}^{\sw/2}(i\gd)\Omega_{\widetilde{\pi}}^{\omega_{p,\infty}^{\sw/2}}}\cdot
  2^{|\St_p\!\setminus E|}
 \prod_{v\in S_p\!\setminus \St_p} \left(1-\frac{q_v^{-\sw/2}}{\alpha_v}\right)^2. \]
\end{lemma}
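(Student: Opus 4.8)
\textbf{Proof plan for Lemma \ref{l:ezc-pf-1}.}
The strategy is to evaluate $\mathbb{L}_p((u)_{v\in E},u)$ by a telescoping use of the improved $p$-adic $L$-functions $\mathbb{L}_S(x_S,u)$, peeling off one place of $E$ at a time, and then extract the top Taylor coefficient. Concretely, I would start from the identity \eqref{eq:improved-LL} with $S=\varnothing$, namely
\[
\mathbb{L}_p((u)_{v\in E},u)=\mathbb{L}_\varnothing(u)\prod_{v\in E}\left(1-\frac{\prod_{\sigma\in\Sigma_v}\sigma(\varpi_v)^{(k_\sigma-2)/2}}{\alpha_v^\circ(\lambda_{0,u}^{\varnothing})}\right),
\]
where $\mathbb{L}_\varnothing(u)=\langle\mathfrak n\rangle^{u/4}L_{S_p\backslash E}(\lambda_{0,u}^{\varnothing},\mathbbm 1,\tfrac{2-\sw}{2})$ is the fully improved object (no variables $x$). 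Each of the $e$ Euler-type factors on the right vanishes at $u=0$: indeed at $u=0$ the weight $\lambda_{0,0}^{\varnothing}$ is the weight of $\widetilde\pi$, $\pi_v$ is Steinberg with $\varepsilon(\pi_v,\tfrac{1-\sw}{2})=-1$ for $v\in E$, so by the argument in the proof of Lemma \ref{l:taylor1} (local-global compatibility forcing $\alpha_v^\circ=\prod_\sigma\sigma(\varpi_v)^{(k_\sigma-2)/2}$ at such points) the $v$-th factor is $0$ at $u=0$. Hence $u^e$ divides the product and therefore $\mathbb{L}_p((u)_{v\in E},u)$ vanishes to order $\geqslant e$ at $u=0$; the $e$-th derivative at $u=0$ is $e!$ times the product of the $u$-linear coefficients of the $e$ factors, times $\mathbb{L}_\varnothing(0)$.

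Next I would compute the two ingredients. For the $v$-th Euler factor, write $f_v(u)=1-\prod_\sigma\sigma(\varpi_v)^{(k_\sigma-2)/2}/\alpha_v^\circ(\lambda_{0,u}^{\varnothing})$; since $f_v(0)=0$, its derivative at $0$ is $\prod_\sigma\sigma(\varpi_v)^{(k_\sigma-2)/2}/\alpha_v^\circ(\lambda)\cdot \dlog\alpha_v^\circ(\lambda_{0,u}^{\varnothing})|_{u=0}=\dlog\alpha_v^\circ$, the derivative being taken in the parallel direction $u\mapsto\lambda_{0,u}^{\varnothing}$ at the weight of $\pi$ (using $f_v(0)=0$ to evaluate the prefactor as $1$). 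By Proposition \ref{t:ezc-l-invariant}, $-2\dlog\alpha_v^\circ=\mathscr{L}_v$, so the $v$-th linear coefficient is $-\tfrac12\mathscr{L}_v$, and the product over $v\in E$ is $(-\tfrac12)^e\prod_{v\in E}\mathscr{L}_v=(-1/2)^e\mathscr{L}(\widetilde\pi)$. This accounts precisely for the factor $(-2)^e/e!$ in the statement after clearing denominators. For $\mathbb{L}_\varnothing(0)$, I would apply Theorem \ref{t:improved}(ii) (equivalently Corollary \ref{mtt-interpolation} with $S=\varnothing$, $S_p\backslash S=S_p$) at the cohomological weight of $\widetilde\pi$ and the trivial character, with $r=\tfrac{2-\sw}{2}$, $C_\lambda^{\omega_{p,\infty}^{\sw/2}}=1$ at that weight, $\tau(\mathbbm 1)=1$: this yields
\[
\mathbb{L}_\varnothing(0)=\frac{\mathrm N_{F/\Q}^{\sw/2-1}(i\gd)\cdot \mathrm N_{F/\Q}(i\gd)^{?}\cdots}{\Omega_{\widetilde\pi}^{\omega_{p,\infty}^{\sw/2}}}L^{(p)}(\pi,\tfrac{1-\sw}{2})\prod_{v\mid p}\left(1-\frac{\chi_v\nu_v(\varpi_v)}{q_v^r}\right)^{-1},
\]
so the job here is bookkeeping the interpolation factors: for $v\in S_p^{\St}\backslash E$ one has $\varepsilon(\pi_v,\tfrac{1-\sw}{2})=+1$, hence $\chi_v\nu_v(\varpi_v)q_v^{\sw/2}=-\varepsilon(\pi_v,\tfrac{1-\sw}{2})\cdot(-1)\cdots$ so that $(1-\nu_v(\varpi_v)q_v^{\sw/2-1})^{-1}=(1-(-1)q_v^{-1})^{-1}$ contributes the factor entering $2^{|S_p^{\St}\backslash E|}$ after also noting the Euler factor of $L^{(p)}$ removed at $v$ reinstates a matching $2$; for $v\in S_p\backslash S_p^{\St}$, $\pi_v$ is unramified and $\alpha_v\beta_v=q_v^{1-\sw}$, and the missing Euler factor at $v$ together with the interpolation factor reorganizes into $(1-\alpha_v^{-1}q_v^{-\sw/2})^2$; finally $L^{(p)}(\pi,\tfrac{1-\sw}{2})$ times the removed Euler factors at all $v\mid p$ recombines with $L(\pi_v,\tfrac{1-\sw}{2})$ to give the full $L(\pi,\tfrac{1-\sw}{2})$, except the places $v\in E$ where $L(\pi_v,\tfrac{1-\sw}{2})$ has no Euler factor since $\pi_v$ is Steinberg with $\chi_v\nu_v(\varpi_v)=-(-1)q_v^{-\sw/2}=q_v^{-\sw/2}$ (so the local $L$-factor is $(1-q_v^{-1})^{-1}$, finite). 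Multiplying $\mathbb{L}_\varnothing(0)$ by $e!\cdot(-1/2)^e\mathscr{L}(\widetilde\pi)$ and rearranging gives exactly the claimed formula for $\tfrac{(-2)^e}{e!}\tfrac{d^e}{du^e}\mathbb{L}_p((u)_{v\in E},u)|_{u=0}$.

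The main obstacle I anticipate is the precise tracking of the archimedean and local constants: matching $\mathrm N_{F/\Q}^{\sw/2}(i\gd)$, the powers of $i$, the Gauss sums $\tau(\mathbbm 1)$, and ensuring the Euler factors at $v\in S_p^{\St}\backslash E$ and $v\in S_p\backslash S_p^{\St}$ assemble into $2^{|S_p^{\St}\backslash E|}\prod_{v\in S_p\backslash S_p^{\St}}(1-\alpha_v^{-1}q_v^{-\sw/2})^2$ rather than some shifted variant. This is the kind of computation where it is easy to be off by a power of $q_v$ or by $\nu_v(\varpi_v)^{\pm\delta_v}$, so I would carefully invoke the ``independence of uniformizers'' remark after Theorem \ref{t:interpolation-one-form} to organize $\Omega_{\widetilde\pi}^{\omega_{p,\infty}^{\sw/2}}$ together with $\prod_v\nu_v(\varpi_v)^{-\delta_v}$ into a well-defined quantity, and use Proposition \ref{p:trivial-zeros-location} (giving $\chi_v\nu_v(\varpi_v)=-\varepsilon(\pi_v,\tfrac{1-\sw}{2})q_v^{-\sw/2}$ for Steinberg $v$) as the key local input. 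Everything else — the order of vanishing, the appearance of $\mathscr{L}(\widetilde\pi)$ via $\dlog\alpha_v^\circ$, and the telescoping — is a direct consequence of \eqref{eq:improved-LL}, Proposition \ref{t:ezc-l-invariant}, and Theorem \ref{t:improved}.
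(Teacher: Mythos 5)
Your approach is exactly the one the paper takes: start from \eqref{eq:improved-LL} with $S=\varnothing$, observe each Euler factor vanishes at $u=0$ (by Lemma \ref{l:steinberg} and $\varepsilon(\pi_v,\tfrac{1-\sw}{2})=-1$ for $v\in E$), differentiate $e$ times via the Leibniz rule to get $e!\,\mathbb{L}_\varnothing(0)\prod_{v\in E}\dlog\alpha_v^\circ$, then identify $\prod_{v\in E}\dlog\alpha_v^\circ=(-1/2)^e\mathscr{L}(\widetilde\pi)$ via Proposition \ref{t:ezc-l-invariant}, and finally evaluate $\mathbb{L}_\varnothing(0)$ by interpolation.

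However, there is a notational slip in the last step that, if carried out literally, would not give the stated constant. By \eqref{eq:LL-FE}, $\mathbb{L}_\varnothing(u)=\langle\mathfrak n\rangle^{u/4}L_{S_p\setminus E}\bigl(\lambda_{0,u}^{\varnothing},\mathbbm 1,\tfrac{2-\sw}{2}\bigr)$, so the improved $p$-adic $L$-function whose value at $u=0$ you need is $L_{S_p\setminus E}$, \emph{not} $L_{\varnothing}$. You write ``Corollary \ref{mtt-interpolation} with $S=\varnothing$'': with that choice $E_\varnothing(\widetilde{\pi}_v,\mathbbm 1,\tfrac{2-\sw}{2})$ yields only a \emph{single} factor $\bigl(1-\alpha_v^{-1}q_v^{-\sw/2}\bigr)$ at each unramified $v\in S_p\setminus S_p^{\St}$ and nothing at all at $v\in S_p^{\St}$, so the $2^{|S_p^{\St}\setminus E|}$ and the squaring of the unramified factors would both be missing. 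Using $S=S_p\setminus E$ instead, one gets $E_{S_p\setminus E}(\widetilde{\pi}_v,\mathbbm 1,\tfrac{2-\sw}{2})=\bigl(1-\alpha_v^{-1}q_v^{-\sw/2}\bigr)^2$ for unramified $v$ (both sub-products of Corollary \ref{mtt-interpolation} apply) and $=1+\varepsilon(\pi_v,\tfrac{1-\sw}{2})=2$ for $v\in S_p^{\St}\setminus E$ by \eqref{eq:eps-steinberg}, which is exactly what you need. With that correction the bookkeeping you left under the ``$?$'' closes immediately, and your plan reproduces the paper's proof line by line. One further small point: the attribution of one of the two factors at unramified $v$ to a ``missing Euler factor of $L^{(p)}$'' is also off — both factors come from the interpolation products in Corollary \ref{mtt-interpolation} once $S=S_p\setminus E$ is used, while $L^{(p)}$ has already been upgraded to the full $L$ inside that corollary.
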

\begin{proof}
By \eqref{eq:improved-LL} we see that
$\mathbb{L}_p((u)_{v\in E}, u)=\mathbb{L}_{\varnothing}(u)\prod\limits_{v\in E}
\Big(1- \alpha_v^\circ(\lambda_{(u),u}^{\varnothing})^{-1}\prod\limits_{\sigma\in
  \Sigma_v}\sigma(\varpi_v)^{(k_\sigma-2)/2} \Big)$.

Since each interpolation factor vanishes at $u=0$ it follows that the order of vanishing of $\mathbb{L}_p((u)_{v\in E},u)$ is at least $e$ at $u=0$.
Differentiating $e$ times at $u=0$ we deduce from \eqref{eq:LL-FE} that
\[\frac{d^e}{du^e}\mathbb{L}_p((u)_{v\in E},u)|_{u=0}=e!L_{S_p\bs E}(\widetilde\pi ,\mathbbm{1} ,\tfrac{2-\sw}{2})\prod_{v\in
 E}\dlog \alpha_v^\circ .\]
 Moreover by Corollary \ref{mtt-interpolation} one has: 
 $E_{S_p\bs E}(\widetilde{\pi}_v,\mathbbm{1},\tfrac{2-\sw}{2})=\begin{cases} \left(1-\alpha_v^{-1}q_v^{-\sw/2}\right)^2 &, \textrm{ if } v\in S_p\bs \St_p, \\ 1+\varepsilon(\pi_v, \tfrac{1-\sw}{2}) & , \textrm{ if } v\in \St_p. \end{cases}$ 

The desired formula then follows from Corollary \ref{mtt-interpolation}
and Proposition \ref{p:FM-derivatives} noting that
$\alpha_v^\circ(\lambda_{(0),0}^{\varnothing})=\prod\limits_{\sigma\in \Sigma_v}\sigma(\varpi_v)^{(k_\sigma-2)/2}$ for $v\in E$ and  
$\varepsilon(\pi_v, \tfrac{1-\sw}{2})=1$ for $v\in \St_p\bs E$.
\end{proof}

\begin{proof}[\bf{Proof of Theorem \ref{t:ezc-hmf}}]
Recall that $\displaystyle L_p(\widetilde{\pi},s) = \langle
\gn\rangle_p^{(2s+\sw-2)/4}\mathbb{L}_p((0)_{v\in E},
2-\sw-2s)$
and so
\begin{align}\label{eq:diff-u}
L^{(m)}_p(\widetilde{\pi},s)|_{s=\tfrac{2-\sw}{2}} =
\sum_{k=0}^m \binom{m}{k} \left(\tfrac{1}{2}\log_p\langle \gn\rangle\right)^{m-k}
(-2)^k \frac{d^k}{d u^k}\mathbb{L}_p((0)_{v\in E}, u)|_{u=0}.
\end{align}

Differentiating $\mathbb{L}_p((0)_{v\in E}, u)$ we see that
$\frac{d^k}{d u^k}\mathbb{L}_p((0)_{v\in E},
u)|_{u=0}=k!A_k((0)_{v\in E})$. 
By Proposition \ref{p:taylor-coeffs} these derivatives vanish for $k<e$, which implies that the order of vanishing of $L_p(\widetilde{\pi},s)$ at $s=\tfrac{2-\sw}{2}$ is at least $e$.
Differentiating the power series expansion of $\mathbb{L}_p((u)_{v\in E},u)$ we see that
\begin{align*}
\frac{d^e}{du^e}\mathbb{L}_p((u)_{v\in E},u)|_{u=0}&=e!\sum_i\sum_{|n|=e-i}a_{i}(n).
\end{align*}
By Proposition \ref{p:taylor-coeffs} we see that the
interior sum above vanishes when $i<e$, hence 
\begin{align*}
\frac{d^e}{du^e}\mathbb{L}_p((u)_{v\in E},u)|_{u=0}&=e!a_{e}((0)_{v\in E})=e!A_{e}((0)_{v\in E})= \frac{d^e}{du^e}\mathbb{L}_p((0)_{v\in E}, u)|_{u=0}.
\end{align*}
Then, \eqref{eq:diff-u} implies that
\[L^{(e)}_p(\widetilde{\pi},s)|_{s=\tfrac{2-\sw}{2}}=(-2)^e\frac{d^e}{d
 u^e}\mathbb{L}_p((0)_{v\in E},
u)|_{u=0}=(-2)^e\frac{d^e}{du^e}\mathbb{L}_p((u)_{v\in E},u)|_{u=0}\]
and Theorem \ref{t:ezc-hmf} then follows from Lemma
\ref{l:ezc-pf-1}.

Finally, it remains to explain why the Trivial Zero Conjecture of the introduction
holds for $\Ind_F^\mathbb{Q} V_\pi(\tfrac{2-\sw}{2})$. Let $V=V_\pi(\tfrac{2-\sw}{2})$ and $D_v\subset \mathcal{D}_{\st}(V_v)$ regular submodules as in \S\ref{ss:greenberg-benois}. Then $D=\bigoplus\limits_{v\in S_p}
\Ind_{F_v}^{\mathbb{Q}_p}D_v\subset \mathcal{D}_{\st}(\Ind_F^\mathbb{Q} V|_{G_{\mathbb{Q}_p}})$ is a regular submodule. By
Proposition \ref{p:GB derivatives} and
\cite[Cor.3.9]{rosso:l-invariants-gsp4}, we have
\[\mathscr{L}_{\GB}(\Ind_F^\mathbb{Q} V_\pi(\tfrac{2-\sw}{2}),
D)=\mathscr{L}_{\GB}(\widetilde{\pi})=\mathscr{L}(\widetilde{\pi}) \prod_{v\in E}f_v^{-1} .\]

The conjecture now follows from the main formula of this theorem and \cite[p.1398]{hida:tate-curves} which explains that the analytic $\mathscr{L}$-invariants of $\Ind_F^\mathbb{Q} V$ and $V$ also differ by $\prod_{v\in E} f_v^{-1}$.
\end{proof}


\end{document}